\theoremstyle{theorem}
\newtheorem{theorem}{Theorem}[section]
\newtheorem{lemma}[theorem]{Lemma}
\newtheorem{proposition}[theorem]{Proposition}
\newtheorem{corollary}[theorem]{Corollary}
\newtheorem{claim}[theorem]{Claim}
\theoremstyle{definition}
\newtheorem{defn}[theorem]{Definition}
\newtheorem*{remark}{Remark}
\newtheorem{example}[theorem]{Example}
\newtheorem{definition}[theorem]{Definition}
\newtheorem{convention}[theorem]{Convention}
\newtheorem{construction}[theorem]{Construction}
\numberwithin{equation}{section}
\newcommand{\Vol}{{\rm Vol}}
\newcommand{\Map}{{\rm Map}}
\newcommand{\PMap}{{\rm PMap}}
\newcommand{\C}{\mathcal{C}}
\newcommand{\B}{\mathcal{B}}
\newcommand{\intt}{{\rm{Int}}}
\newcommand{\PSL}{{\rm PSL}}
\newcommand{\iVol}{{\rm {V\underline{ol}}}}
\newcommand{\pare}{\mathfrak{p}}
\newcommand{\diam}{{\rm diam}}
\newcommand{\supp}{{\rm supp}}
\newcommand{\oct}{{\rm oct}}
\newcommand{\Area}{{\rm Area}}
\newcommand{\tet}{{\rm tet}}
\title{End-periodic homeomorphisms and volumes of mapping tori}
\author[E. Field, H. Kim, C. Leininger, and M. Loving]{Elizabeth Field, Heejoung Kim, Christopher Leininger, and Marissa Loving}
\date{\today}
\begin{document}

\maketitle

\begin{abstract}
Given an irreducible, end-periodic homeomorphism $f: S \to S$ of a surface with finitely many ends, all accumulated by genus, the mapping torus, $M_f$, is the interior of a compact, irreducible, atoroidal 3-manifold $\overline M_f$ with incompressible boundary. Our main result is an upper bound on the infimal hyperbolic volume of $\overline M_f$ in terms of the translation length of $f$ on the pants graph of $S$. This builds on work of Brock and Agol in the finite-type setting. We also construct a broad class of examples of  irreducible, end-periodic homeomorphisms and use them to show that our bound is asymptotically sharp.
\end{abstract}

\section{Introduction}

Infinite-type surfaces arise naturally as leaves of foliations of 3-manifolds. In the case of taut, depth-one foliations of closed, hyperbolic $3$--manifolds, any noncompact leaf is a surface $S$ of infinite genus with finitely many ends (all accumulated by genus).  The leaf $S$ is dense in an open submanifold which is the mapping torus $M_f$ of an {\em end-periodic} homeomorphism $f \colon S \to S$.
As such, the study of homeomorphisms of infinite-type surfaces is intimately related to the study of foliated 3-manifolds. 
The goal of this paper is to relate the hyperbolic geometry of $M_f$ to the geometry and dynamics of the monodromy $f$ in a way that mirrors the connections between properties of pseudo-Anosov homeomorphisms of finite-type surfaces and the geometry of their mapping tori.

The motivating result from the finite-type setting is due to Brock \cite{Brock-mappingtorus-vol}, who related the translation length, $\tau(f)$, of a pseudo-Anosov homeomorphism $f$ on the pants graph with the hyperbolic volume of its mapping torus.
In particular, he proved that there is some $K >0$ so that
\[\frac{1}{K}\tau(f)\leq \Vol(M_f) \leq K \tau(f),\]
where $K$ depends only on the surface.  Agol \cite{Agol-oct} improved the upper bound on volume, giving an explicit value of the constant, and moreover proved that it is sharp.  

For an end-periodic homeomorphism $f \colon S \to S$, the mapping torus $M_f$ is the interior of a compact, irreducible $3$--manifold $\overline M_f$ with incompressible boundary (\Cref{proposition:interior}). Handel and Miller proved a Nielsen-Thurston type classification theorem in the end-periodic setting, with the notion of {\em irreducibility} serving as an analogue of being pseudo-Anosov; see \cite{CC-book} or \Cref{S:end-periodic defs}.  We prove that when $f$ is irreducible, $\overline M_f$ is atoroidal (\Cref{P:its hyperbolic}), and further define a notion of {\em strong irreducibility} which ensures $\overline M_f$ is also acylindrical (\Cref{L:annuli analysis}).  Thurston's Hyperbolization Theorem (see \Cref{Theorem:Thurston.Geometrization}) then reinforces the analogy with the Nielsen-Thurston classification in the finite-type case, providing a \emph{convex} hyperbolic metric on $\overline M_f$ when $f$ is irreducible, and one with totally geodesic boundary when $f$ is strongly irreducible. Note that \emph{every} 3-manifold with boundary admits a non-convex hyperbolic metric so it is important to make this distinction. We write $\iVol(\overline M_f)$ to denote the infimum of volumes of all such hyperbolic metrics on $\overline M_f$. Our first theorem is an analogue of Brock's upper bound on volume and our approach follows Agol's proof in \cite{Agol-oct}.


\newcommand{\upperboundtheorem}{For any irreducible, end-periodic homeomorphism $f \colon S \to S$ of a surface with finitely many ends, all accumulate by genus, we have $\iVol(\overline M_f) \leq V_\oct \tau(f)$.}
\begin{theorem}\label{T:upper} 
\upperboundtheorem
\end{theorem}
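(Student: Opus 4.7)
The plan is to adapt Agol's argument from \cite{Agol-oct} to the end-periodic setting. In the pseudo-Anosov case, Agol realizes each elementary move in the pants graph by a regular ideal octahedron (of volume $V_\oct$), builds the mapping torus by layering these octahedra along a geodesic path from $P$ to $f(P)$ in the pants graph and gluing by $f$, and obtains the bound $\Vol(M_f) \leq n V_\oct$, where $n$ is the length of the path; minimizing over paths then yields $\Vol(M_f) \leq V_\oct \tau(f)$.

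Given $\epsilon > 0$, I would first choose a locally finite pants decomposition $P$ of $S$ with $d_\mathcal{P}(P, f(P)) \leq \tau(f) + \epsilon =: n$, together with an elementary move path $P = P_0, P_1, \ldots, P_n = f(P)$. Because each elementary move alters only finitely many curves, the total change from $P$ to $f(P)$ is supported on a compact subsurface of $S$, which is consistent with the end-periodic structure of $f$. I would then extend this path to a bi-infinite $f$-equivariant sequence $\{P_i\}_{i \in \mathbb{Z}}$ by setting $P_{i+n} = f(P_i)$.

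For each elementary move $P_i \to P_{i+1}$, following Agol, I would insert an ideal hyperbolic octahedron supported in a neighborhood of the four-holed sphere or one-holed torus where the move takes place, and extend by a trivial product structure elsewhere in $S$. Stacking these layers produces an $f$-equivariant ideal polyhedral structure on $S \times \mathbb{R}$ whose quotient under $(x, t) \sim (f(x), t-1)$ is $M_f$, with exactly $n$ ideal octahedra per fundamental domain. The final step is to argue that this structure extends to, or approximates, a convex hyperbolic metric on $\overline M_f$ of volume at most $n V_\oct$, giving $\iVol(\overline M_f) \leq (\tau(f) + \epsilon) V_\oct$; letting $\epsilon \to 0$ completes the proof.

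The main obstacle is ensuring Agol's construction goes through in the infinite-type, end-periodic setting. The pants decompositions $P_i$ are infinite, $M_f$ is open, and the compactification $\overline M_f$ has boundary components arising from the dynamics of $f$ at the ends of $S$. One must check that the $f$-equivariant polyhedral structure on $S \times \mathbb{R}$ descends to a well-defined ideal polyhedral decomposition of $\overline M_f$ with appropriate behavior along $\partial \overline M_f$: outside the compact support of the moves, no octahedra are inserted, and this trivial product region must be made compatible with the hyperbolic geometry at the boundary. A secondary subtlety is converting the ideal polyhedral volume into a bound on $\iVol$, which for a manifold with boundary requires either a Schl\"afli-type monotonicity argument or a direct approximation of convex hyperbolic metrics by ideal polyhedral ones, a step that is immediate in the closed finite-type case by Mostow rigidity.
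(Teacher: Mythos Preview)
Your outline captures the correct high-level strategy---Agol's octahedra, one or two per elementary move---but there are two genuine gaps.

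First, you assume you can choose $P$ with $d_{\mathcal P}(P,f(P)) \leq \tau(f)+\epsilon$. But $\tau(f)$ is the \emph{asymptotic} translation length $\inf_P \liminf_n d(P,f^n(P))/n$, which can be strictly smaller than the minimal displacement $\inf_P d(P,f(P))$; moreover $\mathcal P_f(S)$ need not have an $f$--invariant component, only $f^N$--invariant ones. The paper deals with this by passing to a power $f^N$, choosing a path $P_0,\ldots,P_n$ from $P$ to $f^{-N}(P)$ with $(n_T+2n_S)/N$ close to $\tau(f)$, bounding $\iVol(\overline M_{f^N})$, and then dividing by $N$ using multiplicativity of $\iVol$ in finite covers (via Storm's simplicial-volume identification, \Cref{T:Storm 2}).

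Second, the ``main obstacle'' you flag is indeed the heart of the matter, and your sketch does not resolve it. Extending by a trivial product outside the support of the moves does \emph{not} yield a hyperbolic polyhedral decomposition of $\overline M_f$: those pair-of-pants--times--interval regions carry no hyperbolic volume, and there is no way to glue the octahedral blocks across them while retaining a convex hyperbolic metric on $\overline M_f$ itself. The paper's solution is to \emph{collapse} each such product $Y\times I$ to $Y$ and each flow-annulus over a pants curve to a circle, via an explicit cellular quotient $h\colon \overline M_f \to \widehat M_f$ shown to be homotopic to a homeomorphism using Armentrout's theorem (\Cref{thm:armentrout}); this is the content of \Cref{P:block decomposition}. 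One then has a genuine block decomposition of $(\widehat M_f,\hat L)$ and hence a complete hyperbolic metric on $\widehat M_f - \hat L$ of volume exactly $V_\oct(n_T+2n_S)$. The manifold $\overline M_{f^N} - P_\Omega$ is a Dehn filling of $\widehat M_{f^N} - \hat L$, so Thurston's volume monotonicity under Dehn filling (\Cref{T:filling volume}) gives $\Vol(\overline M_{f^N}-P_\Omega) \leq V_\oct(n_T+2n_S)$; finally Storm's inequality $\iVol(\overline M_{f^N}) \leq \iVol(\overline M_{f^N}-P_\Omega)$ (\Cref{T:Storm3}) removes the paring locus. Your ``Schl\"afli-type monotonicity'' intuition is thus realized concretely as Dehn-filling monotonicity plus Storm's theorem, not by any direct polyhedral approximation on $\overline M_f$.
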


Here, $V_\oct$ denotes the volume of a regular ideal octahedron, and $\tau(f)$ denotes the asymptotic translation length of $f$ on the pants graph $\mathcal P(S)$.  
Throughout this paper, we assume that $S$ has finitely many ends, all accumulated by genus.  Although $\mathcal P(S)$ is disconnected for such surfaces, the end-periodic assumption implies that there is a non-empty subgraph, $\mathcal P_f(S) \subset \mathcal P(S)$, for which each component is invariant by a positive power of $f$, and hence $\tau(f)$ is well-defined (see \Cref{S:associated-graphs} where we also discuss the path metric on the components of $\mathcal P(S)$ we consider).

For any component $\Omega \subset \mathcal P_f(S)$ which is invariant by $f$, we will construct a pants decomposition $P_\Omega$ of $\partial \overline M_f$ for which $\overline M_f - P_\Omega$ is acylindrical, and $P_\Omega$ depends only on the component $\Omega$; see \Cref{P:block decomposition}. Denoting the volume of the complete hyperbolic metric with totally geodesic boundary on this manifold by $\Vol(\overline M_f -P_\Omega)$, which by a result of Storm \cite{Storm2} is equal to the infimum of volumes of all convex hyperbolic metrics on $\overline M_f - P_\Omega$, and letting $\tau(f, \Omega)$ denote the translation distance of $f$ on $\Omega$, we prove the following.
\newcommand{\Omegaupperboundtheorem}{For any $f$--invariant component $\Omega \subset \mathcal P_f(S)$,\[ \Vol(\overline M_f-P_\Omega) \leq V_\oct\tau(f,\Omega).\]}

\begin{theorem} \label{T:component volume}
\Omegaupperboundtheorem
\end{theorem}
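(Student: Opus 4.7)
The plan is to adapt Agol's strategy from \cite{Agol-oct} to the end-periodic setting: construct an explicit convex hyperbolic metric on $\overline M_f - P_\Omega$ whose volume is exactly $V_\oct \tau(f,\Omega)$, and then conclude via Storm's theorem \cite{Storm2}.

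First, I would choose a geodesic $P_0, P_1, \ldots, P_n$ in $\Omega$ realizing the translation length, so that $f(P_0) = P_n$ and $n = \tau(f, \Omega)$, with each step $P_i \to P_{i+1}$ a single elementary move in the pants graph. Following the octahedral construction of \cite{Agol-oct}, I would assemble a cobordism $W \cong S \times [0,n]$ divided into $n$ layers, where the $i$th layer encodes the move from $P_i$ to $P_{i+1}$. In the finite subsurface where the move takes place (a one-holed torus or four-holed sphere), one inserts a single regular ideal octahedron whose ideal vertices are identified with the two swapped curves and their neighbors, producing a hyperbolic structure of volume $V_\oct$ on that layer; outside this region the pants decomposition is unchanged, giving a product of pairs of pants with intervals and contributing totally geodesic faces. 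Gluing the two ends of $W$ via $f$ then yields a 3-manifold $N$ carrying a convex hyperbolic metric of volume $nV_\oct = V_\oct \tau(f,\Omega)$, with totally geodesic pants faces along the pants curves carried around the mapping torus.

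The second step is to identify $N$ with $\overline M_f - P_\Omega$. By construction, $N$ is the quotient of $W$ under the $f$--identification, and the removed locus consists of neighborhoods of the pants curves appearing in the sequence $P_0,\ldots,P_n$. Comparing with the block decomposition of \Cref{P:block decomposition}, which produces $P_\Omega$ as the pants decomposition of $\partial \overline M_f$ dictated by the same combinatorial data along $\Omega$, one expects $N$ to be homeomorphic to $\overline M_f$ with a regular neighborhood of $P_\Omega$ removed. Once this identification is in hand, Storm's theorem \cite{Storm2}, as invoked in the excerpt, gives $\Vol(\overline M_f - P_\Omega) \le \Vol(N) = V_\oct \tau(f,\Omega)$.

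The main technical obstacle will be making the identification of $N$ with $\overline M_f - P_\Omega$ precise, and verifying that the octahedral assembly descends cleanly under the $f$--gluing. In the end-periodic setting, each $P_i$ is an infinite collection of curves, but only finitely many change along the geodesic; the cobordism $W$ is therefore a finite octahedral assemblage together with an infinite family of product regions corresponding to the unaffected pants curves, and these product regions must be shown to match up consistently into the ends of $\overline M_f$, realizing exactly the curves of $P_\Omega$. Because $f$ acts on $\Omega$ by translation along the chosen geodesic, the layered structure on $W$ is $f$--equivariant up to re-indexing, so once the topological identification is settled the hyperbolic structure should descend without further difficulty, completing the argument.
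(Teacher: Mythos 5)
Your overall strategy matches the paper's — building a block/octahedral assemblage from a path of pants decompositions and invoking Storm — but there is a genuine gap at the crucial final step, and two smaller but consequential errors.

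The decisive issue is the identification of your manifold $N$ with $\overline M_f - P_\Omega$. The octahedral assemblage (equivalently, the block decomposition of \Cref{P:block decomposition}) gives a complete hyperbolic metric with totally geodesic thrice-punctured-sphere boundary on the manifold $\widehat M_f - \hat L$, where $\hat L$ includes \emph{all} the curves appearing in the pants decompositions $P_0,\dots,P_n$, not just those that survive to the boundary. Interior curves of this link become \emph{torus cusps} of $N$, so $N$ is a proper drilling of $\overline M_f - P_\Omega$ and the two manifolds are not homeomorphic. Moreover, if they were homeomorphic then by Mostow--Prasad rigidity (or the uniqueness clause of \Cref{Theorem:Thurston.Geometrization}) the totally geodesic metric would be the unique one and your construction would yield \emph{equality} $\Vol(\overline M_f-P_\Omega)=V_\oct\tau(f,\Omega)$, which is false in general. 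The missing ingredient is Thurston's Dehn surgery theorem (\Cref{T:filling volume}): one Dehn fills the interior torus cusps to recover $\overline M_f - P_\Omega$, and the theorem provides the inequality $\Vol(\overline M_f - P_\Omega) \le \Vol(N)$. Storm's theorem enters afterward to identify $\iVol$ with the totally geodesic volume. Without the Dehn filling comparison, the inequality does not follow.

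Two further points: first, $\tau(f,\Omega)$ is an \emph{asymptotic} translation length, defined by a $\liminf$ over $n$, and is typically not realized by a single path from $P$ to $f^{-1}(P)$; the paper passes to powers $f^N$, where $\overline M_{f^N}$ is an $N$-fold cover of $\overline M_f$, chooses a path with $(n_T+2n_S)/N$ within $\epsilon$ of $\tau(f,\Omega)$, uses multiplicativity of $\iVol$ under finite covers, and lets $\epsilon\to 0$. Second, the volume contribution of a block depends on the move: a four-holed-sphere block is two regular ideal octahedra ($2V_\oct$), not one, which is precisely why the pants-graph metric in \Cref{def:pants-graph} weights sphere edges by $2$; your claim of $V_\oct$ per layer should be $V_\oct(n_T+2n_S)$, and the coefficient in the theorem comes out correctly only with this weighted accounting.
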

The translation length of $f$ on various components of $\mathcal P(S)$ is quite mysterious as the following corollary of \Cref{T:component volume} shows.
\newcommand{\largedistance}{For any irreducible, end-periodic homeomorphism $f \colon S \to S$ and $R >0$, there exists an $f$--invariant component $\Omega \subset \mathcal P_f(S)$ such that $\tau(f,\Omega) \geq R$.}
\begin{corollary} \label{C:large distance}
\largedistance
\end{corollary}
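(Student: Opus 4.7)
The plan is to argue by contradiction, combining Theorem \ref{T:component volume} with a finiteness theorem for hyperbolic 3-manifolds of bounded volume. Suppose toward a contradiction that there is some $R > 0$ such that $\tau(f,\Omega) < R$ for every $f$-invariant component $\Omega \subset \mathcal P_f(S)$. Theorem \ref{T:component volume} would then yield the uniform bound
\[\Vol(\overline M_f - P_\Omega) \leq V_\oct R\]
across all such $\Omega$, where each $\overline M_f - P_\Omega$ carries a complete hyperbolic metric with totally geodesic boundary (together with finitely many cusps coming from $P_\Omega$), via Thurston's Hyperbolization applied to the acylindrical cut produced by the construction of $P_\Omega$.

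Next, I would exhibit an infinite sequence of $f$-invariant components $\Omega_1,\Omega_2,\ldots$ of $\mathcal P_f(S)$ whose associated cut manifolds $\overline M_f - P_{\Omega_n}$ are pairwise non-homeomorphic. Since $P_\Omega$ depends only on $\Omega$ (and not on $f$), it suffices to arrange that the isotopy classes of the multicurves $P_{\Omega_n} \subset \partial \overline M_f$ lie in infinitely many distinct orbits of the mapping class group of $\overline M_f$. A natural way to produce such components is to start with one $f$-invariant $\Omega_0$ and modify its pants decompositions by Dehn twists along $f$-invariant curves supported in the periodic ends of $S$; these twists move into new components of the pants graph and, being supported deep in the ends, can be arranged so that the resulting component is again $f$-invariant (rather than merely $f^k$-invariant).

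The contradiction would then follow from a Jorgensen-Thurston type finiteness result: up to isometry, there are only finitely many hyperbolic 3-manifolds with totally geodesic boundary and volume at most $V_\oct R$. The rigidity of totally geodesic boundary prevents the usual Dehn filling degenerations that produce infinite families in the purely cusped setting, so bounded volume really does force finitely many topological types. This contradicts the infinite family $\{\overline M_f - P_{\Omega_n}\}$ produced above, completing the proof.

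The main obstacle I anticipate is in the second paragraph: constructing infinitely many $f$-invariant components whose $P_\Omega$'s are pairwise inequivalent on $\partial \overline M_f$. One must verify both that the Dehn-twisted pants decomposition lies in $\mathcal P_f(S)$ and that $f$ fixes (rather than permutes) the resulting component. Careful placement of the twisting curves within a fundamental domain for the action of $f$ on a periodic end, together with a book-keeping argument on how $P_\Omega$ changes under such modifications on $S$, should suffice. If a direct construction of this form proves elusive, an alternative is to extract the sequence $\{\Omega_n\}$ from a sequence of end-periodic homeomorphisms related to $f$ (e.g.\ by composing $f$ with powers of a Dehn twist preserved by the periodic structure) whose invariant components pull back to distinct $f$-invariant components of $\mathcal P_f(S)$.
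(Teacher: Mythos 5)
Your overall strategy is genuinely different from the paper's: you argue by contradiction, invoking a finiteness theorem for hyperbolic $3$--manifolds with totally geodesic boundary and bounded volume, whereas the paper \emph{directly constructs} a sequence of $f$--invariant components $\Omega_k$ with $\Vol(\overline M_f - P_{\Omega_k}) \to \infty$ (and hence $\tau(f,\Omega_k) \to \infty$ by \Cref{T:component volume}). The paper's construction parallels the proof of \Cref{T:blah blah}: starting from one $f$--invariant component $\Omega$, take a curve $\gamma_0 \subset P_\Omega$ bounding a four-punctured sphere $\Sigma \subset \partial \overline M_f - (P_\Omega - \gamma_0)$, glue $2k$ pants blocks isometrically onto the geodesic boundary piece $\Sigma - \gamma_0$, reinterpret the result as $\overline M_f - (P_\Omega \cup L_k)$ for a link $L_k$ in a collar neighborhood of $\partial\overline M_f$, and perform $(1,s)$--Dehn filling. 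The crucial observation is that this filling only alters the product structure of the collar, hence only replaces $P_\Omega$ by a Dehn-twisted pants decomposition $P_{k,s}$ on $\partial \overline M_f$; and since the twisting curves all lie in $\Sigma$ and hence lift to $\mathcal U_\pm$, $P_{k,s}$ still lifts to a pants decomposition on $\mathcal U_\pm$, which is exactly the characterization of $P_{k,s}$ arising as $P_{\Omega_{k,s}}$ for some $f$--invariant component.

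There is a genuine gap in your proposal which you yourself identify: the construction of infinitely many $f$--invariant components whose cut manifolds $\overline M_f - P_{\Omega_n}$ are pairwise non-homeomorphic. You offer only a sketch (Dehn twisting ``deep in the ends,'' or passing to related end-periodic homeomorphisms) and acknowledge that both verifying $f$--invariance and distinguishing the $\Map(\overline M_f)$--orbits of the $P_{\Omega_n}$ require ``careful placement'' and a ``book-keeping argument.'' This is precisely the technical content that the paper's explicit block-gluing/Dehn-filling construction supplies, so the omission is not incidental. A secondary issue is the finiteness theorem you invoke. Jorgensen--Thurston per se does not give finiteness of hyperbolic manifolds with bounded volume (Dehn fillings of a single cusped manifold provide infinite families of closed manifolds with uniformly bounded volume); rather it gives finiteness modulo Dehn filling. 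The statement you actually need --- that there are only finitely many complete hyperbolic $3$--manifolds with nonempty totally geodesic boundary, rank-one cusps, and volume at most a given constant --- is true (since rank-one cusps cannot be Dehn filled, ruling out the usual degenerations, and the relevant finiteness can be extracted from work of Kojima), but it is not standard under the name ``Jorgensen--Thurston'' and would need to be justified or referenced carefully. The paper's direct approach sidesteps this entirely.
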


\Cref{T:component volume} also provides uniform lower bounds on asymptotic translation length. To state the result in this case, we let $V_\tet$ denote the volume of a regular ideal tetrahedron in $\mathbb H^3$ (which is also the maximal volume of a tetrahedron in $\mathbb H^3$).

\newcommand{\lowertranslation}{Given an irreducible, end-periodic homeomorphism $f$, we have
\[ \tau(f) \geq \frac{V_\tet \xi(\partial \overline M_f)}{2 V_\oct}.\]}
\begin{corollary} \label{C:lower bound translation}
\lowertranslation
\end{corollary}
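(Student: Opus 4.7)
The plan is to bootstrap \Cref{T:component volume} against a universal lower bound on the volume of a hyperbolic 3-manifold with totally geodesic boundary.

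First, I fix an arbitrary $f$-invariant component $\Omega \subset \mathcal P_f(S)$. By \Cref{T:component volume},
\[
\Vol(\overline M_f - P_\Omega) \leq V_\oct\, \tau(f,\Omega).
\]
The idea is to extract a lower bound on $\tau(f,\Omega)$ by bounding $\Vol(\overline M_f - P_\Omega)$ from below purely in terms of $\partial \overline M_f$. Since the geodesic boundary of $\overline M_f - P_\Omega$ is obtained by cutting $\partial \overline M_f$ along the pants decomposition $P_\Omega$, its Euler characteristic equals $\chi(\partial \overline M_f)$, so its complexity is controlled by $\xi(\partial \overline M_f)$.

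Next, I would invoke a standard lower volume estimate for compact hyperbolic 3-manifolds with totally geodesic boundary to conclude
\[
\Vol(\overline M_f - P_\Omega) \geq \frac{V_\tet\,\xi(\partial \overline M_f)}{2}.
\]
There are two natural routes: (i) Miyamoto's sharp bound $\Vol(N) \geq \tfrac{V_\oct}{2}|\chi(\partial N)|$, which is stronger than what we need after using $|\chi(S)| \geq \tfrac{2}{3}\xi(S)$ for closed hyperbolic surfaces and the elementary inequality $V_\oct \cdot 2 \geq V_\tet \cdot 3$; or (ii) doubling $\overline M_f - P_\Omega$ along its geodesic boundary to obtain a finite-volume hyperbolic manifold $DM$ with $\Vol(DM) = 2\Vol(\overline M_f - P_\Omega) = V_\tet\,\|DM\|$, and then estimating $\|DM\|$ from below using the Gromov simplicial volume of the embedded boundary $\partial \overline M_f$ (noting $\|S_g\| = 4g-4 \geq 3g-3 = \xi(S_g)$).

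Combining these inequalities gives $\tau(f,\Omega) \geq V_\tet\,\xi(\partial \overline M_f)/(2V_\oct)$ for every $f$-invariant component $\Omega \subset \mathcal P_f(S)$. Since $\tau(f)$ is determined by the translation lengths on the components of $\mathcal P_f(S)$ (taking the infimum after normalizing by periods in case $f$ cyclically permutes components), the stated bound on $\tau(f)$ follows.

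The main obstacle is justifying the clean $\tfrac{V_\tet}{2}\xi(\partial \overline M_f)$ form of the lower bound in the hybrid setting of $\overline M_f - P_\Omega$, which has both cusps (from the drilled pants curves in $P_\Omega$) and a totally geodesic boundary (the pants themselves); here the doubling approach is cleanest because the resulting closed-or-cusped hyperbolic manifold has a straightforward simplicial volume estimate, while citing Miyamoto directly requires a short arithmetic comparison between $|\chi|$ and $\xi$. A secondary point to verify is that the definition of $\tau(f)$ on the disconnected graph $\mathcal P_f(S)$ is such that a uniform lower bound over invariant components indeed gives the same lower bound on $\tau(f)$.
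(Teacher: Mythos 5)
Your overall framework matches the paper's: apply \Cref{T:component volume} to get $\Vol(\overline M_f - P_\Omega) \leq V_\oct\,\tau(f,\Omega)$, then obtain a lower bound on $\Vol(\overline M_f - P_\Omega)$ that depends only on $\xi(\partial \overline M_f)$, and combine. But the lower-bound step, which you yourself flag as the crux, is where your preferred route fails.

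Your route (ii) proposes bounding the simplicial volume $\|D\overline M\|$ of the doubled manifold from below by $\|\partial \overline M_f\|$, treating $\partial \overline M_f$ as an embedded surface in the double. There is no such inequality: the Gromov norm of a $3$-manifold is not bounded below by the Gromov norm of an embedded incompressible surface (e.g.\ $S_g \times S^1$ has vanishing simplicial volume while $\|S_g\| = 4g-4 > 0$, and even for hyperbolic ambient manifolds no such clean inequality is part of the standard toolbox). What actually makes the doubling argument work — and what the paper uses — is a \emph{cusp count}. Doubling $\overline M_f - P_\Omega$ along its totally geodesic thrice-punctured-sphere boundary turns each annular cusp (one per curve of $P_\Omega$) into a torus cusp, so the double is a finite-volume cusped hyperbolic $3$-manifold with exactly $\xi(\partial \overline M_f)$ cusps. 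Adams' theorem on the minimal volume of a cusp neighborhood then gives $\Vol(D(\overline M_f - P_\Omega)) \geq V_\tet\,\xi(\partial \overline M_f)$, and halving yields the $\tfrac{V_\tet}{2}\xi(\partial \overline M_f)$ bound directly. This is the missing ingredient, and it is also where the constant $V_\tet$ comes from; your ``closed-or-cusped hyperbolic manifold has a straightforward simplicial volume estimate'' is vague precisely because the usable estimate is Adams' cusp-volume bound, not a surface-to-manifold comparison of Gromov norms.

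Your route (i) via Miyamoto ($\Vol(N) \geq \tfrac{V_\oct}{2}|\chi(\partial N)|$ for finite-volume hyperbolic $3$-manifolds with nonempty totally geodesic boundary) does work, is a genuinely different key lemma from the one the paper uses, and in fact gives a strictly stronger constant: since $|\chi(\partial \overline M_f)| = \tfrac{2}{3}\xi(\partial \overline M_f)$, one gets $\Vol(\overline M_f - P_\Omega) \geq \tfrac{V_\oct}{3}\xi(\partial \overline M_f)$, and $\tfrac{V_\oct}{3} > \tfrac{V_\tet}{2}$. The paper evidently preferred Adams' bound because it yields exactly the stated constant with no arithmetic comparison; if you want to commit to Miyamoto, just say so and drop route (ii). One further small point: \Cref{T:component volume} is stated only for $f$--invariant components $\Omega$. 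For a component that is only $f^N$--invariant, the paper passes to $\overline M_{f^N}$, applies the block decomposition there, and divides through by $N$; your parenthetical about ``normalizing by periods'' is gesturing at this but should be made explicit, since without it the application of \Cref{T:component volume} to an arbitrary component is not literally licensed.
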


Here $\xi(\partial \overline M_f)$ is the complexity of the boundary, see \Cref{S:surfaces-and-mcg}.
For a description of the lower bound which is more intrinsic to the surface $S$ and homeomorphism $f$, see
\Cref{C:intrinsic lower translation}. 

We end by describing a fairly robust construction for examples of both strongly irreducible homeomorphisms and irreducible, but not strongly irreducible, homeomorphisms.

\newcommand{\tblah}{Given any pure end-periodic homeomorphism $\phi \colon S \to S$ there exist both irreducible and strongly irreducible homeomorphisms $f,f' \colon S \to S$, respectively, so that $f$ and $f'$ agree with $\phi$ on the complement of a compact set.}
\begin{theorem} \label{T:blah}
\tblah
\end{theorem}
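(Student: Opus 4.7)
My plan is to build $f$ and $f'$ from $\phi$ by composing with a high power of a pseudo-Anosov supported on a compact subsurface. Because $\phi$ is pure, its Handel--Miller attracting/repelling neighborhoods at each end are fixed setwise, and we can choose a compact, connected subsurface $K \subset S$ with incompressible boundary that contains enough genus to separate these periodic end neighborhoods from one another. Inside $K$ I would pick a connected subsurface $\Sigma$ of complexity at least two which supports a pseudo-Anosov map $\psi$ of $\Sigma$; extending by the identity gives $\psi \in \Homeo(S)$. The candidates are
\[
f = \psi^N \circ \phi, \qquad f' = (\psi')^N \circ \phi,
\]
where $\psi$ is chosen so that $\Sigma$ is a proper subsurface of $K$ (to allow $f$ to be irreducible but, if we wish, not strongly irreducible), while $\psi'$ is chosen with $\Sigma'$ filling $K$ (so that the attracting/repelling laminations of $\psi'$ have no complementary annulus in $K$). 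In both cases $N$ will be taken large.

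The first step is to check that $f$ and $f'$ remain end-periodic, which is automatic since they agree with $\phi$ outside $K$, and to identify their Handel--Miller attracting/repelling sets: outside $K$ these are inherited from $\phi$, while near $K$ they are perturbed by $\psi^N$. Next I would verify irreducibility. Suppose toward contradiction that $f$ preserves a reducing system up to isotopy and passage to a power. By end-periodicity and the fact that $f=\phi$ near the ends, any such system decomposes into a piece supported in the periodic end-neighborhoods (controlled entirely by $\phi$'s Handel--Miller structure, and thus classifiable by a finite amount of data) plus a piece meeting $K$. The piece meeting $K$ is, up to a bounded return-map correction supported on $\partial K$, acted on by $\psi^N$ on the curve graph $\mathcal{C}(\Sigma)$. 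Since $\psi$ has positive asymptotic translation length on $\mathcal{C}(\Sigma)$, for $N$ sufficiently large no simple closed curve crossing $\Sigma$ essentially can be periodic under the return map. One is then reduced to reducing systems disjoint from $\Sigma$, which can be eliminated by choosing $\Sigma$ so that $K \setminus \Sigma$ has no $\phi$-periodic simple closed curve compatible with the Handel--Miller decomposition.

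For strong irreducibility of $f'$, I would repeat the argument with the annulus/curve analysis of \Cref{L:annuli analysis}: acylindricity of $\overline{M}_{f'}$ reduces to the nonexistence of an $f'$-invariant annular reduction on $\partial \overline{M}_{f'}$, and the filling condition on $\psi'$'s laminations together with the large-$N$ translation length on both the curve graph and the arc-and-curve graph of $\Sigma'$ rules this out by the same comparison. Conversely, to see that the non-strongly-irreducible example $f$ can indeed be irreducible without being strongly irreducible, I would exhibit an essential annulus in $\overline{M}_f$ coming from a curve in $K \setminus \Sigma$ that is periodic under the return map of $\phi$; this uses that $K \setminus \Sigma$ was chosen to carry such a curve.

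The main obstacle is controlling the end-tail of a hypothetical invariant multicurve: because $S$ has infinite genus, a reducing system may a priori involve infinitely many curves escaping to the ends, and one must use $\phi$'s unchanged dynamics at infinity (the junctures and periodic nesting from Handel--Miller theory) to truncate this tail to a bounded piece on $\partial K$. Once this is done, the quantitative argument on $\mathcal{C}(\Sigma)$ is clean, and the two statements of the theorem become parallel assertions about curves versus curves-and-arcs. Making the patching across $\partial K$ rigorous, and verifying that the Handel--Miller theory behaves well under the compact perturbation $\psi^N$, is the technical heart of the proof.
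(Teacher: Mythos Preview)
Your overall strategy---compose $\phi$ with a compactly supported map that has large subsurface projection displacement---is in the same spirit as the paper's, but there are genuine gaps in the execution.

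First, the ``bounded return-map correction supported on $\partial K$'' is where all the work lies, and you have not justified it. Under iteration, $f^k = (\psi^N\phi)^k$ alternates $\psi^N$ with $\phi$, and $\phi$ can move curves in and out of $\Sigma$ in ways you have not controlled. The paper avoids this by first conjugating $\phi$ (via \Cref{C:coarse conjugates to fine}) to a homeomorphism with the same fine end behavior as the explicit handle-shift product $\rho$ from \Cref{construction}; this reduces the problem to $f=\rho h$ where $\rho$ is transparent enough that one can track, by induction, exactly which $2$- or $3$-ball in $\mathcal{AC}(C)$ the projections $\pi_C(f^{\pm k}(\gamma))$ lie in (Lemmas~\ref{FillingPair} and~\ref{ArbitraryFillingPair}). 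Without some analogous reduction, your ``return map'' on $\Sigma$ is not well-defined and the boundedness claim is unsupported.

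Second, you do not address all three parts of \Cref{Irreducibility/Strong irreducibility}. Irreducibility requires ruling out periodic curves, AR-periodic lines, \emph{and} reducing curves (curves that escape to both an attracting and a repelling end under iteration). Your argument only discusses periodic curves. The paper treats reducing curves separately in \Cref{corollary:backwardforward-iteration} and periodic lines in \Cref{lemma:irreducible-arcs}.

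Third, your construction of an irreducible-but-not-strongly-irreducible example is wrong as stated: you propose to exhibit ``a curve in $K\setminus\Sigma$ that is periodic under the return map of $\phi$,'' but a periodic \emph{curve} would violate irreducibility outright. What is needed (and what the paper produces) is a periodic \emph{line} connecting two ends of the same type, disjoint from the support $C$; this is exactly why the paper distinguishes fully separating from partially separating subsurfaces (see the last paragraph of the proof of \Cref{prop:irreducible-examples}). An essential annulus in $\overline M_f$ with boundary on $\partial\overline M_f$ corresponds to a periodic line, not a periodic curve; the latter would give an incompressible torus.
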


The construction is sufficiently robust to prove the following. Note that when an end-periodic homeomorphism $f$ is strongly irreducible,  $\overline M_f$ is acylindrical (see \Cref{proposition:interior}), and so the infimal volume is realized by $\Vol(\overline M_f)$, which denotes the volume of the complete hyperbolic metric with totally geodesic boundary.

\newcommand{\tblahblah}{There is a sequence of strongly irreducible end-periodic homeomorphisms $f_k \colon S \to S$ so that $\Vol(\overline M_{f_k}) \to \infty$ and $\frac{\Vol(\overline M_{f_k})}{\tau(f_k)} \to V_\oct$ as $k \to \infty$. In fact,  
$|\Vol(\overline M_{f_k}) - V_\oct\tau(f_k)|$ is uniformly bounded, independent of $k$.}
\begin{theorem} \label{T:blah blah}
\tblahblah
\end{theorem}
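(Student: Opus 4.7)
The plan is to exploit \Cref{T:upper}, which already provides the upper bound $\Vol(\overline M_{f_k}) \le V_\oct \tau(f_k)$, and to construct a sequence $f_k$ of strongly irreducible end-periodic homeomorphisms for which this inequality is asymptotically saturated. Once $|\Vol(\overline M_{f_k}) - V_\oct \tau(f_k)| \le C$ for some constant $C$ independent of $k$, and once $\tau(f_k) \to \infty$, the remaining conclusions of the theorem follow immediately: $\Vol(\overline M_{f_k}) \to \infty$ and $\Vol(\overline M_{f_k})/\tau(f_k) \to V_\oct$.

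For the construction itself, I would begin with a fixed pure end-periodic homeomorphism $\phi\colon S \to S$ whose mapping torus admits an explicit ideal polyhedral decomposition; a natural choice is to take $\phi$ obtained by adapting the sharp finite-type examples of Agol \cite{Agol-oct} to the end-periodic setting. Then produce $f_k$ by composing $\phi$ with $k$ iterations of an elementary local move supported in a compact subsurface, and apply the modification from \Cref{T:blah} to ensure strong irreducibility. The elementary move should be arranged so that each application (i) contributes a single regular ideal octahedron to a combinatorial decomposition of $\overline M_{f_k}$ and (ii) increases the translation length in the pants graph by exactly one, as measured along an explicit quasi-geodesic in the relevant component of $\mathcal P_f(S)$. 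Condition (ii) yields $\tau(f_k) = k + O(1)$ by a direct pants-graph path computation, and hence by \Cref{T:upper} gives the upper bound $\Vol(\overline M_{f_k}) \le V_\oct k + O(1)$.

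For the matching lower bound, the goal is to realize the combinatorial decomposition from (i) geometrically, so that the hyperbolic metric with totally geodesic boundary on $\overline M_{f_k}$ contains $k$ isometric copies of the regular ideal octahedron and therefore has volume at least $V_\oct k - O(1)$. The main obstacle is precisely this geometric realization: the hyperbolic structure on $\overline M_{f_k}$ is produced abstractly by Thurston's hyperbolization theorem and does not come with explicit polyhedral coordinates. One route is to engineer the construction so that the combinatorial gluing of ideal octahedra satisfies the correct dihedral angle and holonomy conditions to be literally hyperbolic, and then invoke Mostow--Prasad rigidity on the convex core to identify the gluing with the unique hyperbolic structure. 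An alternative is to arrange the $f_k$ so that each $\overline M_{f_k}$ contains an essential, totally geodesic subsurface cutting off a region isometric to a known gluing of regular ideal octahedra, reducing the lower bound to an additivity statement across totally geodesic boundaries. Either route requires care to handle the noncompact, cusped boundary of $\overline M_{f_k}$ coming from the periodic ends of $S$, and to verify that the elementary move can simultaneously deliver both the pants-graph increment in (ii) and the geometric octahedron in (i) without disrupting strong irreducibility.
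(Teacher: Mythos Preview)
Your proposal is an outline rather than a proof, and it lacks the central mechanism that makes the paper's argument work. The crucial missing idea is Dehn filling. The paper does not attempt to verify a polyhedral decomposition of $\overline M_{f_k}$ directly, nor to locate a totally geodesic subsurface inside it; both of the routes you sketch for the lower bound would be hard to carry out as stated. Instead, the paper first drills a five-component link $L$ out of $\overline M_f$ (the boundary of a four-holed sphere $\Sigma$ together with a pants curve $\gamma_0 \subset \Sigma$), proves the drilled manifold is hyperbolic with totally geodesic boundary, and applies Adams' theorem so that $\Sigma - L$ becomes a pair of totally geodesic thrice-punctured spheres. Cutting along these and isometrically gluing in a stack of $2k$ four-holed-sphere blocks produces a manifold $\overline M_k$ with volume exactly $\Vol(\overline M_0) + 4kV_\oct$. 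The point is that $\overline M_k$ is itself a link complement in $\overline M_f$, and $(1,s)$--Dehn filling the new cusps recovers a compactified mapping torus $\overline M_{f_k}$ with $f_k = fD_{k,s}$ a composition of $f$ with Dehn twists; by Thurston's Dehn filling theorem one can take $s=s_k$ large enough that $\Vol(\overline M_{f_k}) \geq 4kV_\oct$. Strong irreducibility of $f_k$ is checked by observing that the twist curves are disjoint from $\rho(\eta)$, so the hypothesis $d_C(h\rho(\eta),\rho(\eta)) \geq 9$ from \Cref{Convention-6.1} survives.

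There is a second, more subtle gap: you assert $\tau(f_k) = k + O(1)$ ``by a direct pants-graph path computation,'' but a path gives only an upper bound on $\tau$. Proving a lower bound on translation length in the pants graph of an infinite-type surface is genuinely difficult, and the paper avoids it entirely. The argument is instead a sandwich: \Cref{T:upper} gives $\Vol(\overline M_{f_k}) \leq V_\oct \tau(f_k)$, an explicit path in $\mathcal P_f(S)$ using the $2k$ new pants moves gives $V_\oct \tau(f_k) \leq V_\oct(n_T + 2n_S + 4k)$, and the Dehn-filling lower bound gives $\Vol(\overline M_{f_k}) \geq 4kV_\oct$. Subtracting yields $|\Vol(\overline M_{f_k}) - V_\oct\tau(f_k)| \leq V_\oct(n_T+2n_S)$ with no need to estimate $\tau(f_k)$ from below independently.
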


The examples in this theorem can also be taken to agree with any given end-periodic homeomorphism $\phi \colon S \to S$ outside some compact set.

\subsection*{Outline of the paper}

In \Cref{Section:Preliminaries}, we provide background on mapping class groups for finite-type and infinite-type surfaces, end-periodic homeomorphisms, pants graphs, decomposition spaces, and 3-manifolds. The mapping torus of an irreducible end-periodic homeomorphism is shown to be the interior of a compact, irreducible, atoroidal 3-manifold in \Cref{section:Compactification}. Furthermore, for a strongly irreducible end-periodic homeomorphism, we show that the compactified mapping torus is also acylindrical. 
In \Cref{Section:quotient construction}, we prove \Cref{T:component volume}, \Cref{T:upper}, and \Cref{C:lower bound translation} as applications of \Cref{P:block decomposition}, which is proved in \Cref{Section: Proof of Pro4.3}. Finally, we construct examples of irreducible (and strongly irreducible) end-periodic homeomorphisms and prove \Cref{T:blah blah} and \Cref{C:large distance} in \Cref{S:example}.

\subsection{Related and future work}
Brock in \cite{Brock-convex-vol} proved that the pants graph is quasi-isometric to the Teichm\"uller space with its Weil-Petersson metric, which allows one to compare volumes of mapping tori with Weil-Petersson translation lengths. A direct proof of such a comparison was later given by Brock--Bromberg \cite{BrockBromberg} and Kojima--McShane \cite{KojimaMcShane}.  Pants distance has also been related to volumes of hyperbolic 3-manifolds in another setting by  Cremaschi,  Rodr\'{\i}guez-Migueles and Yarmola \cite{CR-MY}, see also \cite{Rodriguez-Migueles,CremRodMig}. In forthcoming work, Landry, Minsky, and Taylor study stretch factors of end-periodic homeomorphisms arising from depth one foliations  \cite{LandryMinskyTaylor2021}. Finally, with Autumn Kent, the authors are currently investigating lower bounds on volumes in terms of translation lengths on the pants graph.

\subsection*{Acknowledgements}
The authors would like to thank Sergio Fenley for many useful discussions and suggestions, and for pointing out an error in an earlier argument which ultimately led to the definition of strong irreducibility. They would also like to thank Ty Ghaswala for comments on an earlier draft and some interesting and useful observations. The authors acknowledge support from NSF grants DMS-1840190 (Field), DMS-1811518 (Leininger), and DMS-1902729 (Loving).

\section{Preliminaries}\label{Section:Preliminaries}

\subsection{Surfaces and mapping class groups} \label{S:surfaces-and-mcg}

Let $Y$ be any surface. We will consider the \emph{mapping class group}, $\Map(Y)$, of $Y$, which is the group of orientation preserving homeomorphisms of $Y$ up to isotopy. We do not distinguish the notation for a homeomorphism and the mapping class it defines, as the context will make clear the intended meaning.

Throughout this paper, we let $S$ denote a boundaryless, infinite-type surface with $2 \leq n< \infty$ ends, all accumulated by genus (in particular, we assume $S$ has no planar ends). The mapping class group of an infinite-type surface is an uncountably infinite group and is often called a \emph{big mapping class group}. Important subgroups of big mapping class groups include the \textit{pure mapping class group}, $\PMap(S)$, which is the subgroup of $\Map(S)$ consisting of mapping classes which fix each end of $S$, as well as the \emph{compactly supported mapping class group}, $\Map_c(S)$. For more on infinite-type surfaces and their associated big mapping class groups, see the survey article on these topics by Aramayona and Vlamis \cite{AramayonaVlamis}.

We define the \emph{complexity} of a finite-type surface $\Sigma = \Sigma_{g,n}$ of genus $g$ and with $n$ punctures to be $\xi(\Sigma_{g,n}) = 3g - 3 +n$. We then let $\overline \Sigma = \overline \Sigma_{g,n}$ denote the compact surface of genus $g$ with $n$ boundary components, which we declare to have the same complexity as $\Sigma$. For example, we will call $\overline \Sigma_{0,3}$ a \emph{pair of pants} (compact) and $\Sigma_{0,3}$ a \emph{thrice-punctured sphere} (non-compact).

By a finite-type subsurface of $S$, we mean a connected, finite-complexity, incompressible (i.e.~$\pi_1$--injective) open subsurface $\Sigma \subset S$ with $\xi(\Sigma) \neq 0$. Such a surface $\Sigma$ is homeomorphic to the interior of a compact surface $\overline \Sigma$ with boundary, and we assume that the inclusion extends to a locally injective map of $\overline \Sigma$. We abuse notation and write $\overline \Sigma \subset S$, even though it is only assumed to be embedded on its interior.

Let $Y$ be a surface, either compact or non-compact, possibly of infinite type. By a {\em curve} in $Y$, we mean the homotopy class of an essential (i.e.~non-null-homotopic and non-peripheral) simple closed curve in $Y$. A {\em line} in $Y$ is the proper homotopy class of a proper embedding of $\mathbb R$ into $Y$ which is essential (i.e.~not homotopic via a proper homotopy into an arbitrary neighborhood of an end). By \emph{proper homotopy} we mean that the homotopy itself is a proper map this implies that it is also a homotopy through proper maps. An {\em arc} in $Y$ is the relative homotopy class of an essential arc $(I,\partial I) \to (Y,\partial Y)$ (i.e.~an embedding not homotopic into $\partial Y$ by a relative homotopy $h_t \colon (I,\partial I) \to (Y,\partial Y)$).
We often confuse curves, arcs, and lines with representatives of their (relative/proper) homotopy classes. 
A {\em multicurve}, {\em multiarc}, or {\em multiline} in $Y$ is the union of a collection of curves, arcs, or lines, respectively, in $Y$, having pairwise disjoint representatives.

Given a curve $\alpha$ in $Y$, we let $T_\alpha$ denote the left {\em Dehn twist} about the curve $\alpha$. A {\em pseudo-Anosov mapping class} of a finite-type surface $Y$ is a mapping class with no periodic curves in $Y$. A {\em partial pseudo-Anosov} mapping class of any surface $Y$ is a mapping class that has a representative supported on a (finite-type) subsurface $\Sigma \subset Y$ which is pseudo-Anosov on $\Sigma$. We call $\Sigma$ the {\em support} of such a partial pseudo-Anosov. In general, we say that a mapping class is supported on a subsurface $\Sigma$ if it has a representative which is the identity outside of $\Sigma$. See \cite{Farb.Margalit} for more on mapping class groups of finite-type surfaces and the Nielsen--Thurston classification.

\subsection{End-periodic homeomorphims} \label{S:end-periodic defs}
We refer the reader to \cite{FenleyCT,Fenley-depth-one,CC-book,Gabai-genera} for a more detailed discussion of the theory of end-periodic homeomorphisms, examples, and their relationship to depth-one foliations. We note that the exposition in these references discusses end-periodic homeomorphisms of more general types of surfaces than what we consider here.

\begin{definition} [End-periodic homeomorphisms] \label{D:end-periodic} 
An \emph{end-periodic homeomorphism} of $S$ is a homeomorphism $f$ of $S$ satisfying the following. There exists $m > 0$ such that for each end $E$ of $S$, there is a neighborhood $U_E$ of $E$ so that either
\begin{itemize}
    \item[(i)] $f^m(U_E) \subsetneq U_E$ and the sets $\{f^{nm}(U_E)\}_{n >0}$ form a neighborhood basis of $E$; or
    \item[(ii)] $f^{-m}(U_E) \subsetneq U_E$ and the sets $\{f^{-nm}(U_E)\}_{n > 0}$ form a neighborhood basis of $E$.
\end{itemize}
In the first case, $E$ is said to be an \textit{attracting end} for $f$, and in the second case, $E$ is said to be a \textit{repelling end}. We call such a neighborhood $U_E$ a {\em nesting neighborhood} for $E$, and note that by definition, we may always assume (as we will) that the closures of the nesting neighborhoods of the ends are pairwise disjoint.
\end{definition}

There are many examples of end-periodic homeomorphisms given by Cantwell--Conlon in \cite{CC-examples}. It is also straightforward to build more examples using ``handle shifts", which were introduced by Patel--Vlamis in \cite{PatelVlamis2018}.

\begin{definition}
Take a bi-infinite strip $\mathbb R \times [0, 1]$ and remove disks of radius $\tfrac14$ centered at each point of $\mathbb Z \times \{\tfrac12\}$. To the boundary of each disk (now removed) glue the boundary of a one-holed torus. Call this bi-infinite strip of handles $H$. Let $h$ be the homeomorphism of $H$ given by shifting each handle over by one while tapering to the identity in a neighborhood of the boundary (which consists of two copies of $\mathbb R$). We will call the pair $(H, h)$ a \emph{handle strip}, as shown in \Cref{fig:handleshift}. Note that $H$ has two ends accumulated by genus: one towards which $h$ is shifting, called the \textit{attracting end}, and one from which $h$ shifts away, called the \textit{repelling end}. Now embed $H$ into any surface $S$ with at least one end accumulated by genus and extend $h$ by the identity to a homeomorphism, $\rho$, of $S$. The homeomorphism $\rho$ of $S$ is called a \emph{handle shift}.
\end{definition}

\begin{figure}
    \centering
    \includegraphics[width = 8cm]{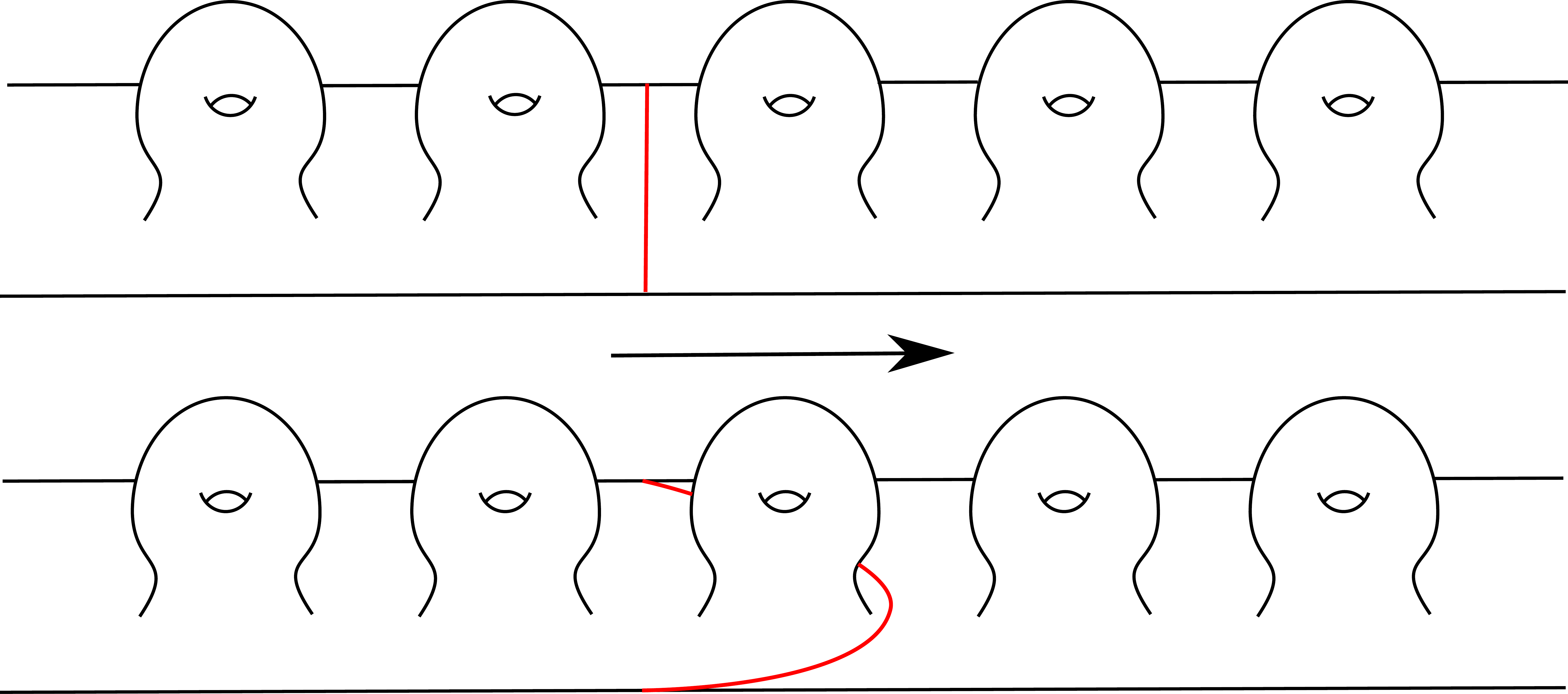}
    \caption{A handle strip comprised of a bi-infinite strip of handles $H$ and a shift map $h$. The left end of the strip is repelling, while the right end is attracting.}
    \label{fig:handleshift}
\end{figure}

\begin{example} \label{Ex:covering example} Consider a finite number of handle strips $(H_1, h_1),\ldots, (H_k, h_k)$ embedded in a ladder surface $L$ (that is an infinite-type surface with exactly two ends, both accumulated by genus) so that the complement of $\bigcup_{i=1}^k H_i$ in $L$ consists of $k$ bi-infinite strips and so that the attracting ends of $H_i$ are sent into a single end of $L$ and the repelling ends are sent into the other end of $L$. See \Cref{fig:product-handleshift} for the case of 2 handle strips. Let $\rho_1, \ldots,  \rho_k$ be the corresponding handle shifts. We obtain an end-periodic homeomorphism $\rho$ by an isotopy of the product of $\rho_1, \ldots, \rho_k$. This is shown for 2 handle strips in \Cref{fig:product-handleshift}. See \Cref{construction} below for more examples constructed using handle shifts. 
\end{example} 

\begin{figure}[htb!]
    \centering
    \includegraphics[width = 6in]{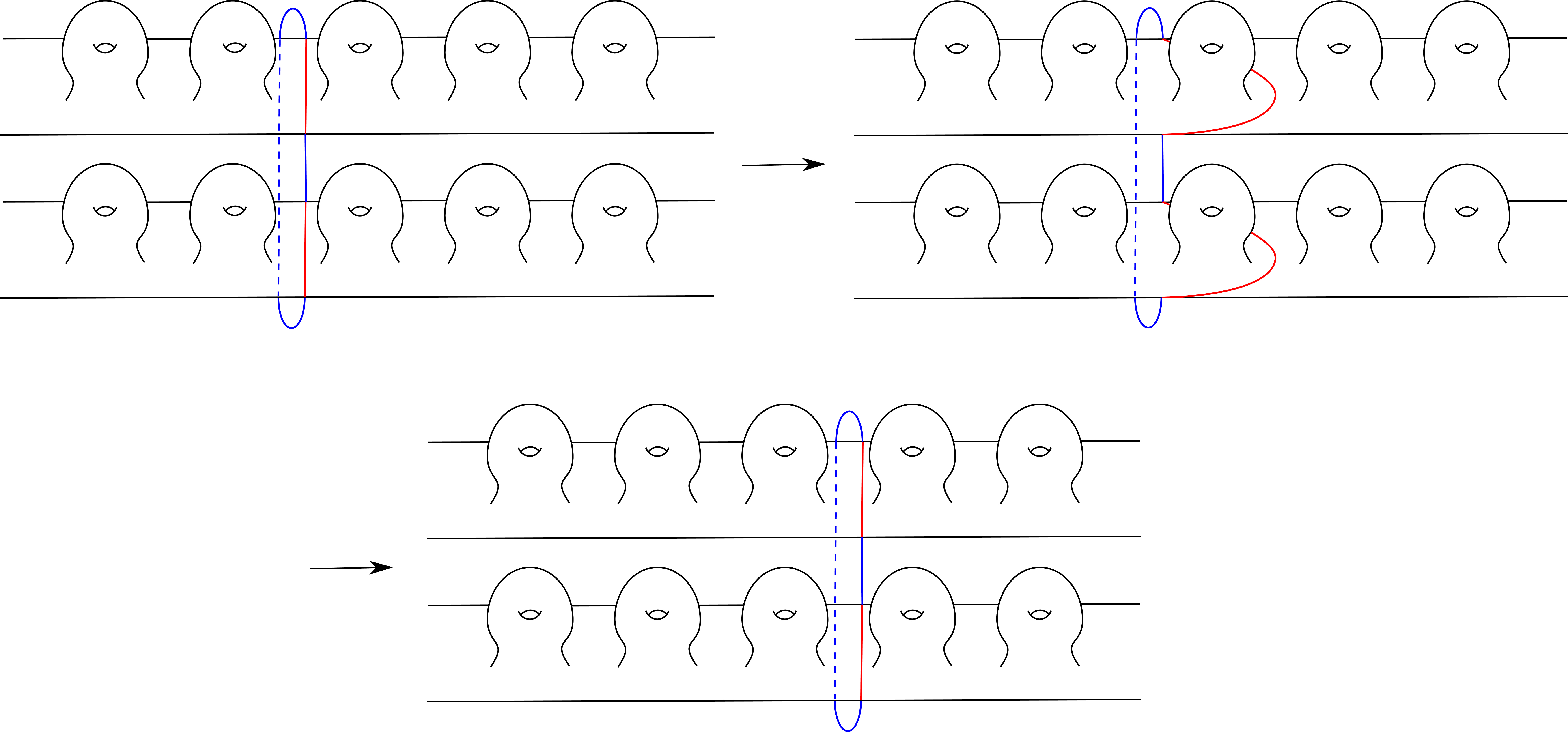}
    \caption{Products of pairwise commuting handle shifts as end-periodic homeomorphisms.}
    \label{fig:product-handleshift}
\end{figure}

In the example of the ladder surface just described, the end-periodic homeomorphism generates a covering group acting with closed surface quotient (of genus one more than the number of handle shifts involved). We now explain the generalization of this behavior to arbitrary end-periodic homeomorphisms of infinite-type surfaces.  

Given an end-periodic homeomorphism $f \colon S \to S$, choose nesting neighborhoods for each end and let $U_+$ be the union of nesting neighborhoods of the attracting ends, and likewise $U_-$ the union of nesting neighborhoods of repelling ends. Define 
\[ \mathcal U_+ = \bigcup_{n \geq 0} f^{- n}(U_+) \quad \mbox{ and } \quad \mathcal U_- = \bigcup_{n \geq 0} f^{n}(U_-) \]
We call $\mathcal U_+$ the {\em positive escaping set for $f$} and $\mathcal U_-$ the {\em negative escaping set for $f$}. From \Cref{D:end-periodic}, any choice of nesting neighborhoods will give rise to the same sets $\mathcal U_\pm$. Our standing assumption that $S$ has only finitely many ends, each accumulated by genus, together with the definition of nesting neighborhoods leads to the following properties of the escaping sets.  

\begin{lemma} \label{L:escaping sets are covering spaces}
Given an end-periodic homeomorphism $f \colon S \to S$, the escaping sets $\mathcal U_+$ and $\mathcal U_-$ are each $f$--invariant, and the infinite cyclic group $\langle f\rangle$ acts freely, properly discontinuously, and cocompactly on each. Consequently, the quotients $\mathcal U_\pm \to S_\pm = \mathcal U_\pm/\langle f \rangle$ are closed, orientable surfaces.
\end{lemma}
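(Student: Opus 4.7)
The plan is to prove the claims for $\mathcal U_+$; the case of $\mathcal U_-$ is symmetric, obtained by applying the same argument to $f^{-1}$ (for which the repelling ends of $f$ become attracting). The core of the argument is to produce a compact fundamental domain for the action of $\langle f \rangle$ on $\mathcal U_+$. As a preliminary, I would refine the nesting neighborhood so that $\partial U_+$ is a finite disjoint union of essential simple closed curves in $S$, and so that $f(U_+) \subsetneq U_+$ (for example, by passing from $U_+$ to $\bigcup_{k=0}^{m-1} f^k(U_+)$, which is still a nesting neighborhood of the attracting ends since the $m$-th iterate of $f$ fixes each such end, and which is sent strictly inside itself by $f$). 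With these standing assumptions, $f$-invariance of $\mathcal U_+$ is immediate: $f(\mathcal U_+) = f(U_+) \cup \mathcal U_+ = \mathcal U_+$, using $f(U_+) \subset U_+ \subset \mathcal U_+$.

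Next I would set
\[ F \;:=\; \overline{U_+} \setminus f(U_+), \]
a compact subsurface of $S$ with boundary $\partial U_+ \sqcup f(\partial U_+)$; compactness holds because $f(U_+)$ still contains a neighborhood of each attracting end, so $F$ is bounded away from those ends. To see that $F$ is a fundamental domain for $\langle f \rangle$ on $\mathcal U_+$, I would verify two things. First, for tiling: given $x \in \mathcal U_+$, the set $\{n \in \mathbb Z : f^n(x) \in U_+\}$ is non-empty (since $f^N(x) \in U_+$ for some $N \geq 0$) and bounded below (otherwise $x \in \bigcap_{k \geq 0} f^k(U_+) = \emptyset$, as $\{f^k(U_+)\}_{k \geq 0}$ forms a neighborhood basis of the attracting ends), so it has a minimum $n(x)$, and $f^{n(x)}(x) \in U_+ \setminus f(U_+) \subset F$. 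Second, for disjointness of interiors: if $n > n'$ and $\mathrm{int}(f^n(F)) \cap \mathrm{int}(f^{n'}(F)) \neq \emptyset$, then $f^{n-n'}(\mathrm{int}(F)) \cap \mathrm{int}(F) \neq \emptyset$, but $f^{n-n'}(\mathrm{int}(F)) \subset f(U_+)$ while $\mathrm{int}(F) \subset U_+ \setminus \overline{f(U_+)}$, a contradiction.

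The remaining assertions follow quickly. Cocompactness is immediate, since the quotient is the continuous image of the compact $F$. Proper discontinuity follows because any compact $K \subset \mathcal U_+$ meets only finitely many translates $f^n(F)$. Freeness can be shown directly: if $f^k(x) = x$ for some $k > 0$ and $x \in \mathcal U_+$, then picking $N$ with $f^N(x) \in U_+$, the identity $f^N(x) = f^{jk}(f^N(x))$ forces $f^N(x) \in \bigcap_{j \geq 0} f^{jk}(U_+) \subset \bigcap_{n \geq 0} f^n(U_+) = \emptyset$, a contradiction. Finally, $\mathcal U_+/\langle f \rangle$ is the closed surface obtained from $F$ by identifying $\partial U_+$ with $f(\partial U_+)$ via $f$, and is orientable because $f$ is orientation-preserving. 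The main technical subtlety is the preliminary refinement of the nesting neighborhood: the whole argument hinges on being able to arrange the strict nesting $f(U_+) \subsetneq U_+$ together with multicurve boundary so that $F$ is a genuine compact subsurface and the tiling by $\langle f \rangle$-translates of $F$ is locally finite.
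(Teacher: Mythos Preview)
Your proof is correct and follows essentially the same approach as the paper's: both arguments refine the nesting neighborhoods so that $f(U_+) \subsetneq U_+$, set $K_+ = \overline{U_+ \setminus f(U_+)}$ (your $F$) as a compact fundamental domain, and deduce proper discontinuity and cocompactness from the locally finite tiling by $\langle f \rangle$-translates. You supply somewhat more detail than the paper---an explicit tiling argument via the minimal $n(x)$, and a direct proof of freeness using the neighborhood-basis property---whereas the paper simply observes that $\langle f \rangle$ is torsion free and that a properly discontinuous action of a torsion-free group is free.
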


\begin{proof} The $f$--invariance of each of $\mathcal U_+$ and $\mathcal U_-$ is immediate from their definition.

Choose the nesting neighborhoods for the attracting ends so that their union $U_+$ has the property that $f(U_+) \subsetneq U_+$. Then $K_+ = \overline{U_+-f(U_+)}$ is a compact subset of $\mathcal U_+$ and
\[ \mathcal U_+ = \bigcup_{n \in \mathbb Z} f^n(K_+).\]
From the fact that positive powers applied to nesting neighborhoods give a neighborhood basis of the ends in \Cref{D:end-periodic}, we see that the $f^n$--translates of $K_+$ form a locally finite cover of $\mathcal U_+$. It follows that $\langle f \rangle$ acts properly discontinuously and cocompactly on $\mathcal U_+$. Since $\langle f \rangle$ is torsion free, the action is free. The same proof is valid for the repelling ends.
\end{proof}

Now note that $\mathcal U_\pm \to S_\pm$ is an infinite cyclic covering, but there may be multiple components of $\mathcal U_\pm$ that project to a single component of $S_\pm$. Fixing any component $\mathcal U$ of $\mathcal U_\pm$ defining an end $E$ of $S$ and projecting to a component $S_\mathcal{U} \subset S_\pm$, then $\mathcal U \to S_\mathcal{U}$ is a connected, infinite cyclic cover of the connected, closed, orientable surface $S_\mathcal{U}$. As such, there is a nonseparating simple closed curve $\alpha \subset S_\mathcal{U}$ that lifts to $\mathcal{U}$ so that any lift $\alpha_0 \subset \mathcal{U}$ separates $\mathcal{U}$ into neighborhoods of the two ends of $\mathcal{U}$. One of these is a nesting neighborhood $U_E$ of an end $E$ of $S$. Moreover, if $p_E$ is the period of $E$, then $\overline{f^{p_E}(U_E)} \subset U_E$: indeed, $\langle f^{p_E} \rangle$ generates the covering group of $\mathcal{U} \to S_\mathcal{U}$, and $\alpha_0 \cup f^{p_E}(\alpha_0)$ bounds a subsurface that serves as a fundamental domain. Note that 
\begin{equation} \label{E:good nesting neighborhoods} 
U_{E_i} = f^i(U_E) \quad \mbox{ for each } \quad i=0,\ldots,p_E-1.
\end{equation}
are nesting neighborhoods for all the attracting ends $E=E_0,\ldots,E_{p_E-1}$ in the $f$--orbit of $E$ with the same properties that $U_E$ has for $E$.

When the nesting neighborhood of an attracting end $E$ is chosen in this way, and the neighborhoods of the other ends in the $f$--orbit of $E$ are given by \eqref{E:good nesting neighborhoods}, then we will call these {\em good nesting neighborhoods} for the attracting ends. We similarly define good nesting neighborhoods for the repelling ends.

Using the unions, $U_+$ and $U_-$, of good nesting neighborhoods for the attracting and repelling ends, respectively, we can choose a hyperbolic metric on $S$ so that the boundaries of these neighborhoods are geodesics, and furthermore $f^{\pm 1}|_{U_+}$ and $f^{\pm 1}|_{U_-}$ are isometries onto their images; see \cite{Fenley-depth-one}.

\begin{corollary} \label{C:end quotient surfaces}
The number of components of $S_+$ and $S_-$ is equal to the number of $f$--orbits of components of $\mathcal U_+$ and $\mathcal U_-$, respectively. Moreover, $\xi(S_+) = \xi(S_-)$. \end{corollary}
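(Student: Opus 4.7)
The first assertion is immediate: since $S_\pm = \mathcal U_\pm/\langle f \rangle$ and $\langle f\rangle$ permutes the connected components of $\mathcal U_\pm$, the components of $S_\pm$ correspond bijectively with the $\langle f \rangle$--orbits of components of $\mathcal U_\pm$. For the equality $\xi(S_+) = \xi(S_-)$, I plan to first reduce to an equality of Euler characteristics: each component of $S_\pm$ is a closed orientable surface of some genus $g$, and $\xi = 3g-3 = -\tfrac{3}{2}(2-2g) = -\tfrac{3}{2}\chi$ per component, so summing gives $\xi(S_\pm) = -\tfrac{3}{2}\chi(S_\pm)$. Thus it suffices to show $\chi(S_+) = \chi(S_-)$.

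My plan for this is to compute each Euler characteristic using a fundamental domain for the $\langle f\rangle$--action. Set $K_+ = \overline{U_+ \setminus f(U_+)}$ and $K_- = \overline{U_- \setminus f^{-1}(U_-)}$; the quotient $S_\pm$ is obtained from $K_\pm$ by identifying its two boundary families via $f^{\pm 1}$, and since these boundaries are disjoint unions of circles (each contributing $0$ to Euler characteristic), we have $\chi(S_\pm) = \chi(K_\pm)$. To compare $\chi(K_+)$ and $\chi(K_-)$, I will use the compact subsurface $A = S \setminus (\intt U_+ \cup \intt U_-)$ and its image $f(A)$. Before doing so, I will arrange that $U_+$, $U_-$, and $f(U_-)$ have pairwise disjoint interiors by replacing $U_+$ with $f^N(U_+)$ for $N$ sufficiently large. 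This works because $\overline{f(U_-) \cap U_+}$ contains no ends of $S$ (any such end would be simultaneously attracting and repelling), hence is compact, and the nested family $\{f^N(U_+)\}_{N \geq 0}$ is eventually disjoint from any compact set away from the attracting ends.

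With this disjointness, and using $f(U_+) \subsetneq U_+$ together with $U_- \subsetneq f(U_-)$, a direct set-theoretic computation yields $A \setminus f(A) = \intt f(K_-)$ and $f(A) \setminus A = \intt K_+$, with shared boundaries consisting of circles. Inclusion--exclusion for Euler characteristic (with the circle contributions vanishing) then gives $\chi(A) = \chi(A \cap f(A)) + \chi(f(K_-))$ and $\chi(f(A)) = \chi(A \cap f(A)) + \chi(K_+)$. Since $f$ is a homeomorphism, $\chi(A) = \chi(f(A))$, so $\chi(K_+) = \chi(f(K_-)) = \chi(K_-)$, giving $\chi(S_+) = \chi(S_-)$ and hence $\xi(S_+) = \xi(S_-)$. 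The main obstacle is securing the disjointness hypothesis and cleanly extracting the set-theoretic identities above from the nesting structure; once these are in place, the Euler characteristic accounting is routine.
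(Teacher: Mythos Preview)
Your argument is correct and parallels the paper's closely: both compare the fundamental domains $K_\pm$ through the compact core $A$ (the paper's $\Sigma = \overline{S - (U_+ \cup U_-)}$) using that $f$ is a homeomorphism. The paper phrases the comparison via hyperbolic area and Gauss--Bonnet rather than Euler characteristic directly, and packages the key step as the single identity $f(\Sigma \cup K_-) = \Sigma \cup K_+$; this organization needs only $\overline{U_+} \cap \overline{U_-} = \emptyset$ and sidesteps your extra disjointness arrangement, since one just uses that $\Sigma \cap K_\pm$ is a union of circles. Your route is slightly more elementary in that it never introduces a hyperbolic metric. One small slip: you cannot make $U_-$ and $f(U_-)$ have disjoint interiors, since $U_- \subset f(U_-)$; what you actually need is only $U_+ \cap f(U_-) = \emptyset$, and replacing $U_+$ by $f(U_+)$ already secures this (because $f(U_+) \cap f(U_-) = f(U_+ \cap U_-) = \emptyset$), so no compactness argument is required.
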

\begin{proof} The first statement is clear since the preimage of any component of $S_\pm$ is an $f$--orbit of components of $\mathcal U_\pm$.

For the second claim, choose the unions $U_+$ and $U_-$ of good nesting neighborhoods and a hyperbolic metric as above. Note that by our assumption on $U_+$ and $U_-$, $\overline{U_-} \cap \overline{U_+} = \emptyset$. 
In particular,
\[ \Sigma = \overline{S  -  (U_+ \cup U_-)}\]
is a subsurface with geodesic boundary, as is $f(\Sigma)$. We will let $K_{\pm}  = \overline{U_{\pm} - f^{\pm}(U_{\pm})}$. 

Now observe that $S_+$ admits a hyperbolic metric so that the restriction of the covering projection $\mathcal U_+ \to S_+$ to $K_+ \subset \mathcal U_+$ is a surjective local isometry, injective on the interior. Combining this with the Gauss-Bonnet Theorem we have 
\[ \tfrac{4\pi}3\xi(S_+) = \Area(S_+) = \Area(K_+ ). \]
Likewise, there is a hyperbolic metric on $S_-$ so that 
\[ \tfrac{4\pi}3\xi(S_-) = \Area(S_-) = \Area(K_- ). \]
On the other hand,
\begin{equation} \label{E:area 1} \Area(\Sigma \cup K_-) = \Area(\Sigma) + \Area(K_-),  \end{equation}
since $\Sigma$ and $K_-$ intersect along a finite union of simple closed geodesics, which has area $0$. Similarly,
\begin{equation} \label{E:area 2} \Area(\Sigma \cup K_+) = \Area(\Sigma) + \Area(K_+).  \end{equation}
Now observe that $f$ maps $\Sigma \cup K_-$ to $f(\Sigma) \cup f(K_-) = \Sigma \cup K_+$, and so by Gauss-Bonnet the areas on the left hand sides of \eqref{E:area 1} and \eqref{E:area 2} are equal. Thus, $\Area(K_-) = \Area(K_+)$, and so $\xi(S_+) = \xi(S_-)$, as required.
\end{proof}


In the following, recall that the terms ``curve'' and ``line'' actually mean a (proper) homotopy class of such.
\begin{definition}[Irreducibility/Strong irreducibility]\label{Irreducibility/Strong irreducibility} An end-periodic homeomorphism, $f: S\to S$, is \emph{irreducible} if it has 
\begin{itemize}
    \item[1)] no {\em periodic curves};
    \item[2)] no {\em AR-periodic lines}, i.e.~a line $\ell$ with one end in an attracting end of $S$ and the other in a repelling end of $S$ so that there exists $k \in \mathbb Z$ with $f^k(\ell) = \ell$;~and
    \item[3)] no {\em reducing curves}, i.e.~a curve $\gamma$ so that there exists $m, n \in \mathbb Z$ with $m < n$ and such that $f^n(\gamma)$ is contained in a nesting neighborhood of an attracting end and $f^m(\gamma)$ is contained in a nesting neighborhood of a repelling end. 
\end{itemize}  
An end-periodic homeomorphism $f$ is \emph{strongly irreducible} if it is irreducible and contains no \emph{periodic lines}, i.e. a line $\ell$ in $S$ such that $f^k(\ell) = \ell$ for some $k\in \mathbb{Z}$ (with no constraints on the ends of the line).
\end{definition}

\begin{remark}
Note that the language used above to define periodic and AR-periodic lines differs from that used in the literature. See, for example, \cite{Fenley-depth-one, FenleyCT, CC-book}. 
\end{remark}

In their unpublished work, Handel and Miller developed an analogue of the Nielsen-Thurston classification/reduction theory for end-periodic homeomorphisms.  In that theory, irreducible end-periodic homeomorphisms play the role of pseudo-Anosov homeomorphisms due to the analogous properties their invariant laminations enjoy.
See Cantwell--Conlon--Fenley \cite{CC-book} for a detailed exposition and expansion of this theory.
The concept of strong irreducibility is new, and arose in the current paper from geometric considerations of the mapping torus; see \Cref{proposition:interior}. 

In his thesis \cite{FenleyThesis1989}, Fenley produces the irreducible end-periodic homeomorphism shown in \Cref{fig:fenley-irreducible-example}. His proof of irreducibility relies on building and analyzing the corresponding weighted train tracks. We will provide a general construction of both irreducible and strongly irreducible end-periodic homeomorphisms in \Cref{S:example}. However, our proofs of irreducibility rely on subsurface projection and curve complex geometry rather than weighted train tracks.

\begin{figure}[htb!]
\centering
\def\svgwidth{4in}
\begingroup%
  \makeatletter%
  \providecommand\color[2][]{%
    \errmessage{(Inkscape) Color is used for the text in Inkscape, but the package 'color.sty' is not loaded}%
    \renewcommand\color[2][]{}%
  }%
  \providecommand\transparent[1]{%
    \errmessage{(Inkscape) Transparency is used (non-zero) for the text in Inkscape, but the package 'transparent.sty' is not loaded}%
    \renewcommand\transparent[1]{}%
  }%
  \providecommand\rotatebox[2]{#2}%
  \newcommand*\fsize{\dimexpr\f@size pt\relax}%
  \newcommand*\lineheight[1]{\fontsize{\fsize}{#1\fsize}\selectfont}%
  \ifx\svgwidth\undefined%
    \setlength{\unitlength}{1835.87534457bp}%
    \ifx\svgscale\undefined%
      \relax%
    \else%
      \setlength{\unitlength}{\unitlength * \real{\svgscale}}%
    \fi%
  \else%
    \setlength{\unitlength}{\svgwidth}%
  \fi%
  \global\let\svgwidth\undefined%
  \global\let\svgscale\undefined%
  \makeatother%
  \begin{picture}(1,0.36844717)%
    \lineheight{1}%
    \setlength\tabcolsep{0pt}%
    \put(0,0){\includegraphics[width=\unitlength,page=1]{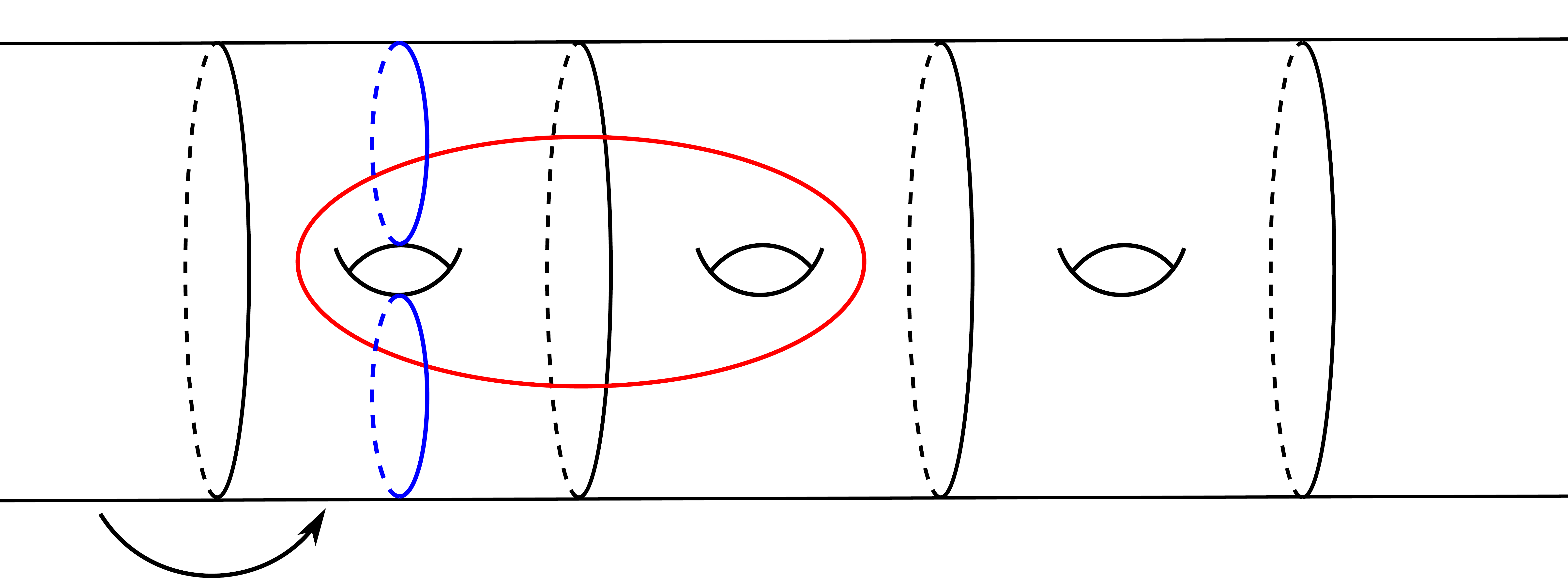}}%
    \put(0.25191462,0.02151016){\color[rgb]{0,0,0}\makebox(0,0)[t]{\lineheight{1.25}\smash{\begin{tabular}[t]{c}$\alpha$\end{tabular}}}}%
    \put(0.25191462,0.35587228){\color[rgb]{0,0,0}\makebox(0,0)[t]{\lineheight{1.25}\smash{\begin{tabular}[t]{c}$\beta$\end{tabular}}}}%
    \put(0.48772235,0.27288072){\color[rgb]{0,0,0}\makebox(0,0)[lt]{\lineheight{1.25}\smash{\begin{tabular}[t]{l}$\gamma$\end{tabular}}}}%
    \put(0,0){\includegraphics[width=\unitlength,page=2]{fenley-irreducible-example.pdf}}%
  \end{picture}%
\endgroup%

\caption{The homeomorphism $f=T_{\alpha}T_{\beta}T_{\gamma}\rho$, where $\rho$ is the end-periodic homeomorphism induced by a single handle strip, is an irreducible end-periodic homeomorphism by work of Fenley.}
\label{fig:fenley-irreducible-example}
\end{figure}

\subsection{End behavior}
\label{sub:endbehavior}

Given a homeomorphism $f \colon S \to S$ (or its associated mapping class $f \in \Map(S)$), its ``end behavior" describes the way in which $f$ acts on neighborhoods of the ends. It is convenient to make this precise in two different ways as follows.

First, we will say that two elements $f,f' \in \Map(S)$ have the same {\em fine end behavior} if $f^{-1}f' \in \Map_c(S) = \PMap_c(S)$: that is, if $f^{-1}f'$ is compactly supported. Thus, for any end of $S$, $f$ and $f'$, agree on some neighborhood of that end. In \Cref{S:example}, we will show that for any pure, end-periodic homeomorphism $f \colon S \to S$, there are both irreducible and strongly irreducible end-periodic homeomorphisms with the same fine end behavior as $f$; see \Cref{T:blah}. 

To construct examples in this level of generality, it is convenient to have a notion of ``coarse end behavior". This coarse end behavior will also provide a convenient description of the surfaces $S_\pm$ associated to an end-periodic homeomorphism as described above. To formalize this, we will first introduce a few more definitions and state a result of Aramayona--Patel--Vlamis \cite{aramayona2020first}. 

Let $H_1^{sep}(S,\mathbb Z)$ be the subgroup of $H_1(S,\mathbb Z)$ spanned by homology classes of separating curves, and let $H^1_{sep}(S, \mathbb Z) \leq H^1(S, \mathbb Z)$ be the subgroup naturally identified with $\text{Hom}(H_1^{sep}(S, \mathbb Z), \mathbb Z)$ by the isomorphism $H^1(S, \mathbb Z) \to \text{Hom}(H_1(S, \mathbb Z), \mathbb Z)$ arising from the Universal Coefficient Theorem.

Given an end $E$ of $S$, let $\gamma$ be an oriented curve that separates $E$ from the other ends (with orientation such that the component of $S - \gamma$ to the right of $\gamma$ is a neighborhood of $E$). Recall that we require that $S$ is boundaryless and has at least two ends accumulated by genus. Then, $\gamma$ defines a nonzero element $[\gamma] \in H_1^{sep}(S,\mathbb Z)$, canonically associated to $E$. When convenient, we denote this class $v_{E} = [\gamma] \in H_1^{sep}(S,\mathbb Z)$. Given $f \in \PMap(S)$, $[\gamma] = [f(\gamma)] = f_*([\gamma])$ and Aramayona-Patel-Vlamis define an integer $\varphi_{[\gamma]}(f)$ which can be viewed as the ``signed genus" between $\gamma$ and $f(\gamma)$. If, for example, $\gamma$ and $f(\gamma)$ are disjoint, then $|\varphi_{[\gamma]}(f)|$ is the genus of the subsurface bounded by $\gamma$ and $f(\gamma)$ (with a negative sign if $f(\gamma)$ is to the left of $\gamma$); see \cite[Section~3]{aramayona2020first} for a more precise description. Furthermore, they show that $\varphi_{[\gamma]} \colon \PMap(S) \to \mathbb Z$ is a well-defined homomorphism that depends only on the homology class $[\gamma]$; see \cite[Proposition~3.3]{aramayona2020first}. 

According to \cite[Proposition~4.4]{aramayona2020first}, this induces a surjective homomorphism
\[ \Phi^* \colon \PMap(S) \to H^1_{sep}(S,\mathbb Z),\]
by $\Phi^*(f)([\gamma]) = \varphi_{[\gamma]}(f)$. In other words, $\Phi^*(f)$ describes ``how much genus has been shifted" on each end by $f$. It follows from \cite[Theorem~5]{aramayona2020first} and its proof that $\Phi^*$ is a surjective homomorphism whose kernel is precisely $\overline{\Map_c(S)}$, the closure of the compactly supported mapping class subgroup. We define the {\em coarse end behavior} of $f \in \PMap(S)$ to be the cohomology class $\Phi^*(f) \in H^1_{sep}(S,\mathbb Z)$.

\begin{remark} We note that $\Phi^*(f) = \Phi^*(f')$ if and only if $f^{-1}f' \in \overline{\Map_c(S)}$ (thanks to Ghaswala for pointing this out).  In particular, if $f$ and $f'$ have the same fine end behavior, then they have the same coarse end behavior (since $\Map_c(S) < \overline{\Map_c(S)}$). Aramayona, Patel, and Vlamis construct a section of $\Phi^*$ onto an abelian group generated by commuting handle shifts, thus giving $\PMap(S)$ the structure of a semidirect product; see \cite[Theorem~3, Corollary~6]{aramayona2020first}. Recall that we require that $S$ is boundaryless and has at least two ends accumulated by genus. Although we will use handle shifts in \Cref{construction}, we will be using a different collection of handle shifts. Since we will not appeal to this structure, we do not discuss it further.
\end{remark}

\begin{lemma} \label{L:genus of component}
Suppose $f \colon S \to S$ is a pure, end-periodic homeomorphism, and let $S_\pm$ be the surfaces obtained as quotients of the positive and negative escaping sets. If $E$ is an end of $S$ with associated homology class $v_{E} \in H_1^{sep}(S,\mathbb Z)$,
then the genus of the corresponding component of $S_\pm$ is $1 + |\Phi^*(f)(v_{E})|$. 
\end{lemma}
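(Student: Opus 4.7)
The plan is to work with one end at a time and directly compute the genus of the quotient surface using a fundamental domain. Since $f$ is pure, it fixes every end, so in the definition of end-periodic we may take $m=1$, i.e.\ $f(U_E) \subsetneq U_E$ for an attracting end $E$ (and $f^{-1}(U_E) \subsetneq U_E$ for a repelling end). This means each $\langle f \rangle$--orbit on components of $\mathcal{U}_\pm$ contains a single component, and if $\mathcal{U}_E$ denotes the component of $\mathcal{U}_+$ (or $\mathcal{U}_-$) associated to $E$, then the corresponding component of $S_\pm$ is $S_E = \mathcal{U}_E/\langle f\rangle$.

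Next I would build a concrete fundamental domain. Choose a good nesting neighborhood $U_E$ whose boundary is a single simple closed curve $\gamma$ separating $E$ from all other ends, oriented so the component of $S\setminus \gamma$ to the right of $\gamma$ is $U_E$ itself. Then $[\gamma] = v_E \in H_1^{sep}(S,\mathbb Z)$. In the attracting case, since $f(U_E) \subsetneq U_E$ and the nesting neighborhoods of distinct ends have disjoint closures, $\gamma$ and $f(\gamma)$ are disjoint and cobound the compact subsurface
\[ K_E = \overline{U_E - f(U_E)}, \]
which is a fundamental domain for the $\langle f\rangle$--action on $\mathcal{U}_E$ (as in the proof of \Cref{L:escaping sets are covering spaces}). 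The repelling case is symmetric with $K_E = \overline{U_E - f^{-1}(U_E)}$.

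Now I would identify the genus of $K_E$ using Aramayona--Patel--Vlamis. The subsurface $K_E$ has exactly two boundary components, $\gamma$ and $f^{\pm 1}(\gamma)$. By the description of $\varphi_{[\gamma]}$ recalled in the excerpt, when $\gamma$ and $f(\gamma)$ are disjoint, $|\varphi_{[\gamma]}(f)|$ equals the genus of the subsurface bounded by $\gamma$ and $f(\gamma)$. Hence
\[ \mathrm{genus}(K_E) = |\varphi_{[\gamma]}(f)| = |\Phi^*(f)(v_E)|. \]
Finally, $S_E$ is obtained from $K_E$ by identifying its two boundary circles via $f$ (respectively $f^{-1}$), which increases the genus by exactly one. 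Therefore $\mathrm{genus}(S_E) = 1 + |\Phi^*(f)(v_E)|$, as claimed.

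The main obstacle is not conceptual but bookkeeping: one needs to confirm that the good nesting neighborhood $U_E$ can be chosen so that $\partial U_E$ is a single simple closed curve whose homology class is precisely $v_E$ (rather than a multicurve), and that the sign convention in the Aramayona--Patel--Vlamis definition of $\varphi_{[\gamma]}$ is consistent with the orientation of $\gamma$ in both the attracting and repelling cases — the absolute value in the statement accommodates the sign flip between the two cases, but one should verify this is the only subtlety and that no correction is needed when the "genus shifted" is negative.
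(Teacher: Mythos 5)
Your proof is correct and follows essentially the same route as the paper: pick a single simple closed curve $\gamma$ with $[\gamma]=v_E$ cutting off a neighborhood $U$ of $E$ with $f(\overline U)\subset U$, observe that $\gamma$ and $f(\gamma)$ cobound a fundamental domain of genus $|\Phi^*(f)(v_E)|$ by the Aramayona--Patel--Vlamis description of $\varphi_{[\gamma]}$, and note that identifying the two boundary circles raises the genus by one. The bookkeeping worries you flag at the end are not real obstacles: for a surface with finitely many ends one can always take $\partial U_E$ to be a single curve, and the absolute value does indeed absorb the sign in both the attracting and repelling cases.
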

\begin{proof}
Since $f$ is pure, we may choose a simple closed curve $\gamma$ such that $[\gamma]= v_{E}$, so that $\gamma$ cuts off a neighborhood $U$ of, say, an attracting end $E$ and $f(\overline U) \subset U$. 
In particular, $\gamma$ and $f(\gamma)$ are disjoint and bound a subsurface of genus $|\Phi^*(f)(v_{E})|$. This subsurface serves as a fundamental domain for the action of $\langle f \rangle$ on the corresponding component of $\mathcal U_+$, and hence the genus of the component of $S_+$ is one more than $|\Phi^*(f)(v_{E})|$, as required. The case of a repelling end is similar. \end{proof}

Given any $f \in \PMap(S)$, let $E_1,\ldots, E_n$ denote the ends of $S$, $v_i = v_{E_i}$, for $i=1,\ldots,n$, and set
\begin{equation} \label{E:end complexity}
|\Phi^*(f)| = \sum_{i=1}^n |\Phi^*(f)(v_i)|.
\end{equation}
With this definition, we have the following.
\begin{corollary} \label{C:complexity of S_pm}
If $f \colon S \to S$ is an end-periodic homeomorphism with associated quotient surfaces $S_\pm$ from the escaping sets, then
\[ \xi(S_+) = \xi(S_-) = \tfrac32 |\Phi^*(f)|. \]
\end{corollary}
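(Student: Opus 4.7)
By \Cref{C:end quotient surfaces} it suffices to show $\xi(S_+) = \tfrac32|\Phi^*(f)|$. The plan is to reduce to the case that $f$ is pure, apply \Cref{L:genus of component} end by end, and then use a homological identity together with a sign analysis.

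For the reduction, choose $P$ so that $f^P \in \PMap(S)$. The escaping sets for $f$ and $f^P$ coincide, and the residual action of $\langle f\rangle/\langle f^P\rangle \cong \mathbb Z/P$ on $\mathcal U_+/\langle f^P\rangle$ is free because $\langle f\rangle$ acts freely on $\mathcal U_+$. Thus $\mathcal U_+/\langle f^P\rangle \to S_+$ is a regular $P$-fold cover of closed surfaces, and multiplicativity of Euler characteristic (together with $\xi = -\tfrac32\chi$ on closed surfaces) gives $\xi(\mathcal U_+/\langle f^P\rangle) = P\,\xi(S_+)$. Combined with $|\Phi^*(f^P)| = P|\Phi^*(f)|$, the identity for $f$ follows from the one for $f^P$, so we may assume $f$ is pure.

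With $f$ pure, each attracting end is $f$--fixed, so each component of $\mathcal U_+$ is $f$--invariant and the components of $S_+$ are in bijection with the attracting ends of $S$. By \Cref{L:genus of component}, the component corresponding to an attracting end $E$ is a closed orientable surface of genus $1+|\Phi^*(f)(v_E)|$ and thus has complexity $3|\Phi^*(f)(v_E)|$. Summing gives
\[ \xi(S_+) = 3\sum_{E \text{ attracting}}|\Phi^*(f)(v_E)|. \]
It remains to show this sum equals $\tfrac12|\Phi^*(f)|$. Choose disjoint oriented cut-off curves $\gamma_1,\dots,\gamma_n$ for the ends, each oriented so that $E_i$ lies to its right; they cobound a compact subsurface of $S$, so $\sum_i v_{E_i} = 0$ in $H^{sep}_1(S,\mathbb Z)$, and applying $\Phi^*(f)$ yields $\sum_i \Phi^*(f)(v_{E_i}) = 0$. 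Using good nesting neighborhoods, at an attracting end $f(\gamma_E)$ lies in $U_E$, i.e.~to the right of $\gamma_E$, so $\Phi^*(f)(v_E) \geq 0$; at a repelling end the reverse containment $U_E \subsetneq f(U_E)$ places $f(\gamma_E)$ to the left of $\gamma_E$ and forces $\Phi^*(f)(v_E) \leq 0$. Uniformity of the signs on each type of end, together with the vanishing of the signed sum, yields $\sum_{\text{attr}}|\Phi^*(f)(v_E)| = \sum_{\text{rep}}|\Phi^*(f)(v_E)| = \tfrac12|\Phi^*(f)|$, which completes the proof.

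I expect the sign analysis in the final step to be the main subtlety: one must carefully track the orientation convention in the definition of $\varphi_{[\gamma]}$ from \cite{aramayona2020first} to confirm that the sign of $\Phi^*(f)(v_E)$ is dictated by whether $E$ is attracting or repelling, and ensure the good nesting neighborhoods provide disjoint representatives of $\gamma_E$ and $f(\gamma_E)$ whose relative position matches the sign convention.
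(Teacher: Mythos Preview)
Your argument is correct, but the paper's route is shorter and sidesteps both the reduction step and the sign analysis. Note first that $\Phi^*$ is defined in the paper only on $\PMap(S)$, so $|\Phi^*(f)|$ is already only meaningful for pure $f$; your reduction to a power is thus unnecessary (and, strictly speaking, the identity $|\Phi^*(f^P)| = P|\Phi^*(f)|$ you invoke presupposes a definition of the right-hand side that the paper does not give for non-pure $f$).

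For pure $f$, the paper avoids your final sign computation by summing over \emph{all} ends rather than just the attracting ones. Applying \Cref{L:genus of component} to every end $E_i$ gives
\[
\xi(S_+) + \xi(S_-) = \sum_{i=1}^n \bigl(3(1+|\Phi^*(f)(v_i)|)-3\bigr) = 3\sum_{i=1}^n |\Phi^*(f)(v_i)| = 3|\Phi^*(f)|,
\]
and then the equality $\xi(S_+)=\xi(S_-)$ from \Cref{C:end quotient surfaces} (which you also cite) halves this immediately. Your approach instead computes $\xi(S_+)$ alone as $3\sum_{\text{attr}}|\Phi^*(f)(v_E)|$ and then must argue that the attracting and repelling sums of absolute values agree; the sign analysis you carry out is valid, but it is effectively re-deriving the content of $\xi(S_+)=\xi(S_-)$ by a different method, when that equality was already available and already invoked at the outset.
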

\begin{proof}
By the \Cref{L:genus of component}, the genus of the component of $S_\pm$ corresponding to $E_i$ is $|\Phi^*(f)(v_i)|+1$. Therefore,
\[ \xi(S_+) + \xi(S_-) = \sum_{i=1}^n \left( 3(1+|\Phi^*(f)(v_i)|)-3\right) = 3 \sum_{i=1}^n |\Phi^*(f)(v_i)|. \]
By \Cref{C:end quotient surfaces}, $\xi(S_+) = \xi(S_-) = \tfrac12(\xi(S_+) + \xi(S_-))$, completing the proof.
\end{proof} 

To construct end-periodic homeomorphisms with a specified fine end behavior, the next corollary shows that it suffices to construct an end-periodic homeomorphism with the same coarse end behavior.
\begin{corollary} \label{C:coarse conjugates to fine}
Let $f_1, f_2: S\to S$ be pure, end-periodic homeomorphisms with the same coarse end behavior. Then, there exists a mapping class $f_1'$ with the same fine end behavior as $f_2$ such that $f_1$ is conjugate to $f_1'$ by an element of $\PMap(S)$.
\end{corollary}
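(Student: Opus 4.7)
The plan is to construct $g \in \PMap(S)$ such that $f_1' = gf_1 g^{-1}$ agrees with $f_2$ on a neighborhood of each end of $S$; then $(f_1')^{-1}f_2 \in \Map_c(S)$ as required. For each end $E$, I begin by choosing good nesting neighborhoods $U_E^1$ for $f_1$ and $U_E^2$ for $f_2$ with pairwise disjoint closures. By the coarse end behavior hypothesis and \Cref{L:genus of component}, the fundamental domains $V_E^i$ are compact orientable surfaces with two boundary components and common genus $c_E = |\Phi^*(f_i)(v_E)| \geq 1$, where the inequality uses that every end of $S$ is accumulated by genus (so each component of $S_\pm$ has genus at least $2$). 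In particular, $f_1$ and $f_2$ induce the same partition of ends into attracting and repelling.

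I would first reduce to the case where both $f_1$ and $f_2$ share nesting neighborhoods. Replacing $U_E^i$ by $f_i^{k_E}(U_E^i)$ adjusts the genus of the compact complement $K_i = \overline{S \setminus \bigcup_E U_E^i}$ by $k_E c_E$ at end $E$; since the step sizes agree for $i = 1,2$ and are positive, the genera of $K_1$ and $K_2$ can be equalized, whereupon the classification of compact bordered surfaces (combined with the change-of-coordinates principle on $S$ applied to the two disjoint multicurves $\{\alpha_E^i\}$) yields $\phi \in \PMap(S)$ with $\phi(U_E^1) = U_E^2$ for every $E$. Setting $f_2' = \phi^{-1} f_2 \phi$, each $U_E^1$ becomes a nesting neighborhood for $f_2'$. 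It then suffices to construct $g \in \PMap(S)$ with $gf_1 g^{-1}$ agreeing with $f_2'$ on $\bigcup_E U_E^1$, since $\phi g$ would then conjugate $f_1$ to a mapping class agreeing with $f_2$ on $\bigcup_E U_E^2$.

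To construct $g$ I work end by end. For each $E$, let $\tilde V_E \subset U_E^1$ denote the fundamental domain of $f_2'$; it is a compact surface of genus $c_E$ with two boundary components, hence homeomorphic to $V_E^1$. I pick a homeomorphism $h_E \colon V_E^1 \to \tilde V_E$ whose restriction to the outer boundary $\alpha_E^1 = \partial U_E^1$ is any orientation-preserving self-homeomorphism of $\alpha_E^1$, with the restriction to the inner boundary determined by the equivariance relation $h_E \circ f_1 = f_2' \circ h_E$ on $\alpha_E^1$. Then I extend equivariantly to $g_E \colon U_E^1 \to U_E^1$ by $g_E(f_1^k(x)) = (f_2')^k(h_E(x))$ for $x \in V_E^1$, using $k \geq 0$ at attracting ends and $k \leq 0$ at repelling ends; the boundary compatibility makes $g_E$ a well-defined homeomorphism, and $g_E f_1 = f_2' g_E$ by construction. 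Finally I extend the disjoint union $\bigsqcup_E g_E$ across the compact complement $K_1$ by any orientation-preserving self-homeomorphism of $K_1$ realizing the prescribed boundary values; such an extension exists because any orientation-preserving homeomorphism of the boundary of a compact orientable surface extends over the interior. The resulting $g$ fixes each end and so lies in $\PMap(S)$, and $g f_1 g^{-1} = f_2'$ on $\bigcup_E U_E^1$.

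The hard part will be the topological matching in the reduction step, namely arranging $K_1 \cong K_2$ as bordered surfaces with end-matching boundary correspondence via a single pure mapping class of $S$. This is the only place where the equality of coarse end behaviors is essentially used: it both equates the fundamental-domain genera and makes the step sizes $c_E$ for the available iterated adjustments identical for $f_1$ and $f_2$, so any integer discrepancy between the genera of $K_1$ and $K_2$ can be absorbed by applying $f_i$ a suitable number of times at each end. The standing assumption that every end of $S$ is accumulated by genus ensures $c_E \geq 1$, giving enough flexibility to actually equalize the genera; everything after this step reduces to standard surface topology.
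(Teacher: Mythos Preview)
Your approach is essentially the same as the paper's: build the conjugating homeomorphism by matching the compact cores and then extending equivariantly over each end using the fact that the fundamental domains have equal genus. The paper does this in one step (constructing $h$ directly as a map $h_0$ on the core glued to equivariant lifts $h_E \colon U_E^1 \to U_E^2$ on the ends), while you split it into a preliminary conjugation by $\phi$ followed by an end-by-end construction of $g$; these are equivalent, with your $\phi g$ playing the role of the paper's $h$.

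There is one gap in your reduction step. You claim that by replacing $U_E^i$ with $f_i^{k_E}(U_E^i)$ you can always equalize the genera of $K_1$ and $K_2$, because ``the step sizes agree.'' But this mechanism changes $\mathrm{genus}(K_i)$ only by integer combinations of the $c_E$, hence only by multiples of $d=\gcd_E c_E$. If your initial choices of good nesting neighborhoods happen to give $\mathrm{genus}(K_1)\not\equiv\mathrm{genus}(K_2)\pmod d$---and nothing you have said rules this out when all $c_E\geq 2$---your stated adjustment cannot close the gap. The fix is to allow more general good nesting neighborhoods: the boundary curve need only project to \emph{some} nonseparating curve in the quotient surface $S_E$, not a fixed one, and since $S_E$ has genus $c_E+1\geq 2$ one can vary that curve to realize any residue class modulo $c_E$ for the incremental genus. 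This is what the paper's terse phrase ``choosing smaller good nesting neighborhoods'' is implicitly invoking; your assertion that $c_E\geq 1$ ``gives enough flexibility'' via powers of $f_i$ alone is not quite right.

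A second, harmless imprecision: for a repelling end $E$ you actually obtain $gf_1g^{-1}=f_2'$ only on $(f_2')^{-1}(U_E^1)\subsetneq U_E^1$, not on all of $U_E^1$ as stated, since $f_1$ carries points of the outermost fundamental domain out of $U_E^1$, where $g$ is no longer given by $g_E$. This smaller set is still a neighborhood of $E$, so the conclusion about fine end behavior is unaffected.
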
 

\begin{proof}
For each $i=1,2$, choose good nesting neighborhoods $U_\pm^i$ for $f_i$ so that the neighborhoods $U_E^i$ of the end $E$ is bounded by a single curve $v_E^i$ that projects to a nonseparating curve in the quotient surface $S_{\pm}^i$. We further assume that the neighborhood are chosen so that there is an orientation preserving homeomorphism
\[ h_0 \colon \overline{S - (U_+^1 \cup U_-^1)} \to \overline{S - (U_+^2 \cup U_-^2)}.\] 
The existence of such a homeomorphism follows from the classification of surfaces, since we can increase the genus of either $\overline{S-(U^i_+ \cup U^i_-)}$, $i=1,2$ by choosing smaller good nesting neighborhoods.  Moreover, we may assume $h_0(v_E^1) = v_E^2$ for each end $E$. Since $f_1$ and $f_2$ have the same coarse end behavior, by \Cref{L:genus of component}, for each end $E$, the image $S_E^1 \subset S_\pm^1$ of $U_E^1$ is a closed surface of the same genus as the image $S_E^2 \subset S_\pm^2$ of $U_E^2$. By the classification of surfaces again there is an orientation preserving homeomorphism $S_E^1 \to S_E^2$ that sends the image of $v_E^1$ to the image of $v_E^2$. This homeomorphism lifts to a homeomorphism $h_E \colon U_E^1 \to U_E^2$ which is equivariant with respect to the restriction of the semigroup generated by $f_1$ and $f_2$ if $E$ is an attracting end, and equivariant with respect to $f_1^{-1}$ and $f_2^{-1}$ if $E$ is a repelling end. That is, on $U_E$ we have we have that $f_2 = h_E f_1 h_E^{-1}$ or $f_2^{-1} = h_E f_1^{-1}h_E^{-1}$ if $E$ is attracting or repelling, respectively. Observe that in the repelling case, on $f_2^{-1}(U_E)$ we have $f_2 = h_E f_1^{-1} h_E^{-1}$. 
Because all of these homeomorphisms are orientation preserving, adjusting $h_0$ if necessary, we may assume that $h_E$ agrees with $h_0$ on $v_E^1$. We may therefore glue these together into a single pure homeomorphism $h \colon S \to S$. Setting $f_1' = h f_1^{-1} h^{-1}$ we see that $f_1'$ and $f_2$ agree on
\[ U_+^2 \cup f_2^{-1}(U_-^2),\]
the complement of which has compact closure. Thus, $f_1'$ and $f_2$ have the same fine end behavior. 
\end{proof}

Note that for $v_1,\ldots,v_n$ as above (e.g.~in \eqref{E:end complexity}), we have $\sum_i v_i = 0$ in $H_1^{sep}(S,\mathbb Z)$, and any $n-1$ elements of this set forms a basis. Given any $\omega \in H^1_{sep}(S,\mathbb Z)$, we thus have
\[\sum_{i=1}^n \omega(v_i) = 0, \]
 and moreover, for any integers $w_1,\ldots,w_n$ with $\sum w_i=0$, there exists a unique $\omega \in H^1_{sep}(S,\mathbb Z)$ so that $\omega(v_i) = w_i$ for all $i = 1, \ldots, n$. Thus, specifying the coarse end behavior is equivalent to specifying such a collection of integers $w_1,\ldots,w_n$ with sum zero.

For an end-periodic homeomorphism on a surface with $n$ ends, the integers $w_1,\ldots,w_n$ describing the coarse end behavior (as above, summing to zero) are all nonzero.
Conversely, for any prescribed such coarse end behavior, in the following construction, we explain how to build examples of end-periodic homeomorphisms with that coarse end behavior. Moreover, in \Cref{S:example}, we will show that many of the end-periodic homeomorphisms arising from \Cref{construction} are irreducible and strongly irreducible.

\begin{construction}\label{construction} As above, let $v_1,\ldots,v_n$ be separating curves bounding pairwise disjoint  neighborhoods $U_1,\ldots,U_n$ of the ends $E_1,\ldots,E_n$ of $S$, respectively, and fix any nonzero integers $w_1, \ldots, w_n$ with $\sum_{i=1}^n w_i = 0$. Next, properly embed pairwise-disjoint handle strips $\{(H_j,h_j)\}_1^m$ into $S$ so that for each $w_i > 0$, there are exactly $w_i$ ends of the handle strips sent to $E_i$, all of which are attracting, while if $w_i <0$, then there are $-w_i$ ends of the handle strips sent to $E_i$, all of which are repelling.
We assume the embeddings are such that each $v_i$ meets each of the handle strips exiting the end $E_i$ in a single separating arc in the handle strip like the red arc in the top of \Cref{fig:handleshift}, and that in $U_i$ the complement of the handle strips is a union of properly embedded half strips, $[0,1] \times [0,\infty) \subset \overline U_i$; see \Cref{fig:handle-strip-intersection-section-2}.
As in \Cref{Ex:covering example}, the associated handle shifts $\rho_1,\ldots,\rho_m$ pairwise-commute, and the map $\rho = \Pi_1^m \rho_i$ is isotopic to an end-periodic homeomorphism with good nesting neighborhoods $U_1,\ldots,U_n$.
By construction, $\Phi^*(\rho)(v_i) = w_i$.  In \Cref{sharpness}, we will construct examples of irreducible and strongly irreducible end periodic homeomorphisms with the same fine end behavior as $\rho$ by composing $\rho$ with a compactly supported homeomorphism.
\end{construction}

\begin{figure}[htb!]
    \centering
    \def\svgwidth{4in}
    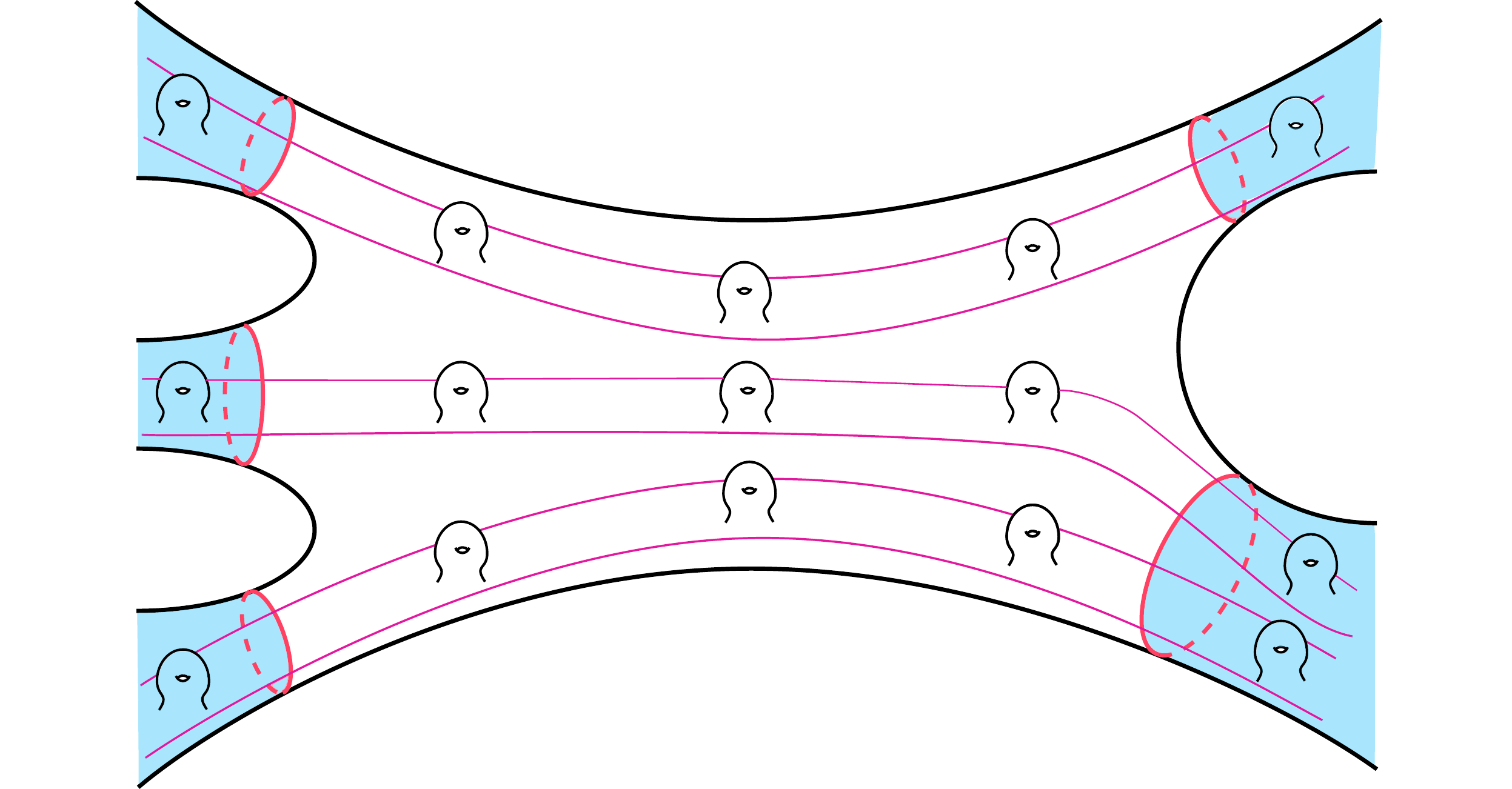
    \caption{The curves $v_1,\ldots,v_5$ cut off neighborhoods $U_1,\ldots,U_5$ (in blue) of the ends $E_1, \ldots, E_5$, respectively.}
    \label{fig:handle-strip-intersection-section-2}
\end{figure}

\begin{remark} 
The notion of coarse end behavior can be extended to an arbitrary element $f \in \Map(S)$ by recording the permutation of the ends, together with the cohomology class $\Phi^*(f^m)$, where $m\geq 1$ is the order of the permutation. The purpose of the discussion of end behaviors is to provide a framework for illustrating the generality of the examples described above and elaborated upon in \Cref{S:example}. Expanding to non-pure end-periodic homeomorphisms only serves to complicate the notation and obfuscate the ideas. For this reason, we do not elaborate on, or further pursue end behaviors for non-pure elements.
\end{remark}

\subsection{Graphs associated to surfaces} \label{S:associated-graphs}

Graphs built from topological data associated to curves and arcs on a surface have become an essential tool in studying surfaces and their mapping class groups.  Most notable among these is the curve graph, $\C(Y)$, of a surface $Y$, whose vertices are curves on $Y$ with edges connecting any two vertices that have disjoint representatives on $Y$.

When $Y$ has finite type,
a germinal result of Masur--Minsky \cite{MasurMinsky.1999} with numerous implications is that $\C(Y)$ is infinite diameter and hyperbolic.
Unfortunately, it is not hard to see that $\C(Y)$ has diameter 2 when $Y$ has infinite type. The actions of big mapping class groups on various other graphs of curves, simplicial complexes, and metric spaces have been studied extensively. See, for just a few examples, Aramayona--Valdez \cite{AramayonaValdez2016}, Bavard \cite{Bavard}, Durham--Fanoni--Vlamis \cite{DFV}, Lanier--Loving \cite{LanierLoving2021}, and Mann--Rafi \cite{MannRafi2019}.

Another useful graph is the \textit{arc and curve graph}, $\mathcal{AC}(Y)$: the graph whose vertices are curves and arcs on $Y$, and whose edges correspond to pairs of curves and/or arcs having disjoint representatives. When $Y$ is an annulus, the vertices of $\mathcal{AC}(Y)$ are also arcs (which recall means homotopy classes of essential arcs), though in this setting we require the homotopies to {\em fix} the endpoints (otherwise there would be only one vertex).  In this case, edges connect vertices with representatives having disjoint interiors.  

Given $\Sigma \subset Y$ a non-annular, essential subsurface, the \emph{subsurface projection} \[ \pi_{\Sigma} : \mathcal{C}(Y) \longrightarrow \mathcal{P}(\mathcal{AC}(\overline \Sigma))\]
takes a vertex $\alpha \in \mathcal{C}(Y)$ to the collection of curves and arcs in $\Sigma$ obtained by taking the union of all distinct homotopy classes occurring in the intersection of $\alpha$ with $\overline \Sigma$, after $\overline \Sigma$ and $\alpha$ are put in minimal position (we send points in an edge to the union of the images of the endpoints of the edge).  Note that our definition differs slightly from the projections defined by Masur and Minsky \cite{MasurMinsky.2000}. 

When $\Sigma$ is an annulus, we first consider the cover $Z_{\Sigma}$ of $Y$ corresponding to $\pi_1\Sigma$. This cover is naturally the interior of a compact annulus, $\overline Z_\Sigma$, (obtained by adding the ideal boundary) and we let $\pi_{\Sigma}(\alpha)$ be the union of the closures of those components of the preimage of $\alpha$ in $Z_\Sigma$ that limit to points on both boundary components of $\overline Z_\Sigma$. 
 
We define the $\Sigma$-\textit{subsurface distance} as \[d_{\Sigma}(\alpha,\beta) = \diam_{\mathcal{AC}(\overline \Sigma)}(\pi_\Sigma(\alpha) \cup \pi_\Sigma(\beta)),\]
whenever $\pi_\Sigma(\alpha),\pi_\Sigma(\beta) \neq \emptyset$. Observe that in this case we have
\begin{equation} \label{E:at least 2}
    d_\Sigma(\alpha,\beta) \geq 2 \Leftrightarrow i(\alpha,\beta) \neq 0.
\end{equation} 
If $\pi_\Sigma(\alpha),\pi_\Sigma(\beta),\pi_\Sigma(\gamma) \neq \emptyset$, then this distance satisfies the triangle inequality
\[d_\Sigma(\alpha,\beta) \leq d_\Sigma(\alpha,\gamma)+ d_\Sigma(\gamma,\beta).\]

\begin{figure}
    \centering
    \includegraphics[width = 8 cm]{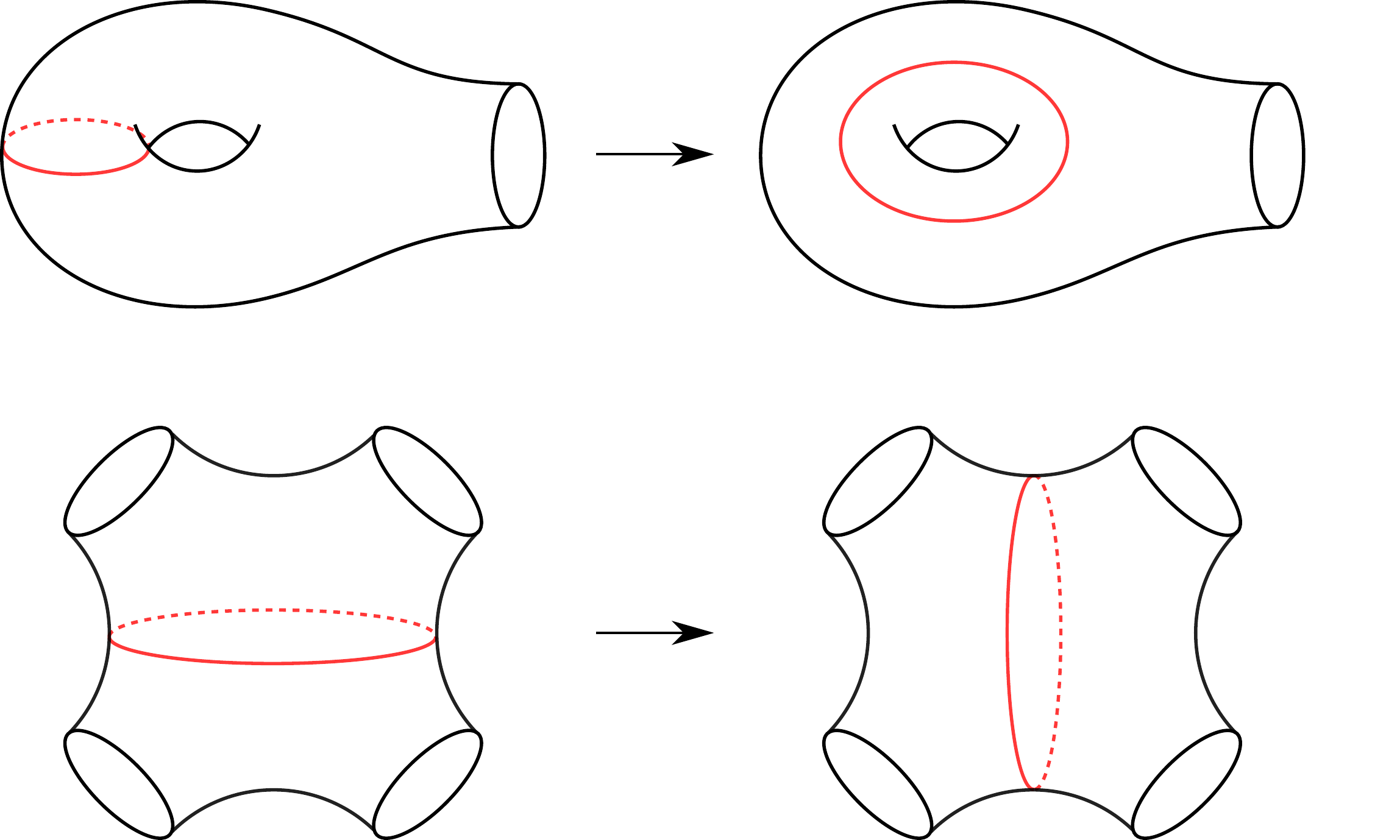}
    \caption{Elementary moves on pants decompositions.}
    \label{fig:pants-moves}
\end{figure}

The graph of central interest in this paper is the pants graph of an infinite-type surface $S$, which we now describe. First, a {\em pants decomposition} on $S$ is a multicurve in $S$ that cuts $S$ into pairs of pants. An {\em elementary move} on a pants decomposition $P$ replaces a single curve in $P$ with a different one intersecting it a minimal number of times, producing a new pants decomposition $P'$. If $P'$ is obtained from $P$ by an elementary move and $Q = P \cap P'$ is the maximal multicurve in common, then $S  -  Q$ has one component $\Sigma$ which has complexity one. There are two types of elementary moves, depending on the homeomorphism type of $\Sigma$, and these are illustrated in \Cref{fig:pants-moves}.

\begin{definition} \label{def:pants-graph}
The {\em pants graph}, $\mathcal P(S)$, is the graph whose vertices are (isotopy classes of) pants decompositions on $S$, with edges between pants decompositions that differ by an elementary move.  We define a path metric on (the components of) $\mathcal P(S)$ so that and edge corresponding to an elementary move that occurs on a one-holed torus has length $1$, and an edge corresponding to an elementary move that occurs on a four-holed sphere has length $2$. The group $\Map(S)$ still acts isometrically on $\mathcal P(S)$ with this metric.
\end{definition} 

Brock proved in \cite{Brock-mappingtorus-vol} that for finite-type surfaces $Y$, $\mathcal P(Y)$ is a coarse model for the Teichm\"uller space of $Y$ with the Weil--Petersson metric. For infinite-type surfaces, the pants graph is not connected, and thus the distance between pants decompositions may be infinite. See \cite{Branman} for an alternative topology on this graph and other facts about it.

\begin{definition}
Given any end-periodic homeomorphism $f \colon S \to S$, we define the \textit{asymptotic translation distance} of $f$ on $\mathcal{P}(S)$ to be $$\tau(f) = \inf_{P\in\mathcal{P}(S)} \liminf_{n \to \infty} \frac{d(P, f^n(P))}n,$$ 
where this infimum is over all pants decompositions $P \in \mathcal P(S)$. Observe that $\tau(f^n) = n \tau(f)$ for all $n > 0$.
\end{definition}

For a pants decomposition $P$ of $S$, it may be the case that for all $n > 0$, $P$ and $f^n(P)$ are not in the same component, and thus $d(P,f^n(P)) = \infty$ for all $n >0$. If this is the case, then the same is necessarily true for all $P'$ in the same component as $P$. For the purposes of studying $\tau(f)$, such components are useless, and thus we let $\mathcal P_f(S)$ be the subgraph of $\mathcal P(S)$ spanned by all {\em $f$--asymptotic pants decompositions}; that is, those pants decompositions $P$ for which $P$ and $f^n(P)$ differ by a finite number of elementary moves for some $n > 0$ (and are thus a finite distance in $\mathcal P(S)$). So, in the definition of $\tau(f)$, it suffices to consider only pants decompositions $P$ taken from $\mathcal P_f(S)$. Note that, since $\mathcal P_f(S)$ is non-empty, $\tau(f)$ is finite. 

\begin{remark}
The behavior of end periodic homeomorphisms on the pants graph is reminicent of the work of Funar and Kapoudjian in \cite{FunKap}.  Indeed, their {\em asymptotic mapping class group of infinite genus} is a subgroup of the mapping class group of the {\em blooming Cantor tree} that preserves a certain component of its pants graph.
 Thanks to Ghaswala for pointing out this reference.
\end{remark}

Given a component $\Omega \subset \mathcal P_f(S)$, we define $\tau(f,\Omega)$ by the same formula as $\tau(f)$, except the infimum is taken over all $P \in \Omega$ (rather than over all $P \in \mathcal P(S)$). As such, $\tau(f,\Omega) \geq \tau(f)$.

\subsection{Decomposition spaces}
Our proof of \Cref{T:upper} will utilize some decomposition theory and a result of Armentrout \cite{Arm69}. 

Let $M$ be a 3-manifold. A \textit{decomposition} $G$ of $M$ is a partition of $M$ into disjoint sets. Given such a decomposition, there is a natural map $f: M\to M / G$ obtained by collapsing each element $X\in G$ to a point and endowing $M / G$ with the quotient topology. If $\pi: M\to N$ is any quotient map, then there is a naturally associated decomposition of $M$ which comes from the preimages of points in $N$ under $\pi$. As such, a quotient map may also be referred to as a decomposition. For more details about decompositions; see \cite{Dav86}.

\begin{defn}[Upper semicontinuous]
A decomposition $G$ of $M$ is said to be \textit{upper semicontinuous} if each element $X\in G$ is compact, and if for each $X\in G$ and each open subset $U\subseteq M$ with $X$ contained in $U$, there exists a further open set $V\subseteq M$ containing $X$ such that for each $X'\in G$ with $X'\cap V\neq \emptyset$, we have that $X'\subseteq U$. A quotient map $\pi: M\to N$ is said to be \textit{upper semicontinuous} if the associated decomposition space is upper semicontinuous.
\end{defn}

The following result gives alternative characterizations of an upper semicontinuous decomposition; see \cite[Proposition I.1.1]{Dav86}. 

\begin{proposition}\label{P:cellular characterization}
Let $G$ be a decomposition of a 3-manifold $M$. The following are equivalent:

\begin{enumerate}
    \item The decomposition $G$ is upper semicontinuous.
    \item For each open set $U$ in $M$, let $U^*$ denote the union of all sets in $G$ which are contained in $U$. Then, the set $U^*$ is open in $M$.
    \item The associated quotient map $f: M\to M / G$ is closed.
\end{enumerate}
\end{proposition}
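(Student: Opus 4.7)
I would prove this by verifying the circle of implications $(1) \Rightarrow (2) \Rightarrow (3) \Rightarrow (1)$, treating compactness of the decomposition elements as a standing hypothesis that is preserved throughout (so the only nontrivial content is the topological equivalence of the three ``neighborhood / closed map / saturated-open'' conditions).

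For $(1) \Rightarrow (2)$, I would fix an open $U \subseteq M$ and a point $x \in U^*$. By definition there is $X \in G$ with $x \in X \subseteq U$. Upper semicontinuity supplies an open $V$ with $X \subseteq V$ such that any $X' \in G$ meeting $V$ lies in $U$. Every point of $V$ therefore sits in some element of $G$ contained in $U$, giving $V \subseteq U^*$ and hence $x \in \intt(U^*)$. For $(2) \Rightarrow (3)$, let $C \subseteq M$ be closed. Since $M/G$ has the quotient topology, to show $f(C)$ is closed it is enough to check that $f^{-1}((M/G) - f(C))$ is open in $M$. But $f^{-1}((M/G) - f(C))$ is precisely the set of points whose decomposition class is disjoint from $C$, which equals $(M - C)^*$; this is open by hypothesis (2).

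For the return arrow $(3) \Rightarrow (1)$, take $X \in G$ and an open $U \supseteq X$, set $C = M - U$, and use closedness of $f$ to conclude that $f(C)$ is closed in $M/G$. Since $X \subseteq U$ we have $f(X) \notin f(C)$, so
\[
V \;=\; f^{-1}\bigl((M/G) - f(C)\bigr)
\]
is an open subset of $M$ containing $X$. Any $X' \in G$ with $X' \cap V \neq \emptyset$ has $f(X') \notin f(C)$, forcing $X' \cap C = \emptyset$, i.e.\ $X' \subseteq U$. This is exactly the neighborhood condition in the definition of upper semicontinuity.

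\textbf{Main obstacle.} The topological argument above is entirely formal bookkeeping with the quotient topology; there is no real geometric hurdle. The one point requiring care is that the paper's definition of upper semicontinuity bundles in the requirement that each $X \in G$ be compact, and this compactness is not produced by either (2) or (3) alone. In a completely rigorous statement one must either (a) assume compactness of elements as part of each of (1)--(3), or (b) verify compactness separately for the decompositions that arise in our constructions. In the applications to $\overline M_f$ our decomposition elements will be compact by construction (typically closed arcs or compact surfaces collapsed to points), so this is not a genuine obstruction to using the proposition, but the statement as written should be read with compactness of elements as a common background assumption.
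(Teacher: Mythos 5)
Your argument is correct and is essentially the standard proof one finds in Daverman's book; the paper itself does not prove this proposition but simply cites \cite[Proposition I.1.1]{Dav86}, so your write-up fills in a proof the paper takes as known. The three implications are all verified correctly: the identity $f^{-1}\bigl((M/G) - f(C)\bigr) = (M-C)^*$ for $(2)\Rightarrow(3)$, and the choice $V = f^{-1}\bigl((M/G)-f(C)\bigr)$ for $(3)\Rightarrow(1)$, are exactly the right bookkeeping. Your caveat about compactness is also a genuine and well-taken point: as written, conditions (2) and (3) are purely topological and do not by themselves force each element of $G$ to be compact (a constant map $\mathbb{R}\to\{\ast\}$ is closed with a noncompact fiber), so strict equivalence requires reading ``each $X\in G$ is compact'' as a standing hypothesis on $G$ rather than as part of condition (1) alone. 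This is indeed how Daverman and the present paper intend it, and as you note it is automatically satisfied in the application to $\overline M_f$, where the decomposition elements are points or compact flow-line arcs. No gap.
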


\begin{defn}[Cellular decomposition]\label{definition:cellular}
A subset $X$ of a 3-manifold $M$ is said to be \textit{cellular} if there exists a sequence $\{B_i\}$ of 3-cells in $M$ such that for each $i\geq 1$, $B_{i+1}\subset \mathrm{Int}(B_i)$, and $X = \cap_{i=1}^\infty B_i$. Equivalently, a subset $X$ of a 3-manifold $M$ is cellular if and only if $X$ is compact and has arbitrarily small neighborhoods which are homeomorphic to Euclidean 3-space. A decomposition $G$ of a 3-manifold $M$ is a \textit{cellular decomposition} if $G$ is an upper semicontinuous decomposition of $M$ and if each element $X\in G$ is a cellular subset of $M$.
\end{defn}

The following result of Armentrout \cite[Theorem 2]{Arm69} gives a sufficient condition for when the quotient space is homeomorphic to the original space. We apply this result to the double of $\overline{M}_f$ in the proof of \Cref{P:block decomposition}.

\begin{theorem} \label{thm:armentrout} Suppose $M$ is a 3-manifold and $G$ is a cellular decomposition of $M$ such that $M / G$ is a 3-manifold. Then, $M \to M / G$ is homotopic to a homeomorphism.\end{theorem}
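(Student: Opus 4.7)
The plan is to verify Bing's shrinking criterion, which states that the quotient map $\pi \colon M \to M/G$ is a near-homeomorphism (a uniform limit of homeomorphisms $M \to M/G$), and then invoke the general principle that between finite-dimensional manifolds any near-homeomorphism is homotopic to a homeomorphism by an infinite composition argument.

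First I would fix compatible metrics $d_M$ and $d_{M/G}$ on $M$ and $M/G$ and recall that Bing's criterion amounts to producing, for each $\epsilon > 0$, a homeomorphism $h \colon M \to M$ such that
\[
\text{diam}_M(h(X)) < \epsilon \text{ for every } X \in G, \qquad \text{and} \qquad d_{M/G}(\pi(x),\pi(h(x))) < \epsilon \text{ for every } x \in M.
\]
The first condition says all elements of $G$ have been shrunk; the second says $\pi h$ is $\epsilon$-close to $\pi$, so that the associated maps $M \to M/G$ form a Cauchy sequence converging to $\pi$.

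Next, the heart of the argument is the construction of such an $h$ from the cellular hypothesis. The key observation is that, because $M/G$ is a $3$-manifold (and therefore second countable with a well-behaved topology), the non-degenerate elements of $G$ of diameter at least $\epsilon$ form a null sequence $\{X_i\}$ in each compact region of $M$: indeed, upper semicontinuity forces any accumulation of large elements to produce, in the limit, a decomposition element that fails to be a point, contradicting local Euclidean structure at the corresponding point of $M/G$. For each such $X_i$, the cellularity of $X_i$ furnishes a nested sequence of $3$-cells $B_i \supset X_i$ with $\text{diam}_{M/G}(\pi(B_i))$ as small as desired; by a diagonal argument combined with upper semicontinuity, I would choose pairwise disjoint such cells. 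Inside each $B_i$, a standard isotopy of the $3$-ball supported in $B_i$ shrinks $X_i$ into a ball of diameter less than $\epsilon$. The (locally finite) product of these ambient isotopies yields $h$, and the two bullets above follow from the smallness and disjointness of the supports together with the fact that $\pi$ collapses each $B_i$ to a set of small diameter in $M/G$.

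The hardest step will be the null-sequence property and the simultaneous disjoint shrinking: one must show that upper semicontinuity, together with $M/G$ being a manifold, really does rule out accumulation of large decomposition elements and allows the cellular neighborhoods to be chosen disjointly without losing control over their diameters in $M/G$. Once the shrinking criterion is verified, we obtain a sequence of homeomorphisms $h_n \colon M \to M/G$ with $h_n \to \pi$ uniformly on compacta. The passage from near-homeomorphism to homotopy-equivalent-to-a-homeomorphism is then a formal consequence of the Brown–Armentrout infinite-composition construction: one builds a concrete homeomorphism $H \colon M \to M/G$ as a telescoping limit of the $h_n$ and exhibits an explicit straight-line homotopy in a product neighborhood identifying $H$ with $\pi$.
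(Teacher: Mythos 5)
The paper does not supply a proof of this statement; it is imported as a citation to Armentrout (\cite[Theorem 2]{Arm69}), so there is no internal proof to compare with, and you are in effect attempting to prove Armentrout's theorem from scratch. Your high-level scaffolding is the correct one: verify Bing's shrinking criterion to conclude that $\pi$ is a near-homeomorphism, then use the ANR property of manifolds (nearby maps are homotopic) to pass from uniform approximability by homeomorphisms to homotopy to a homeomorphism. The difficulty, however, is entirely concentrated in the verification of the shrinking criterion, and that is exactly where your argument breaks down.

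You claim that the elements of $G$ of diameter at least $\epsilon$ form a null sequence on compact regions, on the grounds that ``upper semicontinuity forces any accumulation of large elements to produce, in the limit, a decomposition element that fails to be a point, contradicting local Euclidean structure at the corresponding point of $M/G$.'' This is false. The quotient $M/G$ being locally Euclidean at $\pi(X)$ is entirely compatible with $X$ being non-degenerate; that is the whole point of cellular decomposition theory (Moore's theorem in dimension two, and its generalizations). A cellular, upper semicontinuous decomposition of $\mathbb{R}^3$ with quotient $\mathbb{R}^3$ can have a continuum of non-degenerate elements all of the same diameter: for instance the vertical arcs $\{t\}\times\{0\}\times[0,1]$, $t\in[0,1]$. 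Here the large elements are certainly not a null sequence and cannot be enclosed in pairwise disjoint small cells, so your ``locally finite product of ambient isotopies'' cannot be carried out. What you have proposed would prove, at most, the much older and much easier result that null sequences of cellular sets are shrinkable. Armentrout's actual argument is a substantial, genuinely three-dimensional piece of work proceeding via general position for surfaces in $3$-manifolds, the structure of ``almost $3$-cells,'' and an intricate inductive shrinking; there is no reduction of the general case to the null-sequence case, and supplying one is the missing idea.
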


\subsection{Surfaces in 3-manifolds and foliations}
Suppose $\overline{M}$ is a compact, orientable, $3$-manifold with (possibly empty) boundary. We say $\overline{M}$ is {\em irreducible} if every smoothly embedded $2$--sphere in $\overline{M}$ bounds a $3$-ball in $\overline{M}$. An {\em incompressible surface} in $\overline M$ is the image of a compact, connected, orientable surface by an embedding $(\overline{\Sigma},\partial \overline{\Sigma}) \to (\overline{M},\partial \overline{M})$ such that $\overline \Sigma$ is not a sphere or disk, and such that the induced map on fundamental groups is injective. We often write $\overline \Sigma \subset \overline M$ for an incompressible surface, though it will always come from an embedding of pairs.

An incompressible annulus $(A,\partial A) \subset (\overline M,\partial \overline M)$ is {\em boundary parallel} if there is an embedded solid torus $V \subset \overline M$ so that $\partial V$ is the union of $A$ with an annulus $A' = V \cap \partial \overline M$ intersecting each other along their common boundary $\partial A = \partial A'$. If $\overline M$ is irreducible, then by \cite[Lemma~5.3]{Waldhausen}, an incompressible annulus is boundary parallel if and only if the inclusion is homotopic by a homotopy of pairs $h_t \colon (A,\partial A) \to (\overline M, \partial \overline M)$ to a map of $A$ into $\partial \overline M$. An {\em essential annulus} in $\overline M$ is an incompressible annulus which is not boundary parallel.
If no essential annulus exists, we say that $\overline M$ is {\em acylindrical}. 
The manifold $\overline{M}$ is {\em atoroidal} if for every incompressible torus, the inclusion is homotopic into $\partial \overline{M}$.

Let $\mathcal{F}$ be a transversely oriented, codimension-one foliation  of a compact 3-manifold $\overline{M}$ for which the boundary is a union of leaves. We say such a foliation is \textit{taut} if for every leaf $\lambda$ of $\mathcal{F}$ there is a closed curve or embedded arc $(\gamma_\lambda,\partial \gamma_\lambda) \subset (\overline M,\partial \overline M)$ which is transverse to $\mathcal{F}$ and intersects $\lambda$.

We will need the following ``normal form" theorem for surfaces in taut foliated $3$-manifolds due to Roussarie \cite[Th\'eor\`em 1]{Rou74} (see also \cite[Theorem~9.5.5]{CandConII}). A strengthening of an absolute version of this theorem is due to Thurston \cite{Thu72} (see also \cite{Has86}), but we will use the following relative version.

\begin{theorem} \label{theorem:taut}
Suppose $\mathcal{F}$ is a transversely oriented, taut foliation of a 3-manifold with boundary, $\overline M$, for which $\partial \overline{M}$ is a union of leaves. Suppose $\overline{\Sigma} \subset \overline{M}$ is a properly embedded, incompressible torus or annulus. After an isotopy of the embedding which is the identity on the boundary (in the annulus case) we have:
\begin{enumerate}
\item $\overline{\Sigma}$ is a torus and is either transverse to $\mathcal F$ or is a leaf of $\mathcal F$; or
\item $\overline{\Sigma}$ is an annulus and is transverse to $\mathcal F$, except at finitely many circle tangencies occurring in the interior of $\overline{M}$.
\end{enumerate}
\end{theorem}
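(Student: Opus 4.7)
The plan is to follow Roussarie's original strategy: place $\overline\Sigma$ in generic position with respect to $\mathcal F$ and then successively cancel Morse-type tangencies of the induced singular foliation, using incompressibility of $\overline\Sigma$ together with the taut hypothesis. The main obstacle will be ruling out \emph{vanishing cycles}, which is precisely the content of Novikov's theorem.

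First, by a small perturbation that is the identity on $\partial\overline\Sigma$ in the annulus case, I would arrange that the singular foliation $\mathcal F|_{\overline\Sigma}$ obtained from the trace of $\mathcal F$ on $\overline\Sigma$ has only Morse-type interior singularities, namely centers and saddles. In the annulus case, the fact that $\partial\overline M$ is a union of leaves forces the components of $\partial\overline\Sigma$ to themselves be circles of tangency of $\overline\Sigma$ with $\mathcal F$. Applying Poincar\'e--Hopf to $\mathcal F|_{\overline\Sigma}$ yields $\chi(\overline\Sigma) = \#\text{centers} - \#\text{saddles}$, which is $0$ since $\overline\Sigma$ is a torus or an annulus. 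Thus centers and saddles appear in equal numbers, and it suffices to cancel them in pairs.

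The heart of the argument is the cancellation step. Around any center $p$, the leaves of $\mathcal F|_{\overline\Sigma}$ form nested circles, and pushing outward one meets a first such circle $\gamma$ passing through a saddle; this $\gamma$ lies in a single leaf $L$ of $\mathcal F$ and bounds an embedded disk $D\subset \overline\Sigma$ containing $p$ (and, to be handled inductively, possibly other interior saddles). If $\gamma$ is null-homotopic in $L$, then it bounds a disk $D'\subset L$, and the 2-sphere $D\cup D'$ bounds a ball in $\overline M$ (using irreducibility, which in turn follows from the existence of a taut foliation). A standard ambient isotopy supported in that ball pushes $\overline\Sigma$ across $L$, cancelling $p$ together with its adjacent saddle. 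If instead $\gamma$ is essential in $L$, then $\gamma$ is a vanishing cycle in the sense of Novikov: it bounds the disk $D\subset\overline\Sigma$ and is therefore null-homotopic in $\overline M$, yet is essential in its leaf. Novikov's theorem then forces $\mathcal F$ to contain a Reeb component, contradicting tautness. Hence the first case always applies, and I can cancel inductively.

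After all center/saddle pairs have been removed, $\overline\Sigma$ is transverse to $\mathcal F$ in its interior, unless the procedure terminates with $\overline\Sigma$ coinciding with a leaf of $\mathcal F$, which is explicitly allowed by conclusion~(1) of the torus case. In the annulus case the isotopy must fix $\partial\overline\Sigma$, which prevents pushing certain tangency loci off through the boundary; after cleanup, the residual tangency set consists of finitely many disjoint circles in $\mathrm{Int}(\overline M)$, giving conclusion~(2). The main technical difficulty is making the vanishing-cycle exclusion uniform enough to carry out each cancellation with controlled support so that the inductive scheme terminates — precisely where Novikov's theorem enters in its sharp form.
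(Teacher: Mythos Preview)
The paper does not prove this theorem: it is quoted as a ``normal form'' result due to Roussarie \cite{Rou74} (with pointers to Thurston and to Candel--Conlon for an exposition) and used as a black box in the proofs of \Cref{P:its hyperbolic} and \Cref{L:annuli analysis}. So there is no proof in the paper to compare your argument against.

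Your sketch is indeed the standard Roussarie--Thurston strategy and is correct in outline. Two places deserve tightening. First, in the cancellation step you assert that $D\cup D'$ is an embedded $2$--sphere bounding a ball; but you have not arranged that the interior of the disk $D\subset\overline\Sigma$ is disjoint from the disk $D'\subset L$, so $D\cup D'$ need not be embedded. The usual fix is an innermost--disk argument together with the local product structure of $\mathcal F$ near $L$, and you should say so. Second, your treatment of the annulus case is internally inconsistent: your cancellation argument, as written, eliminates \emph{all} Morse tangencies and hence would yield full transversality in the interior, yet you then assert without justification that ``the residual tangency set consists of finitely many disjoint circles.'' You should either argue that full transversality is achieved (in which case conclusion~(2) holds with zero interior circle tangencies---note that in the paper's application, the interior leaves are non-closed, so interior leaves of $\mathcal F|_A$ may legitimately spiral toward $\partial A$), or explain the mechanism by which certain center--saddle pairs collapse to circle tangencies rather than disappearing. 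Finally, your appeal to Novikov is slightly loose: to invoke it you must exhibit a genuine vanishing cycle (a curve essential in its leaf but inessential in nearby leaves on one side), and the family of nested circles around the center is exactly what provides this, but it is worth making that explicit.
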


\subsection{Dehn filling}
Given a $3$-manifold $\overline{M}$ with a torus boundary component $T$, we can \emph{Dehn fill} $\overline{M}$ along $T$ by gluing a solid torus $D \times S^1$ to $\overline{M}$ by a homeomorphism $\varphi: D \times S^1 \to T$. The homeomorphism type of the Dehn filling is determined by the isotopy class of $\beta \subset T$ to which $\partial D \times \{x\}$ is identified. We call $\beta$ the {\em Dehn filling coefficient}, and write $\overline{M}(\beta)$ to denote the {\em $\beta$--Dehn filled manifold}. Choosing a basis $\mu,\lambda$ for $\pi_1(T) \cong \mathbb Z^2$, the isotopy class of $\beta$ on $T$ is given by $\beta = \pm(p \lambda + q \mu)$, with $p$ and $q$ relatively prime. So, we may also refer to this Dehn filling coefficient as $(p,q)$, writing $\overline M(\beta) = \overline M(p,q)$. Additionally, we allow a Dehn filling coefficient of ``$\infty$'', which means that we do not glue in a solid torus at all (i.e.,~we leave the boundary component unfilled). Any sequence of distinct slopes is said to converge to $\infty$. 

If $\overline{M}$ is a $3$-manifold with $k$ torus boundary components and $\beta = (\beta_1,\ldots,\beta_k)$ is a $k$--tuple of Dehn filling coefficients (some or all of which may be $\infty$), we may perform $\beta_i$--Dehn filling on each boundary component, which we denote by $\overline{M}(\beta_1,\ldots,\beta_k)$ or simply $\overline{M}(\beta)$. Finally, we also consider Dehn filling on torus ends of a noncompact manifold. These are ends which have neighborhoods homeomorphic to $T^2 \times (0,1)$, where $T^2$ is a torus. We perform Dehn filling on such ends by first deleting  a product open neighborhood of the end and then Dehn filling the resultant boundary torus.

A link in a $3$--manifold,  $L \subset \overline{M}$, is the image of an embedding of a disjoint union of circles (a link with one component is a knot). Given a link $L \subset \overline M$, we let $\overline{M} - L$ denote the exterior of $L$ in $\overline{M}$ obtained by removing an open tubular neighborhood of $L$ from $\overline{M}$. 

It is straightforward to see that $(1,n)$--Dehn filling the exterior of a knot which is a simple closed curve in a surface fiber of a fibered $3$-manifold (for an appropriate basis of the fundamental group of the torus boundary) has the same effect as changing the monodromy by the $n^{th}$ power of a Dehn twist; see e.g.~\cite{StallingsFiber}.

\begin{proposition}\label{P:Stallings}
Let $\gamma$ be a curve in $Y$ and $K$ the associated knot in the mapping torus $M_f$ for $f \in \Map(Y)$. Then $(M_f-K)(1,n) \cong M_{f \circ T_{\gamma}^n}$.

More generally, if $L$ is a link consisting of curves $\gamma_1,\ldots,\gamma_k$ with $\gamma_i$ lying on fiber $Y \times \{\frac{i}{k+1}\} \subset Y \times (0,1) \subset M_f$, for each $i$, then \[ (M_f-L)((1,n_1),\ldots,(1,n_k)) \cong M_{f  \circ T_{\gamma_k}^{n_k} \circ  \cdots \circ T_{\gamma_1}^{n_1}}. \]
\end{proposition}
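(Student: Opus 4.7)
The plan is to prove the single-knot case first and then deduce the multi-knot case by induction, since the knots $\gamma_1, \ldots, \gamma_k$ lie on distinct fibers and so their tubular neighborhoods are pairwise disjoint, allowing the Dehn fillings to be performed one at a time. After performing the filling on $K_k$ first, the single-knot case yields $M_{f \circ T_{\gamma_k}^{n_k}}$, and in this new manifold the remaining knots $K_1, \ldots, K_{k-1}$ still lie on fibers of the modified fibration (since $V_k$ was disjoint from them), so we iterate. The order of composition comes out as $f \circ T_{\gamma_k}^{n_k} \circ \cdots \circ T_{\gamma_1}^{n_1}$, because each new Dehn twist is inserted on the ``right'' of the existing monodromy near the fiber where its knot sits.

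For the single-knot case, I parameterize $M_f = Y \times [0, 1]/(y, 1) \sim (f(y), 0)$ (so that the upward monodromy is $f$) and take $K = \gamma \times \{1/2\}$ after reparameterization. Let $A \cong S^1 \times [-\epsilon, \epsilon]$ be an annular neighborhood of $\gamma$ on which $T_\gamma^n$ is supported, and set $V = A \times [1/2 - \delta, 1/2 + \delta]$. Then $V$ is a tubular neighborhood of $K$ and a solid torus with core $K$. On the boundary torus $T = \partial V$, take $\mu$ to be the boundary of a meridional cross-sectional disk (e.g.\ $\{\theta_0\} \times [-\epsilon, \epsilon] \times [1/2 - \delta, 1/2 + \delta]$) and $\lambda$ to be a pushoff of $K$ into the fiber $Y \times \{1/2\}$ (the surface framing). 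This $(\mu, \lambda)$ is the appropriate basis referenced in the statement.

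The core observation is that $M_{f \circ T_\gamma^n}$ admits the following presentation: take $Y \times [0,1]$ with the usual seam identification $(y, 1) \sim (f(y), 0)$, and additionally impose the interior identification $(y, 1/2^-) \sim (T_\gamma^n(y), 1/2^+)$ along the fiber $Y \times \{1/2\}$. Tracing the monodromy around the circle confirms that this recovers $M_{f \circ T_\gamma^n}$. Since $T_\gamma^n$ is supported on $A$, the extra identification is trivial outside of $V$, so $M_{f \circ T_\gamma^n}$ and $M_f$ agree on the complement of $V$, giving a canonical identification $M_{f \circ T_\gamma^n} - V' \cong M_f - V$, where $V'$ denotes the ``twisted'' solid torus replacing $V$. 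The remaining task is to identify the meridian of $V'$ on $T$. I would build a meridional disk for $V'$ by combining the standard meridional half-disk in $V_1 = A \times [1/2 - \delta, 1/2]$ with a twisted half-disk in $V_2 = A \times [1/2, 1/2 + \delta]$ whose intersection with $A \times \{1/2\}$ matches the $T_\gamma^n$-image of the top arc of the $V_1$ half-disk. Because $T_\gamma^n$ rotates points in the interior of $A$ through $n$ full turns while fixing $\partial A$ pointwise, the top arc of the twisted half-disk, viewed as a curve on $A \times \{1/2 + \delta\} \subset T$, has the same endpoints on $\partial A$ as the straight top arc but is homotopic rel endpoints to the straight arc plus $n$ loops around the longitude. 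Consequently the meridian $\mu'$ of $V'$ is isotopic on $T$ to $\mu + n\lambda$, which is precisely the $(1,n)$-slope in the chosen basis.

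The main obstacle I anticipate is this local surgery computation together with the accompanying sign and orientation bookkeeping --- fixing the direction of the Dehn twist, the side of the cut on which the twist is applied, and the basis convention --- so that the slope matches $(1, n)$ rather than $(1, -n)$ or $(n, 1)$. The underlying geometric picture, that $n$ twists in the monodromy correspond to adding $n$ longitudinal wraps to the meridian of the surgery solid torus, is standard; the work is in pinning down the conventions consistent with the statement. Once this is done, assembly is immediate: $M_{f \circ T_\gamma^n}$ is obtained from $M_f - K \simeq M_f - V$ by gluing a solid torus along the slope $\mu + n\lambda$, which is the $(1, n)$-Dehn filling.
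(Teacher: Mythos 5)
The paper itself does not prove this proposition: it is stated as a standard fact, attributed to Stallings (``see e.g.~\cite{StallingsFiber}''), and the only surrounding comment is that ``$(1,n)$--Dehn filling\ldots has the same effect as changing the monodromy by the $n$th power of a Dehn twist\ldots for an appropriate basis.'' Your argument is the standard proof of that fact, and it is essentially correct: cut $M_f$ along a fiber near $K$, reglue by $T_\gamma^n$ to present $M_{f\circ T_\gamma^n}$, observe that since $T_\gamma^n$ is supported on the annulus $A$ the exteriors of $V$ and $V'$ are literally the same manifold, and then track the meridian disk of $V'$ to see that it is $\mu + n\lambda$ in the $(\text{meridian},\ \text{surface longitude})$ basis. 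Your trace of the composition order in the link case ($f\circ T_{\gamma_k}^{n_k}\circ\cdots\circ T_{\gamma_1}^{n_1}$) is also correct, and the observation that the induction works because the tubular neighborhoods $V_i$ lie in disjoint $t$-slabs, so each surgery leaves the others' fibers and framings intact, is exactly the point one needs.

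The one place you should be careful is the one you flag: the coefficient convention. As written, the paper defines $(p,q)$ by $\beta=\pm(p\lambda+q\mu)$, so your slope $\mu+n\lambda$ would actually read as $(n,1)$ under a literal reading of that definition. In practice the paper means the standard $(p\mu+q\lambda)$ ordering --- which is why both the paragraph before the proposition and your argument hedge with ``for an appropriate basis'' --- so your computation of $\mu'=\mu+n\lambda$ does give the intended $(1,n)$-filling. That is precisely the bookkeeping you identified as the remaining work, and it is the only real content to pin down; the rest of the argument, including the ``extra seam'' presentation of $M_{f\circ T_\gamma^n}$ and the half-disk construction of the new meridian, is correct as stated.
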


\subsection{Pared 3-manifolds} \label{S:pared}
The following definitions provide the appropriate topological framework for stating Thurston's Hyperbolization Theorem (see \Cref{Theorem:Thurston.Geometrization}), which ensures that the manifolds we are interested in admit hyperbolic structures. Given a $3$-manifold $\overline{M}$ and closed subset $\pare \subset \partial \overline{M}$, we say that $(\overline{M},\pare)$ is a {\em pared $3$-manifold} if:
\begin{enumerate}
\item \label{I:pared basic} $\overline{M}$ is compact, connected, orientable, irreducible and $\partial \overline M \neq \emptyset$;
\item \label{I:not v abelian} $\pi_1(\overline M)$ is not virtually abelian;
\item \label{I:what pare is} $\pare$ consists of incompressible annuli and tori, and contains all tori in $\partial \overline M$;
\item \label{I:annuli}  any incompressible annulus $(S^1 \times I,S^1 \times \partial I) \to (\overline{M},\pare)$ is properly homotopic into $\pare$ (i.e.,~homotopic as a map of pairs); and
\item \label{I:non cyclic} any non-cyclic abelian subgroup of $\pi_1(\overline M)$ is conjugate into the fundamental group of some component of $\pare$.
\end{enumerate} 

A pared manifold $(\overline{M},\pare)$ has {\em incompressible boundary} if $\partial \overline{M} - \pare$ is incompressible (when $\pare = \emptyset$, this condition simply means that $\partial \overline{M}$ is incompressible). An {\em essential annulus} in $(\overline{M},\pare)$ is an incompressible annulus $(S^1 \times I,S^1 \times \partial I) \to (\overline{M},\partial \overline{M} - \pare)$ which is not homotopic into $\partial \overline{M}$. One says that a pared manifold $(\overline{M},\pare)$ is {\em acylindrical} if it has incompressible boundary and contains no essential annulus. If $\pare = \emptyset$, this agrees with the definition of acylindrical given above.

Observe that if $(\overline M,\pare)$ is a pared manifold with incompressible boundary and $\pare$ meets each boundary component of $\overline M$ of genus at least $2$ in annuli whose core curves define a pants decomposition of these boundary components, then $(\overline M,\pare)$ is acylindrical. To see this, observe that if there were an incompressible annulus with boundary in $\partial \overline M - \pare$, then since this surface is a union of thrice-punctured spheres, the boundary of the annulus may be isotoped into $\pare$, and hence the entire annulus is properly homotopic relative to the boundary into $\pare$ by (\ref{I:annuli}).

Suppose $\overline M$ is a compact, connected, orientable, irreducible $3$-manifold with incompressible, nonempty boundary.  If all components of $\partial \overline M$ have genus at least $2$, then $(\overline M,\emptyset)$ is a pared manifold if $\overline M$ is atoroidal: conditions (\ref{I:pared basic})-(\ref{I:not v abelian}) are immediate, (\ref{I:what pare is})-(\ref{I:annuli}) are vacuous, and (\ref{I:non cyclic}) follows from the Torus Theorem; see e.g.~\cite[Corollary~IV.4.3]{JaSh}. Similarly, if $\pare \neq \emptyset$ and contains all tori in $\partial \overline M$, then $(\overline M,\pare)$ is pared if $\overline M$ is atoroidal and not  Seifert fibered, see \cite[Theorem~IV.4.1]{JaSh}.   

\begin{remark}
It is sometimes convenient to replace some or all of the annuli in $\pare$ by their core curves, so that $\pare$ is a union of tori, annuli, and simple closed curves. Note that up to homeomorphism, this does not affect the manifold $\overline{M}-\pare$ or its boundary $\partial \overline{M} - \pare$. 
\end{remark}

A ``pants block'' is a pared manifold built as a quotient of $\overline \Sigma \times I$, where $\overline \Sigma$ is a complexity-one surface (so $\overline \Sigma \cong \overline \Sigma_{1,1}$ or $\overline \Sigma \cong \overline \Sigma_{0,4}$) and $I$ an interval, say $I = [0,1]$, together with some additional decoration. We make this precise with the following.

\begin{defn}[Pants block] \label{D:pants block}
A {\bf pants block} is a pair $({\mathcal B},\partial_1 \mathcal B)$, where $\mathcal B$ is a handlebody of genus $2$ or $3$, and $\partial_1 \mathcal B \subset \partial \mathcal B$ is union of simple closed curves defining a pants decomposition of the boundary of $\mathcal B$. We further assume that $(\mathcal B,\partial_1 \mathcal B)$ admits a quotient map of pairs
\[ \varphi \colon (\overline \Sigma \times I,\alpha_0 \cup \alpha_1 \cup (\partial \overline \Sigma \times I)) \to (\mathcal B,\partial_1 \mathcal B) \] 
such that
\begin{enumerate}
    \item[(a)] $\overline \Sigma$ is a compact, complexity-one surface;
    \item[(b)] $\alpha_i \subset \overline \Sigma \times \{i\}$, for $i=0,1$ are curves of the form $\alpha_i = \alpha^i \times \{i\}$, with $\alpha^0,\alpha^1 \subset \overline \Sigma$ curves intersecting minimally (thus intersecting once in $\overline \Sigma_{1,1}$ and twice in $\overline \Sigma_{0,4}$); and
    \item[(c)] $\varphi$ is 1-to-1, except on $\partial \overline \Sigma \times I$, where $\varphi^{-1}(\varphi(x,t)) = \{x \} \times I$, for all $(x,t) \in \partial \overline \Sigma \times I$.
\end{enumerate}
We sometimes refer to a pants block simply as a {\bf block}.
\end{defn}

Up to homeomorphism (ignoring the pared locus), there are exactly two distinct pants blocks, as shown in \Cref{fig:pants-blocks}. 
It is straightforward to see that pants blocks are acylindrical pared manifolds using the product structure, since
\[ \mathcal B - \partial_1 \mathcal B \cong \overline \Sigma \times I  -  (\alpha_0 \cup \alpha_1 \cup (\partial \overline \Sigma \times I)). \] 

\begin{figure}
    \centering
    \includegraphics[width=13cm]{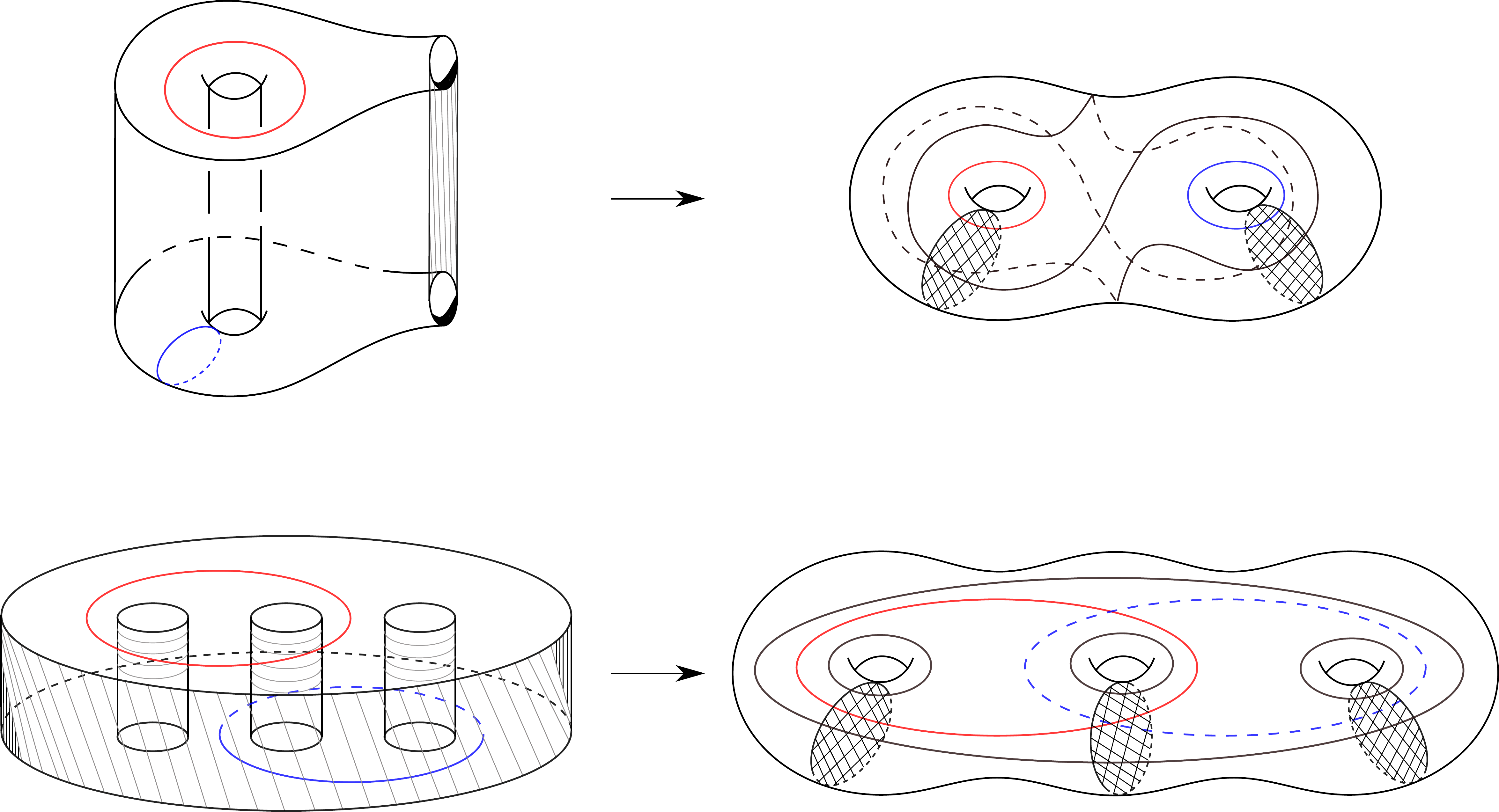}
    \caption{Pants blocks corresponding to the one-holed torus and 4-holed sphere. The curve $\alpha_0$ is shown in blue, the curve $\alpha_1$ is shown in red, and the image (under $\varphi$) of $\partial \overline \Sigma$ is shown in grey. The meridian disks for the handlebodies are drawn for illustrative purposes only.}
    \label{fig:pants-blocks}
\end{figure}

It will be convenient to keep track of where the components of $\partial_1 \mathcal B$ came from. As such, we use the following notation for the components of $\partial_1 \mathcal B$: 
\[ \partial_1^- \mathcal B = \varphi(\alpha_0) \quad \partial_1^+ \mathcal B = \varphi(\alpha_1) \quad \partial_1^v \mathcal B = \varphi(\partial \overline \Sigma \times I).\]
We also write $\partial_2 \mathcal B = \partial \mathcal B - \partial_1 \mathcal B$, which is a disjoint union of thrice-punctured spheres, $\Sigma_{0,3}$, (two for $\overline \Sigma_{1,1}$ and four for $\overline \Sigma_{0,4}$), and set \[ \partial_2^- \B = \partial_2 \B \cap \varphi(\overline \Sigma \times\{0\}) \mbox{ and } \partial_2^+ \B = \partial_2 \cap \varphi(\overline \Sigma \times \{1\}).\]

We often denote a pants block simply as $\mathcal B$, suppressing the decoration and the quotient map, though they are in fact part of the structure. 
When we want to distinguish between the two types of pants blocks built from $\overline \Sigma_{0,4}$ and $\overline \Sigma_{1,1}$, we write $\mathcal B^S$ and $\mathcal B^T$, respectively.

\begin{remark}
We note that $(\mathcal B,\partial_1 \mathcal B)$ is a pared $3$--manifold: everything except condition \eqref{I:annuli} in the definition is immediate. Condition \eqref{I:annuli} follows from an identification of the fundamental group as a free group, in which all components of $\partial_1 \mathcal B$ are distinct homotopy classes.  The product structure implies $\partial_2 \mathcal B$ is incompressible, and by the comments above, $(\mathcal B,\partial_1 \mathcal B)$ is acylindrical.  
\end{remark}

\subsection{Geometry of hyperbolic 3-manifolds}
By a {\em convex hyperbolic $3$-manifold}, we mean a $3$-manifold which is the quotient of a closed, convex subset of $\mathbb H^3$ by a discrete, torsion-free subgroup $\Gamma < \PSL_2(\mathbb C)$. A {\em convex hyperbolic metric} on a $3$-manifold, $M$, is a metric which makes $M$ isometric to a convex hyperbolic $3$-manifold. A special case of Thurston's Geometrization Theorem for Haken manifolds is the following; see \cite{Thurston.Geometrization.1, mcmullen1992riemann,Morgan}.

\begin{theorem} \label{Theorem:Thurston.Geometrization} Suppose $(\overline{M},\pare)$ is a pared $3$-manifold and $\partial \overline{M}-\pare \neq \emptyset$ is incompressible. Then $\overline{M}-\pare$ admits a convex hyperbolic metric $\sigma$. If $(\overline{M},\pare)$ is additionally assumed to be acylindrical and {\em non-degenerate}, then there is a unique (up to isometry) convex hyperbolic structure $\sigma_{min}$ on $\overline{M}-\pare$ for which $\partial \overline{M}-\pare$ is totally geodesic.
\end{theorem}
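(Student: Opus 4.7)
The plan is to establish this via Thurston's hyperbolization program for Haken manifolds, together with the skinning map fixed point argument for the acylindrical case with totally geodesic boundary.

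For existence in the general case, I would first verify that $\overline M - \pare$ is Haken. Conditions (1)--(5) of the pared manifold definition, together with the assumption that $\partial \overline M - \pare$ is nonempty and incompressible, ensure that $\overline M$ is an irreducible $3$-manifold containing an incompressible surface (namely a component of $\partial \overline M - \pare$ itself). Hence $\overline M$ admits a Haken hierarchy $\overline M = M_0 \supset M_1 \supset \cdots \supset M_n$, where each $M_{i+1}$ is obtained from $M_i$ by splitting along an incompressible surface, and $M_n$ is a disjoint union of $3$-balls. I would then proceed by reverse induction on the hierarchy: the terminal pieces admit obvious hyperbolic structures, and at each step I would use Thurston's gluing theorems (Maskit combination, together with the appropriate deformation theory for geometrically finite Kleinian groups) to assemble the hyperbolic structures from $M_{i+1}$ into a geometrically finite hyperbolic structure on $M_i$ in which the parabolic locus corresponds to $\pare$. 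At the final stage this produces a geometrically finite Kleinian group $\Gamma < \PSL_2(\mathbb C)$ whose convex core quotient is homeomorphic to $\overline M - \pare$, giving the convex hyperbolic metric $\sigma$.

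For the second part, the acylindricity and non-degeneracy hypotheses ensure that in the inductive argument above, one never encounters accidental parabolics or degenerate ends, so the resulting representation lies in the interior of the space $AH(\overline M, \pare)$ of geometrically finite structures. The key additional input is Thurston's skinning map $\sigma \colon \mathcal T(\partial \overline M - \pare) \to \mathcal T(\partial \overline M - \pare)$, which records the conformal structure on the second boundary component of the cover corresponding to each boundary subgroup. In the acylindrical case, Thurston proved that a suitable iterate of $\sigma$ is uniformly contracting on Teichm\"uller space, so by the Banach fixed point theorem it has a unique fixed point. This fixed point corresponds precisely to the hyperbolic structure $\sigma_{min}$ in which the doubling across the boundary is a smooth hyperbolic manifold, which forces $\partial \overline M - \pare$ to be totally geodesic. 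Uniqueness then follows from Mostow--Prasad rigidity applied to the double $DM$ of $\overline M - \pare$ along its geodesic boundary, using that any two totally geodesic structures double to isometric complete finite-volume hyperbolic manifolds.

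The main obstacles are genuine deep theorems rather than routine calculations: establishing the inductive gluing step along compressible/incompressible splitting surfaces (which requires Thurston's double limit theorem and control over the deformation space at each stage), and proving the contraction property of the skinning map in the acylindrical setting (which uses an analysis of the second fundamental form of the convex core boundary together with the geometry of quasi-Fuchsian groups). Since these are precisely the contents of \cite{Thurston.Geometrization.1, mcmullen1992riemann, Morgan}, in practice I would cite those sources rather than reproduce the arguments.
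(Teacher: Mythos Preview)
Your proposal is correct and aligns with the paper's treatment: the paper does not give its own proof of this theorem but simply cites \cite{Thurston.Geometrization.1, mcmullen1992riemann, Morgan} for existence and notes that uniqueness of $\sigma_{min}$ follows from Mostow--Prasad rigidity applied to the double of $\overline M - \pare$ over its boundary, exactly as you do. Your sketch of the hierarchy induction and the skinning map argument is a faithful outline of what is in those references, and your closing remark that one would cite them rather than reproduce the arguments is precisely what the paper does.
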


The {\em non-degeneracy} assumption on $(\overline{M},\pare)$ rules out the case that the structure degenerates to a lower-dimensional manifold. Because of acylindricity, this simply means that $(\overline{M},\pare)$ is not a pair of pants times an interval, with $\pare$ being the three annuli which are the boundary curves of the pants times the interval. We also note that $\pare$ is allowed to be empty in the theorem above, though $\partial \overline M$ is not.  The uniqueness of $\sigma_{min}$ in \Cref{Theorem:Thurston.Geometrization} is a consequence of Mostow-Prasad Rigidity by doubling $\overline {M}-\pare$ over the boundary.

We will write $\Vol(\overline M-\pare,\sigma)$ for the total volume of the manifold $\overline M-\pare$ with respect to the metric $\sigma$.  In the special case that $(\overline M,\pare)$ is acylindrical, we will write $\Vol(\overline M-\pare)$ to mean $\Vol(\overline M-\pare,\sigma_{min})$, where $\sigma_{min}$ is the metric given by \Cref{Theorem:Thurston.Geometrization}.

A pants block $(\mathcal B,\partial_1 \mathcal B)$ is an acylindrical, pared manifold, as described in the remark above; thus \Cref{Theorem:Thurston.Geometrization} gives a unique hyperbolic structure on $\mathring{\B} = \B - \partial_1 \B$ with totally geodesic boundary. In fact, this can be constructed explicitly from  regular ideal octahedra: one octahedron in the case of $\mathcal B^T$ and two in the case of $\mathcal B^S$; see e.g.~Agol \cite[Lemma 2.3]{Agol-oct}. 

\begin{proposition} \label{P:geometric block}
The manifold $\mathring{\B}$ has a convex hyperbolic metric $\sigma_{\B}$ with totally geodesic boundary consisting of a union of thrice-punctured spheres. The metric $\sigma_\B$ is obtained by gluing together half the faces of a regular ideal octahedron for $\B = \B^T$ and two regular ideal octahedra for $\B = \B^S$. Consequently,
\[ \Vol(\mathring{\B}^T,\sigma_{\B^T}) = V_\oct \quad \mbox{ and } \quad \Vol(\mathring{\B}^S,\sigma_{\B^S}) = 2V_\oct, \]
where $V_\oct$ is the volume of a regular ideal octahedron.
\end{proposition}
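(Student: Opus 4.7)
The plan is to combine the uniqueness provided by Thurston's Hyperbolization Theorem with an explicit construction by gluing regular ideal octahedra.

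First, I would verify that $(\mathcal B, \partial_1 \mathcal B)$ satisfies the hypotheses of \Cref{Theorem:Thurston.Geometrization}. By the remark following \Cref{D:pants block}, $(\mathcal B,\partial_1 \mathcal B)$ is an acylindrical pared manifold with incompressible boundary. It is non-degenerate since $\mathcal B$ is a handlebody of genus $2$ (when $\B = \B^T$) or $3$ (when $\B = \B^S$), and hence is not a pair of pants times an interval with $\partial_1 \mathcal B$ the three boundary annuli. Thus \Cref{Theorem:Thurston.Geometrization} yields a unique convex hyperbolic metric $\sigma_{min}$ on $\mathring{\B}$ with totally geodesic boundary consisting of thrice-punctured spheres. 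It therefore suffices to produce any such metric in order to identify it with $\sigma_\B$.

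Next, I would construct such a metric explicitly from regular ideal octahedra, following the argument of Agol in \cite[Lemma 2.3]{Agol-oct}. Recall that a regular ideal octahedron $O \subset \mathbb H^3$ has $8$ ideal triangular faces, arranged in $4$ opposite pairs, and that $\Vol(O) = V_\oct$. Since the unique complete hyperbolic structure on a thrice-punctured sphere decomposes into two ideal triangles, $\partial_2 \B^T$ accounts for $4$ triangles and $\partial_2 \B^S$ for $8$. For $\B^T$, take a single octahedron and identify $4$ of its faces in pairs via isometries of $\mathbb H^3$, leaving the remaining $4$ faces to assemble into the two thrice-punctured spheres of $\partial_2^- \B^T$ and $\partial_2^+ \B^T$. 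For $\B^S$, repeat this construction with two octahedra, gluing $4$ of $8$ faces on each in pairs and leaving the remaining $8$ faces to form the $4$ thrice-punctured spheres of $\partial_2 \B^S$.

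The main obstacle is to choose the face identifications so that the resulting hyperbolic $3$-manifold is homeomorphic as a pared manifold to $(\mathcal B, \partial_1 \mathcal B)$, rather than merely having boundary of the same topological type. That is, after the identifications, the underlying space must be a handlebody of the correct genus, and the edges along which the cusps of the unglued triangles meet must correspond to the pants curves of $\partial_1 \mathcal B$ produced by the quotient map $\varphi$ in \Cref{D:pants block}. I would verify this by carefully tracking the combinatorics of the face and edge identifications, guided by the product structure $\overline \Sigma \times I$ and using that the $1$-skeleton of the resulting cell decomposition matches the curves $\alpha_0$, $\alpha_1$, and $\partial \overline \Sigma \times I$. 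Once this match is established, uniqueness from \Cref{Theorem:Thurston.Geometrization} gives $\sigma_{min} = \sigma_\B$, and additivity of volume yields $\Vol(\mathring \B^T, \sigma_{\B^T}) = V_\oct$ and $\Vol(\mathring \B^S, \sigma_{\B^S}) = 2 V_\oct$.
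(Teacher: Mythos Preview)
Your proposal is correct and follows essentially the same approach as the paper: the paper does not give a detailed proof but simply observes that $(\mathcal B,\partial_1\mathcal B)$ is an acylindrical pared manifold (so \Cref{Theorem:Thurston.Geometrization} applies) and refers to Agol \cite[Lemma~2.3]{Agol-oct} for the explicit octahedral construction. Your write-up is more detailed than what the paper provides, but the strategy is identical.
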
 

The notation $\sigma_{min}$ is justified by the following theorem due to Storm \cite[Theorem~3.1]{Storm2}; see also Storm \cite{Storm} and Agol--Storm--Thurston \cite{AgolStormThurston}.

\begin{theorem} \label{T:Storm}
Given a pared manifold $(\overline{M},\pare)$ which is acylindrical and non-degenerate with $\partial \overline M - \pare \neq \emptyset$ and given any convex hyperbolic metric $\sigma$ on $\overline M-\pare$, we have $\Vol(\overline M-\pare,\sigma_{min}) \leq \Vol(\overline M - \pare,\sigma)$ with equality precisely when $\sigma_{min} = \sigma$.
\end{theorem}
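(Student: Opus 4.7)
\textbf{Proof proposal for \Cref{T:Storm}.} The plan is to reduce the comparison to a volume inequality on a closed hyperbolic manifold via doubling and then invoke the Besson--Courtois--Gallot minimal entropy rigidity theorem. Concretely, set $N = \overline M - \pare$ and form the double $DN = N \cup_{\partial \overline M - \pare} N$, glued by the identity along the (non-toroidal) boundary. Since $(\overline M, \pare)$ is acylindrical with incompressible boundary, $DN$ is itself the interior of a pared manifold (with pared locus the double of $\pare$) satisfying the hypotheses of \Cref{Theorem:Thurston.Geometrization}, and Mostow--Prasad rigidity together with the $\mathbb Z/2$ symmetry interchanging the two halves forces the unique complete, finite volume hyperbolic metric on $DN$ to be the double $D\sigma_{min}$ of $\sigma_{min}$. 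In particular $\sigma_{min}$ automatically has totally geodesic boundary and $\Vol(DN, D\sigma_{min}) = 2\Vol(N, \sigma_{min})$.

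Given any convex hyperbolic metric $\sigma$ on $N$, the boundary $\partial N$ is locally convex, so doubling produces a complete length metric $D\sigma$ on $DN$ which is smoothly hyperbolic on each open half and CAT($-1$) globally, with $\Vol(DN, D\sigma) = 2\Vol(N, \sigma)$. The universal cover of $(N, \sigma)$ is convex in $\mathbb H^3$, which lets one reflect across the boundary components to build an isometric immersion of the universal cover of $DN$ into $\mathbb H^3$; this immediately shows the volume entropy of $D\sigma$ is at most $2$, matching the entropy of the hyperbolic metric $D\sigma_{min}$.

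The next step is to smooth $D\sigma$ to a Riemannian metric $g_\epsilon$ of sectional curvature bounded above by $-1 + \epsilon$ whose volume converges to $\Vol(DN, D\sigma)$ and whose volume entropy converges to that of $D\sigma$ as $\epsilon \to 0$; for non-compact ends arising from the toroidal portion of the doubled $\pare$, one additionally truncates at small cusps before applying the smoothing. To these smooth metrics, one applies the Besson--Courtois--Gallot natural map inequality on a finite-volume hyperbolic $3$-manifold, which gives
\[ \Vol(DN, D\sigma_{min}) \leq \left( \frac{h(g_\epsilon)}{2} \right)^3 \Vol(DN, g_\epsilon), \]
and taking $\epsilon \to 0$ combined with the entropy bound yields $\Vol(DN, D\sigma_{min}) \leq \Vol(DN, D\sigma)$. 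Dividing by $2$ gives the desired inequality $\Vol(N, \sigma_{min}) \leq \Vol(N, \sigma)$.

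For the rigidity statement, equality of volumes together with the entropy comparison forces equality in the BCG inequality after smoothing, and the BCG rigidity theorem then identifies the smoothed metrics with $D\sigma_{min}$ up to isometry; passing to the limit and restricting to one copy of $N$ shows $\sigma = \sigma_{min}$. The main obstacle I anticipate is the smoothing step: the doubled metric $D\sigma$ is only $C^0$ along the gluing locus (with a convex ``crease''), and one must verify that the approximation can be arranged to simultaneously control the sectional curvature upper bound, the volume, and the volume entropy, while also handling the noncompact cusps arising from the toroidal components of the pared locus. This is the technical heart of Storm's argument and is where care with the asymptotic geometry near cusps and near the crease is required.
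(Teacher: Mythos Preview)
The paper does not prove \Cref{T:Storm}; it is quoted as \cite[Theorem~3.1]{Storm2} and used as a black box. So there is no ``paper's own proof'' to compare against. Your sketch is broadly faithful to Storm's actual argument in \cite{Storm2}, which does proceed by doubling over the non-toroidal boundary and invoking Besson--Courtois--Gallot entropy rigidity, with the technical work being exactly the smoothing/cusp issues you flag. For the purposes of this paper, though, no argument is expected---the correct response is simply to cite Storm.
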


Suppose $(\overline{M},\pare)$ is a pared $3$-manifold and $\partial \overline M - \pare \neq \emptyset$ is incompressible. In this situation, we define
\begin{equation}\label{E:V inf pared defined} \iVol(\overline M - \pare) = \inf\{\Vol(\overline M -\pare,\sigma) \mid \sigma \mbox{ a convex hyperbolic metric on } \overline{M}-\pare \}.
\end{equation} 
If $\pare = \emptyset$, we also write $\iVol(\overline M) = \iVol(\overline M - \emptyset)$.

By \Cref{T:Storm}, $\iVol(\overline{M} - \pare)$ is realized as $\Vol(\overline M - \pare,\sigma_{min})$ when $(\overline M,\pare)$ is acylindrical. The main fact we will need about the general case is the second half of the result of Storm \cite[Theorem~3.1]{Storm2} referenced above.

\begin{theorem}\label{T:Storm 2}
If $(\overline M,\pare)$ is a pared $3$-manifold with $\partial \overline M-\pare \neq \emptyset$ incompressible, then $\iVol(\overline M - \pare)$ is equal to half of the simplicial volume of the double of $\overline M$ over $\overline{\partial \overline M-\pare}$. In particular, for any degree $k < \infty$ covering, $(\overline M',\pare') \to (\overline M,\pare)$, one has $\iVol(\overline M'- \pare') = k \iVol(\overline M - \pare)$.
\end{theorem}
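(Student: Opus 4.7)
The plan is to establish the main identity $\iVol(\overline M-\pare) = \tfrac12\|DM\|$, where $DM$ denotes the double of $\overline M$ along $\overline{\partial\overline M - \pare}$ and $\|\cdot\|$ is simplicial volume, normalized so that $\Vol(N) = \|N\|$ for any complete finite-volume hyperbolic $3$-manifold $N$. The ``in particular'' statement is then immediate: a degree-$k$ covering $(\overline M',\pare') \to (\overline M,\pare)$ of pared manifolds lifts to a degree-$k$ covering $DM' \to DM$, so multiplicativity of simplicial volume under finite covers gives $\|DM'\| = k\|DM\|$, and hence $\iVol(\overline M'-\pare') = k\,\iVol(\overline M - \pare)$.

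For the main identity, I would first reduce to the acylindrical, non-degenerate case. Using the pared JSJ decomposition, cut $\overline M$ along a maximal collection of essential annuli into acylindrical pared pieces and Seifert-fibered or $I$-bundle pieces; doubling commutes with this cutting, producing a decomposition of $DM$ along incompressible tori. Gromov's additivity of simplicial volume along tori, combined with the vanishing of $\|\cdot\|$ on Seifert pieces, expresses $\|DM\|$ as the sum of the simplicial volumes of the doubles of the acylindrical pieces. A matching additivity for $\iVol$, proved by gluing convex hyperbolic metrics across the cutting annuli and collapsing the Seifert pieces, reduces the identity to the acylindrical case.

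In the acylindrical, non-degenerate case, \Cref{Theorem:Thurston.Geometrization} produces a unique convex hyperbolic metric $\sigma_{min}$ on $\overline M - \pare$ with totally geodesic boundary $\partial\overline M - \pare$. Doubling $\sigma_{min}$ across this geodesic boundary yields a smooth, complete, finite-volume hyperbolic metric on $DM - D\pare$, whose cusps correspond to the components of $\pare$ doubled up (each annulus doubles to a torus, each torus doubles to a pair of tori). Mostow--Prasad rigidity identifies this as the unique such hyperbolic structure, so Gromov--Thurston gives $\Vol(DM - D\pare) = \|DM\|$. Therefore
\[
\Vol(\overline M - \pare, \sigma_{min}) \;=\; \tfrac12\Vol(DM - D\pare) \;=\; \tfrac12\|DM\|,
\]
and by \Cref{T:Storm} this equals $\iVol(\overline M - \pare)$.

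The main technical obstacle is the matching lower bound, namely $\Vol(\overline M - \pare,\sigma) \geq \tfrac12\|DM\|$ for every convex hyperbolic metric $\sigma$. Doubling $\sigma$ produces a Riemannian metric $D\sigma$ on $DM - D\pare$ which is only Lipschitz along the doubling locus, so Gromov--Thurston does not apply directly. The key observation is that convexity of $\sigma$ ensures the doubled metric has a \emph{convex} fold along the doubling locus (dihedral angle $\leq \pi$), which permits one to straighten singular cycles by geodesic simplices while controlling their simplicial mass by $\Vol(D\sigma) = 2\Vol(\sigma)$. This yields $\|DM\| \leq 2\Vol(\sigma)$, and taking the infimum over $\sigma$ completes the proof.
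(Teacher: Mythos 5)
The paper does not prove this statement; it cites Storm \cite[Theorem~3.1]{Storm2} directly, so there is no internal proof to compare against. Your sketch has the right overall shape --- double $\overline M$ across $\partial\overline M - \pare$, relate the Gromov norm of the double to the volume of the totally geodesic metric in the acylindrical case via Thurston hyperbolization and Mostow--Prasad rigidity, and deduce cover-multiplicativity --- and the ``in particular'' clause is handled correctly. But the two hardest steps are asserted rather than proved. The lower bound $\Vol(\overline M - \pare, \sigma) \geq \tfrac12\|DM\|$ for an arbitrary convex hyperbolic metric $\sigma$ is the crux of Storm's theorem, and it is not obtained by naive straightening in the Lipschitz doubled metric $D\sigma$. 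You correctly observe that convexity of $\sigma$ makes the fold convex, so that $D\sigma$ is nonpositively curved in a weak sense, but extracting the sharp $V_\tet$ bound on the volume of straight simplices --- or, more to the point, running a volume-rigidity argument --- in this singular setting is exactly what requires the Besson--Courtois--Gallot natural-map technique that Storm invokes. Dismissing this as ``the main technical obstacle'' and then gesturing at straightening papers over the entire content of Storm's proof.

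Your proposed reduction to the acylindrical case is also not free. You assert a ``matching additivity for $\iVol$'' under cutting $\overline M$ along the characteristic annuli, ``proved by gluing convex hyperbolic metrics across the cutting annuli and collapsing the Seifert pieces.'' That additivity is itself a nontrivial theorem --- one must show that the infimum on the cut-open pieces can be approximated by glued metrics on $\overline M$, and that allowing the annuli to degenerate does not decrease the infimum --- and it is not supplied. There is also a structural redundancy in your writeup: once you have reduced to the acylindrical non-degenerate case, \Cref{T:Storm} together with your doubling/Mostow paragraph already yields $\iVol(\overline M - \pare) = \tfrac12\|DM\|$ there, so the separate ``lower bound'' paragraph becomes superfluous under your own plan; as written, the sketch mixes two different proof architectures (reduction-then-\Cref{T:Storm} versus a direct volume-rigidity bound for arbitrary $\sigma$) without completing either one.
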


It will also be useful to relate the infimal volume of a pared manifold to the infimal volume of the manifold after forgetting the paring locus; see Storm \cite[Corollary~1.2, Corollary 2.10, and Theorem~3.2]{Storm2}.
\begin{theorem} \label{T:Storm3}
Suppose $(\overline M,\pare)$ is a pared acylindrical $3$-manifold and that $\partial \overline M$ is incompressible, nonempty, and all components have genus at least $2$. Then $\iVol(\overline M) \leq \iVol(\overline M - \pare)$.
\end{theorem}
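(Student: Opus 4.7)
The plan is to reduce the inequality to a simplicial volume comparison via \Cref{T:Storm 2}, and then to produce a degree-one collapsing map between the two relevant doubles. Since all components of $\partial \overline M$ have genus at least $2$, $\partial \overline M$ contains no tori and the pared locus $\pare$ consists only of annuli. Because annuli have cyclic fundamental group, the acylindricity of $(\overline M, \pare)$ forces $\pi_1(\overline M)$ to contain no non-cyclic abelian subgroup, so $(\overline M, \emptyset)$ also satisfies the defining properties of a pared manifold with incompressible boundary. Applying \Cref{T:Storm 2} to both pared structures yields
\[
\iVol(\overline M) \;=\; \tfrac12 \|W\|, \qquad \iVol(\overline M - \pare) \;=\; \tfrac12 \|D_1, \partial D_1\|,
\]
where $W = D_{\partial \overline M}(\overline M)$ is the closed double, $D_1 = D_{\overline{\partial \overline M - \pare}}(\overline M)$ is the double of $\overline M$ over the non-pared boundary, and $\|\cdot\|$ denotes simplicial volume. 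The inequality then reduces to showing $\|W\| \leq \|D_1, \partial D_1\|$. Note that $\partial D_1$ is a disjoint union of tori: each annular component of $\pare$ contributes two copies in $\partial D_1$, glued along their common boundary circles.

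The key geometric observation is that $W$ can be recovered from $D_1$ via a natural self-identification. Each boundary torus of $D_1$ is assembled from two copies of an annular component of $\pare$; further identifying these two copies via the doubling involution yields $W$. A local coordinate computation near the fixed circles of this involution (i.e., near $\partial \pare$) shows that ``folding'' each boundary torus onto its core annulus produces a genuine $3$-manifold, and thus gives a well-defined continuous surjection $q \colon D_1 \to W$. By construction, $q$ restricts to an orientation-preserving homeomorphism from $D_1 - \partial D_1$ onto $W - P$, where $P \subset W$ is the image of $\pare$, and $q$ is at most two-to-one on $\partial D_1$.

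To conclude, I would verify that $q_\ast [D_1, \partial D_1] = [W]$ under the identification $H_3(W, P) \cong H_3(W)$, which is valid because $P$ is a union of annuli with $H_2(P) = H_3(P) = 0$. The desired inequality $\|W\| \leq \|D_1, \partial D_1\|$ then follows from the monotonicity of simplicial volume under degree-one maps, combined with Gromov's amenable filling principle: since the components of $P$ have cyclic (hence amenable) fundamental group, the $2$-cycle boundaries $q_\#(\partial c)$ of efficient pushed-forward relative cycles can be filled inside $P$ by $3$-chains of arbitrarily small norm. The main technical obstacle I foresee is executing this amenable filling step carefully, so that a near-minimal relative fundamental cycle for $(D_1, \partial D_1)$ is converted into a fundamental cycle for $W$ without inflating its simplicial norm in the limit.
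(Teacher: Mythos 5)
Your outline is correct, and it captures the same underlying mechanism as the argument the paper attributes to Storm. One small point of bookkeeping: the fact that $\pi_1(\overline M)$ has no non-cyclic abelian subgroup follows directly from condition (\ref{I:non cyclic}) of the pared-manifold definition together with the fact that $\pare$ consists of annuli (so its components have cyclic $\pi_1$); acylindricity is not what you need to invoke there. That slip does not affect anything downstream.

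On the substance: the paper does not supply its own proof of this theorem — it cites Storm and merely remarks that Storm's argument ``exploits the behavior of volume (and simplicial volume) under Dehn filling.'' Your folding map $q\colon D_1 \to W$ is in fact a disguised version of that Dehn filling picture. Concretely, if $N(P)$ denotes a tubular neighborhood of the annuli $P \subset W$, one checks that $W - N(P) \cong D_1$ (push the two copies of $\pare$ slightly into the two halves of the double; the resulting boundary torus is exactly the torus $T_A = A_1 \cup_\partial A_2$ that you identified), so $W$ \emph{is} a Dehn filling of $D_1$. Your quotient $q$ and the Dehn filling inclusion $D_1 \hookrightarrow W$ are different maps, but both give degree-one maps of pairs $(D_1, \partial D_1) \to (W, P)$ (resp.\ $(W, \text{cores})$) sending relative fundamental class to relative fundamental class, and both conclude via the same amenability input: the target of the filling ($P$, or the cores of the filling solid tori) has amenable $\pi_1$, so Gromov's equivalence theorem gives $\|[W]\|_{(W,P)} = \|W\|$, hence $\|W\| \le \|D_1, \partial D_1\|$, and then \Cref{T:Storm 2} converts this into the volume inequality. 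You are right that the amenable-filling step is the technical heart; to make it precise you would invoke Gromov's equivalence theorem for the pair $(W,P)$ (or, equivalently, the standard proof that simplicial volume is non-increasing under Dehn filling). So while you avoid the explicit Dehn filling language, the argument is essentially Storm's, and it is sound modulo carefully quoting the amenability machinery.
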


Since $(\overline M,\pare)$ is pared and $\partial \overline M$ contains no tori we see that $(\overline M,\emptyset)$ is also pared and hence admits a convex hyperbolic structure by \Cref{Theorem:Thurston.Geometrization} so $\iVol(\overline M)$ is well-defined.  

Storm's proof of \Cref{T:Storm3}  exploits the behavior of volume (and simplicial volume) under Dehn filling. The behavior of the hyperbolic and simplicial volumes is part of Thurston's Dehn Surgery Theorem; see Thurston \cite[Theorem~5.8.2, Theorem 6.5.4, and Theorem 6.5.6]{TNotes}, as well as \cite[Theorem~1A]{NeumannZagier} and \cite[15.4.1]{Martelli2016}. The version of Thurston's theorem, which we state here, follows by doubling over the totally geodesic boundary and appealing to \cite{MenascoReid}. The notion of a sequence of slopes going to $\infty$ is natural from the perspective of Thurston's Hyperbolic Dehn Surgery Theorem. 

\begin{theorem} \label{T:filling volume}
Let $(M,\sigma)$ be a finite volume hyperbolic $3$-manifold with totally geodesic boundary and $k$ torus cusps. Suppose $\beta^n = (\beta_1^n,\ldots,\beta_k^n)$ are a sequence of Dehn filling coefficients so that for each $i$, either $\beta_i^n = \infty$ for all $n$, or $\beta_i^n \to \infty$. Then for $n$ sufficiently large, $M(\beta^n)$ admits a complete hyperbolic structure $\sigma(\beta^n)$ with totally geodesic boundary and
\[ \Vol(M(\beta^n),\sigma(\beta^n)) \to \Vol(M,\sigma).\]
Moreover, 
\[\Vol(M(\beta^n),\sigma(\beta^n)) \leq \Vol(M,\sigma) \]
with equality if and only if $\beta^n = (\infty,\ldots,\infty)$.
\end{theorem}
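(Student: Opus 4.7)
The plan is to reduce the statement to the standard version of Thurston's Hyperbolic Dehn Surgery Theorem (without totally geodesic boundary) by doubling. Let $DM$ denote the double of $M$ along the totally geodesic boundary. Then $DM$ is a complete, finite-volume hyperbolic $3$-manifold without boundary, having $2k$ torus cusps (two for each cusp of $M$), and $\Vol(DM) = 2\Vol(M,\sigma)$. The isometric involution $\iota \colon DM \to DM$ swapping the two copies of $M$ fixes the doubled surface $F = \partial M \subset DM$ as a totally geodesic surface, and pairs the cusps of $DM$ into the $k$ pairs coming from the cusps of $M$.

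Each slope $\beta_i^n$ on the $i^{\rm th}$ cusp of $M$ gives rise, via the doubling, to a pair of $\iota$-related slopes on the two corresponding cusps of $DM$; I will call this symmetric filling $(\beta^n, \beta^n)$. Under the hypothesis that each $\beta_i^n$ is either identically $\infty$ or tends to $\infty$, the doubled tuple $(\beta^n, \beta^n)$ satisfies Thurston's hyperbolic Dehn surgery hypotheses, so for all sufficiently large $n$ the filled manifold $DM(\beta^n, \beta^n)$ admits a complete hyperbolic metric $\widetilde{\sigma}(\beta^n)$ with
\[ \Vol(DM(\beta^n, \beta^n), \widetilde{\sigma}(\beta^n)) \;\leq\; \Vol(DM), \]
with strict inequality unless the filling is trivial, and convergence to $\Vol(DM)$ as $n \to \infty$; these are exactly the three conclusions of \cite[Theorem~5.8.2 and~6.5.6]{TNotes}.

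To descend this back to $M$, the main step is showing that the involution $\iota$ extends over the Dehn filling to an isometric involution $\widetilde{\iota}$ of $DM(\beta^n, \beta^n)$ whose fixed set is the original totally geodesic surface $F$. Topologically, $\iota$ plainly extends to the symmetric filling since it permutes the pairs of filling tori. By Mostow--Prasad rigidity, this topological extension is properly homotopic to an isometry $\widetilde{\iota}$; and since $\widetilde{\iota}^2$ is homotopic, hence equal, to the identity, $\widetilde{\iota}$ is an involution. This is precisely the setting in which the MenascoReid theorem on involutions of closed hyperbolic $3$-manifolds (applied off the filling cores) guarantees that the fixed set of $\widetilde{\iota}$ is an embedded totally geodesic surface; since this fixed set is properly homotopic to $F$, the quotient $DM(\beta^n, \beta^n)/\widetilde{\iota}$ is hyperbolic with totally geodesic boundary and is homeomorphic to $M(\beta^n)$. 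This gives the hyperbolic metric $\sigma(\beta^n)$ on $M(\beta^n)$ with totally geodesic boundary, and a two-to-one cover $DM(\beta^n, \beta^n) \to M(\beta^n)$ of hyperbolic manifolds.

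Finally, the volume statements follow by dividing the doubled volumes by two. Convergence $\Vol(M(\beta^n), \sigma(\beta^n)) \to \Vol(M, \sigma)$ and the inequality $\Vol(M(\beta^n), \sigma(\beta^n)) \leq \Vol(M, \sigma)$ are immediate from the corresponding statements for $DM(\beta^n, \beta^n)$; equality in the inequality forces equality in Thurston's theorem applied to $DM$, which occurs only for the trivial symmetric filling, i.e.\ $\beta^n = (\infty, \ldots, \infty)$. The step I expect to be most delicate is the invocation of MenascoReid to promote the topological extension of $\iota$ to an honest isometric involution whose fixed point set is a totally geodesic surface; verifying that the fixed set is precisely the doubled surface $F$ (and not, say, an isolated collection of fixed points or a different totally geodesic surface) is where the argument needs most care, and requires tracking the proper homotopy class of the extension through the filling.
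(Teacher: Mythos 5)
Your high-level strategy — double over the totally geodesic boundary, apply Thurston's Dehn Surgery Theorem to the doubled cusped manifold, and then descend — is exactly the route the paper signals (it gives no detailed proof, saying only that the statement ``follows by doubling over the totally geodesic boundary and appealing to Menasco--Reid''). However, your deployment of Menasco--Reid is off-target, and this leaves the gap you yourself flag unresolved. What Menasco--Reid actually prove is that a \emph{closed embedded totally geodesic surface} in a cusped, finite-volume hyperbolic $3$-manifold remains totally geodesic after all but finitely many Dehn fillings of each cusp; there is no ``Menasco--Reid theorem on involutions,'' and the fact that the fixed set of an isometric involution is totally geodesic is elementary and doesn't need them. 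The cleaner argument applies Menasco--Reid directly to $F = \partial M \subset DM$: for $n$ large, $F$ is (isotopic to) a totally geodesic surface in the hyperbolic structure on $DM(\beta^n,\beta^n)$; cutting along $F$ produces two copies of $M(\beta^n)$, each now carrying the complete hyperbolic metric with totally geodesic boundary, and the volume identity $\Vol(DM(\beta^n,\beta^n)) = 2\Vol(M(\beta^n),\sigma(\beta^n))$ follows immediately. This bypasses entirely the step you identify as delicate (promoting the topological extension of $\iota$ to an isometry and matching its fixed set with $F$); if you insist on the involution route, filling that hole honestly would require something like the Orbifold Theorem to conjugate (not just homotope) the topological involution to the isometric one, which is far heavier machinery than the problem needs. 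With Menasco--Reid read correctly your outline becomes a complete proof; as written, the middle paragraph both misattributes the cited theorem and leaves the key identification of the fixed set as an unproven assertion.
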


\section{Compactified mapping tori}\label{section:Compactification}

Given an end-periodic homeomorphism $f \colon S\to S$, we let
\[M_f =S\times[0,1]/(x,1) \sim (f(x),0) \]
denote the mapping torus of $f$.  We will write $(\psi_s)$ to denote the {\em suspension flow} on $M_f$, defined as the descent to $M_f$ of the local flow on $S \times [0,1]$ given by $\psi_s(x,t) = (x,t+s)$, when $0 \leq t \leq t+s \leq 1$. 
The goal of this section is to prove the following result.

\begin{proposition}\label{proposition:interior}
Let $f \colon S \to S$ be an irreducible, end-periodic homeomorphism of a surface with finitely many ends, all accumulated by genus.
Then the mapping torus, $M_f$, is the interior of a compact, irreducible, atoroidal $3$-manifold, $\overline{M}_f$, with incompressible boundary. If $f$ is strongly irreducible, then $\overline{M}_f$ is also acylindrical.
\end{proposition}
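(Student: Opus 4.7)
The plan is to construct the compactification $\overline M_f$ using the escaping-set structure, extend the fibration $M_f \to S^1$ to a taut foliation $\mathcal F$ on $\overline M_f$ whose boundary leaves are exactly the components of $\partial \overline M_f$, and then apply Roussarie's normal-form theorem (\Cref{theorem:taut}) to reduce an incompressible torus or essential annulus to a form whose intersection with a fiber is forced by the dynamics of $f$ to produce a periodic curve or a periodic line. The topological conclusions (irreducibility and incompressibility of the boundary) will then fall out essentially for free.

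For the compactification, each component $S_+^i$ of $S_+$ is the quotient by $\langle f \rangle$ of an $f$-orbit of components of $\mathcal U_+$; the corresponding $\langle f \rangle$-invariant open subset of $M_f$ fibers over $S_+^i$ via $(y,t) \mapsto \pi_+(y)$, and a direct computation with fundamental domains shows each fiber is homeomorphic to $\mathbb R$. Since orientable $\mathbb R$-bundles over paracompact spaces are trivial, this yields a homeomorphism of the subset to $S_+^i \times \mathbb R$. Under this identification, the decreasing family of end-neighborhoods of $M_f$ obtained from $\{f^n(U_{E_i})\}_{n \geq 0}$ corresponds to product half-spaces with parameter tending to $+\infty$, so the end is compactified by adjoining a copy of $S_+^i$ at infinity. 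Doing this for all $f$-orbits of attracting and (analogously) repelling ends yields a compact $\overline M_f$ with $\partial \overline M_f = \bigsqcup_i S_+^i \sqcup \bigsqcup_j S_-^j$. The fiber foliation of $M_f$ extends across these product neighborhoods to a foliation $\mathcal F$ of $\overline M_f$ whose boundary leaves are exactly the components of $\partial \overline M_f$, and the suspension flow extends to a flow transverse to $\mathcal F$ that exits through the boundary, making $\mathcal F$ taut.

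From here the easy conclusions follow. The universal cover of $M_f$ is $\tilde S \times \mathbb R$, so $M_f$ is aspherical; by the Sphere Theorem $M_f$ is irreducible. Since every boundary component of $\overline M_f$ has genus $\geq 2$ (by \Cref{L:genus of component} applied to a power $f^p$ fixing every end, together with $|\Phi^*(f^p)(v_E)| \geq 1$ for end-periodic $f$), every embedded $2$-sphere in $\overline M_f$ lies in $M_f$ and hence bounds a $3$-ball, so $\overline M_f$ is irreducible. Incompressibility of $\partial \overline M_f$ follows from Novikov's theorem applied to $\mathcal F$: a taut foliation has no Reeb components, so every leaf, in particular every boundary leaf, is $\pi_1$-injective.

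For atoroidality, given an incompressible torus $T \subset \overline M_f$, \Cref{theorem:taut} puts $T$ (after isotopy) either as a leaf of $\mathcal F$ or transverse to $\mathcal F$; the leaf case is immediately ruled out since no leaf is a torus. In the transverse case, $T \cap S$ with any fiber is a finite, nonempty union of essential simple closed curves which the monodromy $f$ permutes up to isotopy, so some power $f^m$ fixes one such curve up to isotopy, contradicting the no-periodic-curves clause of irreducibility. An essentially identical strategy handles an essential incompressible annulus $A$ under strong irreducibility: isotope $A$ transverse to $\mathcal F$ away from finitely many interior circle tangencies; then $A \cap S$ is a $1$-manifold whose components are essential circles and essential lines whose ends escape into ends of $S$ determined by the boundary components of $A$, and periodicity under $f$ yields either a periodic curve or a periodic line, both of which strong irreducibility forbids. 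I expect the most delicate step to be this last one: I will need to verify carefully that a non-boundary-parallel essential $A$ really produces an essential intersection with fibers (rather than one that can be isotoped away near the tangencies), and that the lines so obtained have endpoints in ends genuinely forbidden by the strong-irreducibility hypothesis.
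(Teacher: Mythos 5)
Your overall strategy is the same as the paper's: compactify via the escaping-set structure, show the fiber foliation $\mathcal F$ extends to a taut foliation of $\overline M_f$ with boundary leaves, and apply Roussarie's normal form (\Cref{theorem:taut}) to reduce incompressible tori and annuli to intersections with fibers that the dynamics of $f$ forbid. The easy parts are fine, modulo cosmetic differences (you invoke the Sphere Theorem and Novikov where the paper just cites the mapping-torus structure for irreducibility and uses the explicit infinite cyclic cover $\widetilde M_\infty$ for boundary incompressibility). Your atoroidality argument is essentially the paper's: cut along a fiber, observe the torus becomes a finite union of annuli in $S\times[0,1]$ whose boundary curves are permuted by the gluing, producing an $f$-periodic curve.

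The genuine gap is the acylindricity step, and you correctly guess that this is where the work lies, but your sketch elides exactly the places where care is needed. After applying \Cref{theorem:taut} to an essential annulus $A$, one only knows that $A$ is transverse to $\mathcal F$ away from finitely many \emph{interior circle tangencies}. Your claim that ``$A\cap S$ is a $1$-manifold whose components are essential circles and essential lines whose ends escape into ends of $S$'' is not automatic; two things can go wrong and both must be handled. First, the induced foliation $\mathcal F_A$ may have an interior circle leaf $\alpha$. In that case the paper argues that $\alpha$ generates $\pi_1(A)$, so the inclusion of $A$ lifts to the infinite cyclic cover $\widetilde M_\infty$, and a lift either has its two boundary circles on opposite sides ($\mathcal U_-\times\{-\infty\}$ and $\mathcal U_+\times\{\infty\}$), which produces an $f$-reducing curve, or on the same side, which forces $A$ to be boundary-parallel inside $S\times[-\infty,\infty]$, hence in $\overline M_f$, contradicting essentiality. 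Only after ruling out closed interior leaves does one know that the non-boundary leaves of $\mathcal F_A$ spiral into $\partial A$. Second, even granting the spiraling, the statement that one obtains an $f$-\emph{periodic} line requires an argument, not just ``periodicity under $f$'': the paper proves a claim that a preimage component $\widetilde A\subset\widetilde M_\infty$ is the universal cover of $A$ and that some positive power $F^m$ of the deck generator of $\widetilde M_\infty\to\overline M_f$ preserves $\widetilde A$ and generates its covering group. It is the strip between a leaf $\ell\subset S\times\{t\}$ of $\widetilde{\mathcal F}_A$ and $F^m(\ell)$, projected to $S$, that exhibits the periodic line $\ell_0$, and one must then show $\ell_0$ is essential (the paper uses an innermost-monogon argument: if $\ell_0$ bounded a half-plane, the union of the corresponding monogons over the strip would give a boundary-parallelism of $A$). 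None of this is visible in your sketch, so as written the acylindricity claim does not follow.

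Two small additional remarks. Your reason for the irreducibility of $\overline M_f$ (``every boundary component has genus $\ge 2$'') is a red herring: any embedded $2$-sphere in $\overline M_f$ is closed and can be isotoped into the interior $M_f$ regardless of the boundary genus, after which irreducibility of $M_f$ applies. And if you want to use Novikov's theorem for boundary incompressibility, you should cite a version that applies to manifolds with boundary (or double first); the paper's covering-space argument using $\mathcal U_\pm\times\{\pm\infty\}\subset\widetilde M_\infty$ is both self-contained and shorter.
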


The fact that $M_f$ is the interior of a compact, irreducible manifold with boundary is well-known: the boundary points are obtained by adding ``endpoints at infinity'' to each positive (respectively, negative) ray of the suspension flow that passes through $\mathcal U_+ \times \{0\}$ (respectively, $\mathcal U_- \times \{0\}$); see \cite{Fenley-depth-one}. 
As we will make explicit use of this compactification in our discussion, we discuss this in more detail. 

Suppose $f$ is any end-periodic homeomorphism of $S$. We begin by describing the compactification of $M_f$ in a way that is well-suited to our discussion. As a starting point, we identify the infinite cyclic cover of $M_f$ dual to the fibration over $S^1$ by
\[ p \colon S \times (-\infty,\infty) \to M_f \]
together with the action of the covering group isomorphic to $\mathbb Z$, generated by the homeomorphism $F$ given by
\[ F(x,t) = (f(x),t-1).\]
This is a covering action with quotient $M_f$. Indeed, $S \times [0,1]$ is a fundamental domain, and the only points that are nontrivially identified are of the form $(x,1) \sim F(x,1) = (f(x),0)$.

Recall that $\mathcal U_+,\mathcal U_- \subset S$ are the positive and negative escaping sets, respectively, as defined in \Cref{S:end-periodic defs}. 
We define a partial compactification of $S \times (-\infty,\infty)$ inside $S \times [-\infty,\infty]$ by 
\[ \widetilde M_\infty = S \times (-\infty,\infty)\, \, \sqcup \,\, \mathcal U_+ \times \{\infty\} \,\, \sqcup \,\, \mathcal U_- \times \{-\infty\}.\]
Since $\mathcal U_+$ and $\mathcal U_-$ are $f$--invariant, we may extend the action of $\langle F \rangle$ by defining
\[ F(x,\pm \infty) = (f(x),\pm \infty)\]
whenever $x \in \mathcal U_{\pm}$.
By \Cref{L:escaping sets are covering spaces}, $\langle f\rangle|_{\mathcal U_\pm}$ acts cocompactly, with quotient closed (not necessarily connected) surfaces $S_+ = \mathcal U_+/\langle f \rangle$ and $S_- = \mathcal U_-/\langle f \rangle$.

\begin{lemma} \label{L:prop discont} Suppose $f$ is end-periodic and $\langle F \rangle$ acts on $S \times(-\infty,\infty)$ as above. Then the action of $\langle F \rangle \cong \mathbb Z$ extends to a properly discontinuous, cocompact action on $\widetilde M_\infty$ by $F(x,\pm \infty) = (f(x),\pm \infty)$. The quotient, $\overline M_f$, is thus a compact manifold with $\partial \overline M_f \cong S_+ \sqcup S_-$.
\end{lemma}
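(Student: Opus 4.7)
\medskip\noindent\textbf{Proof plan.}
The plan is to realize $\widetilde M_\infty$ as an open subset of the 3-manifold with boundary $S \times [-\infty,\infty]$ and verify that $\langle F \rangle$ acts freely, properly discontinuously, and cocompactly; the lemma then follows by standard covering space arguments. The complement of $\widetilde M_\infty$ in $S \times [-\infty,\infty]$ is the closed set $((S-\mathcal U_+)\times\{\infty\})\cup((S-\mathcal U_-)\times\{-\infty\})$, so $\widetilde M_\infty$ is open and inherits the structure of a 3-manifold with boundary $(\mathcal U_+\times\{\infty\})\sqcup(\mathcal U_-\times\{-\infty\})$. The map $(x,t)\mapsto(f(x),t-1)$ extends continuously to $S \times [-\infty,\infty]$, preserves $\widetilde M_\infty$ by $f$-invariance of $\mathcal U_\pm$, and restricts to the required homeomorphism $F$. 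Freeness of the $\langle F\rangle$-action is then immediate from the $\mathbb R$-shift on $S\times\mathbb R$ together with the freeness part of \Cref{L:escaping sets are covering spaces}.

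For proper discontinuity, first I would show each point has a compact neighborhood $N$ with $F^n(N)\cap N=\emptyset$ for $n\neq 0$. At $(x_0,t_0)\in S\times\mathbb R$, one takes $N=\overline{K}\times[t_0-\tfrac12,t_0+\tfrac12]$ and uses the $\mathbb R$-shift. At $(x_0,\pm\infty)$, \Cref{L:escaping sets are covering spaces} provides a compact neighborhood $K$ of $x_0$ in $\mathcal U_\pm$ with pairwise disjoint $\langle f\rangle$-translates, and then $N=K\times[T,\infty]$ (or $K\times[-\infty,T]$) works for any $T\in\mathbb R$. For any compact $C\subset \widetilde M_\infty$, a finite cover by such neighborhoods $N_1,\ldots,N_k$ reduces proper discontinuity to a finite case analysis of pairs $(N_i,N_j)$: same-type-finite pairs are handled by the $\mathbb R$-shift; same-type-infinite pairs by proper discontinuity of $\langle f\rangle$ on $\mathcal U_\pm$; and the mixed pairs rely on the key observation that if $K''\subset\mathcal U_+$ is compact with $K''\subset f^{-\ell}(U_+)$ and $K'\subset S$ is any compact set, then $f^m(K'')\subset f^{m-\ell}(U_+)$, and since the good nesting neighborhoods satisfy $\bigcap_{n}\overline{f^n(U_+)}=\emptyset$ in $S$, compactness of $K'$ forces $f^m(K'')\cap K'=\emptyset$ for $m$ large; the symmetric statement handles compact subsets of $\mathcal U_-$.

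Cocompactness I would prove sequentially. With compact fundamental domains $K_\pm=\overline{U_\pm-f^{\pm 1}(U_\pm)}$ for $\langle f\rangle$ on $\mathcal U_\pm$, every $\langle F\rangle$-orbit in $\widetilde M_\infty$ meets the set $D=(S\times[0,1])\cup(K_+\times\{\infty\})\cup(K_-\times\{-\infty\})$. Given a sequence $[p_k]\in \overline M_f$, pick representatives $q_k\in D$. If a subsequence lies in $K_\pm\times\{\pm\infty\}$, compactness gives a limit. Otherwise $q_k=(x_k,t_k)\in S\times[0,1]$; either $x_k$ has a subsequence converging in $S$, giving a limit, or $x_k$ exits every compact subset and, after passing to a subsequence, escapes to a single end which must be attracting or repelling (the remaining ends of $S$ lie in the compact surface $\Sigma=\overline{S-(U_+\cup U_-)}$). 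In the attracting case, one writes $x_k=f^{m_k}(y_k)$ with $y_k\in K_+$ and $m_k\to\infty$; then $F^{-m_k}(q_k)=(y_k,t_k+m_k)$ converges, after further subsequencing, in the product topology on $S\times[-\infty,\infty]$ to a point $(y_\infty,\infty)\in K_+\times\{\infty\}$, yielding the required subsequential limit in $\overline M_f$. The repelling case is symmetric.

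Combining freeness and proper discontinuity, $\widetilde M_\infty\to \overline M_f$ is a covering onto a Hausdorff quotient, so $\overline M_f$ is a 3-manifold with boundary, compact by the previous paragraph. The identification $\partial \overline M_f\cong S_+\sqcup S_-$ is immediate: on $\partial\widetilde M_\infty=(\mathcal U_+\times\{\infty\})\sqcup(\mathcal U_-\times\{-\infty\})$ the $\langle F\rangle$-action reduces to the $\langle f\rangle$-action on $\mathcal U_\pm$, whose quotients are $S_\pm$ by definition. I anticipate the main obstacle will be the case analysis for proper discontinuity across mixed neighborhood types, in particular ruling out that $f^n$ shuttles a compact subset of $\mathcal U_+$ into a compact subset of $\mathcal U_-$ for infinitely many $n$; the argument rests on the end-periodic nesting structure and the fact that the sets $f^n(U_+)$ retreat into the attracting ends, so they eventually miss any prescribed compact set.
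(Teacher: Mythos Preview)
Your proposal is correct and follows essentially the same approach as the paper. The only organizational difference is that the paper proves proper discontinuity by decomposing an arbitrary compact set into three pieces $K_0 \subset (S - (f^n(U_+)\cup f^{-n}(U_-)))\times[-n,n]$, $K_+ \subset (f^{-n}(U_+)- f^n(U_+))\times[0,\infty]$, and $K_- \subset (f^n(U_-)- f^{-n}(U_-))\times[-\infty,0]$ and then checks the six pairwise interactions directly, whereas you cover by small product neighborhoods and run the same case analysis; the cocompactness argument via pulling escaping sequences back into $K_\pm \times [0,\pm\infty]$ is identical to the paper's.
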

\begin{proof} Recall that
\[ \mathcal U_\pm = \bigcup_{k=1}^\infty f^{\mp k}U_\pm \]
where $U_\pm$ are good nesting neighborhoods. 
Since $\mathcal U_\pm$ is invariant by $f$, it follows that the action of $F$ extends to an action by the given formula. 

By considering an appropriate open covering of $\widetilde M_\infty$, 
we note that for any compact set $K \subset \widetilde M_\infty$, there is some $n \in \mathbb N$ and a decomposition $K = K_+ \cup K_- \cup K_0$ where
\[ K_+ \subset (f^{-n}(U_+) -  f^n(U_+)) \times [0,\infty],\]
\[ K_- \subset (f^n(U_-)  -  f^{-n}(U_-)) \times [-\infty,0],\]
and
\[ K_0 \subset (S  -  (f^n(U_+) \cup f^{-n}(U_-))) \times [-n,n].\]

Note that only finitely many $\langle F \rangle$--translates of each of $K_0$, $K_+$, or $K_-$ can intersect themselves (the case of $K_0$ by inspection of the second coordinate, and $K_\pm$ by inspection of the first coordinate). 
Therefore, we must show that only finitely many $\langle F \rangle$--translates of any one of these sets can nontrivially intersect another one. And for this, it suffices to consider the $\langle F \rangle$--translates of $K_0$ with either $K_+$ or $K_-$ and the $\langle F \rangle$--translates of $K_+$ with $K_-$.

In the first case, we observe that for any integer $m > n$, $F^{m}(K_0) \cap K_+ = \emptyset$, by considering the second coordinates. On the other hand, for any integer $m < -2n$, inspection of the first coordinate shows that $F^{m}(K_0) \cap K_+ = \emptyset$. Thus $F^m(K_0) \cap K_+ = \emptyset$ for all but at most $3n+1$ values of $m \in \mathbb Z$. Similar reasoning applies to show that at most $3n+1$ $\langle F \rangle$--translates of $K_0$ nontrivially intersect $K_-$.

Next observe that for all integers $m < 0$, $F^m(K_+) \cap K_- = \emptyset$, by considering the second coordinates. Finally, note that for all $k>n$, $f^k(U_+) \cap f^n(U_-) = \emptyset$, and so if $m > 2n$, then $F^m(K_+) \cap K_- = \emptyset$.
Consequently, there are at most $2n+1$ values of $m \in \mathbb Z$ so that $F^m(K_+) \cap K_- \neq \emptyset$. Therefore, the action is properly discontinuous, and hence $\overline M_f = \widetilde M_\infty/\langle F \rangle$ is a $3$-manifold with boundary $S \pm$.

All that remains is to prove that the action is cocompact. For convenience, define
\[ C_n = S  -  (f^n(U_+) \cup f^n(U_-)) \quad \mbox{ and } \quad \Delta_\pm = \overline U_\pm  -  f^{\pm 1}(U_\pm). \]
We observe that $C_n$ is compact for all $n > 0$, and $\Delta_\pm$ is a compact fundamental domain for $\langle f \rangle$ on $\mathcal U_\pm$.

Now we suppose $\{z_k\}_{k=1}^\infty \subset \overline M_f$ is any sequence, and show that after passing to a subsequence, it converges. Since $\partial \overline M_f = S_+ \cup S_-$ is compact, without loss of generality we may assume $z_k \in M_f$, for all $k$.

For each $k \geq 1$, let $(x_k,t_k)$ be the unique point in $p^{-1}(z_k) \cap S \times [0,1)$. Since each $C_n$ is compact, we may assume that for all $n$, only finitely many $x_k$ lie in $C_n$. 
After passing to a subsequence, we may assume that for any $n$, all but finitely many $x_k$ lie in $f^n(U_+)$ or in $f^{-n}(U_-)$. Assuming the first case (the second is similar) and passing to a further subsequence, we can choose an increasing sequence $\{n_k\}_{k=1}^\infty$ so that $x_k \in f^{n_k}(\Delta_+)$. Now observe that
\[ F^{-n_k}(x_k,t_k) = (f^{-n_k}(x_k),t_k+n_k) \in \Delta_+ \times [0,\infty].\]
The set on the right is compact, and so the sequence converges. It follows that $\{z_k\}$ converges, as required.
\end{proof}

Note that the foliation of $\widetilde M_\infty$ by fibers of the projection $\widetilde M_\infty \to [-\infty,\infty]$ is invariant by $\langle F \rangle$, and so descends to a transversely oriented foliation $\mathcal F$ of $\overline M_f$ for which $\partial \overline M_f$ is a union of (compact) leaves.

\begin{lemma} \label{L:Taut double} Suppose $f$ is an end-periodic homeomorphism of $S$ and $\mathcal F$ the foliation on $\overline M_f$ just defined. Then, $\mathcal F$ is a taut foliation. 
\end{lemma}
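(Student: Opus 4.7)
To show $\mathcal F$ is taut, I will construct, for each leaf $\lambda$ of $\mathcal F$, a transverse embedded arc with endpoints on $\partial\overline M_f$, using the vertical flow $\psi_s(x,t)=(x,t+s)$ on $\widetilde M_\infty$. This flow is $F$--equivariant and transverse to the fiber foliation, so it descends to a flow on $M_f$ transverse to $\mathcal F|_{M_f}$. In the cover, the orbit $\{x\}\times\mathbb R$ extends to a compact arc $A_x=\{x\}\times[-\infty,\infty]$ with endpoints $(x,-\infty)\in \mathcal U_-\times\{-\infty\}$ and $(x,\infty)\in\mathcal U_+\times\{\infty\}$ lying on boundary leaves precisely when $x\in\mathcal U_+\cap\mathcal U_-$.

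\textbf{Key observation and the arc in $\overline M_f$.} A crucial point is that neither $\mathcal U_+$ nor $\mathcal U_-$ contains any periodic points of $f$. Indeed, if $f^N(x)=x$ and $f^k(x)\in U_E$ for some attracting end $E$, then iterating the nesting property $f^m(U_E)\subsetneq U_E$ gives $f^k(x)\in f^{jm}(U_E)$ for every $j\ge 1$; combining this with periodicity, $f^k(x)$ would lie in $\bigcap_l f^{lmN}(U_E)=\emptyset$, a contradiction, and the symmetric argument rules out periodic points in $\mathcal U_-$. Consequently, for $x_0\in\mathcal U_+\cap\mathcal U_-$ the $\langle F\rangle$--translates of $A_{x_0}$ sit over the pairwise distinct points $f^n(x_0)\in S$ and are therefore pairwise disjoint, so $A_{x_0}$ projects to an embedded arc $\gamma\subset\overline M_f$, transverse to $\mathcal F$, with one endpoint on $S_-$ and the other on $S_+$.

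\textbf{Covering every leaf and the main obstacle.} As $t$ sweeps $(-\infty,\infty)$ along $A_{x_0}$, the class of $t$ modulo $\mathbb Z$ surjects onto $\mathbb R/\mathbb Z$, which parametrizes the interior leaves of $\mathcal F$; hence the single arc $\gamma$ already meets every interior leaf, handling tautness for all of them at once. For a boundary leaf $\lambda\subset S_\pm$, the same construction with $x_\lambda\in\mathcal U_+\cap\mathcal U_-$ chosen in the component of $\mathcal U_\pm$ covering $\lambda$'s component of $S_\pm$ yields a transverse arc with an endpoint on $\lambda$. The main obstacle, then, is to verify that $\mathcal U_+\cap\mathcal U_-$ is nonempty and meets every component of $\mathcal U_+$ and of $\mathcal U_-$. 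I plan to argue this from \Cref{L:escaping sets are covering spaces} (each component of $\mathcal U_\pm$ is $f$--invariant and $\langle f\rangle$--cocompact, projecting to a closed surface in $S_\pm$), together with the no-periodic-points observation and the finiteness of the end set of $S$; this should force the ``inward'' end of each component of $\mathcal U_\pm$ to extend into a nesting neighborhood of opposite type, supplying the required $x_\lambda$ in each case.
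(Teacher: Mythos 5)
Your proof shares the paper's starting point of working in $\widetilde M_\infty$ and seeking transversals from $\mathcal U_-\times\{-\infty\}$ to $\mathcal U_+\times\{\infty\}$, but it diverges by insisting the transversal be a vertical flow line $\{x_0\}\times[-\infty,\infty]$. This forces both endpoints to lie over the same $S$-coordinate, hence requires $x_0\in\mathcal U_+\cap\mathcal U_-$; and, to reach every boundary leaf, you then need $\mathcal U_+\cap\mathcal U_-$ to be nonempty and to meet every component of $\mathcal U_+$ and of $\mathcal U_-$. You flag this yourself as the ``main obstacle,'' but the sketch you give (``this should force the inward end of each component of $\mathcal U_\pm$ to extend into a nesting neighborhood of opposite type'') does not close it. A component $\mathcal U'$ of $\mathcal U_+$ is an infinite cyclic cover of a closed surface, with one end the attracting end $E$ of $S$ and a second end $E'$ intrinsic to $\mathcal U'$; the inclusion $\mathcal U'\hookrightarrow S$ is not assumed to be proper, so a sequence escaping toward $E'$ need not escape compact subsets of $S$ and in particular need not ever enter $U_-$. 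Nothing in \Cref{L:escaping sets are covering spaces}, the no-periodic-point observation, or the finiteness of the end set rules this out, and the lemma makes no irreducibility assumption that might help; so the claim is a genuine missing input, not a routine detail, and I am not convinced it even holds in this generality.

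The paper sidesteps the issue entirely: the transversal is taken to be an arbitrary embedded arc in $\widetilde M_\infty$, strictly monotone in the $[-\infty,\infty]$-coordinate, rather than a flow line. Because $S\times(-\infty,\infty)$ is connected, such an arc can start at $(y,-\infty)$ with $y\in\mathcal U_-$ and end at $(z,\infty)$ with $z\in\mathcal U_+$, with $y$ and $z$ chosen independently and in whichever components are needed; no control over $\mathcal U_+\cap\mathcal U_-$ is required. The remaining parts of your argument are correct and line up with the paper's: the absence of periodic points in $\mathcal U_\pm$ (equivalently, a slightly generic choice of arc) makes the projected arc embedded, a single such transversal already meets every interior leaf since its $t$-coordinate surjects onto $\mathbb R/\mathbb Z$, and the finitely many boundary leaves are handled by adjusting the endpoints. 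Replacing your flow lines by these monotone arcs eliminates the gap and recovers the paper's proof.
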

\begin{proof} Any transversal of the foliation of $\widetilde M_\infty$ connecting a point of $\mathcal U_- \times \{-\infty\}$ to a point of $\mathcal U_+ \times \{\infty\}$, projects to a transversal of $\mathcal F$ on $\overline M_f$.
Doing this for each component of $\mathcal U_+$ and $\mathcal U_-$, produces enough transversals to intersect every leaf of $\mathcal F$, as required.
\end{proof}

We let $D \overline M_f$ be the double of $\overline M_f$ over its boundary. This induces a foliation $\mathcal F_D$ on $D\overline M_f$ which is also taut.

\begin{proposition} \label{P:its hyperbolic}
Suppose $f$ is an irreducible, end-periodic homeomorphism. Then $\overline M_f$ is a compact, irreducible, atoroidal $3$-manifold with incompressible boundary.
\end{proposition}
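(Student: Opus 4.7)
The plan is to prove the four conclusions --- compactness, incompressibility of $\partial\overline M_f$, irreducibility of $\overline M_f$, and atoroidality --- in that order, combining the taut foliation $\mathcal F$ from \Cref{L:Taut double} with the semidirect product structure $\pi_1(\overline M_f) \cong \pi_1(S) \rtimes_{f_*} \mathbb Z$ and the hypothesis that $f$ is irreducible. Compactness is immediate from \Cref{L:prop discont}. An observation used throughout: by \Cref{L:genus of component}, combined with the fact that $\Phi^*(f^m)(v_i) \neq 0$ for every end of an end-periodic homeomorphism (after passing to a pure power $f^m$), every component of $\partial \overline M_f = S_+ \sqcup S_-$ has genus at least $2$; in particular $\partial \overline M_f$ contains no tori.

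For incompressibility of $\partial \overline M_f$, I pass to the double $D\overline M_f$ with its taut foliation $\mathcal F_D$. Novikov's theorem asserts that every leaf of a taut codimension-one foliation of a closed $3$-manifold is $\pi_1$-injective, so $\partial \overline M_f \hookrightarrow D\overline M_f$ is $\pi_1$-injective. Since this inclusion factors through $\pi_1(\overline M_f)$, the map $\pi_1(\partial \overline M_f) \to \pi_1(\overline M_f)$ is also injective.

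For irreducibility of $\overline M_f$, I note that $S$ is non-compact, so $\widetilde S \cong \mathbb R^2$, and pulling back the fibration $M_f \to S^1$ gives the universal cover $\widetilde S \times \mathbb R \cong \mathbb R^3$. Hence $\pi_2(M_f) = 0$ and $M_f$ is irreducible. For a general $2$-sphere $\Sigma \subset \overline M_f$, I put it in general position with $\partial \overline M_f$ and use the incompressibility just established, together with standard innermost-disk surgeries on $\Sigma \cap \partial \overline M_f$, to reduce to a sphere in $M_f$, which bounds a ball.

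The main obstacle is atoroidality: because $\partial \overline M_f$ has no tori, I must rule out \emph{any} incompressible torus $T \subset \overline M_f$. By Roussarie's theorem (\Cref{theorem:taut}) and tautness of $\mathcal F$, after isotopy $T$ is either a leaf of $\mathcal F$ or transverse to $\mathcal F$. The leaf case is ruled out since leaves are either interior (copies of the infinite-type $S$) or boundary components (genus $\geq 2$), and neither is a torus. In the transverse case, I consider the image of $\pi_1(T) = \mathbb Z^2$ under $\pi_1(\overline M_f) \to \mathbb Z$: this image is nonzero, because otherwise $\mathbb Z^2$ would inject into $\pi_1(\widetilde M_\infty) \cong \pi_1(S)$, which is free (as $S$ is a non-compact surface) and contains no $\mathbb Z^2$. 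So the image is $n\mathbb Z$ for some $n \geq 1$, and the connected component $A$ of the preimage of $T$ in $\widetilde M_\infty$ is an $F^n$-invariant annulus transverse to the product foliation $\{S \times \{t\}\}$. A level circle $\ell = A \cap (S \times \{t_0\})$ then projects to an embedded simple closed curve in $S$ representing a generator $\beta$ of $\pi_1(T) \cap \pi_1(S)$, essential by incompressibility of $T$. Commutativity in $\pi_1(T) = \mathbb Z^2$ together with the semidirect product structure forces $f_*^n(\beta)$ to be conjugate to $\beta$ in $\pi_1(S)$, so $f^n(\ell)$ is freely homotopic to $\ell$ in $S$; since freely homotopic essential simple closed curves on a surface are isotopic, $\ell$ is an $f$-periodic curve, contradicting the assumption that $f$ is irreducible. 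The technical delicacy I anticipate is verifying that the level circle $\ell$ descends cleanly to a simple essential curve in $S$ and that the resulting isotopy really yields a periodic curve in the sense of \Cref{Irreducibility/Strong irreducibility}; both should follow from transversality of $A$ to the product foliation and incompressibility of $T$.
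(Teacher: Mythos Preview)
Your proof is correct, but several steps take a different route from the paper's.

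For incompressibility of $\partial\overline M_f$, the paper argues directly from the explicit cyclic cover: each component of $\partial\overline M_f$ is covered by a component of $\mathcal U_\pm\times\{\pm\infty\}\subset\widetilde M_\infty$, and $\mathcal U_\pm\hookrightarrow S$ is $\pi_1$--injective since it is an open incompressible subsurface. Your appeal to Novikov on the double $D\overline M_f$ is valid but uses heavier machinery where a one-line retraction argument suffices.

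For atoroidality, the paper's argument is shorter and more geometric: after putting $T$ transverse to $\mathcal F$, one simply cuts $M_f$ open along a single fiber $S\times\{0\}$. The torus becomes a union of annuli in $S\times[0,1]$ joining a multicurve in $S\times\{0\}$ to one in $S\times\{1\}$; the gluing by $f$ makes this multicurve $f$--invariant, hence some component is $f$--periodic. Your approach via the full cyclic cover and the semidirect product structure reaches the same contradiction (a periodic curve) but requires you to justify that a component $A$ of the preimage is an annulus on which the height function is a proper submersion, so that $A\cap(S\times\{t_0\})$ is a \emph{single} circle generating $\pi_1(A)$. This is true (properness follows from compactness of $T$ in a fundamental domain; connectedness of the fiber from connectedness of $A$ together with Ehresmann), but it is exactly the ``technical delicacy'' you flag, and the paper's cut-along-a-fiber argument sidesteps it entirely. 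What your approach buys is a cleaner algebraic explanation of \emph{why} the periodic curve appears: the commutation relation in $\mathbb Z^2$ forces $f_*^n(\beta)$ to be conjugate to $\beta$.

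For irreducibility, your innermost-disk reduction is more than is needed: any embedded sphere can simply be pushed off $\partial\overline M_f$ into the interior using a boundary collar, after which irreducibility of $M_f$ (which you correctly deduce from $\widetilde M_f\cong\mathbb R^3$) finishes the job. The paper states this in one line.
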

\begin{proof}
By \Cref{L:prop discont}, we know that $\overline M_f$ is a compact $3$-manifold with boundary. Since $M_f$ is a mapping torus, it follows that $M_f$ (and thus $\overline{M}_f$) is irreducible. Since each component of $\partial \overline M_f$ is covered by a component of the $\pi_1$--injective subsurface $\mathcal U_\pm \times \{\pm \infty\} \subset \widetilde M_\infty$, it follows that $\partial \overline M_f$ is incompressible.

To prove that $\overline M_f$ is atoroidal, we suppose that $T \subset \overline M_f$ is an incompressible torus and derive a contradiction. By \Cref{theorem:taut}, after an isotopy of the inclusion, we can assume that $T$ is transverse to $\mathcal F$ (since $\mathcal F$ has no torus leaves).  

Cutting $M_f$ open along a fiber, we obtain a product $S \times [0,1]$ and the torus is cut into a union of annuli connecting a finite, disjoint union of curves in $S \times \{0\}$ to a finite disjoint union of curves in $S \times \{1\}$ (the torus cannot be disjoint from any fiber since $S \times [0,1]$ contains no incompressible tori). Since $M_f$ is obtained by gluing $S \times \{1\}$ to $S \times \{0\}$ by $f$, this produces a finite $f$--invariant union of curves, and thus an $f$--periodic curve. This is a contradiction to the irreducibility assumption, and thus $\overline M_f$ is atoroidal.
\end{proof}

Suppose $f$ is an irreducible, end-periodic homeomorphism. Since the boundary components of $\overline{M}_f$ are closed surfaces of genus at least $2$, \Cref{P:its hyperbolic} and \Cref{Theorem:Thurston.Geometrization} imply that $\overline{M}_f$ admits a convex hyperbolic metric, since $(\overline{M}_f,\emptyset)$ is pared; see the discussion following \Cref{Theorem:Thurston.Geometrization}.

\begin{lemma} \label{L:annuli analysis}
Suppose $f$ is an irreducible, end-periodic homeomorphism and $(A,\partial A) \subset 
(\overline M_f, \partial \overline M_f)$ is an essential annulus. Then both boundary components of $A$ are in $S_+$ or both are in $S_-$, and their preimages in $\widetilde M_{\infty}$ are lines in the surface $\mathcal U_+ \times \{\infty\}$ or $\mathcal U_- \times \{- \infty\}$, respectively.

If $f$ is strongly irreducible, then $\overline M_f$ is acylindrical.
\end{lemma}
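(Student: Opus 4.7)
The plan is to apply Roussarie's normal form theorem (\Cref{theorem:taut}) to the taut foliation $\mathcal F$ from \Cref{L:Taut double} to isotope $A$ to be transverse to $\mathcal F$ except at finitely many interior circle tangencies, and then lift $A$ to a component $\tilde A \subset \widetilde M_\infty$ of its preimage to constrain its structure. Since $\pi_1(A) \cong \mathbb Z$, the stabilizer of $\tilde A$ in $\langle F \rangle$ is either trivial (so $\tilde A$ is a compact annulus mapping homeomorphically to $A$) or equal to $\langle F^k \rangle$ for some $k \geq 1$ (so $\tilde A$ is a strip and is the universal cover of $A$). The height function on $\widetilde M_\infty$ takes value $\pm\infty$ exactly on boundary components of $\tilde A$ lying in $\mathcal U_\pm \times \{\pm\infty\}$, and after Roussarie each component of $\partial A$ already lies entirely in either $S_+$ or $S_-$.

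To rule out mixed boundaries (one component in $S_+$ and one in $S_-$), I analyze both lift types. In the compact-annulus case, the first-coordinate projection $\widetilde M_\infty \to S$ restricts on $\tilde A$ to a free homotopy in $S$ between boundary circles $C_+ \subset \mathcal U_+$ and $C_- \subset \mathcal U_-$; hence $C_+$ is freely homotopic to a curve in a nesting neighborhood of a repelling end while itself lying in a nesting neighborhood of an attracting end, exhibiting $C_+$ (or an iterate) as a reducing curve and contradicting condition (3) of \Cref{Irreducibility/Strong irreducibility}. In the strip case, $F^k$ acts as a translation of $\tilde A$, and together with the height function this yields an $F^k$-equivariant arc in $\tilde A$ joining a point at height $-\infty$ to one at height $+\infty$; its first-coordinate projection gives an $f^k$-equivariant proper embedding $\mathbb R \to S$ with ends escaping to a repelling and an attracting end, i.e., an AR-periodic line, contradicting condition (2). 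Next, I rule out the compact-annulus case when both boundaries lie on the same side by arguing that such a $\tilde A$ is boundary parallel in $\widetilde M_\infty$---its two boundary circles cobound a subsurface of $\mathcal U_\pm \times \{\pm\infty\}$ and, together with $\tilde A$, bound an embedded region in $\widetilde M_\infty$---yielding a proper homotopy of $A$ into $\partial \overline M_f$ and contradicting essentiality. Thus $\tilde A$ must be a strip with both boundaries lying in a single $\mathcal U_\pm \times \{\pm\infty\}$, and the preimages of $\partial A$ are lines, as claimed.

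Finally, for the acylindricity assertion under strong irreducibility, suppose an essential annulus $A$ exists; by the first part there is a boundary line $\ell \subset \mathcal U_\pm \times \{\pm\infty\}$ with $f^k(\ell) = \ell$ for some $k \geq 1$. The main technical obstacle is to verify that $\ell$, viewed as a subset of $S$ via the inclusion $\mathcal U_\pm \hookrightarrow S$, is a proper embedding of $\mathbb R$ in $S$---that is, a line in $S$ in the paper's sense---rather than only a proper embedding in the open subsurface $\mathcal U_\pm$. I expect this to follow by combining the proper discontinuity of the $\langle f \rangle$-action on $\mathcal U_\pm$ from \Cref{L:escaping sets are covering spaces} with the proper embeddedness of the strip $\tilde A$ in $\widetilde M_\infty$, which forces both ends of $\ell$ to escape to ends of $S$. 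Once established, $\ell$ is a periodic line in $S$, contradicting strong irreducibility.
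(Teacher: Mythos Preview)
Your overall strategy (apply Roussarie, lift to $\widetilde M_\infty$, dichotomize on the stabilizer of a component $\tilde A$) matches the paper's, and your treatment of the compact-annulus lift---reducing curve in the mixed-boundary case, boundary parallelism in the same-side case---is essentially the paper's argument. The substantive divergence, and the gap, is in how you extract the periodic line in the strip case.

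You try to use the \emph{boundary} of $\tilde A$, a line $\ell$ in $\mathcal U_\pm \times\{\pm\infty\}$, as the periodic line in $S$. But $\ell$ is only a priori proper in the open subsurface $\mathcal U_\pm$, not in $S$, and your proposed fix via proper discontinuity and proper embeddedness of $\tilde A$ does not close this: the $f^{-k}$--end of $\ell$ can accumulate on the frontier $\overline{\mathcal U_+}\setminus\mathcal U_+$ in $S$ (points whose forward orbit never enters $U_+$), which is nonempty for irreducible $f$ in general. The paper avoids this entirely by working instead with a \emph{leaf} of the induced foliation $\mathcal F_A$ on $A$. Such a leaf lies in a single fiber $S\times\{t\}$, so its first-coordinate projection is the leaf itself as a subset of $S$; the strip in $\tilde A$ between this leaf and its $F^m$--translate lies in $S\times[t,t+m]$ and supplies the proper homotopy $\ell_0\simeq f^m(\ell_0)$ in $S$, with the ends of the strip forced to exit ends of $S$ (this is where properness in $S$ comes for free). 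The same mechanism handles your strip/mixed-boundary case cleanly: the ends of the leaf $\ell_0$ are determined by which side of $\partial\widetilde M_\infty$ the corresponding boundary of $\tilde A$ lies in, so a mixed boundary makes $\ell_0$ AR-periodic. Your ``$F^k$--equivariant arc'' construction does not produce a line in $S$: a transverse arc in $\tilde A$ projects to a compact path, and the boundary lines $\ell_\pm$ of $\tilde A$ each lie entirely in one escaping set, so neither can be AR. Finally, you omit the check that the resulting line is essential (does not bound a half-plane); the paper verifies this by observing that otherwise $A$ would bound a solid torus with an annulus in $\partial\overline M_f$, contradicting essentiality.
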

\begin{proof} An application of \Cref{theorem:taut} implies that, after an isotopy of the inclusion which is the identity on $\partial A$, we may assume $A$ is transverse to $\mathcal F$, except possibly at finitely many circle tangencies occurring in $M_f \cap A$. In particular, $\mathcal F$ induces a foliation $\mathcal F_A$ on $A$ whose leaves are components of intersection with the leaves of $\mathcal F$ (consequently, each component of $\partial A$ is a leaf of $\mathcal F_A$).

If there is a circle leaf $\alpha$ of $\mathcal F_A$ in the interior of $A$ (this happens, for example, if there is a circle tangency) then $\alpha$ is also a curve in some fiber $S$ of $M_f$. Since $\alpha$, viewed as a loop on $A$, generates $\pi_1(A)$ and lies on $S$, it follows that the inclusion of $A$ lifts to an embedding into $\widetilde M_\infty$. In particular, the preimage of $A$ in this covering space is a disjoint union of copies of $A$ permuted by the covering transformation $F$.

Suppose one boundary component of $A$ lies on $S_-$ and the other on $S_+$. Then any lift of $A$ has one boundary component in $\mathcal U_- \times \{-\infty\}$ and the other in $\mathcal U_+ \times \{\infty\}$. This means there is an $f$-reducing curve which corresponds to $(S\times \{0\}) \cap A$, contradicting irreducibility.


Therefore, both boundary components of $A$ lie in $S_+$ or both lie in $S_-$. In $\widetilde M_\infty \subset S \times [-\infty,\infty]$, any lift of $A$ is therefore boundary parallel, implying the same for $A$ in $\overline M_f$, contradicting the fact that $A$ is essential.

From the contradictions above, we conclude that $A$ has no closed leaves except its boundary curves.  In this case, all leaves of $\mathcal F_A$ spiral in toward the boundary components of $A$.  
Let $\widetilde A \subset \widetilde M_\infty$ be any component of the preimage of $A$. Note that the foliation $\mathcal F_A$ lifts to a foliation $\widetilde{\mathcal F}_A$ on $\widetilde A$ whose non-boundary leaves are intersections with fibers $S \times \{t \}$. 

\begin{claim} $\widetilde A$ is the universal covering of $A$ and some power of $F$ preserves $\widetilde A$ and generates the covering group.
\end{claim}

\begin{proof} Either some positive power of $F$ preserves $\widetilde A$, or no positive power does. In the former case, the smallest such positive power generates an infinite order covering action on $\widetilde A$ with quotient $A$. In this case, $\widetilde A$ must be the universal cover and we are done. If there is no positive power that preserves $\widetilde A$, then $\widetilde A$ must be a lift of $A$. In this case, the boundary of $\widetilde A$ is a union of curves in $\mathcal U_+ \times \{\infty\} \cup\, \mathcal U_- \times \{-\infty\}$. 
Since the non-boundary leaves of $\widetilde{\mathcal F}_A$ limit onto the boundary curves of $\widetilde A$, we see that some fiber $S \times \{t\}$ contains a path (the leaf) that accumulates on $\mathcal U_\pm \times \{\pm \infty\}$, which is impossible. This contradiction means we are in the first case that $\widetilde A$ is the universal cover, as required.
\end{proof}

Let $F^m$ be the smallest positive power of $F$ that preserves $\widetilde A$ and thus generates the covering group of $\widetilde A \to A$. Let $\ell \subset (S \times \{t\}) \cap \widetilde A$ be a leaf of $\widetilde{\mathcal F}_A$ and consider the strip in $\widetilde A$ between $\ell$ and $F^m(\ell)$, which is contained in $S \times [t,t+m]$. This is a union of leaves of $\widetilde{\mathcal F}_A$ which are slices of a product structure on the strip. 
Note that in the covering to $A$ (or $\overline M_f$), the restriction to each leaf in the strip injects to a leaf of $\mathcal F_A$ and spirals in toward $\partial A$. In particular, the two ends of the strip must exit ends of $S \times [t,t+m]$ (since the only sequences in $S \times [t,t+m]$ which when projected to $\overline M_f$ accumulate on $\partial \overline M_f$ are those that exit the ends). Since $F(x,s) = (f(x),s-1)$, it follows that after projecting out the $\mathbb R$--direction, $\ell$ projects to a line $\ell_0$ in $S$ and the strip projects to a proper homotopy from $\ell_0$ to $f^m(\ell_0)$. That is, $\ell_0$ is an $f$--periodic line. We note that $\ell_0$ is in fact an essential line. For, if not, then it would bound an infinite monogon (topological half-plane) in $S$, and hence each leaf of $\widetilde{\mathcal F}_A$ would also bound a monogon in the surface slice $S \times \{s\}$. The union of these would project to the interior of a solid torus in $\overline M_f$ whose boundary is the union of $A$ and an annulus in $\partial \overline M_f$, contradicting the fact that $A$ is essential.
If $f$ is strongly irreducible, this is a contradiction. Therefore, in this case, there is no essential annulus and $\overline M_f$ is acylindrical. 

In general, the boundary of $\widetilde A$ consists of two lines in $\mathcal U_\pm \times \{\pm \infty\}$. If one of these lines lies in $\mathcal U_- \times \{-\infty\}$ and the other lies in $\mathcal U_+ \times \{\infty\}$, then $\ell_0$ is an AR-periodic line with one end in an attracting end of $S$ and the other in a repelling end, a contradiction. It follows that both components of the boundary of $\widetilde A$ are contained in $\mathcal U_+ \times \{ \infty \}$ or both are contained in $\mathcal U_- \times \{-\infty\}$. This proves the first claim of the lemma, and since we have already proved the second claim, we are done.
\end{proof}

\begin{proof}[Proof of \Cref{proposition:interior}]
This is immediate from \Cref{P:its hyperbolic} and \Cref{L:annuli analysis}.
\end{proof}

Finally, we note that the suspension flow $(\psi_s)$ on $M_f$ can be reparameterized to a local flow so that the flow lines that exit every compact set limit to a point on $\partial \overline M_f$ in finite time. We do this so that it extends to a local flow $(\bar \psi_s)$ on $\overline M_f$. This lifts to a local flow on $\widetilde M_\infty \subset S \times [-\infty,\infty]$ and has flow lines given by $\{x\} \times [-\infty,\infty]$, with either or both endpoints missing if $x$ is not in one or both of $\mathcal U_\pm \times \{\pm \infty\}$. Moreover, we can assume that $(\bar \psi_s)$ on $\overline M_f$, and its negative on the reflected copy of $\overline M_f$ in the double $D \overline M_f$, glue together to a well-defined flow $(\psi_s^D)$ on $D \overline M_f$.

\section{A quotient from a path of pants decompositions} \label{Section:quotient construction}

In this section, we begin by describing a method for decomposing $3$-manifolds that will be useful in proving bounds on volumes (see the remark below for other instances of such decompositions in the literature). We will use this idea to prove Theorems~\ref{T:upper} and \ref{T:component volume} as well as the corollaries described in the introduction. In our situation, the decomposition will arise naturally from a quotient map with various properties; see \Cref{P:block decomposition}.

\subsection{Block decompositions} First, recall from \Cref{D:pants block} that a pants block is a compact $3$-manifold, $\B$, with pared boundary $\partial_1 \B \subset \partial \B$, 
where $\partial_1 \B$ is a disjoint union of three or six embedded circles, depending on whether the block comes from $\Sigma_{1,1}$ or $\Sigma_{0,4}$, respectively. We view $\B$ as a stratified space (for bookkeeping purposes only), with strata being $\partial_1 \B$, $\partial_2 \B = \partial \B - \partial_1 \B$, and the interior $\intt(\B)$. We also recall that $\mathring{\B} = \B - \partial_1 \B$ admits a complete hyperbolic structure with totally geodesic, thrice-punctured sphere boundary components.

Suppose $N$ is a compact $3$-manifold, possibly with boundary, and $L \subset N$ a {\em link}. This is the image of a disjoint union of finitely many circles by a (smooth) embedding. We require each component of $L$ to be contained in the interior of $N$ or the boundary of $N$ (that is, if a component intersects the boundary, then it is contained in it).

\begin{defn}
A {\em block map} to $N$ relative to $L$ is a map of pairs $(\B,\partial_1 \B) \to (N,L)$ which is injective on each component of each stratum of $\B$, and so that the image of each component of $\partial_2\B$ is either disjoint from $\partial N$ or contained in it (but distinct components may have the same image). We also require that the image of $\B$ meets $L$ precisely in the image of $\partial_1 \B$. Given a block map, it is convenient to identify the block with its image $\B \subset N$, with the map itself implicit, thus allowing us to identify the components of each stratum in $N$ (some may be equal).

A {\em block decomposition of $N$ relative to $L$} (or a {\em block decomposition of $(N,L)$}) is a finite set of block maps $\{\B_i\}_{i=1}^n$ to $N$ relative to $L$, so that (after identifying the blocks with their images) we have
\begin{enumerate}
    \item the interiors of the $\B_i$ are pairwise disjoint;
    \item the union of the blocks is all of $N$; and
    \item whenever two blocks intersect, they do so in a union of components of strata.
\end{enumerate}

Thus, we can view $N$ as being built from a finite number of blocks. The blocks are glued together in pairs along closures of three-punctured spheres in their boundaries. 
The link $L$ is the union of the images of the $1$--boundaries, $\bigcup \partial_1 \B_i$. 
\end{defn}

Note that if $(N,L)$ admits a block decomposition, then $L \cap \partial N$ defines a pants decomposition of $\partial N$ (which may be empty).

\begin{remark} Using block decompositions to build $3$-manifolds is very similar to the construction of Minsky's ``model manifold" defined by Minsky \cite{ELC1} and used by Brock--Canary--Minsky \cite{ELC2} in the proof the Ending Lamination Conjecture. Our particular definition includes as a special case the construction Agol used in \cite{Agol-oct} for mapping tori of finite-type surfaces.
\end{remark}

An immediate consequence of \Cref{P:geometric block} is the following result. Recall that $V_\oct$ is the volume of the regular ideal octahedron in hyperbolic $3$-space.
\begin{corollary} \label{C:block decomp volume}
Suppose $L \subset N$ is a link in a compact $3$-manifold $N$ with a block decomposition of $N$ relative to $L$ that consists of $n_T$ blocks of type $\B^T$ and $n_S$ blocks of type $\B^S$. 
Then, $N - L$ admits a complete hyperbolic metric with totally geodesic thrice-punctured sphere boundary components and volume $V_\oct(n_T+2n_S)$.
\end{corollary}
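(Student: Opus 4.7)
The plan is to construct the desired hyperbolic metric on $N - L$ by gluing together the individual structures $(\mathring{\B}_i, \sigma_{\B_i})$ produced by \Cref{P:geometric block}, one for each block $\B_i$ of the decomposition. Each $\mathring{\B}_i$ is already a complete hyperbolic manifold with totally geodesic boundary consisting of thrice-punctured spheres, of volume $V_\oct$ when $\B_i = \B^T$ and $2 V_\oct$ when $\B_i = \B^S$. The block decomposition prescribes how to identify components of $\partial_2 \B_i$ with components of $\partial_2 \B_j$ (either pairwise in the interior of $N$, or sitting inside $\partial N$) in order to recover the pair $(N, L)$, so the task reduces to realizing those topological identifications by isometries of the hyperbolic structures.

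The key input is the rigidity of the thrice-punctured sphere: it admits a unique complete finite-volume hyperbolic metric, and its mapping class group is finite (isomorphic to $S_3$ in the orientation-preserving case) and realized by isometries of this metric. Consequently, every homeomorphism between two thrice-punctured spheres is isotopic to an isometry once both carry the unique complete hyperbolic structure. Applying this to each interior pairing of components of $\partial_2 \B_i$ with components of $\partial_2 \B_j$ specified by the decomposition, one can isotope the prescribed gluing homeomorphism to an isometry between the corresponding totally geodesic boundary components of the relevant $\mathring{\B}$'s.

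I would then carry out the gluings using these isometries. An isometric identification of totally geodesic boundaries extends the hyperbolic structure smoothly across the seam (the local developing map passes through), so the result is a complete hyperbolic metric on $\bigsqcup_i \mathring{\B}_i / \sim$, which is homeomorphic to $N - L$ after an isotopy absorbed by the choices above. Those components of $\partial_2 \B_i$ which lie in $\partial N$ descend to totally geodesic thrice-punctured sphere boundary components of this hyperbolic manifold, as required. Finally, additivity of volume across the pieces yields
\[
\Vol(N - L) = \sum_i \Vol(\mathring{\B}_i, \sigma_{\B_i}) = n_T V_\oct + n_S \cdot 2 V_\oct = V_\oct(n_T + 2 n_S).
\]

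The main step requiring care is the first one: justifying that the topological gluings prescribed by the block decomposition can be replaced by isometric ones. This rests on the uniqueness of the complete hyperbolic structure on a thrice-punctured sphere and the finiteness of its mapping class group acting by isometries. Once that is granted, the smoothness of the glued hyperbolic metric and the volume computation are immediate.
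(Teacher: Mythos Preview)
Your argument is correct and is exactly the standard justification the paper has in mind when it declares the corollary an ``immediate consequence'' of \Cref{P:geometric block}: glue the blocks' hyperbolic structures along their totally geodesic thrice-punctured sphere boundaries, using rigidity of $\Sigma_{0,3}$ to realize each prescribed identification by an isometry, and then sum the volumes.
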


\subsection{A block decomposition from a path in $\mathcal P(S)$} \label{S:blocks from paths} Now assume $f$ is an irreducible, end-periodic homeomorphism of $S$. 
Choose a pants decomposition $P$ in any $f$--invariant component $\Omega\subset \mathcal P_f(S)$ and a path $P = P_0,P_1,\ldots,P_n = f^{-1}(P)$ in $\Omega$. We fix representatives of the pants decompositions and of the isotopy class of $f$ (all denoted by the same names) so that $f(P_n) = P_0$. 
We also assume that our representatives are such that $P_k$ and $P_{k+1}$ agree on the complement of a complexity $1$ (open) subsurface $Z_k$. We further assume that on $Z_k$, $P_k$ and $P_{k+1}$ meet in simple closed curves $\alpha_k^-,\alpha_k^+$, respectively, that intersect each other minimally: once or twice depending on the homeomorphism type of $Z_k$. We let $n_T$ and $n_S$ denote the number of surfaces $Z_k$ homeomorphic to $\Sigma_{1,1}$ and $\Sigma_{0,4}$, respectively, with $n_S+n_T = n$. 
As usual, $\overline Z_k\subset S$ is the compactified subsurface (which may only be embedded on its interior; see \Cref{S:surfaces-and-mcg}).

For each $0 \leq k \leq n$, we also view $P_k,Z_k \subset S$ as a subset of the slice
\[ P_k,Z_k \subset S \times \left\{\tfrac{k}{n} \right\} \subset S \times [0,1] \]
via the inclusion in the first factor.
Gluing $S \times \{1\}$ to $S \times \{0\}$ via $f$, we produce $M_f$. Each of $P_k$ and $Z_k$ are well-defined subsets of $M_f \subset \overline M_f$, where we identify $P_n$ with $P_0$ via the gluing map $f$, and similarly identify $Z_n$ with $Z_0$. Define $L = \cup_k P_k \subset M_f$.

For each $0 \leq k < n$, set
\[ W_k = Z_k \times \left( \tfrac{k}n, \tfrac{k+1}n \right), \quad  \overline W_k = \overline Z_k \times \left[ \tfrac{k}n, \tfrac{k+1}n \right], \quad W = \bigsqcup_{k=0}^{n-1} W_k, \quad \mbox{ and } \quad \overline W = \bigcup_{k=0}^{n-1} \overline W_k.\] 
The set $W$ and each $W_k$ is naturally an open subset of $M_f$, and by another abuse of notation we also view $\overline W$ and each $\overline W_k$ as subsets of $M_f$ (again, only the interior is actually embedded).

Each $\overline W_k$ admits a natural map to a block $\phi_k \colon \overline W_k \to \B_k$ (by definition of a block). Here, $\alpha_k^-$, $\alpha_k^+$, and $\partial \overline Z_k \times \left[ \tfrac{k}n,\tfrac{k+1}n \right]$ map to $\partial_1^- \B_k$, $\partial_1^+ \B_k$, and $\partial_1^v \B_k$, respectively. 
We write $\partial_2\overline W_k$ and $\partial_2^\pm \overline W_k$ for the unions of $3$-punctured spheres mapping to $\partial_2 \B_k$ and $\partial_2^\pm \B_k$, respectively. We identify $\partial_2 \overline W_k$, and $\partial_2^\pm \overline W_k$ with their images in $M_f$ when convenient. We also write $\partial^v \overline W_k = \partial \overline Z_k \times \left[ \tfrac{k}n,\tfrac{k+1}n \right]$ to denote {\em vertical boundary annuli} which project to $\partial_1^v \B_k$.

To state the main proposition of this section, recall from \Cref{C:complexity of S_pm} that $\xi(\partial \overline M_f) = \xi(S_+) +\xi(S_-) = 3|\Phi^*(f)|$.

\begin{proposition} \label{P:block decomposition} 
Given a sequence of elementary moves $P = P_0,\ldots,P_n = f^{-1}(P) \subset \Omega\subset \mathcal P_f(S)$, together with the notation and assumptions above, there is a manifold $\widehat M_f$ and a quotient map $h \colon \overline M_f \to \widehat M_f$ with the following properties:
\begin{enumerate}
    \item \label{I:homeo on bdy} $h$ restricts to a homeomorphism from $\partial \overline M_f \to \partial \widehat M_f$;
    \item \label{I:htpc to homeo} $h$ is homotopic (rel boundary) to a homeomorphism $H \colon \overline M_f \to \widehat M_f$;
    \item \label{I:link compts} $\hat L = h(L)$ is a link with $n + \tfrac32|\Phi^*(f)|$ components;
    \item\label{I:block decomp} $h|_{\overline W_k} \colon \overline W_k \to \widehat M_f$ is the composition of $\phi_k$ and a block map $\B_k \subset \widehat M_f$ relative to $\hat L$. Moreover, $\{\B_k\}_{k=0}^{n-1}$ is a block decomposition of $(\widehat M_f,\hat L)$;
    \item\label{I:boundary pants decomp} the pants decomposition $P_\Omega = h^{-1}(\hat L \cap \partial \widehat M_f) = H^{-1}(\hat L \cap \partial \widehat M_f) \subset \partial \overline M_f$ depends only on the component $\Omega \subset \mathcal P(S)$ containing $P$, up to isotopy; and 
    \item\label{I:acylindrical pared} $(\overline M_f,P_\Omega)$ is a pared acylindrical manifold.
\end{enumerate}
\end{proposition}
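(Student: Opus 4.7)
The strategy is to construct $\widehat M_f$ as an explicit quotient of $\overline M_f$, collapsing the complement of the blocks $\overline W_k$ along the interval direction of the mapping torus. Let $\Pi = \overline{\overline M_f - W}$. Each component of $\Pi$ is a ``tube'' of the form $\overline P \times J$, where $\overline P$ is a pair of pants that persists in a maximal consecutive subrange $P_{k_0}, \ldots, P_{k_1}$ of the path (so $\overline P$ is disjoint from each $Z_k$ for $k_0 \leq k < k_1$) and $J$ is the corresponding arc in $[0,1]$, possibly wrapping around the mapping torus gluing; tubes whose pants meet the escaping sets $\mathcal U_\pm$ limit onto $\partial \overline M_f = S_+ \sqcup S_-$. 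Define the decomposition of $\overline M_f$ whose nontrivial equivalence classes are the arcs $\{x\} \times J$ inside each tube, and let $h \colon \overline M_f \to \widehat M_f$ be the quotient map.

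Properties (\ref{I:homeo on bdy}) and (\ref{I:htpc to homeo}) follow from Armentrout's Theorem (\Cref{thm:armentrout}) applied to $\overline M_f$ (or its double to handle boundary): each equivalence class is a point or a compact arc, hence cellular, and the product structure of the tubes yields upper semicontinuity. The restriction $h|_{\partial \overline M_f}$ is a homeomorphism since distinct boundary points have distinct first coordinates in $S$ and hence are not identified. For (\ref{I:block decomp}), the interiors of the $\overline W_k$ are disjoint from $\Pi$, so $h$ factors through the block map $\phi_k$ on each block, and the collapsed intermediate tubes implement the gluings between consecutive blocks along three-punctured sphere components of $\partial_2 \mathcal B_k$. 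For (\ref{I:link compts}), the $n$ elementary moves contribute $n$ link components (one per move, corresponding to the ``new'' curves $\alpha_k^\pm$), and the boundary-hitting tubes contribute the $\tfrac{3}{2}|\Phi^*(f)|$ components forming the pants decomposition of $\partial \overline M_f$, where the count comes from \Cref{C:complexity of S_pm}. Property (\ref{I:boundary pants decomp}) holds because $P_\Omega$ is determined by the asymptotic behavior of $P$ in $\mathcal U_\pm$, under the covering projection to $S_\pm$, and this asymptotic behavior is invariant (up to isotopy) under the finitely many elementary moves on compact subsurfaces that relate different choices of $P \in \Omega$ or of paths within $\Omega$.

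For (\ref{I:acylindrical pared}), by \Cref{L:genus of component} each component of $\partial \overline M_f$ has genus at least $2$ (since $\Phi^*(f)(v_E) \neq 0$ for every end $E$), and $P_\Omega$ is a pants decomposition of $\partial \overline M_f$; hence by the remark following the pared definition in \Cref{S:pared}, it suffices to verify that $(\overline M_f, P_\Omega)$ is pared. Conditions (\ref{I:pared basic})--(\ref{I:what pare is}) are immediate from \Cref{P:its hyperbolic}, and (\ref{I:non cyclic}) follows from atoroidality together with the Torus Theorem. For the crucial condition (\ref{I:annuli}), an incompressible annulus with boundary in $\pare$ that is not homotopic into $\pare$ would be essential in $\overline M_f$; by \Cref{L:annuli analysis}, it would lift to a strip in $\widetilde M_\infty$ whose boundary lines project to closed curves in $S_\pm$ necessarily isotopic to $P_\Omega$ components, and a comparison with the corresponding boundary-hitting tube in the block decomposition produces the required homotopy into $\pare$.

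The main obstacle is precisely this analysis near $\partial \overline M_f$: invoking Armentrout requires careful verification of cellularity and upper semicontinuity where infinitely many tubes accumulate on the boundary components $S_\pm$, and condition (\ref{I:annuli}) is the most delicate piece of the argument when $f$ is merely irreducible and admits periodic lines giving rise to essential annuli in $\overline M_f$ that must be absorbed into $\pare$.
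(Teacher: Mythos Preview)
Your overall approach---collapsing flow-line arcs in $\overline M_f - W$, applying Armentrout to the double, and reading off the block decomposition---is exactly what the paper does. Two points need correction.

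First, a minor omission: Armentrout's theorem on $D\overline M_f$ yields only that $Dh$ is homotopic to a homeomorphism; to conclude that $h$ itself is homotopic rel boundary to a homeomorphism $\overline M_f \to \widehat M_f$, the paper observes that $h_*$ is injective on $\pi_1$ (since it is after doubling) and then invokes Waldhausen's theorem. Relatedly, your worry about ``infinitely many tubes accumulating on $\partial \overline M_f$'' is misplaced: once irreducibility forces every pants curve of $L$ to eventually flip forward or backward (this is precisely where irreducibility enters, and you do not invoke it), the complement $\overline M_f - (\mathcal A \cup W)$ has only \emph{finitely} many components, each a product $Y \times I$ with $Y$ one of the finitely many three-punctured spheres in $\bigcup_k \partial_2 \overline W_k$. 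Your description of the components of $\Pi$ as products $\overline P \times J$ is also not quite right; adjacent pairs of pants share boundary curves that persist through different ranges, so the components are more complicated and the paper instead works with the finer decomposition into the annuli $\mathcal A$ and these $Y \times I$ pieces.

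Second, and more substantively, your argument for condition (\ref{I:annuli}) in item (\ref{I:acylindrical pared}) does not work as written. You suggest comparing an essential annulus $A$ with $\partial A \subset N(P_\Omega)$ to a ``boundary-hitting tube'' to manufacture a homotopy into $P_\Omega$; but those tubes are boundary-parallel in $\overline M_f$, whereas $A$ is by hypothesis essential, so no such comparison can produce the homotopy. The correct argument is a clean contradiction: by construction, every component of $P_\Omega$ lifts to a \emph{closed} curve in $\mathcal U_\pm \times \{\pm\infty\}$ (it is the image, under the covering $\mathcal U_\pm \to S_\pm$, of an $f$-eventually-invariant pants curve near the ends). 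On the other hand, \Cref{L:annuli analysis} says that the boundary curves of any essential annulus in $\overline M_f$ lift to \emph{lines} in $\mathcal U_\pm \times \{\pm\infty\}$. Since $\partial A \subset P_\Omega$ cannot lift simultaneously to closed curves and to lines, $A$ is not essential after all; hence it is boundary parallel, and that parallelism (with $\partial A$ already in $N(P_\Omega)$) gives the required homotopy of pairs into $N(P_\Omega)$.
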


The pants decomposition $P_\Omega$ of $\partial \overline M_f$ is the one alluded to in the introduction, and so by \Cref{P:block decomposition} (\ref{I:acylindrical pared}) together with \Cref{Theorem:Thurston.Geometrization}, $\overline M_f - P_\Omega$ admits a complete hyperbolic structure, $\sigma(f,\Omega)$, with totally geodesic, thrice-punctured sphere boundary components. So, by \Cref{T:Storm},
\[ \iVol(\overline M_f-P_\Omega) = \Vol(\overline M_f - P_\Omega) = \Vol(\overline M_f-P_\Omega,\sigma(f,\Omega)).\]

\subsection{Applications of \Cref{P:block decomposition}} We will prove the proposition in the next section, but first we use it to deduce the first two theorems from the introduction and their corollaries. 

\medskip

\noindent{\bf \Cref{T:component volume}.} {\em \Omegaupperboundtheorem}
\begin{proof} Observe that for any $N >0$, $\overline M_{f^N}$ is an $N$--fold cover of $\overline M_f$. We will apply \Cref{P:block decomposition} to $\overline M_{f^N}$ for an appropriate choice of $N$, which we now specify. Fix any $f$--invariant component $\Omega \subset \mathcal P_f(S)$ and let $\epsilon > 0$. Let $N >0$ and $P = P_0,\ldots,P_n = f^{-N}(P) \subset \Omega$ be a sequence of elementary moves so that 
\[ \left|\tau(f,\Omega) - \frac{n_T+2n_S}N \right| < \epsilon,\]
where $n = n_S+n_T$, as above. Recall that the path metric we defined on (the components of) $\mathcal P(S)$ has edges of length $1$ corresponding to an elementary move that occurs on a one-holed torus has length $1$, and edges of length $2$ corresponding to an elementary move that occurs on a four-holed sphere. 

For the path $P_0,\ldots, P_n$ above, let $P_\Omega^N \subset \partial \overline M_{f^N}$ be the pants decomposition from part (\ref{I:boundary pants decomp}). 
Observe that by replacing the sequence $P_0,\ldots,P_n$ with a concatenation of $P = P_0 = P_0',P_1',\ldots,P_k'= f^{-1}(P)$ and its image under $f^{-1},f^{-2},\ldots,f^{-N+1}$, we get the same pants decomposition $P_\Omega^N$ on $\partial \overline M_{f^N}$, by \Cref{P:block decomposition} (\ref{I:boundary pants decomp}), but this necessarily lifts the pants decomposition $P_\Omega$ of $\partial \overline M_f$. 
Since $\Vol(\overline M_{f^N} - P_{\Omega}^N)$ is the volume of the metric with totally geodesic boundary on $\overline M_{f^N} - P_\Omega^N$, it follows that $\Vol(\overline M_{f^N} - P_{\Omega}^N) = N\Vol(\overline M_f - P_\Omega)$. Therefore, if we can prove
\begin{equation}\label{E:Nth power bound} \Vol(\overline M_{f^N} - P_{\Omega}^N) \leq V_\oct(n_T+2n_S)
\end{equation}
we will have 
\[ \Vol(\overline M_f- P_\Omega)= \frac{\Vol(\overline M_{f^N} - P^N_\Omega)}{N} \leq V_\oct \frac{n_T+2n_S}{N} \leq V_\oct(\tau(f,\Omega)+\epsilon),\]
and letting $\epsilon \to 0$ will then complete the proof.

To prove the inequality (\ref{E:Nth power bound}), we apply \Cref{P:block decomposition} \eqref{I:block decomp} to the path $P_0,\ldots,P_n$ and $\overline M_{f^N}$, producing a block decomposition of $\widehat M_{f^N}$ relative to $\hat L$. By \Cref{C:block decomp volume}, the drilled manifold $\widehat M_{f^N} - \hat L$ 
also admits a complete hyperbolic metric $\sigma$ with totally geodesic thrice-punctured sphere boundary components having volume $\Vol(\widehat M_{f^N} - \hat L,\sigma)=V_\oct(n_T+2n_S)$.
We now observe that $\overline M_{f^N} - P_\Omega^N$ is homeomorphic to a Dehn filling on $\widehat M_{f^N} - \widehat L$ of some of the components of $\hat L$. By \Cref{P:block decomposition} \eqref{I:acylindrical pared} and \Cref{Theorem:Thurston.Geometrization}, there is a convex hyperbolic metric $\sigma(f^N, \Omega)$ on $\overline M_{f^N} - P_\Omega^N$ with totally geodesic boundary. 
Appealing to \Cref{T:filling volume}, we see
\[ \Vol(\overline M_{f^N}-P_\Omega^N) \leq \Vol(\widehat M_{f^N}-\hat L) = V_\oct(n_T+2n_S).\]
This proves (\ref{E:Nth power bound}) and so completes the proof of the theorem.
\end{proof}

From \Cref{T:component volume} we obtain the first theorem.

\medskip

\noindent {\bf \Cref{T:upper}.} {\em \upperboundtheorem}

\smallskip

\begin{proof}
Given $\epsilon >0$, suppose $\Omega \subset \mathcal P_f(S)$ is a component such that
\[ \left| \tau(f) - \tau(f,\Omega) \right| <\epsilon\]
and let $N >0$ be such that $f^N$ preserves $\Omega$. Then, 
\[  \frac{\tau(f^N,\Omega)}{N} = \tau(f,\Omega)  \leq \tau(f) + \epsilon. \]
According to \Cref{T:component volume} we have
\[ \Vol(\overline M_{f^N} - P_\Omega) \leq V_\oct \tau(f^N,\Omega), \]
where $P_\Omega$ is the pants decomposition of $\partial \overline M_{f^N}$ from \Cref{P:block decomposition} \eqref{I:boundary pants decomp}.

On the other hand, by \Cref{T:Storm} and \Cref{T:Storm3},
\[ \iVol(\overline M_{f^N}) \leq \iVol(\overline M_{f^N}-P_{\Omega}) = \Vol(\overline M_{f^N}-P_{\Omega}),  \]
and so by \Cref{T:Storm 2} and the inequalities above we have
\[
\iVol(\overline M_f) = \frac{\iVol(\overline M_{f^N})}{N}  \leq   \frac{V_\oct \tau(f^N,\Omega)}N  \leq V_\oct (\tau(f)+\epsilon).\]
Letting $\epsilon \to 0$ completes the proof.
\end{proof}



Next, we prove the lower bound on translation length stated in the introduction. Recall that $V_\tet$ is the volume of a regular ideal tetrahedron in $\mathbb H^3$.

\medskip

\noindent
{\bf \Cref{C:lower bound translation}}
{\em \lowertranslation}

\begin{proof}
First observe that $\tau(f) = \inf \tau(f,\Omega)$, where the infimum is taken over all components $\Omega \subset \mathcal P(S)$, 
so it suffices to prove that for any component $\Omega$, $\tau(f,\Omega)$ is bounded below by the quantity on the right in the corollary. We need only consider components $\Omega \subset \mathcal P_f(S)$, so we fix such a component. Let $N > 0$ be such that $f^N(\Omega) = \Omega$, and let $P_\Omega \subset \partial \overline M_{f^N}$ be the pants decomposition given by \Cref{P:block decomposition} \eqref{I:boundary pants decomp}. By \Cref{P:block decomposition} \eqref{I:acylindrical pared} together with \Cref{Theorem:Thurston.Geometrization}, $\overline M_{f^N} - P_\Omega$ admits a convex hyperbolic structure $\sigma_{min}$ with totally geodesic boundary so that
\[ \iVol(\overline M_{f^N} - P_\Omega) = \Vol(\overline M_{f^N}-P_\Omega).\]
The number of pants curves in $P_\Omega$ is $\xi(\partial \overline M_{f^N}) = N \xi(\partial \overline M_f)$, and so the double of $\overline M_{f^N}-P_\Omega$ admits a complete hyperbolic structure, obtained by doubling $\sigma_{min}$, having \[ \xi(\partial \overline M_{f^N}) = N \xi(\partial \overline M_f)\] 
cusps. According to Adams \cite{Adams-Ncusp}, the volume of this hyperbolic structure on the double of $\overline M_{f^N}$ is at least $V_\tet$ times the number of cusps, and thus
\[ \tau(f,\Omega) = \frac{\tau(f^N,\Omega)}N  \geq \frac{\iVol(\overline M_{f^N} - P_\Omega)}{N \, V_\oct} \geq \frac{V_\tet \, N \xi(\partial \overline M_f)}{2N \, V_\oct} = \frac{V_\tet \, \xi(\partial \overline M_f)}{2 V_\oct}. \]
\end{proof}

The following provides an alternate description of the lower bound in \Cref{C:lower bound translation} in terms of the coarse end behavior $\Phi^*(f)$ of $f$ as discussed in \Cref{sub:endbehavior}.
\begin{corollary} \label{C:intrinsic lower translation}
Given an irreducible, end-periodic homeomorphism $f$ with coarse end behavior $\Phi^*(f)$, we have
\[ \tau(f) \geq \frac{3 V_\tet |\Phi^*(f)|}{2 V_\oct}. \]
\end{corollary}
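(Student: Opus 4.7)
The plan is to deduce this corollary as a direct consequence of \Cref{C:lower bound translation}, rewriting the boundary complexity $\xi(\partial \overline M_f)$ in terms of the coarse end behavior $\Phi^*(f)$. The crucial input is \Cref{C:complexity of S_pm}, which says $\xi(S_+) = \xi(S_-) = \tfrac32|\Phi^*(f)|$. Combined with the fact that $\partial \overline M_f \cong S_+ \sqcup S_-$ from \Cref{L:prop discont}, this gives
\[ \xi(\partial \overline M_f) = \xi(S_+) + \xi(S_-) = 3|\Phi^*(f)|.\]
Substituting into the bound from \Cref{C:lower bound translation} immediately produces the factor of $3/2$ appearing in the statement.

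First I would verify that the hypotheses of the two ingredients line up with the hypothesis here. \Cref{C:lower bound translation} requires only irreducibility and end-periodicity, which are exactly the present assumptions. However, \Cref{C:complexity of S_pm} (via \Cref{L:genus of component}) is established under the assumption that $f$ is pure, so the non-pure case needs a separate word. To handle it, I would pass to a power: let $m \geq 1$ be the order of the permutation of ends of $S$ induced by $f$, so that $f^m$ is pure. Applying \Cref{T:upper}/\Cref{C:lower bound translation} to $f^m$, using $\tau(f^m) = m\tau(f)$, and invoking the convention $|\Phi^*(f)| := |\Phi^*(f^m)|/m$ indicated in the remark following \Cref{construction} (consistent because $\Phi^*$ is a homomorphism on $\PMap(S)$, so $|\Phi^*(f^m)| = m|\Phi^*(f)|$ in the pure case), the inequality for $f^m$ descends to $f$ with the same constant.

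The main obstacle, such as it is, is purely notational: one needs to confirm that the extension of $|\Phi^*|$ to non-pure elements is the natural scaling-invariant one, so that the power-trick yields the clean constant $3V_\tet/(2V_\oct)$. There is no genuine geometric content beyond what is already contained in \Cref{C:lower bound translation} and \Cref{C:complexity of S_pm}; the result is offered as an alternative packaging of the previous bound that is more intrinsic to the surface $S$ and the homeomorphism $f$ (rather than to the compactified mapping torus $\overline M_f$).
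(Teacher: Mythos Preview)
Your proposal is correct and takes essentially the same approach as the paper: the paper's proof is a one-liner citing \Cref{C:complexity of S_pm} together with $\partial \overline M_f = S_+ \cup S_-$ to rewrite $\xi(\partial \overline M_f) = 3|\Phi^*(f)|$, and then invoking \Cref{C:lower bound translation}. Your additional care about the pure versus non-pure case is actually more thorough than the paper, which simply applies \Cref{C:complexity of S_pm} without comment (implicitly relying on $\Phi^*(f)$ only being defined for pure $f$, or on the extension indicated in the remark following \Cref{construction}).
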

\begin{proof}
Since $\partial \overline M_f = S_+ \cup S_-$, the result is immediate from \Cref{C:complexity of S_pm} and \Cref{C:lower bound translation}.
\end{proof}

\section{Proof of \Cref{P:block decomposition}}\label{Section: Proof of Pro4.3}

The bulk of the proof of \Cref{P:block decomposition} involves defining the quotient map $h: \overline{M}_f\to \widehat M_f$, analyzing the structure of the sets of the associated decomposition, and then appealing to \Cref{thm:armentrout}. Since the hypotheses of that theorem require a closed manifold, we will double $\overline M_f$ over $\partial \overline M_f$.

We now define a quotient map $h: \overline M_f \to \widehat M_f$ in terms of a decomposition of $\overline M_f$. The elements in the decomposition are either:
\begin{enumerate}
    \item singletons in $W$; or
    \item maximal components of a flow line of the local flow $(\bar \psi_s)$ (defined at the end of \Cref{section:Compactification}) intersected with $\overline M_f  -  W$.
\end{enumerate}
We note that there may, a priori, be a flow line that entirely misses $W$. We will see that this does not happen, and as \Cref{C:flowlines hit W} below shows, every component of intersection of a flow line with $\overline M_f  -  W$ is a compact arc. We first verify this for points in $L$ and then for points outside. Along the way, we verify that the image of $L$ under $h$ is the appropriate union of circles from
\Cref{P:block decomposition} \eqref{I:link compts}.\\

For each component $\alpha \subset L$, we say that $\alpha$ {\em flips forward} if $\psi_{1/n}(\alpha) \not \subset L$. Note that $\alpha$ flips forward if and only if it is equal to $\alpha_k^- \subset Z_k \subset S \times \left\{\tfrac{k}n\right\}$ for some $0 \leq k < n$. Thus, there are precisely $n$ components of $L$ that flip forward.
We say that $\alpha$ {\em flips forward after $k \geq 0$ steps} if either $k =0$ and $\alpha$ flips forward, or if $k > 0$ and $\psi_{j/n}(\alpha)$ does not flip forward for any $0 \leq j < k$, but $\psi_{k/n}(\alpha)$ flips forward.
We similarly say that a component $\alpha \subset L$ {\em flips backward} if $\psi_{-1/n}(\alpha) \not \subset L$ and note that there are exactly $n$ components of $L$ that flip backward. {\em Flipping backward after $k\geq 0$ steps} is defined in the analogous manner.
Briefly, we will also say that $\alpha$ {\em eventually flips forward/backward}, if it does so after $k$ steps, for some $k \geq 0$.

For any component $\alpha \subset L$ and (possibly infinite) interval $I \subseteq \mathbb R$ containing $0$, we let
\[ A(\alpha,I) = \bigcup_{t \in I} \psi_t(\alpha).\] 

\begin{lemma} \label{L:eventually flipped}
Every component $\alpha \subset L$ eventually flips forward or backward.
\end{lemma}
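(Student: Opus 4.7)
The plan is to argue by contradiction. Suppose some component $\alpha\subset L$ eventually flips neither forward nor backward, and say $\alpha\subset P_{k_0}$. The two non-flipping hypotheses together say that $\psi_{k/n}(\alpha)\subset L$ for every $k\in\mathbb{Z}$. Using the mapping torus identification $(x,1)\sim(f(x),0)$ to unwrap the flow through full time cycles, this translates into the combinatorial statement that the curve $f^m(\alpha)\subset S$ lies in $P_{k_0}$ for every $m\in\mathbb{Z}$; in particular the full $\langle f\rangle$-orbit of $\alpha$ is contained in the single pants decomposition $P_{k_0}$.

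Now I would split into two cases based on whether this orbit is finite or infinite. If it is finite, then $f^N(\alpha)=\alpha$ for some $N\geq 1$, so $\alpha$ is a periodic curve of $f$, directly contradicting irreducibility. In the infinite case, I would use that a pants decomposition of $S$ is a locally finite multicurve (since $S$ is locally compact and each complementary pair of pants has only three boundary components), so any compact subset of $S$ meets only finitely many components of $P_{k_0}$. Applied to the infinite orbit $\{f^m(\alpha)\}_{m\in\mathbb{Z}}\subset P_{k_0}$, this forces $f^m(\alpha)$ to exit every compact subset of $S$ as $|m|\to\infty$.

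The main step is to upgrade this escape behavior to the conclusion that $\alpha$ is a reducing curve. Fixing good nesting neighborhoods $U_+$ of the attracting ends and $U_-$ of the repelling ends, chosen disjoint from each other, the core $K=\overline{S-(U_+\cup U_-)}$ is compact and the connected components of $S-K$ are exactly the individual nesting neighborhoods $U_E$. For $|m|$ large, $f^m(\alpha)$ is disjoint from $K$, and since $f^m(\alpha)$ is connected it must lie inside a single $U_E$. I would then rule out the wrong type of end in each direction via the nesting property: if $f^m(\alpha)\subset U_E$ with $E$ repelling for arbitrarily large positive $m$, then $\alpha\subset f^{-m}(U_E)$, and $\{f^{-m}(U_E)\}_{m\geq 0}$ is a nested decreasing family of neighborhoods whose intersection is the end $E$ itself, incompatible with $\alpha$ being a fixed compact simple closed curve. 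A symmetric argument handles the negative direction. Hence $f^n(\alpha)\subset U_E$ for an attracting end $E$ and some large $n$, while $f^m(\alpha)\subset U_{E'}$ for a repelling end $E'$ and some $m<n$, which makes $\alpha$ a reducing curve and contradicts irreducibility.

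I expect the main technical care to go into the bookkeeping that translates ``$\psi_{k/n}(\alpha)\subset L$ for all $k$'' into ``$f^m(\alpha)\in P_{k_0}$ for all $m$'' through the mapping torus gluing, since one has to pass through the time-$1$ identification correctly at each wrap; once this combinatorial reduction is in place, the end-periodic analysis in the infinite-orbit case is a direct application of the good nesting neighborhood formalism set up in Section~2.2, and the finite-orbit case is immediate from the definition of irreducibility.
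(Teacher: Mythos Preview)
Your proposal is correct and follows essentially the same approach as the paper: argue by contradiction that a non-flipping component yields an $f$--invariant family of curves inside a single pants decomposition, and then deduce either a periodic curve (finite orbit) or a reducing curve (infinite orbit), contradicting irreducibility. The paper's proof is terser---it simply asserts that the $f$--orbit either is periodic or meets both $U_+$ and $U_-$---while you spell out the local-finiteness and nesting arguments that justify the latter alternative, but the underlying idea is identical.
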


\begin{proof}
We suppose $\alpha$ does not eventually flip forward or backward and arrive at a contradiction. Indeed, under this assumption we have $A(\alpha,\mathbb R)$
is a bi-infinite annulus that meets $S = S \times \{0\}$ in an $f$--invariant subset of $P_0$. This means that there is either a periodic simple closed curve in $P_0$ or else the $f$--orbit of a curve in $P_0$ contains components in both $U_+$ and $U_-$. Either conclusion contradicts the irreducibility of $f$.
\end{proof}

For the next lemma, we let $n = n_S +n_T$, as in \Cref{S:blocks from paths}. 
\begin{lemma} \label{L:annuli through L}
The intersection of $\overline M_f  -  W$ with the union of $(\bar \psi_s)$ flow lines through $L$ consists of $n+\tfrac32|\Phi^*(f)|$ (possibly degenerate) compact annuli. There are $3|\Phi^*(f)|$ boundary components of these annuli on $\partial \overline M_f$ forming a pants decomposition of $\partial \overline M_f$, and $n$ boundary components which are curves of the form $\alpha_k^\pm$ on some $\overline W_k$.
\end{lemma}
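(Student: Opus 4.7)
My plan is to trace the $(\bar\psi_s)$-flow lines through components of $L$ and classify how each maximal segment in $\overline M_f - W$ terminates. For a component $\alpha \subset L$ at level $m/n$, I will track the forward $\psi$-orbit: at each discrete step $t = j/n$, either the curve in question lies in both adjacent pants decompositions (so the flow stays on $L$), or the flow ``flips'' into some block $W_k$. A forward flip happens precisely when the flowing curve equals some $\alpha_k^- \subset \partial_1^- \overline W_k$. By \Cref{L:eventually flipped}, at least one of the forward or backward $\psi$-orbits through $\alpha$ must flip. If, say, the backward orbit never flips, then repeating the argument of \Cref{L:eventually flipped} forces the backward $f$-iterates of $\alpha$ to enter $U_-$, so $\alpha \in \mathcal U_-$; in the reparameterized flow $(\bar\psi_s)$, the backward orbit then reaches $S_- \subset \partial \overline M_f$ in finite time. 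The symmetric case produces an annulus whose forward boundary lies on $S_+$. Thus every maximal flow segment through a component of $L$ yields a (possibly degenerate) compact annulus $A \subset \overline M_f - W$ whose two boundary circles are each either of the form $\alpha_k^-$, $\alpha_k^+$, or a simple closed curve on $\partial \overline M_f$.

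For the count I partition the annuli into three classes: (A) both boundaries of the form $\alpha_k^\pm$; (B) one boundary $\alpha_k^-$ and one on $S_-$; and (C) one boundary $\alpha_k^+$ and one on $S_+$. The possibility of both boundaries on $\partial \overline M_f$ is excluded by \Cref{L:eventually flipped}. The forward-flipping components of $L$ are exactly the $n$ curves $\alpha_0^-, \ldots, \alpha_{n-1}^-$, each arising as the forward boundary of a unique annulus, so $a + b = n$ with $a, b, c$ counting the annuli of each respective type; symmetrically $a + c = n$, and hence $b = c$. Summing the two boundary circles of each annulus and subtracting the $2n$ block-side boundaries gives $b + c$ boundary circles on $\partial \overline M_f$ and a total annulus count of $a + b + c = n + (b+c)/2$.

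The main obstacle is to show that those $b + c$ circles on $\partial \overline M_f$ form a pants decomposition of $\partial \overline M_f = S_+ \sqcup S_-$; since any such pants decomposition contains $\xi(S_+) + \xi(S_-) = 3|\Phi^*(f)|$ curves by \Cref{C:complexity of S_pm}, proving this will simultaneously give $b + c = 3|\Phi^*(f)|$ and the desired total $a + b + c = n + \tfrac32|\Phi^*(f)|$. My approach is to work in the cover $\widetilde M_\infty$: the forward endpoint of a type-(C) annulus through a lift $\tilde\alpha \subset \mathcal U_+$ is $\tilde\alpha \times \{\infty\} \subset \mathcal U_+ \times \{\infty\}$, which projects to a circle on $S_+ = \mathcal U_+/\langle f \rangle$. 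I will choose the good nesting neighborhoods $U_+, U_-$ deep enough to be disjoint from every $Z_k$, which is possible since the $Z_k$ are compact; this forces $P_j \cap \overline{U_+} = P_0 \cap \overline{U_+}$ for all $j$, and similarly for $\overline{U_-}$. Combined with the relation $f(P_n) = P_0$, I expect this to show that components of $L$ deep enough in an end are forward (resp.~backward) non-flipping, and that the resulting family of lifts in $\mathcal U_+$ (resp.~$\mathcal U_-$) is $\langle f \rangle$-equivariant and descends to an embedded, pairwise disjoint pants decomposition on $S_+$ (resp.~$S_-$). The delicate technical step will be verifying the embeddedness and disjointness of the descended curves and matching each $S_\pm$-boundary of an annulus with exactly one curve of the resulting pants decomposition.
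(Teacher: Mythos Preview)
Your proposal is correct and follows essentially the same route as the paper: classify the annuli by their forward/backward flipping behavior via \Cref{L:eventually flipped}, count boundary components ($2n$ on the blocks, the rest on $\partial\overline M_f$), and show the latter form a pants decomposition by choosing good nesting neighborhoods $U_\pm$ on which all $P_j$ agree and using the covering $\mathcal U_\pm \to S_\pm$ to descend $P_0 \cap U_\pm$. The paper's treatment of your ``delicate technical step'' is simply the observation that the forward-flow map $U_+ \to S_+$ is the restriction of the infinite cyclic cover $\mathcal U_+ \to S_+$, so the pants decomposition $P_0 \cap U_+$ upstairs projects to one downstairs.
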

By a ``degenerate annulus'', we mean an annulus $S^1 \times [a,b]$, where $a= b$, which is therefore just a circle. In this case, the two boundary components of the annulus consist of the circle, and this has the form $\alpha_k^+ =\alpha_{k+1}^-$.
\begin{proof} Fix a component $\alpha \subset L$.
By \Cref{L:eventually flipped}, $\alpha$ either flips forward after $k \geq 0$ steps, backward after $k'\geq 0$ steps, or both (for some $k$ and $k'$). If it flips forward after $k$ steps and does not eventually flip backward, then $A(\alpha,(-\infty,\tfrac{k}n])$ is a half-open annulus with its boundary component given by $\alpha_j^- = \psi_{k/n}(\alpha) \subset \overline W_j$, for some $j$. The open end limits to a curve in $S_- \subset \partial \overline M_f$.  

Similarly, if $\alpha$ flips backward after $k' \geq 0$ steps, but does not eventually flip forward, then $A(\alpha,[\tfrac{-k'}n,\infty))$ is a half-open annulus with boundary component $\alpha_j^+ = \psi_{-k'/n}(\alpha) \subset \overline W_j$, for some $j$. 
The open end limits to a curve in $S_+$.  

If $\alpha$ both flips forward after $k \geq 0$ steps and backward after $k' \geq 0$ steps, then $A(\alpha,[\tfrac{-k'}n,\tfrac{k}n])$
is a compact annulus with one boundary component $\alpha_j^- = \psi_{k/n}(\alpha) \subset \overline W_j$ and the other $\alpha_{j'}^+ = \psi_{-k'/n}(\alpha) \subset \overline W_{j'}$, for some $j,j'$.

The annuli of the first two types compactify in $\overline M_f$ to have boundary components on $\partial \overline M_f$. Since $(\bar \psi_s)$ is the local flow extending the reparameterization of $(\psi_s)$, the union of all resulting compact annuli (from all three cases) are exactly the union of the flow lines through $L$ intersected with $\overline M_f  -  W$.

We can count the number of annuli by counting the number of boundary components of the annuli and dividing by $2$. There are exactly two boundary components on each $\overline W_k$, and this accounts for $2n$ boundary components. The remaining boundary components form a multicurve in $\partial \overline M_f$. If this is a pants decomposition, then we have $\xi(\partial \overline M_f) = 3 |\Phi^*(f)|$ such boundary components, and hence the number of annuli is $n+\tfrac32|\Phi^*(f)|$, as required.

To see that the boundary components on $S_+$ form a pants decomposition, we argue as follows (the case of $S_-$ is similar). Choose a good nesting neighborhood $U_+$ of the attracting end with the property that $P_j \cap U_+ = P_k \cap U_+$, for all $j,k$, and so that $f(U_+) \subset U_+$. Note that this means that no component of $P_k \cap U_+ \subset S \times \{\tfrac{k}n\}$ eventually flips forward.
Every point of $U_+$ flows forward to $S_+$ (since $U_+ \subset \mathcal U_+$) and this flow defines a surjective local diffeomorphism $U_+ \to S_+$; more precisely, this is one end of the infinite cyclic covering $\mathcal U_+ \to S_+$. Because $P_k$ defines a pants decomposition of the end $U_+$, this projects by the flow to a pants decomposition of $S_+$, completing the proof.
\end{proof}

Let $A_1,\ldots,A_m$ be the annuli from \Cref{L:annuli through L}, with $m = n + \tfrac32|\Phi^*(f)|$, and let $\mathcal A$ be the union of these annuli. 
Note that the product structure on $A_j \cong S^1 \times I$ obtained from the flow has the property that $\{\ast\} \times I$ is an element of the decomposition defining the quotient $\widehat M_f$. 
Furthermore, every component of $L$ has the form $S^1 \times \{\ast\} \subset A_j$, for some $j$. Therefore, we have the following.
\begin{corollary} \label{C:image of L}
We have $\hat L = h(L) = h(\mathcal A)$ and the restriction $h|_{\mathcal A} \colon \mathcal A \to \widehat M_f$ factors as
\[ \mathcal A \to \bigsqcup_{j=1}^{m} S^1 \to \widehat M_f,\]
where the first map projects out the interval directions of the annuli, and the second map is injective.
\end{corollary}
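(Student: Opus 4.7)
The plan is to unwind the definition of the quotient $h$ on each annulus $A_j$ and verify the three claims (equality of images, the factoring, and the injectivity) one at a time. First, by construction in \Cref{L:annuli through L}, each component of $L$ appears inside some $A_j$ as a slice of the form $S^1 \times \{t\}$ in the product structure coming from the flow, so $L \subset \mathcal A$ and hence $\hat L = h(L) \subseteq h(\mathcal A)$. For the reverse containment, given $p \in A_j$, the decomposition element containing $p$ is the maximal component of a $(\bar\psi_s)$--flow line meeting $\overline M_f - W$ through $p$; in the product coordinates $A_j \cong S^1 \times I$ this is precisely the fiber $\{x\} \times I$ through $p$. Since $A_j$ is a connected component of the flow of $L$ intersected with $\overline M_f - W$, this fiber meets $L$ in at least one point $q$, and so $h(p) = h(q) \in \hat L$.

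For the factoring, the identification above shows that the decomposition classes inside each $A_j$ are exactly the fibers $\{x\} \times I$ of the product structure. Consequently $h|_{A_j}$ is constant on fibers and descends through the projection $A_j = S^1 \times I \to S^1$; doing this for each $j$ gives the first map in the factorization. To see that the induced map $\bigsqcup_{j=1}^{m} S^1 \to \widehat M_f$ is injective, I need distinct fibers (same or different $A_j$) to give distinct decomposition elements. Two fibers in the same $A_j$ are disjoint arcs of a flow line in $\overline M_f - W$, hence different maximal components; and a fiber in $A_j$ and a fiber in $A_{j'}$ for $j \neq j'$ are contained in the disjoint sets $A_j, A_{j'}$, so likewise determine distinct decomposition elements.

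The only point that requires a moment of thought is the last one, and it is the closest thing here to an obstacle: I need to know that a single maximal flow arc in $\overline M_f - W$ passing through $L$ cannot have its interior split between two distinct $A_j$. This is immediate, however, because such a flow arc is a connected subset of $\mathcal A$, and the $A_j$ were defined as the connected components of $\mathcal A$. Thus the arc lies in a single $A_j$, and the decomposition elements visible in $\mathcal A$ are indexed exactly by pairs $(j, x)$ with $x \in S^1$, giving the desired injectivity.
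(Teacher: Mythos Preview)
Your proof is correct and follows essentially the same approach as the paper. The paper dispatches the corollary in two sentences immediately preceding it: the fibers $\{\ast\}\times I$ of the flow-product structure on each $A_j$ are precisely decomposition elements, and every component of $L$ sits as a slice $S^1\times\{\ast\}$ in some $A_j$; your argument simply spells out the same observations in more detail, including the (correct and easy) check that a decomposition arc through $L$ lies in a single component $A_j$.
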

Thus far we have described the restriction of $h$ to both $W$ (on which $h$ is injective) and $\mathcal A$. The next lemma describes the structure of the rest of $\overline M_f$.

\begin{lemma}\label{L:3-punc} 
Every component $X$ of $\overline M_f  -  (\mathcal A \cup W)$ is homeomorphic to a product of a $3$-punctured sphere with a (possibly degenerate) interval $I \subset [-\infty,\infty]$ with $0 \in \partial I$. The $3$-punctured sphere $Y$ is a component of $\bigcup \partial_2 \overline W_j$, and the homeomorphism $Y \times I \to X$ is given by $(x,t) \mapsto \psi_t(x)$. Furthermore, $Y \times \partial I$ consists of two (possibly equal) $3$-punctured sphere components of
\[\bigcup_{j=0}^{n-1} \partial_2 \overline W_j \cup (\partial \overline M_f  -  \mathcal A).\]
\end{lemma}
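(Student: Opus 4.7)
The plan is to describe each component $X$ of $\overline M_f - (\mathcal A \cup W)$ as the $\bar\psi_s$--sweep of a thrice-punctured sphere, using the combinatorial structure of the bi-infinite sequences $\{P_j\}_{j \in \mathbb Z}$ and $\{Z_j\}_{j \in \mathbb Z}$ (extended by $P_{j+n} = f^{-1}(P_j)$ and $Z_{j+n} = f^{-1}(Z_j)$) that lift from $M_f$ to $\widetilde M_\infty$. Given $p \in X$, choose a lift $\tilde p = (x_0, t_0) \in \widetilde M_\infty$. The flow line through $\tilde p$ is vertical; let $I' = (a', b')$ be the maximal open interval containing $t_0$ so that $\{x_0\} \times I' \subset \widetilde M_\infty - \widetilde W$, where $\widetilde W$ is the preimage of $W$. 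Since $\mathcal A$ is a union of flow lines and flow lines do not cross, $\{x_0\} \times I'$ is automatically disjoint from the preimage of $\mathcal A$, so its projection to $\overline M_f$ lies in $X$.

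To analyze the upper endpoint $b'$, observe that $\{x_0\} \times \mathbb R$ meets a lift of a block in the slab at time $(k/n, (k+1)/n)$ precisely when $x_0 \in Z_k$. Hence either $(i)$ $b' = k/n$, where $k$ is the smallest integer with $k > n t_0$ and $x_0 \in Z_k$, or $(ii)$ $b' = +\infty$, in which case $x_0 \in \mathcal U_+$ and the reparameterized flow $\bar\psi_s$ reaches $\partial \overline M_f \cap S_+$ in finite reparameterized time. In case $(i)$, for each $j$ with $(j/n, (j+1)/n) \subset I'$ we have $x_0 \notin Z_j$, and $P_{j+1}$ agrees with $P_j$ off $Z_j$, so the pants piece $Y_j$ of $P_j$ containing $x_0$ is independent of $j$; call this common pants piece $Y$. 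Since $x_0 \in Y \cap Z_k$ and $\partial Z_k \subset P_k$, connectedness of $Y$ forces $Y \subset Z_k$, so $Y$ is a thrice-punctured sphere component of $\partial_2^- \overline W_k$. A symmetric analysis at $a'$ produces a thrice-punctured sphere cap either in some $\partial_2^+ \overline W_{k'}$ or in $S_- - \mathcal A$.

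Irreducibility is then used to rule out the case where both endpoints lie in $\partial \overline M_f$. Suppose for contradiction that $I' = (-\infty, +\infty)$. Then $x_0 \notin Z_j$ for every $j \in \mathbb Z$, the globally constant pants piece $Y$ satisfies $\partial Y \subset P_j$ for every $j$, and iterating $P_{j+n} = f^{-1}(P_j)$ yields $f^k(\partial Y) \subset P_{j_0}$ for every $k \in \mathbb Z$ (with any fixed $j_0$); hence each $f^k(Y)$ is itself a pants piece of $P_{j_0}$. The set $\{f^k(Y)\}_{k \in \mathbb Z}$ is thus an infinite family of pairwise disjoint pants pieces of a single pants decomposition of $S$, which by local finiteness must exit to the ends of $S$. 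The $f$--dynamics on nesting neighborhoods force $f^k(Y)$ to escape to an attracting end as $k \to +\infty$ and to a repelling end as $k \to -\infty$, giving $Y \subset \mathcal U_+ \cap \mathcal U_-$, so any $\gamma \in \partial Y$ is a reducing curve, contradicting irreducibility. Hence at least one of $a', b'$ yields a cap in $\bigcup \partial_2 \overline W_j$; declare $Y$ to be that cap and reparameterize time so that this endpoint is $0 \in \partial I$.

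Finally, the map $\Psi \colon Y \times I \to \overline M_f$, $\Psi(x, t) = \bar\psi_t(x)$, is well-defined because every $x$ in the interior of $Y$ satisfies the same avoidance conditions as $x_0$: it lies in $Y \subset S - Z_j$ for the slabs $I'$ traverses, and $Y \subset Z_k$ in the terminal slab, so all these flow lines enter the same blocks at the same times and terminate at the same caps. The map $\Psi$ is injective (distinct flow lines are disjoint and each is parameterized monotonically by $t$) and surjective onto $X$ (since $X$ is connected and equal to the union of flow-line segments meeting the interior of $Y$); between compact Hausdorff spaces this makes $\Psi$ a homeomorphism. The opposite cap $Y \times \{c\}$ is, by the symmetric dichotomy at the other endpoint, either a component of some $\partial_2 \overline W_{k'}$ or of $\partial \overline M_f - \mathcal A$; the degenerate case $I = \{0\}$ occurs precisely when $Y \subset Z_{k-1} \cap Z_k$, so that $Y$ simultaneously represents $\partial_2^- \overline W_k$ and $\partial_2^+ \overline W_{k-1}$. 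The main obstacle I anticipate is the escape-to-ends argument in the third paragraph: carefully verifying that the infinitely many pairwise disjoint pants pieces in the single pants decomposition $P_{j_0}$ must accumulate on the ends of $S$, and that the $f$--dynamics really assign $k \to +\infty$ to an attracting end and $k \to -\infty$ to a repelling end rather than to the same end in both directions.
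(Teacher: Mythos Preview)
Your argument is essentially correct and arrives at the same picture as the paper's proof, but the organization differs: the paper starts from each thrice-punctured sphere $Y \subset \partial_2^{\pm}\overline W_j$, flows it forward or backward until a boundary pants curve flips, and then checks that these products exhaust $\overline M_f - (\mathcal A \cup W)$; you instead start from a point of $X$, lift to $\widetilde M_\infty$, and reconstruct $Y$ from the constant pants piece along the maximal interval $I'$. Both are fine, but the paper's direction makes the ``every component is reached'' step a one-liner, while you need the connectedness argument for $\Psi$ to be onto $X$.

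Two points to tighten. First, your exclusion of the case $I' = (-\infty,+\infty)$ re-derives, in the language of escaping pants pieces, exactly what the paper already isolated as \Cref{L:eventually flipped}: the boundary curves of $Y$ lie in $P_j$ for all $j$, hence (as components of $L$) never flip forward or backward, contradicting that lemma. Citing it removes the need for your escape-to-ends paragraph entirely. If you prefer to keep your direct argument, the missing ingredient you flagged is simply that $\bigcap_{k\geq 0} f^{-k}(U_-) = \emptyset$ (the $f^{-k}(U_-)$ form a neighborhood basis of the repelling ends), which rules out $f^k(Y)\subset U_-$ for all $k\geq 0$; the symmetric statement for $U_+$ then forces one direction to an attracting end and the other to a repelling end, yielding a reducing curve (or a periodic curve if the $f^k(Y)$ are not all distinct). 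Second, your appeal to ``compact Hausdorff'' for $\Psi$ being a homeomorphism is not valid as stated, since $Y$ is an open thrice-punctured sphere; pass instead to the compact pair of pants $\overline Y$ (whose boundary flows inside $\mathcal A$), apply the compactness argument there, and then restrict.
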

Note that in the statement of this lemma, we may have to make sense of $\psi_\infty(x)$ for some $x$, which is defined to be the point $\displaystyle{\lim_{s \to \infty}\psi_s(x)}$ in $S_+ \subset \partial \overline M_f$, when this limit exists.  We make a similar convention for $\psi_{-\infty}(x)$.  It might seem more natural to use the local flow, $(\bar \psi_s)$, but the parameterization of $(\bar \psi_s)$ is not well-behaved with respect to the $S$--fibers. In particular, we cannot produce the nice homeomorphism described in the statement.
\begin{proof} Consider any $3$-punctured sphere component $Y \subset \partial_2^+ \overline W_j$ for any $j$. This is the interior of a (compact) pair of pants we denote $\overline Y \subset \overline Z_j \times \{\tfrac{j+1}n\}$, and the boundary consists of three (not necessarily distinct) pants curves $\alpha_1,\alpha_2,\alpha_3$ in $P_{j+1}$ (with $j+1$ taken modulo $n$).
If one of these pants curves flips forward after $k$ steps, then taking $I = [0,\tfrac{k}n]$, the map $Y \times I \to M_f$, given by the formula in the lemma maps to $\overline M_f  -  (\mathcal A \cup W)$, with $Y \times \{\tfrac{k}n\}$ mapping to a $3$-punctured sphere component of $\partial_2^- W_i$, for some $i$.  If none of these curves eventually flip forward, then we take $I = [0,\infty]$ and the formula provides the homeomorphism, this time sending $Y \times \{\infty\}$ to a $3$-punctured sphere component of $S_+  -  \mathcal A$.

We can argue similarly for a $3$-punctured sphere component $Y \subset \partial_2^- \overline W_j$ for any $j$, this time producing an interval $I$ of the form $[-\tfrac{k}n,0]$ if one of the boundary pants curves flips backward after $k$ steps or $[-\infty,0]$ if it does not eventually flip backward.

Each of the products produced above is a component of $\overline M_f  -  (\mathcal A \cup W)$, and thus it suffices to show that this accounts for all components of this complement. To see this, we simply note that any point $x$ in such a component $X$ flows forward (remaining inside $X$) for time less than $\tfrac1n$ until it reaches some fiber $S \times \{\tfrac{j}n\}$. It meets this in a $3$-punctured sphere component of the complement of $P_j$, and (each of) the boundary pants curves must eventually flip forward or backward. The first time one of these flips forward or backward specifies a length of time we need to flow $x$ (inside $X$) until it hits $\bigcup \partial_2 \overline W_i$. Therefore, $X$ is one of the components already discussed, completing the proof.
\end{proof} 

\begin{corollary} \label{C:flowlines hit W} 
Every flow line of $(\bar \psi_s)$ has nontrivial intersection with $W$. Consequently, every set in the decomposition is either a point or compact arc of a flow line with at most one endpoint on $\partial \overline M_f$.
\end{corollary}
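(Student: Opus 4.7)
The plan is to combine the structural descriptions of $\overline M_f - W$ provided by Lemmas 4.6 and 4.7, namely $\overline M_f - W = \mathcal A \cup \bigsqcup X$, with the observation that across every interior interface of this decomposition the flow is forced into $W$. Granting this, both assertions of the corollary follow by short arguments.

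For the first assertion, I would take any flow line $\gamma$ and note that if $\gamma \cap (\overline M_f - W) = \emptyset$ then $\gamma \subset W$ and there is nothing to prove. Otherwise $\gamma$ meets either the interior of a product region $X = Y \times I$ from Lemma 4.7, or an annulus $A_j$ from Lemma 4.6, and in each case I would argue that maximality of $\gamma$ forces $\gamma$ to cross into $W$. In the product case, the flow-induced identification gives $\gamma \cap X = \{y\} \times I$ with $y \in Y \subset \partial_2 \overline W_j$ and $0 \in \partial I$; since $Y \subset Z_j$ is open and $\overline W_j$ lies on the side of the fiber opposite to $X$, extending $\gamma$ across $Y$ in the direction away from $X$ lands it inside $Z_j \times (j/n,(j+1)/n) = W_j$. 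In the annulus case, Lemma 4.5 prevents both boundary circles of $A_j$ from lying on $\partial \overline M_f$, so at least one boundary is some $\alpha_k^\pm \subset Z_k$; extending $\gamma$ across this circle into $\overline W_k$ similarly lands in $W_k$.

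For the second assertion, I would first observe that any maximal component $C$ of $\gamma \cap (\overline M_f - W)$ lies inside a single piece of the decomposition --- a single leaf of some $A_j$ or a single arc $\{y\} \times I$ inside some $X$ --- because the first part shows that every transition between distinct pieces of $\overline M_f - W$ must pass through $W$. Since each $A_j$ is a compact annulus and each $I$ is a closed interval in $[-\infty,\infty]$, the set $C$ is a compact arc, possibly degenerate to a point. Its endpoints lie in $\partial W \cup \partial \overline M_f$, and at most one endpoint can lie on $\partial \overline M_f$, since otherwise $\gamma$ would coincide with $C \subset \overline M_f - W$, contradicting the first assertion.

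The step I expect to be the main obstacle is the directional bookkeeping in the first assertion: one must verify from the definitions of $\partial_2^{\pm}$ and of the product map $(y,t) \mapsto \psi_t(y)$ in Lemma 4.7 (and from the analogous setup in Lemma 4.6) that the ``other side'' of $Y$ or of $\alpha_k^\pm$ really is $\overline W_j$ or $\overline W_k$, rather than another $X$ or annulus. Once these conventions are pinned down, each case reduces to a direct fiber-level verification using that $Z_j$ is open.
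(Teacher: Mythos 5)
Your proof is correct, and since the paper leaves this corollary without an explicit proof (treating it as an immediate consequence of the preceding two lemmas), you have essentially supplied the intended argument. The directional bookkeeping you flag as the main obstacle is indeed the crux: for a product region $X = Y\times I$ with $0\in\partial I$, the end $Y\times\{0\}$ lies on $\partial_2\overline W_j$ and the flow continues past it into $W_j$; and for an annulus $A_j$, at least one boundary circle is some $\alpha_k^\pm$ (this uses that every component of $L$ eventually flips forward or backward), across which the flow enters $W_k$. Your reasoning on both points is sound.

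A small streamlining worth noting: to rule out a maximal component meeting both $\mathcal A$ and some $X$, it suffices to observe that a flow line either passes through $L$ (hence lies entirely in $\mathcal A\cup W$) or misses $L$ (hence lies entirely in $\bigsqcup X\cup W$), so a given flow line never transitions between the two types. Also, the ``at most one endpoint on $\partial\overline M_f$'' claim follows directly from the structural data rather than by contradiction: in the product case $Y\times\{0\}\subset\partial_2\overline W_j$ is never on $\partial\overline M_f$, and in the annulus case at least one boundary of $A_j$ is some $\alpha_k^\pm$, so in both cases at most one end of the arc touches $\partial\overline M_f$. Your contradiction argument works too, but the direct version is a little cleaner. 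None of this changes the correctness of your proposal.
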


We are now ready for the proof of \Cref{P:block decomposition}. Most of the proof involves analyzing the decomposition defining the quotient $h \colon \overline M_f \to \widehat M_f$ using the description of the decomposition elements from the lemmas/corollary above. From this analysis, we will show that $\widehat M_f$ is a $3$-manifold with boundary and that $h$ restricts to a homeomorphism $\partial \overline M_f \to \partial \widehat M_f$. Proving that $h$ is homotopic to a homeomorphism involves analyzing the associated quotient of the double $D\overline M_f$ and applying \Cref{thm:armentrout}.  The remaining items follow fairly quickly after this is done. 

\begin{proof}[Proof of \Cref{P:block decomposition}.]
We begin by proving that $\widehat M_f$ is a $3$-manifold with boundary. The first step is to find a neighborhood of each point in $\widehat M_f$ homeomorphic to a ball or half-ball in $\mathbb R^3$. For this, observe that the restriction of $h$ to $W$ is a homeomorphism onto its image, since $W$ is an open set on which $h$ is injective. So, all points in $h(W)$ have neighborhoods homeomorphic to a ball in $\mathbb R^3$. Next, observe that by \Cref{L:3-punc}, each component $X$ of $\overline M_f  -  (\mathcal A \cup W)$ is homeomorphic to a product $X \cong Y \times I$, with $\{y\} \times I = \bigcup_{s \in I} \psi_s(y)$ an arc of a flow line.  The restriction of $h$ to $X$ factors as the composition of the projection $X \to Y$ and an inclusion $Y \to \widehat M_f$,
\[ X \to Y \hookrightarrow \widehat M_f.\] 
If the interval $I$ does not contain either $\infty$ or $-\infty$, then any point in the image of $Y$ in $\widehat M_f$ has a neighborhood homeomorphic to a ball in $\mathbb R^3$, constructed from half-balls in $\overline W_j$ and $\overline W_i$ where the bottom and top $3$-punctured spheres $Y \times \partial I$ of $X$ are components of $\partial_2^+ \overline W_j$ and $\partial_2^- \overline W_i$, respectively. If $I$ contains $\infty$, then it cannot contain $-\infty$, and $Y \times \{0\} \subset X$ is a component of some $\partial_2^+ \overline W_j$ while $Y \times \{\infty\}$ is a component of $S_+  -  \mathcal A$, and in this case every point in the image of $Y$ in $\widehat M_f$ has a neighborhood homeomorphic to a half-ball in $\mathbb R^3$, coming from a half-ball in $\overline W_j$. If $I$ contains $-\infty$, we similarly find a half-ball neighborhood.

The only remaining points of $\widehat M_f$ are those in the image of $\mathcal A$. For these, we consider each component annulus $A_j \subset \mathcal A$ and build neighborhoods from all the $\overline W_k$ it meets. There are different cases depending on whether the annulus $A_j$ meets $\partial \overline M_f$ or not.

First, suppose $A_j \cap \partial \overline M_f = \emptyset$. See \Cref{F:collapsing near annuli} for an illustration of the construction of the neighborhood of a point in the image of $A_j$.  Then we can write
\[ A_j = \bigcup_{s \in [0,t]} \psi_s(\alpha_k^+), \]
for some some $t \geq 0$ and integer $k$ so that $\psi_t(\alpha_{k}^+)=\alpha_{\ell}^-$, for some integer $\ell$. Observe that $k$ is such that $\alpha_k^+ \subset \overline{Z}_k \times \{\tfrac{k+1}n\}$ and does not flip backward (c.f.~proof of \Cref{L:annuli through L}). The annulus may also meet various pieces $\overline W_{k_1},\ldots,\overline W_{k_j}$ along the vertical annuli $\partial^v \overline W_{k_1},\ldots,\partial^v \overline W_{k_j}$.
Any point in the image of $A_j$ is given by the decomposition element of the form $\{\ast\} \times I \subset S^1 \times I \cong A_j$, and a neighborhood of this point is obtained from neighborhoods in $\overline W_k$ and $\overline W_\ell$ on the ``top and bottom", together with neighborhoods ``along the sides" from the images of $\overline W_{k_1},\ldots,\overline W_{k_j}$.

When the annulus $A_j$ meets $S_+$ or $S_-$, we have a similar picture, except in the former case the top reaches $S_+$ instead of some $\overline W_\ell$, and in the latter, the bottom reaches $S_-$ instead of ``starting" on some $\overline W_k$. In this setting, we get half-ball neighborhoods (since the top/bottom are missing). 

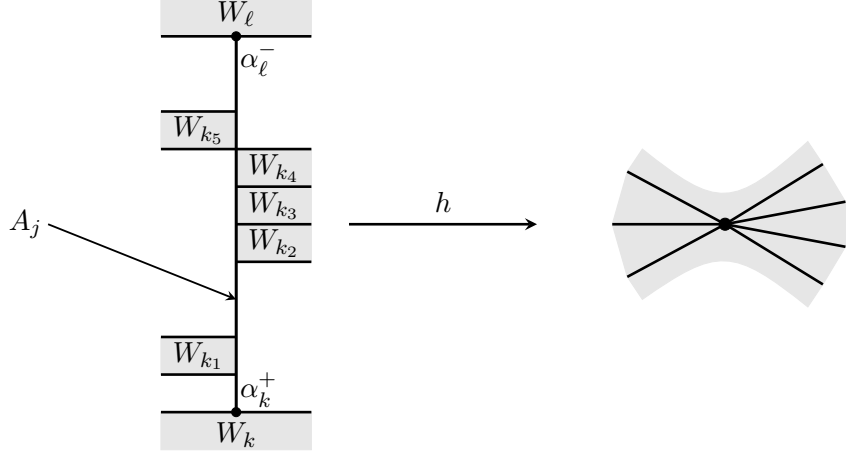
\begin{figure}
\centering
\begin{tikzpicture}
\filldraw[opacity=.1] (-1,5)--(1,5)--(1,5.5) -- (-1,5.5) -- (-1,5);
\node at (0,5.3) {$W_\ell$};
\draw[line width=1pt] (-1,5)--(1,5);
\filldraw[opacity=.1] (-1,0)--(1,0)--(1,-.5) -- (-1,-.5) -- (-1,0);\draw[line width=1pt] (0,0) -- (0,5);
\node at (0,-.3) {$W_k$};
\draw[line width=1pt] (-1,0) -- (1,0);
\filldraw[opacity=.1] (-1,1) -- (0,1) -- (0,.5) -- (-1,.5) -- (-1,1);
\node at (-.5,.75) {$W_{k_1}$};
\draw[line width=1pt] (-1,1) -- (0,1);
\draw[line width=1pt] (-1,.5) -- (0,.5);
\filldraw[opacity=.1] (-1,4) -- (0,4) -- (0,3.5) -- (-1,3.5) -- (-1,4);
\node at (-.5,3.75) {$W_{k_5}$};
\draw[line width=1pt] (-1,4) -- (0,4);
\draw[line width=1pt] (-1,3.5) -- (0,3.5);
\filldraw[opacity=.1] (1,2) -- (0,2) -- (0,3.5) -- (1,3.5) -- (1,2);
\node at (.5,2.25) {$W_{k_2}$};
\node at (.5,2.75) {$W_{k_3}$};
\node at (.5,3.25) {$W_{k_4}$};
\draw[line width=1pt] (0,3.5) -- (1,3.5);
\draw[line width=1pt] (0,3) -- (1,3);
\draw[line width=1pt] (0,2.5) -- (1,2.5);
\draw[line width=1pt] (0,2) -- (1,2);
\draw[line width=1pt] (0,0) -- (0,5);
\draw[fill=black] (0,0) circle (.06cm);
\node at (.3,.3) {$\alpha_k^+$};
\draw[fill=black] (0,5) circle (.06cm);
\node at (.3,4.7) {$\alpha_\ell^-$};
\draw[line width=1pt,->,>=stealth] (1.5,2.5) -- (4,2.5);
\node at (2.75,2.8) {$h$};
\filldraw[opacity=.1] (5.2,3.2)-- (5,2.5) -- (5.2,1.8) -- (6.5,2.5) -- (5.2,3.2);
\filldraw[opacity=.1] (7.8,1.7) -- (8.1,2.2) -- (8.1,2.8) -- (7.8,3.3) -- (6.5,2.5) -- (7.8,1.7); 
\filldraw[opacity=.1] (5.2,1.8) -- (5.4,1.5)  .. controls (6.4,2.2) and (6.6,2.2) .. (7.6,1.4) -- (7.8,1.7) -- (6.5,2.5) -- (5.2,1.8);
\filldraw[opacity=.1] (5.2,3.2) -- (5.4,3.5)  .. controls (6.4,2.7) and (6.6,2.7) .. (7.6,3.6) -- (7.8,3.3) -- (6.5,2.5) -- (5.2,3.2);
\draw[fill=black] (6.5,2.5) circle (.08cm);
\draw[line width=1pt] (5.2,3.2) -- (6.5,2.5) -- (7.8,3.3);
\draw[line width=1pt] (5,2.5) -- (6.5,2.5) -- (8.1,2.8);
\draw[line width=1pt] (5.2,1.8) -- (6.5,2.5) -- (8.1,2.2);
\draw[line width=1pt] (6.5,2.5) -- (7.8,1.7);
\draw[line width=.7pt,->,>=stealth] (-2.5,2.5) -- (0,1.5);
\node at (-2.8,2.5) {$A_j$};
\end{tikzpicture}
\caption{The left-hand side of the figure shows a slice of a neighborhood of some annulus $A_j \cong S^1 \times I$ (specifically, the slice $\{\ast\} \times I$). The figure illustrates the situation that the boundary components $\alpha_k^+$ and $\alpha_\ell^-$ of $A_j$ are on a pair of regions $\overline W_k$ and $\overline W_\ell$. The annulus meets some (possibly zero) number of other regions $\overline W_{k_1},\ldots,\overline W_{k_j}$ along the ``vertical" boundary annuli. The right-hand side of the figure shows the result of collapsing. Neighborhoods in the $\overline W_i$ pieces are glued together in the quotient to produce a neighborhood of points in $h(A_j)$.} \label{F:collapsing near annuli}
\end{figure}

We have found neighborhoods of every point in $\widehat M_f$ homeomorphic to either an open ball or half-ball in $\mathbb R^3$. From the construction of these neighborhoods, it is straightforward to show that any two distinct points in $\widehat M_f$ have disjoint neighborhoods, and that there is a countable basis for the topology. Since $\widehat M_f$ is the image of the compact space $\overline M_f$, it is also compact. Thus, $\widehat M_f$ is a compact $3$-manifold with boundary.  

Since the points with neighborhoods homeomorphic to half-balls are precisely those in the image of $\partial \overline M_f$, we have $\partial \widehat M_f = h(\partial \overline M_f)$.
To see that the restriction of $h$ to $\partial \overline M_f$ is injective, observe that by \Cref{C:flowlines hit W}, any element of the decomposition meets $\partial \overline M_f$ in at most one point. This proves item \eqref{I:homeo on bdy}. According to \Cref{C:image of L}, $\hat L = h(L)= h(\mathcal A)$ is the continuous image of a disjoint union of $n+\tfrac32 |\Phi^*(f)|$ circles. Since $\widehat M_f$ is Hausdorff, the injection is an embedding, proving item \eqref{I:link compts}.
Moreover, since $\mathcal A$ meets $\partial \overline M_f$ in a pants decomposition, $\hat L$ meets $\partial \widehat M_f$ in a pants decomposition which we denote by $P_\Omega$, where $\Omega$ is the component of $\mathcal P(S)$ containing~$P$.

Next, we want to see that $h|_{\overline W_k} \colon \overline W_k \to \widehat M_f$ factors through a block map relative to $\hat L = h(L)$, and that the maps $h|_{\overline{W}_k}$ define a block decomposition. This follows from (a) the fact that $h|_{W_k}$ is injective on the interior of product (by definition of the decomposition), (b) $h$ collapses $\partial^v \overline W_k$ to circles contained in $\hat L$ since these annuli are sub-annuli of $\mathcal A$, (c) $h$ is injective on each pair of pants $Y \subset \partial_2 \overline W_k$ by \Cref{L:3-punc}, and (d) $h$ is injective on each curve $\alpha_k^\pm$ since this is either the bottom or top boundary component of an annulus in $\mathcal A$.

In fact, \Cref{C:image of L} and \Cref{L:3-punc} tell us that $\widehat M_f$ is built by gluing the blocks together along pairs of pants and unions of circle components of $1$--boundaries $\partial_1 \B_k$ contained in $\hat L$ (by definition of the decomposition space, the interiors of the blocks are disjoint). This gives a block decomposition relative to $\hat L$, proving item \eqref{I:block decomp}.

Next we prove item \eqref{I:htpc to homeo}, namely that $h$ is homotopic to a homeomorphism $H \colon \overline M_f \to \widehat M_f$.  For this, we consider the doubles $D\overline M_f$ and $D\widehat M_f$, and let $Dh \colon D\overline M_f \to D\widehat M_f$ be the associated quotient map, which, restricted to each copy of $\overline M_f$, maps via $h$ to one of the copies of $\widehat M_f$. Note that because $D\widehat M_f$ is a $3$-manifold, hence Hausdorff, and $Dh$ is a quotient map from the compact space $D \overline M_f$, it follows that $Dh$ is a closed map. By \Cref{P:cellular characterization}, the decomposition is upper semicontinuous. Furthermore, by \Cref{C:flowlines hit W}, each element of the decomposition defining the quotient map $h$ is an arc (or point) meeting $\partial \overline M_f$ in at most one point, hence each element of the decomposition defining the quotient map $Dh$ is also an arc or point. These are in fact arcs of flow lines of the extension of $(\bar \psi_s)$ to $D \overline M_f$ and so it is easy to find a sequence of neighborhoods, each homeomorphic to a cell, whose intersection is the arc/point. Therefore, the decomposition defining $Dh$ is cellular, hence $Dh$ is cellular, and so by \Cref{thm:armentrout}, $Dh$ is homotopic to a homeomorphism.

Now we note that the inclusion of $\overline M_f$ to $D\overline M_f$ induces an injection $\pi_1 \overline M_f$, and since $Dh_*$ is an isomorphism, it follows that $h_* \colon \pi_1 \overline M_f \to \pi_1 \widehat M_f$ is also injective. By a result of Waldhausen \cite[Theorem~6.1]{Waldhausen}, $h$ is homotopic to a homeomorphism; in fact, this is through a homotopy $h_t$ with $h_t|_{\partial \overline M_f} = h|_{\partial \overline M_f}$ for all $t$. This completes the proof of item \eqref{I:htpc to homeo}.

By construction, $\hat L \cap \partial \widehat M_f = h(P_\Omega)$ and this is equal to $H(P_\Omega)$, since $h|_{\partial \overline M_f} = H|_{\partial \overline M_f}$.
Suppose we replace the path $P= P_0,\ldots,P_n = f^{-1}(P)$ in the construction with a path between $P'$ and $f^{-1}(P')$ in the same component $\Omega \subset \mathcal P(S)$. Since $P = P'$ outside a compact set of $S$ (and the same is true for all pants in the path from $P'$ to $f^{-1}(P')$), one sees that the annuli $\mathcal A'$ constructed analogously to $\mathcal A$ meet $\partial \overline M_f$ in the same pants decomposition. Therefore, $P_\Omega$ does indeed depend only on the component $\Omega$. This proves item \eqref{I:boundary pants decomp}.

All that remains is to prove item \eqref{I:acylindrical pared}, stating that $(\overline M_f,P_\Omega)$ is a pared acylindrical manifold. All the conditions in the definition of pared manifold are immediate from the preceding results, except condition \eqref{I:annuli}, stating that an incompressible annulus $(A,\partial A) \subset (\overline M_f,N(P_\Omega))$ with boundary in a neighborhood $N(P_\Omega)$ of $P_\Omega$ must be homotopic into $N(P_\Omega)$, via a homotopy of pairs. Consider such an annulus $(A,\partial A) \subset (\overline M_f,N(P_\Omega))$.

Since the preimage $\widetilde P_\Omega$ of $P_\Omega$ in $\widetilde M_\infty \subset S \times [-\infty,\infty]$ is a pants decomposition of $\partial \widetilde M_\infty = \mathcal U_\pm \times \{\pm \infty\}$, $A$ lifts to an annulus $\widetilde M_\infty$ with boundary in $N(\widetilde P_\Omega)$. By \Cref{L:annuli analysis}, $A$ is not essential, and hence must be boundary parallel.  Since  $N(P_\Omega)$ is a neighborhood of a pants decomposition, being boundary parallel gives a homotopy of pairs taking $A$ into $N(P_\Omega)$, proving that $(\overline M_f,P_\Omega)$ is a pared manifold.

Now observe that $\overline M_f$ has incompressible boundary, and hence so does $(\overline M_f,P_\Omega)$. 
Since the paring locus $P_\Omega$ is a pants decomposition of the boundary, $(\overline M_f,P_\Omega)$ is acylindrical, as explained in the discussion after the definition of pared manifolds. This completes the proof of item \eqref{I:acylindrical pared} of the proposition, and we are done.
\end{proof}

\section{Sharpness of the Upper Bound}\label{sharpness}

In this section, we describe a construction of end-periodic homeomorphisms and provide sufficient conditions which ensure that they are either strongly irreducible or irreducible (but not strongly irreducible).  The construction allows us to prescribe the end behavior (\Cref{T:blah}), and also produce sequences of examples showing that the bound on volume given in \Cref{T:upper} is asymptotically sharp (\Cref{T:blah blah}). The proof of \Cref{T:blah} will follow immediately from \Cref{prop:irreducible-examples} together with \Cref{C:coarse conjugates to fine}, while the proof of \Cref{T:upper} will appear at the end of \Cref{S:large distance fixed f}.

\subsection{Examples of irreducible, end-periodic homeomorphisms}\label{S:example}

Recall that $S$ is a boundaryless infinite-type surface with $n$ ends, each accumulated by genus, with $2 \leq n < \infty$. We will construct a general family of examples of irreducible and strongly irreducible end-periodic homeomorphisms of $S$ with arbitrary coarse end behavior. All examples will be of the form $f = \rho h$, where $\rho$ is as in \Cref{construction} and $h$ is supported on a compact subsurface $C$.  
Whether the resulting homeomorphism $f$ is irreducible or strongly irreducible will depend on the choice of subsurface $C$.

\begin{definition}[Fully separating/partially separating]
We say a subsurface $C$ is \textit{fully separating} if $C$ separates every end of $S$, in the sense that the end space of each component of $S - C$ is a singleton. We say a subsurface $C$ is \textit{partially separating} (with respect to a given end-periodic homeomorphism) if $C$ separates the collection of attracting ends from the collection of repelling ends.  
\end{definition}

Note that a fully separating subsurface is necessarily partially separating with respect to any end-periodic homeomorphism, but the converse is not necessarily true. For any partially separating subsurface $C$, the boundary $\partial C$ decomposes as a disjoint union $\partial C = \partial_+C \cup \partial_-C$ according whether the ends cut off are attracting or repelling.  Examples and a non-example are illustrated in \Cref{fig:fully-partially-separating}.

\begin{figure}[htb!]
    \centering
    \includegraphics[width = 5in]{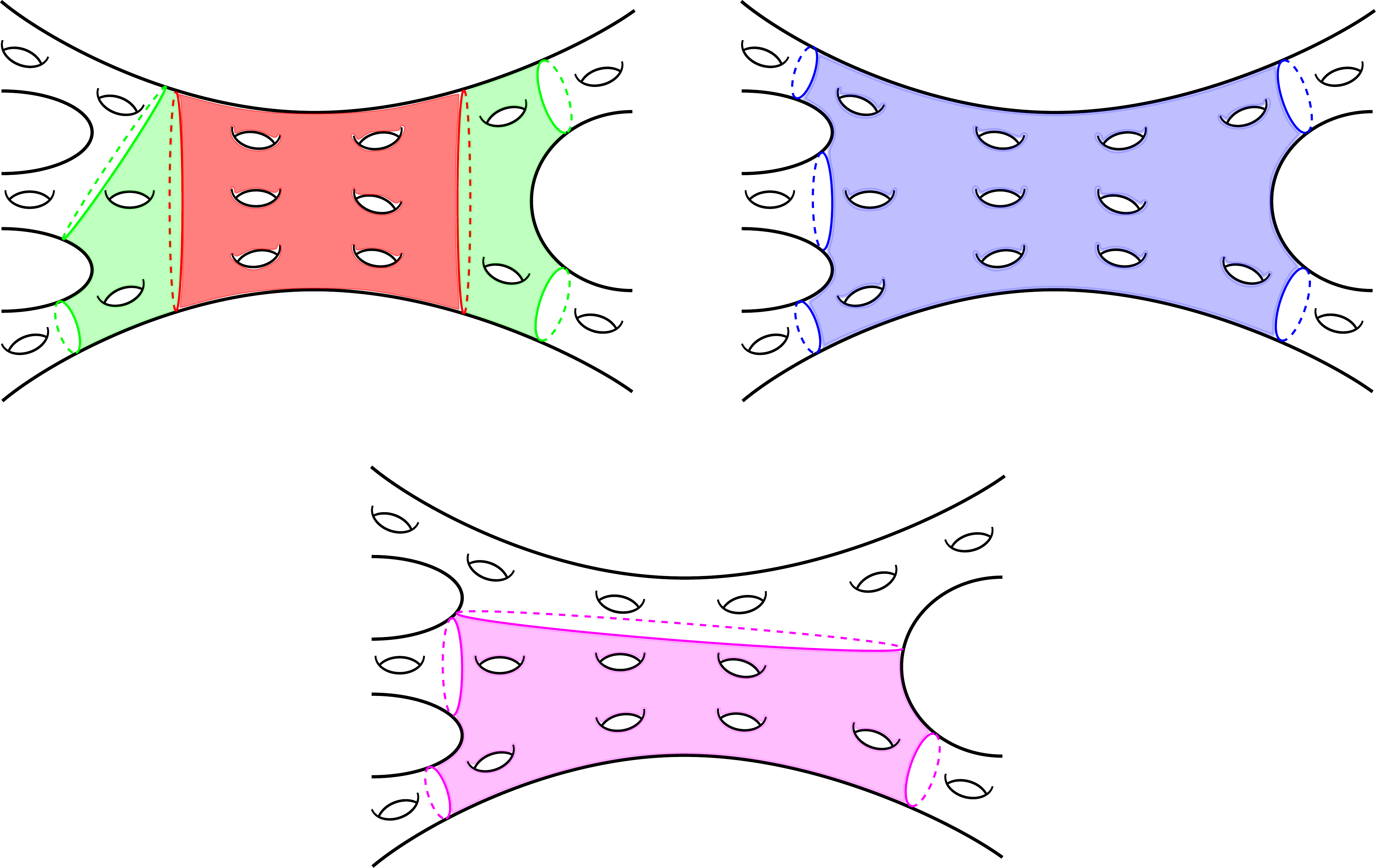}
    \caption{Consider an end-periodic homeomorphism of the $5$-ended surface shown here with $3$-repelling ends on the left and $2$-attracted ends on the right. Both the finite-type subsurface bounded by the red curves and the finite-type subsurface bounded by the green curves are partially separating (upper left). The finite-type subsurface bounded by the blue curves is fully separating (upper right). The finite-type surface bounded by the pink curves is neither fully nor partially separating (bottom).} 
    \label{fig:fully-partially-separating}
\end{figure}

\begin{convention} \label{Convention-6.1}
As in \Cref{construction}, we fix any coarse end behavior of $S$ and realize it by an end-periodic homeomorphism (isotopic to) $\rho = \Pi_1^m \rho_i$, where $\{\rho_i\}_1^m$ are pairwise-commuting handle shifts. We continue with the notation and assumptions from that construction, letting $v_1,\ldots,v_n$ be separating curves cutting off good nesting neighborhoods of the ends, and let $U_+$ and $U_-$ denote the resulting union of good nesting neighborhoods of the attracting and repelling ends, respectively. In addition, we now let $C \subset S$ be a compact subsurface which is partially separating with respect to $\rho$ and disjoint from $U_+$ and $U_-$.  We further assume, as we may, that
\begin{enumerate}
    \item \label{I:planar complement} $S  -  (U_+ \sqcup U_- \sqcup C)$ is either empty or a union of planar subsurfaces;
    \item \label{I:boundary arcs} $\partial C$ meets each of the handle strips in arcs isotopic to the arcs of intersection with the curves $v_1,\ldots,v_n$; and
    \item \label{I:genus at least 2} $C$ intersects each handle strip in a subsurface with one boundary component and genus at least $2$.
\end{enumerate} 
See \Cref{fig:handle-strip-intersection-section-6} for more explanation. 
When $C$ is fully separating, we observe that item \eqref{I:planar complement} implies that $S  -  C = U_+ \sqcup U_-$. Fix a component $\eta$ of $\partial_- C$ and fix a component $\alpha$ of $\partial_+ C$. By items \eqref{I:boundary arcs} and \eqref{I:genus at least 2}, we have $i(\rho(\eta),\rho^{-1}(\alpha)) = 0$. For this fixed $\eta$, we choose $h \in \Map(S)$ which is supported on $C$ and for which $d_{C}(\rho(\eta), h(\rho(\eta)))\geq 9$ (see \Cref{S:associated-graphs} for discussion of subsurface distance). We also let $B \subset B' \subset \mathcal{AC}(C)$ denote the balls of radius two and three about $\rho(\eta)$, respectively.  Since $i(\rho^{-1}(\alpha),\rho(\eta)) = 0$, we have $d_C(\rho^{-1}(\alpha),\rho(\eta)) = 1$ and so $\rho^{-1}(\alpha) \in B$. We will consider the end-periodic homeomorphism $f = \rho h$.  By construction, $U_+$ and $U_-$ are still good nesting neighborhoods of the attracting and repelling ends of $f$, respectively.
\end{convention}

\begin{figure}[htb!]
    \centering
    \def\svgwidth{3.5in}
\begingroup%
  \makeatletter%
  \providecommand\color[2][]{%
    \errmessage{(Inkscape) Color is used for the text in Inkscape, but the package 'color.sty' is not loaded}%
    \renewcommand\color[2][]{}%
  }%
  \providecommand\transparent[1]{%
    \errmessage{(Inkscape) Transparency is used (non-zero) for the text in Inkscape, but the package 'transparent.sty' is not loaded}%
    \renewcommand\transparent[1]{}%
  }%
  \providecommand\rotatebox[2]{#2}%
  \newcommand*\fsize{\dimexpr\f@size pt\relax}%
  \newcommand*\lineheight[1]{\fontsize{\fsize}{#1\fsize}\selectfont}%
  \ifx\svgwidth\undefined%
    \setlength{\unitlength}{596.56860212bp}%
    \ifx\svgscale\undefined%
      \relax%
    \else%
      \setlength{\unitlength}{\unitlength * \real{\svgscale}}%
    \fi%
  \else%
    \setlength{\unitlength}{\svgwidth}%
  \fi%
  \global\let\svgwidth\undefined%
  \global\let\svgscale\undefined%
  \makeatother%
  \begin{picture}(1,0.62846669)%
    \lineheight{1}%
    \setlength\tabcolsep{0pt}%
    \put(0,0){\includegraphics[width=\unitlength,page=1]{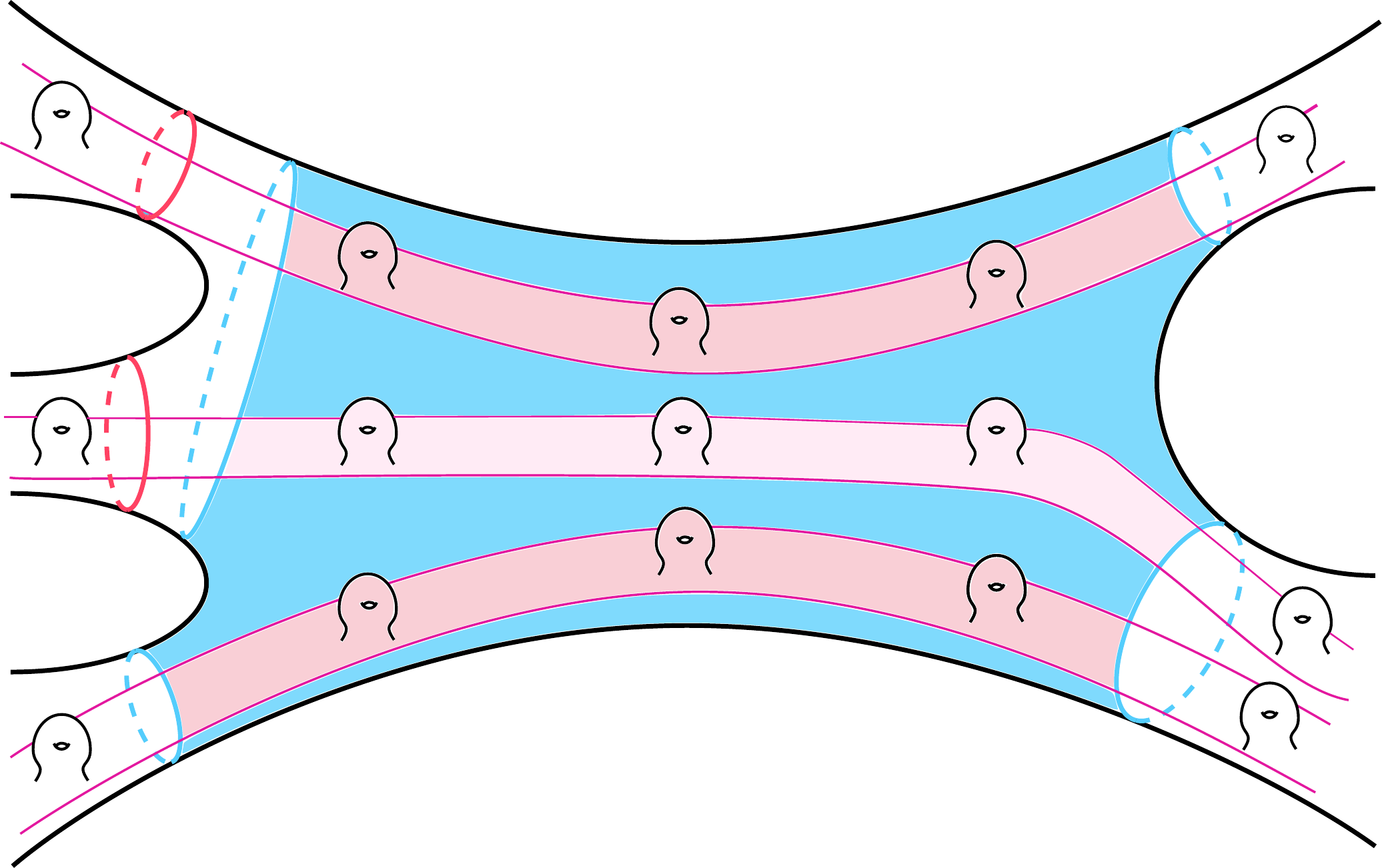}}%
    \put(0.1376617,0.56267074){\color[rgb]{0,0,0}\makebox(0,0)[lt]{\lineheight{1.25}\smash{\begin{tabular}[t]{l}$v_1$\end{tabular}}}}%
    \put(0.0971521,0.38229515){\color[rgb]{0,0,0}\makebox(0,0)[rt]{\lineheight{1.25}\smash{\begin{tabular}[t]{r}$v_2$\end{tabular}}}}%
    \put(0.0955836,0.16741287){\color[rgb]{0,0,0}\makebox(0,0)[rt]{\lineheight{1.25}\smash{\begin{tabular}[t]{r}$v_3$\end{tabular}}}}%
    \put(0.84845597,0.54384893){\color[rgb]{0,0,0}\makebox(0,0)[rt]{\lineheight{1.25}\smash{\begin{tabular}[t]{r}$v_4$\end{tabular}}}}%
    \put(0.89680799,0.26152187){\color[rgb]{0,0,0}\makebox(0,0)[lt]{\lineheight{1.25}\smash{\begin{tabular}[t]{l}$v_5$\end{tabular}}}}%
  \end{picture}%
\endgroup%

    \caption{A partially separating subsurface $C$ (shown in blue) for an end-periodic homeomorphism $\rho$ with repelling ends on the left and attracting ends on the right. The intersection of each handle strip with $C$ is a subsurface of genus $3$ with a single boundary component (shown in pink).  The boundary of $C$ intersects the strips in arcs that represent the same isotopy class as the intersection with the curves $v_1,\ldots,v_5$ defining good nesting neighborhoods of the ends.}
    \label{fig:handle-strip-intersection-section-6}
\end{figure}

\begin{proposition}\label{prop:irreducible-examples}
If $C$ is fully separating, then $f = \rho h$ is strongly irreducible. If $C$ is partially separating with respect to $\rho$, but not fully separating, then $f = \rho h$ is irreducible but not strongly irreducible. 
\end{proposition}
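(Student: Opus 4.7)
\emph{Plan.} I would prove irreducibility in both scenarios using a common subsurface projection argument to $\overline{C}$, then treat periodic lines separately to distinguish fully separating from merely partially separating $C$.

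\emph{Irreducibility.} Let $\gamma$ be an essential curve or line witnessing one of the three forbidden configurations. I would first handle the case that $\gamma$ is homotopic into $U_+ \cup U_-$: because $h$ is supported on $C$ and hence the identity on $U_\pm$, we have $f|_{U_\pm} = \rho|_{U_\pm}$, so combining the nesting properties of $\rho$ on $U_\pm$ with \Cref{L:escaping sets are covering spaces}, periodic curves cannot lie in $U_\pm$ and curves in $U_\pm$ cannot transit to the other escaping set under $f$. Curves or lines meeting only the planar complement (curves parallel to some $v_i$ or component of $\partial C$) can be handled by direct inspection. This reduces to the case that $\gamma$ essentially meets $C$. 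Here I would track the sequence $\{\pi_C(f^k(\gamma))\}_k$ in $\mathcal{AC}(\overline{C})$. Using $h(\eta) = \eta$, $f(\eta) = \rho(\eta) = \beta_0$, the hypothesis $d_C(\beta_0, h(\beta_0)) \geq 9$, and the observation $d_C(\rho^{-1}(\alpha), \beta_0) \leq 1$, together with the fact that projections of curves contained in $U_\pm$ (which meet $C$ only along the handle-strip boundary pattern) are confined to a uniformly bounded subset of $\mathcal{AC}(\overline{C})$, I would deduce via triangle inequalities that $\{\pi_C(f^k(\gamma))\}_k$ is an unparametrized quasi-geodesic of uniform positive speed. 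This outright contradicts periodicity of a curve; for reducing curves and AR-periodic lines it prevents the orbit from bridging the two bounded projection sets of $U_\pm$-curves in the available number of transit steps.

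\emph{Periodic lines.} When $C$ is fully separating, a hypothetical periodic line $\ell$ not already ruled out must lie in a complementary component $D$ of $C$, which has exactly one end $E$. Since $h$ fixes $D$ pointwise, $f^k(\ell) = \rho^k(\ell) = \ell$, and I would use the nested structure $D \supsetneq \rho^k(D) \supsetneq \rho^{2k}(D) \supsetneq \cdots$ shrinking to the end $E$ to conclude that any such $\ell$ would need to have representatives in $\bigcap_{n \geq 0} \rho^{kn}(D)$, forcing $\ell$ to homotope into arbitrarily small neighborhoods of $E$, which contradicts essentiality. When $C$ is partially but not fully separating, some complementary component $D$ of $S - C$ contains at least two ends of $S$ of the same type. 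I would construct a periodic line by lifting a suitable essential loop in the corresponding component of the quotient surface $S_\pm$ through the cyclic cover $\mathcal U_\pm \to S_\pm$ to obtain an essential, properly embedded, $\rho^k$-invariant line $\ell \subset D$. Since $\ell \cap C = \emptyset$, $h(\ell) = \ell$, and so $f^k(\ell) = \rho^k(\ell) = \ell$, producing the required periodic line and showing that $f$ is not strongly irreducible.

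\emph{Main obstacle.} The subsurface projection estimate is the technical heart of the argument. Rigorously establishing a uniform lower bound $d_C(\pi_C(f^k(\gamma)), \pi_C(f^{k+1}(\gamma))) \geq c$ requires careful control over how the non-supported handle shift $\rho$ interacts with $\pi_C$. Specifically, I expect to need that for any curve $\delta$ meeting $C$ only in the controlled pattern dictated by $\partial C \cap H_i$, the projections $\pi_C(\delta)$ and $\pi_C(\rho^{\pm 1}(\delta))$ stay at bounded $\mathcal{AC}(\overline{C})$-distance, so that the nine-unit displacement supplied by the hypothesis on $h$ propagates cleanly through the iteration without cancellation. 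The stipulation from \Cref{Convention-6.1} that each intersection $C \cap H_i$ is a genus-at-least-$2$ subsurface with a single boundary component is precisely what I expect to enable this coarse control.
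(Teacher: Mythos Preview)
Your overall strategy---use subsurface projections to $C$---matches the paper's, but the specific mechanism you propose is incorrect. You claim that $\{\pi_C(f^k(\gamma))\}_k$ is an unparametrized quasi-geodesic of positive speed. In fact the paper proves the opposite behaviour: for any curve $\gamma^- \subset U_-$ with $\pi_C(\rho(\gamma^-)) \neq \emptyset$, one has $\pi_C(f^k(\gamma^-)) \subset B'$ (the radius-$3$ ball about $\rho(\eta)$) for \emph{all} $k \geq 1$; symmetrically, backward iterates of curves in $U_+$ project into $h^{-1}B'$. So projections do not escape to infinity---they are trapped in two bounded balls, one for each direction of iteration. If $\delta$ is any curve essentially meeting $C$ and disjoint from such a $\gamma^-$, then $f^k(\delta)$ remains disjoint from $f^k(\gamma^-)$, forcing $\pi_C(f^k(\delta))$ (when nonempty) to lie within distance $1$ of $B'$; thus your lower bound $d_C(\pi_C(f^k(\gamma)),\pi_C(f^{k+1}(\gamma))) \geq c$ cannot hold.

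The paper's actual argument runs as follows. The inductive lemmas (\Cref{FillingPair}, \Cref{ArbitraryFillingPair}) show $\pi_C(f^k(\gamma^-)) \subset B'$ and $\pi_C(f^{-k}(\gamma^+)) \subset h^{-1}B'$; since $d_C(h\rho(\eta),\rho(\eta)) \geq 9$ forces these balls at least distance $3$ apart, the pair $f^k(\gamma^-),\, f^{-k}(\gamma^+)$ \emph{fills} $C$. A periodic curve $\delta$ meeting $C$ is disjoint from some $\gamma^\pm$, hence $\delta = f^{\pm m}(\delta)$ is disjoint from both $f^m(\gamma^-)$ and $f^{-m}(\gamma^+)$, contradicting filling. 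Reducing curves and AR-periodic lines are handled by the same filling-plus-disjointness trick (\Cref{corollary:backwardforward-iteration}, \Cref{lemma:irreducible-arcs}), never by tracking $\pi_C(f^k(\delta))$ directly. Your treatment of periodic lines in the fully separating case inherits the same gap, since lines meeting $C$ are deferred to the quasi-geodesic argument. For the not-strongly-irreducible construction, the paper takes a line disjoint from both $C$ \emph{and} the handle strips, so that $f$ fixes it pointwise; your cyclic-cover lift is more elaborate and requires extra care to ensure disjointness from $C$.
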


Assuming the lemmas and corollary proved in the remainder of \Cref{S:example}, we now provide a straightforward proof of \Cref{prop:irreducible-examples}. 

\begin{proof} Let $f = \rho h$ and $C = \supp(h)$ be as in \Cref{Convention-6.1}. Suppose that $C$ is partially separating. Then by \Cref{corollary:backwardforward-iteration}, \Cref{lemma:periodic}, and \Cref{lemma:irreducible-arcs}, $f$ is irreducible. Furthermore, if $C$ is fully separating then there are no periodic lines by \Cref{lemma:irreducible-arcs}. Hence $f$ is strongly irreducible in this case. 

Note that in the case that $C$ is partially, but not fully separating, there exist at least two ends of the same type (that is, either both positive or both negative) that are contained in the same complementary component of $C$. This implies that there is a bi-infinite line $\ell$ passing between these two ends, which is disjoint from $C$ and which lives in the complement of the handle strips that comprise $\rho$. Since $\ell$ does not intersect the support of $h$ (namely $C$) or any handle strips, then it is an AR-periodic line passing between two ends of the same type and so $f$ is irreducible, but not strongly irreducible.\end{proof}

The following lemmas will be used to show that there are no periodic lines or reducing curves.

\begin{lemma}\label{lemma:NonemptyProjection}
Let $\eta$ and $\alpha$ be as in \Cref{Convention-6.1}, and let $\gamma$ be any curve with $\pi_C(\gamma) \neq \emptyset$.
If $d_{C}(\gamma, \rho(\eta))\geq 2$, then $\pi_C (\rho^{-1}(\gamma))\neq \emptyset$. 
If $d_{C}(\gamma, \rho^{-1}(\alpha))\geq 2$, then $\pi_C (\rho(\gamma)) \neq \emptyset$. 
\end{lemma}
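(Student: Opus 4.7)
My plan is to prove the contrapositive of each implication, and by the obvious symmetry (swap $\rho \leftrightarrow \rho^{-1}$ and $\eta \leftrightarrow \alpha$) it suffices to treat the first statement. So I assume $\pi_C(\gamma) \neq \emptyset$ and $\pi_C(\rho^{-1}(\gamma)) = \emptyset$, and I aim to conclude $d_C(\gamma, \rho(\eta)) \leq 1$, which contradicts the hypothesis.

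The first step is to invoke equivariance of subsurface projection under the self-homeomorphism $\rho$ of $S$: explicitly, $\pi_{\rho(C)}(\gamma) = \rho\bigl(\pi_C(\rho^{-1}(\gamma))\bigr)$. Thus the vanishing $\pi_C(\rho^{-1}(\gamma)) = \emptyset$ is equivalent to $\pi_{\rho(C)}(\gamma) = \emptyset$, which is in turn equivalent to $\gamma$ admitting a representative $\gamma'$ disjoint from $\rho(C)$ in $S$. Since $\eta$ is a boundary component of $C$, we have $\rho(\eta) \subset \rho(C)$, and therefore $\gamma'$ is disjoint from $\rho(\eta)$ as well. In particular $i(\gamma, \rho(\eta)) = 0$.

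The second step uses that $\rho(\eta)$ is an essential curve in $C$, which is built into the setup of \Cref{Convention-6.1} and follows geometrically from the requirement that $C$ meets each handle strip in a subsurface of genus at least $2$, leaving room to accommodate the one-handle shift performed by $\rho$. Hence $\pi_C(\rho(\eta)) = \{\rho(\eta)\}$ is nonempty, and $\pi_C(\gamma)$ is nonempty by hypothesis. The characterization \eqref{E:at least 2} of non-intersection in terms of subsurface distance then forces $d_C(\gamma, \rho(\eta)) \leq 1$, yielding the desired contradiction.

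The second statement follows from the analogous argument with $\rho^{-1}$ in place of $\rho$ and $\alpha$ in place of $\eta$: from $\pi_C(\rho(\gamma)) = \emptyset$ one extracts a representative of $\gamma$ disjoint from $\rho^{-1}(C)$ and hence from $\rho^{-1}(\alpha) \subset \rho^{-1}(C)$, then \eqref{E:at least 2} gives $d_C(\gamma, \rho^{-1}(\alpha)) \leq 1$. I do not expect any substantial obstacle here; the core of the argument is a single application of $\rho^{\pm 1}$-equivariance of $\pi_C$ combined with the basic disjoint-projections principle encoded in \eqref{E:at least 2}. The only point deserving a moment's care is the essentialness of $\rho(\eta)$ and $\rho^{-1}(\alpha)$ in $C$, which is already part of the geometric setup of \Cref{Convention-6.1}.
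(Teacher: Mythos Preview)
Your proof is correct and is essentially the contrapositive of the paper's direct argument: the paper observes that $d_C(\gamma,\rho(\eta))\geq 2$ forces $i(\gamma,\rho(\eta))\neq 0$ by \eqref{E:at least 2}, hence $i(\rho^{-1}(\gamma),\eta)\neq 0$, and since $\eta\subset\partial C$ this immediately gives $\pi_C(\rho^{-1}(\gamma))\neq\emptyset$. Your route through equivariance of projection and disjointness from $\rho(C)$ is the same observation unwound; the paper's version is slightly more economical in that it never needs to verify separately that $\rho(\eta)$ is essential in $C$.
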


\begin{proof} If $d_C(\gamma,\rho(\eta)) \geq 2$, then $i(\gamma,\rho(\eta)) \neq 0$; see \eqref{E:at least 2}.  Therefore  $i(\rho^{-1}(\gamma),\eta) \neq 0$, and hence $\pi_C(\rho^{-1}(\gamma)) \neq \emptyset$.  A similar argument proves the second claim.
\end{proof}

\begin{lemma}\label{FillingPair}
Let $\alpha,\eta$, and $B$ be as in \Cref{Convention-6.1}.  Then for all $k\geq 1$, we have
\begin{enumerate}
    \item[(i)] $\pi_C (f^{-k}(\alpha))\subset h^{-1}B$
    \item[(ii)] $\pi_C (f^k(\eta))\subset B$
\end{enumerate} 
Consequently, $f^k(\eta)$ and $f^{-k}(\alpha)$ fill $C$. \end{lemma}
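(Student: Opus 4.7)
The plan is to prove (i) and (ii) by simultaneous induction on $k$. The key auxiliary claim, used at each step, is that $i(f^k(\eta),\eta)=0$ and $i(f^{-k}(\alpha),\alpha)=0$ for every $k\ge 1$. The topological mechanism driving this is straightforward: each handle shift $\rho_i$ moves handles of $H_i$ in the attracting direction, so $\rho$ maps $C\cup (S\setminus C)_+$ into itself (where $(S\setminus C)_\pm$ denotes the union of components of $S\setminus C$ containing ends of the indicated type); combined with $h$ being the identity outside $C$, a quick induction then gives $f^k(\eta)\subset C\cup(S\setminus C)_+$ and $f^{-k}(\alpha)\subset C\cup(S\setminus C)_-$. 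Since $\eta$ is a component of $\partial_-C$ separating $C$ from the adjacent component of $(S\setminus C)_-$, this containment forces $f^k(\eta)$ to lie on the $C$-side of $\eta$ and hence to be isotopic to a representative disjoint from $\eta$; the analogous statement holds for $f^{-k}(\alpha)$ and $\alpha$.

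The base case $k=1$ is a direct computation. Since $\eta,\alpha\subset\partial C$ and $\supp(h)\subset C$, we have $h(\eta)=\eta$ and $h(\alpha)=\alpha$, giving $f(\eta)=\rho(\eta)$ (the center of $B$) and $f^{-1}(\alpha)=h^{-1}\rho^{-1}(\alpha)$. Because $\rho^{-1}(\alpha)\in B$ by construction, and because $h$ is supported on $C$ (hence commutes with $\pi_C$ and acts as an isometry of $\mathcal{AC}(C)$), applying $h^{-1}$ yields $\pi_C(f^{-1}(\alpha))\subset h^{-1}B$.

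For the inductive step of (ii), set $\gamma=h(f^k(\eta))$, so that $f^{k+1}(\eta)=\rho(\gamma)$. The inductive hypothesis together with the isometric action of $h$ gives $d_C(\gamma,h(\rho(\eta)))\le 2$, and since $d_C(\rho(\eta),h(\rho(\eta)))\ge 9$ the triangle inequality gives $d_C(\gamma,\rho(\eta))\ge 7$, hence also $d_C(\gamma,\rho^{-1}(\alpha))\ge 6$. \Cref{lemma:NonemptyProjection} therefore yields $\pi_C(\rho(\gamma))\ne\emptyset$. On the other hand, the auxiliary disjointness claim combined with $h(\eta)=\eta$ gives $i(\gamma,\eta)=i(f^k(\eta),\eta)=0$, whence $i(\rho(\gamma),\rho(\eta))=0$; every arc of $\pi_C(\rho(\gamma))$ is then disjoint from $\rho(\eta)$ and from every other such arc, so $d_C(\rho(\gamma),\rho(\eta))\le 1\le 2$, as required. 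The inductive step for (i) is parallel: with $\beta=f^{-k}(\alpha)$, the hypothesis forces $d_C(\beta,\rho(\eta))\ge 7$, so \Cref{lemma:NonemptyProjection} gives $\pi_C(\rho^{-1}(\beta))\ne\emptyset$; the disjointness $i(\beta,\alpha)=0$ implies $d_C(\rho^{-1}(\beta),\rho^{-1}(\alpha))\le 1$, and combining with $d_C(\rho^{-1}(\alpha),\rho(\eta))\le 1$ and the $h^{-1}$-isometry yields $\pi_C(f^{-(k+1)}(\alpha))\subset h^{-1}B$.

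For the filling conclusion, note that $d_C(\rho(\eta),h^{-1}\rho(\eta))=d_C(h\rho(\eta),\rho(\eta))\ge 9$. By (i) and (ii), every element of $\pi_C(f^k(\eta))$ is within distance $2$ of $\rho(\eta)$ and every element of $\pi_C(f^{-k}(\alpha))$ is within distance $2$ of $h^{-1}\rho(\eta)$, so two applications of the triangle inequality give $d_C(f^k(\eta),f^{-k}(\alpha))\ge 9-4=5$, and distance at least $3$ in $\mathcal{AC}(C)$ implies that two curves fill $C$. I expect the main obstacle to be the careful topological bookkeeping in the auxiliary disjointness claim — in particular, verifying that each iterate really lies in $C\cup(S\setminus C)_+$ (respectively $C\cup(S\setminus C)_-$) at the level of isotopy classes and can then be perturbed off the separating curve $\eta$ (respectively $\alpha$) rather than merely being contained in the closed region bounded by it.
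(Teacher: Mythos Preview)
Your proof is correct and follows essentially the same inductive scheme as the paper: establish the base cases directly, then at each step use \Cref{lemma:NonemptyProjection} to get nonempty projection, use the disjointness $i(f^k(\eta),\eta)=0$ (respectively $i(f^{-k}(\alpha),\alpha)=0$) to pin the projection back near $\rho(\eta)$ (respectively $\rho^{-1}(\alpha)$), and finish with the isometric action of $h$.

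The one place you diverge from the paper is the ``auxiliary disjointness claim,'' and this is exactly the step you flagged as worrisome. Your topological argument (tracking that $f^k(\eta)$ stays in $C\cup(S\setminus C)_+$) works, but the paper short-circuits it: since $\alpha\subset\partial_+C$ and $\rho(\alpha)$ lies outside $C=\supp(h)$, one has $f(\alpha)=\rho(\alpha)$, and inductively $f^m(\alpha)=\rho^m(\alpha)$ for all $m\ge 1$; these are simply the $\rho$-translates of $\alpha$ pushed out the attracting end, hence pairwise disjoint from $\alpha$. Dually, $f^{-m}(\eta)=\rho^{-m}(\eta)$ for all $m\ge 1$, so $i(\eta,f^m(\eta))=i(f^{-m}(\eta),\eta)=i(\rho^{-m}(\eta),\eta)=0$. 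This avoids the isotopy-level bookkeeping about invariant regions entirely.
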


\begin{proof} We will split the proof into two pieces, first showing that (i) holds and then~(ii).  In each case, the proof is by induction.
 
\begin{proof}[Proof of (i)]
As noted in \Cref{Convention-6.1},  $\rho^{-1}(\alpha) \subset B$, and since $C = \supp(h)$ we have $f^{-1}(\alpha) = h^{-1}\rho^{-1}(\alpha) \subset h^{-1}B$ and our base case holds.

Assume, by induction, that there exists $m \geq 1$ such that $\pi_{C}( f^{-m}(\alpha)) \subset h^{-1}B$. Since $d_C(h^{-1}\rho(\eta),\rho(\eta)) \geq 9$, we have $d_C(f^{-m}(\alpha),\rho(\eta)) \geq 7$, and so \Cref{lemma:NonemptyProjection} implies that 
$\pi_C (\rho^{-1}f^{-m}(\alpha)) \neq \emptyset$. Since $i(f^{-m}(\alpha), \alpha) = 0$ (because $i(\alpha, f^m(\alpha)) = i(\alpha, \rho^m(\alpha)) = 0$ by construction), we also have that $i(\rho^{-1}f^{-m}(\alpha), \rho^{-1}(\alpha)) = 0$. This implies that $d_{C}( \rho^{-1}f^{-m}(\alpha),  \rho^{-1}(\alpha)) = 1$. Since $d_C(\rho(\eta),\rho^{-1}(\alpha))=1$, the triangle inequality guarantees that $\pi_C (\rho^{-1}f^{-m}(\alpha)) \subset B$. Thus, as $C = \supp(h)$, we have 
\[ h^{-1}\pi_C(\rho^{-1}f^{-m}(\alpha)) = \pi_C (h^{-1} \rho^{-1} f^{-m}(\alpha)) =\pi_C (f^{-m-1}(\alpha)) \subset h^{-1}B,\] 
as desired.
\end{proof}

\begin{proof}[Proof of (ii):] Note that $f(\eta) = \rho(\eta)$, since $\eta \subset \partial_- C$. So we immediately have that $\pi_C (f(\eta))\subset B$ and our base case holds.

Now, assume that for some $m\geq 1$, $\pi_C (f^m(\eta))\subset B$.  Therefore, we have that $h\pi_C (f^m(\eta))= \pi_C (hf^m(\eta))\subset hB$.  By \Cref{Convention-6.1}, $d_C(\rho(\eta),\rho^{-1}(\alpha)) = 1$
and $d_C(h(\rho(\eta)),\rho(\eta)) \geq 9$, and thus
$d_C(hf^m(\eta),\rho^{-1}(\alpha)) \geq 6$.
Therefore, \Cref{lemma:NonemptyProjection} implies that $\pi_C (f^{m+1}(\eta)) =\pi_C (\rho hf^m(\eta)) \neq \emptyset$. 
Since $i(\eta, f^m(\eta)) = 0$, we must also have that $i(f(\eta), f^{m+1}(\eta)) = 0$. Thus, $\pi_C( f^{m+1}(\eta))\subset B$. \end{proof}

So, we have shown by induction that for all $k\geq 1$, $\pi_C( f^{-k}(\alpha))\subset h^{-1}B$ and $\pi_C (f^k(\eta))\subset B.$ Since $d_C(h\rho(\eta),\rho(\eta))\geq 9$, we have $d_C(f^k(\eta),f^{-k}(\alpha)) \geq 5$, and thus $f^k(\eta)$ and $f^{-k}(\alpha)$ fill $C$ for all $k\geq 1$. \end{proof}

\begin{lemma}\label{ArbitraryFillingPair}
Let $\gamma^+\subset \overline U_+$ and $\gamma^- \subset \overline U_-$ be curves such that $\pi_C (\rho^{-1}(\gamma^+)) \neq \emptyset$ and  $\pi_C (\rho(\gamma^-)) \neq \emptyset$. Let $\alpha,\eta$, and $B'$ be as in \Cref{Convention-6.1}. Then for all $k\geq 1$, we have 

\begin{enumerate}
    \item [(i)] $\pi_C (f^{-k}(\gamma^+)) \subset h^{-1} B'$; and 
    \item[(ii)] $\pi_C( f^k(\gamma^-))\subset B'$.
\end{enumerate}
Consequently, $f^k(\gamma^-)$ and $f^{-k'}(\gamma^+)$ fill $C$ for all $k, k'\geq 1$.
\end{lemma}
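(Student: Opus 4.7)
The plan is to mirror the induction in the proof of \Cref{FillingPair}, but using the radius-$3$ ball $B'$ in place of $B$ and invoking \Cref{FillingPair} itself as a black box to bootstrap from the distinguished curves $\alpha, \eta$ to the arbitrary curves $\gamma^+, \gamma^-$. The key geometric input is that, by \Cref{Convention-6.1}, $C$ is disjoint from $U_+ \sqcup U_-$ while $\alpha \subset \partial_+ C$ and $\eta \subset \partial_- C$, so $\gamma^+ \subset \overline U_+$ has a representative disjoint from $\alpha$, and $\gamma^- \subset \overline U_-$ has a representative disjoint from $\eta$. Consequently $i(f^{-k}(\gamma^+), f^{-k}(\alpha)) = 0$ and $i(f^k(\gamma^-), f^k(\eta)) = 0$ for every $k \geq 0$, and whenever both subsurface projections are nonempty this forces $d_C(f^{-k}(\gamma^+), f^{-k}(\alpha)) \leq 1$ and $d_C(f^k(\gamma^-), f^k(\eta)) \leq 1$.

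For (i), I proceed by induction on $k$. In the base case, the hypothesis $\pi_C(\rho^{-1}(\gamma^+)) \neq \emptyset$ gives $\pi_C(f^{-1}(\gamma^+)) \neq \emptyset$; combined with $\pi_C(f^{-1}(\alpha)) \subset h^{-1}B$ from \Cref{FillingPair} and the triangle inequality, this yields $\pi_C(f^{-1}(\gamma^+)) \subset h^{-1}B'$. For the inductive step, assuming $\pi_C(f^{-m}(\gamma^+)) \subset h^{-1}B'$, the fact that $h$ acts as an isometry of $\mathcal{AC}(C)$ gives $d_C(\rho(\eta), h^{-1}\rho(\eta)) \geq 9$, so $d_C(f^{-m}(\gamma^+), \rho(\eta)) \geq 9 - 3 = 6 \geq 2$, and \Cref{lemma:NonemptyProjection} ensures $\pi_C(f^{-(m+1)}(\gamma^+)) \neq \emptyset$. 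Combining $\pi_C(f^{-(m+1)}(\alpha)) \subset h^{-1}B$ from \Cref{FillingPair} with $d_C(f^{-(m+1)}(\gamma^+), f^{-(m+1)}(\alpha)) \leq 1$ via the triangle inequality closes the induction. Statement (ii) is the mirror-image induction with $\eta, \gamma^-$ and positive iterates replacing $\alpha, \gamma^+$ and negative iterates, using the corresponding half of \Cref{FillingPair}.

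The filling conclusion is then a direct distance estimate. From (i), (ii), and three applications of the triangle inequality,
\[ d_C(f^k(\gamma^-), f^{-k'}(\gamma^+)) \geq d_C(\rho(\eta), h^{-1}\rho(\eta)) - d_C(f^k(\gamma^-), \rho(\eta)) - d_C(f^{-k'}(\gamma^+), h^{-1}\rho(\eta)) \geq 9 - 3 - 3 = 3 \]
for all $k, k' \geq 1$. Any essential arc or curve on $C$ disjoint from both $f^k(\gamma^-)$ and $f^{-k'}(\gamma^+)$ would force this $\mathcal{AC}(C)$-distance to be at most $2$; hence no such arc or curve exists, and the union $f^k(\gamma^-) \cup f^{-k'}(\gamma^+)$ fills $C$.

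The argument is expected to be entirely routine once \Cref{FillingPair} is available; the only point requiring care is the verification that $\gamma^+$ admits a representative disjoint from $\alpha$ and that $\gamma^-$ admits one disjoint from $\eta$, which is forced by the placement of $C$, $U_\pm$, and the handle strips prescribed in \Cref{Convention-6.1}.
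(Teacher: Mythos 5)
Your proof is correct and follows essentially the same inductive strategy as the paper's: bootstrap from the distinguished curves $\alpha, \eta$ to $\gamma^\pm$ via disjointness, invoke \Cref{FillingPair} to control $\pi_C(f^{\mp k}(\alpha))$ and $\pi_C(f^{\pm k}(\eta))$, use \Cref{lemma:NonemptyProjection} to keep the projection nonempty, and close with triangle inequalities in $\mathcal{AC}(C)$. The only (immaterial) difference is in the base case, where you cite \Cref{FillingPair} for $\pi_C(f^{-1}(\alpha)) \subset h^{-1}B$ while the paper bounds $\pi_C(\rho^{-1}(\gamma^+))$ directly using $d_C(\rho^{-1}(\alpha), \rho(\eta)) = 1$; both yield the same containment.
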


\begin{proof}
As in \Cref{FillingPair}, we will first show that (i) holds, and then prove (ii).  In both cases, the proof is by induction.

\begin{proof}[Proof of (i)]
As noted in \Cref{Convention-6.1}, we have $d_C(\rho(\eta), \rho^{-1}(\alpha)) = 1$, and since $i(\rho^{-1}(\gamma^+),\rho^{-1}(\alpha)) = 0$, it follows that $\pi_C (\rho^{-1}(\gamma^+)) \subset B\subset B'$. Therefore, we have $h^{-1}\pi_C( \rho^{-1}(\gamma^+)) = \pi_C(f^{-1}(\gamma^+))\subset h^{-1} B'$, proving the base case. 

Assume by induction that for some $m \geq 1$, $\pi_C (f^{-m}(\gamma^+))\subset h^{-1}B'$. Since $d_C(h^{-1}(\rho(\eta)),\rho(\eta)) \geq 9$, we have $d_C(f^{-m}(\gamma^+),\rho(\eta)) \geq 7$, and \Cref{lemma:NonemptyProjection} implies that
$\pi_C( \rho^{-1}f^{-m}(\gamma^+))\neq \emptyset$. As in that proof we have  \[ h^{-1}\pi_C (\rho^{-1} f^{-m}(\gamma^+)) = \pi_C (h^{-1}\rho^{-1}f^{-m}(\gamma^+)) = \pi_C (f^{-(m+1)}(\gamma^+)) \neq \emptyset.\]
Since $i(\gamma^+, \alpha) = 0$, this implies that $i(f^{-(m+1)}(\gamma^+), f^{-(m+1)}(\alpha)) = 0$. Then as $\pi_C (f^{-(m+1)}(\alpha))\subset h^{-1}B$, \Cref{FillingPair} implies $\pi_C( f^{-(m+1)}(\gamma^+))\subset h^{-1} B'$, as desired.
\end{proof}

\begin{proof}[Proof of (ii)]
Since $i(\gamma^-, \eta) = 0$, we have $i(f(\gamma^-), f(\eta)) = 0$. Because $h$ has no effect on $\partial C$, we have $f(\eta) = \rho h(\eta) = \rho(\eta)$, and  \Cref{FillingPair} implies that $h(\pi_C(\rho(\gamma^-)) = \pi_C (f(\gamma^-)) \subset B\subset B'$, proving the base case.

Assume by induction that for some $m\geq 1$, $\pi_C (f^m(\gamma^-) )\subset B'$. Then since $d_C(\rho^{-1}(\alpha),\rho(\eta))=1$, we have
 $d_C(f^m(\gamma^-),\rho^{-1}(\alpha)) \leq 4$. The assumption that $d_C(h\rho(\eta),\rho(\eta))\geq 9$, then implies $d_C(hf^m(\gamma^-),\rho^{-1}(\alpha)) \geq 5$.  From \Cref{lemma:NonemptyProjection} we deduce that 
$\pi_C (\rho h f^m(\gamma^-))\neq \emptyset$. Now observe that since $i(\gamma^-, \eta) = 0$, we have $i(f^{m+1}(\gamma^-), f^{m+1}(\eta)) = 0$. As \Cref{FillingPair} gives us that $\pi_C (f^{m+1}(\eta))\subset B$, this implies that $\pi_C(f^{m+1}(\gamma^-))\subset B'$, as desired.
\end{proof}
Hence, we have shown by induction that for all $k\geq 1$, $\pi_C( f^{-k}(\gamma^+))\subset h^{-1} B'$ and $\pi_C (f^k(\gamma^-))\subset B'$, for any curves $\gamma^+\subset U_+$ and $\gamma^-\subset U_-$ which project nontrivially to $C$ after one handle shift. Since $d_C(h\rho(\eta),\rho(\eta))\geq 9$, it follows that $d_C(h^{-k}(\gamma^+),f^{k'}(\gamma^-))\geq 3$, and we can conclude that $f^k(\gamma^-)$ and $f^{-k'}(\gamma^+)$ fill $C$ for all $k, k'\geq 1$.
\end{proof}

\begin{corollary}\label{corollary:backwardforward-iteration}
There is no curve in $U_+$ which escapes into $U_-$ under backward iteration of the map $f$, and there is no curve in $U_-$ which escapes into $U_+$ under forward iteration of the map $f$.
\end{corollary}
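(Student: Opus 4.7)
The plan is to argue by contradiction using \Cref{ArbitraryFillingPair}, exploiting the fact that $U_- \cap C = \emptyset$. I will prove the first claim; the second follows by the symmetric argument using the $U_-$-version of the lemma. Assume for contradiction that $\gamma \subset U_+$ satisfies $f^{-k}(\gamma) \subset U_-$ for some $k \geq 1$.

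Since $h$ is supported on $C$ and $C \cap U_+ = \emptyset$, the map $f^{-1}$ agrees with $\rho^{-1}$ on any curve contained in $U_+$, so as long as the backward orbit of $\gamma$ stays in $U_+$, one has $f^{-j}(\gamma) = \rho^{-j}(\gamma)$. Because $U_+ \cap U_- = \emptyset$, there is a smallest $m \geq 1$ with $\rho^{-m}(\gamma) \not\subset U_+$, and I would set $\gamma^+ := \rho^{-(m-1)}(\gamma) \subset U_+$. The next step, and what I regard as the main obstacle, is to verify the hypothesis $\pi_C(\rho^{-1}(\gamma^+)) \neq \emptyset$ needed to invoke \Cref{ArbitraryFillingPair}. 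The curve $\rho^{-1}(\gamma^+) = \rho^{-m}(\gamma)$ exits $U_+$, hence crosses some component $v_i \subset \partial U_+$; since $\rho$ is supported in a neighborhood of the handle strips, any such crossing occurs inside a handle strip, and by item (2) of \Cref{Convention-6.1} the arcs of $v_i$ and $\partial C$ in that handle strip are isotopic, so $\rho^{-1}(\gamma^+)$ must also cross $\partial C$ essentially and enter $C$, yielding $\pi_C(\rho^{-1}(\gamma^+)) \neq \emptyset$.

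With the hypothesis confirmed, \Cref{ArbitraryFillingPair} gives $\pi_C(f^{-k'}(\gamma^+)) \subset h^{-1}B' \neq \emptyset$ for every $k' \geq 1$. Using the identity $\gamma^+ = f^{-(m-1)}(\gamma)$ (which holds because $h$ acts trivially on the initial $U_+$-segment of the orbit), this translates into $\pi_C(f^{-j}(\gamma)) \neq \emptyset$ for every $j \geq m$. On the other hand, since $U_-$ is a good nesting neighborhood of the repelling ends we have $f^{-1}(U_-) \subset U_-$, so the assumption $f^{-k}(\gamma) \subset U_-$ propagates to $f^{-j}(\gamma) \subset U_-$ for every $j \geq k$. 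Choosing $j \geq \max(m,k)$ then produces a single curve that essentially meets $C$ while also being contained in $U_- \subset S - C$, the desired contradiction. The most delicate point is the geometric verification of nonempty projection: one must rule out the possibility that $\rho^{-1}(\gamma^+)$ slips into a planar component of $S - (U_+ \sqcup U_- \sqcup C)$ without meeting $C$, which is precisely what the alignment of $\partial C$ and $v_i$ arcs built into \Cref{Convention-6.1} prevents.
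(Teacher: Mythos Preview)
Your argument is correct and follows essentially the same route as the paper: both reduce to \Cref{ArbitraryFillingPair} by first iterating the curve within $U_\pm$ until one more application of $\rho^{\mp 1}$ produces nontrivial $C$--projection, then invoking the lemma to conclude that all further iterates meet $C$, contradicting containment in $U_\mp$. The paper simply asserts the existence of the required iterate $r$ with $\pi_C(\rho^{-1}f^{-r}(\beta^+))\neq\emptyset$, whereas you identify it as the last time before first exit and give a geometric justification via the alignment of $\partial C$ with the $v_i$ inside the handle strips; this added detail is welcome, though (as you note) it is the delicate point and relies on the specific setup of \Cref{Convention-6.1}.
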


\begin{proof} Let $\beta^-\subset  U_-$ and $\beta^+\subset  U_+$ be any curves. Then there exists $q,r\geq 0$ so that $f^q(\beta^-)\subset \overline U_-$ but $\pi_C(\rho f^q(\beta^-)) \neq \emptyset$, and  $f^{-r}(\beta^+)\subset \overline U_+$ but $\pi_C(\rho^{-1}f^{-r}(\beta^+)) \neq \emptyset$. 
Note that the curves $\gamma^- = f^q(\beta^-)$ and $\gamma^+ = f^{-r}(\beta^+)$ satisfy the hypotheses of \Cref{ArbitraryFillingPair}. Thus, for all $k> \max\{q, r\}$, $f^k(\beta^-)$ and $f^{-k}(\beta^+)$ project nontrivially to $C$. Consequently, $\beta^-$ cannot escape into $U_+$ under forward iteration of $f$ and $\beta^+$ cannot escape into $U_-$ under backward iteration of $f$. \end{proof}

\begin{lemma}\label{lemma:periodic}
Let $C$ be either partially or fully separating and let $f = \rho h$ be as in \Cref{Convention-6.1}. 
Then, there are no curves that are periodic under the homeomorphism $f$. That is, for curves $\delta$ on $S$, $f^m(\delta) \neq \delta$ for all $m \neq 0$.
\end{lemma}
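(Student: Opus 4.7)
The plan is to argue by contradiction: assume $f^m(\delta)=\delta$ for some $m\neq 0$ (WLOG $m>0$), so the orbit $\{f^k(\delta):k\in\mathbb{Z}\}$ is a finite set of isotopy classes. The overall strategy is first to rule out the ways in which $\delta$ could be isotoped off $C$, and then to use the backward dynamics of $f$ to push $C$ into arbitrarily deep nesting neighborhoods of the repelling ends, yielding a contradiction via the $f$-equivariance of the geometric intersection number.

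First I would handle the cases in which $\delta$ is disjoint from $C$. If $\delta\subset\overline{U_+}$, then $h(\delta)=\delta$ (since $\supp(h)=C$ is disjoint from $U_+$) and $\rho(U_+)\subsetneq U_+$, so $f^k(\delta)=\rho^k(\delta)\subset\rho^k(U_+)$ for all $k\geq 0$. Since $\{\rho^k(U_+)\}$ is a shrinking neighborhood basis of the attracting ends and $\delta$ is essential (hence non-peripheral), it cannot be isotopic into arbitrarily small end-neighborhoods, contradicting $f^{mk}(\delta)=\delta$ for all $k$. The case $\delta\subset\overline{U_-}$ is symmetric, using backward iteration. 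The remaining complementary components of $S\setminus(U_+\cup U_-\cup C)$ are planar and disjoint from every handle strip by \Cref{Convention-6.1}, so both $\rho$ and $h$ restrict to the identity there; taking $C$ large enough in the construction ensures these planar pieces contain no essential simple closed curves of $S$. Hence every iterate $f^k(\delta)$ must intersect $C$ essentially, and $i(\delta,C)>0$.

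Next I would exploit the backward dynamics. Choosing $C$ inside the union of the handle strips places $C$ within the negative escaping set $\mathcal{U}_-$ of $f$: any point of $C$, after enough backward applications of $\rho^{-1}$, exits $C$ into the repelling side of the handle strip, and further $f^{-1}$-iterations (for which $h^{-1}$ is trivial, having support in $C$) follow $\rho^{-1}$ into the repelling ends. By \Cref{L:escaping sets are covering spaces}, $\langle f\rangle$ acts cocompactly on $\mathcal{U}_-$ with fundamental domain $K_-=\overline{U_--f^{-1}(U_-)}$, and the telescoping identity $\bigcup_{j\leq -M}f^j(K_-)=f^{-M}(U_-)$ combined with compactness of $C$ yields $f^{-k}(C)\subset f^{N_0-k}(U_-)$ for some fixed $N_0$ and all $k\geq N_0$. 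Since $\delta$ is a compact, essential curve that is not isotopic into $\overline{U_-}$ (from the previous paragraph), its geodesic representative in any hyperbolic metric on $S$ can be taken to avoid the small neighborhood $f^{-N}(U_-)$ for $N$ large enough, and hence to be disjoint from $f^{-k}(C)$ for all $k$ sufficiently large. By $f$-equivariance $i(f^k(\delta),C)=i(\delta,f^{-k}(C))=0$ for such $k$, while periodicity $f^{mk}(\delta)=\delta$ forces $i(\delta,C)=0$ — contradicting what was established in the previous paragraph.

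The hard part will be the escape statement $f^{-k}(C)\subset f^{-N}(U_-)$ for arbitrarily large $N$: it relies on $C$ sitting in $\mathcal{U}_-$ (ensured by placing $C$ inside handle strips) and on the interleaved compactly supported perturbations $h^{-1}$ being ``swallowed'' once a point leaves $C$ under repeated $\rho^{-1}$-shifts. The cleanest way I see is to invoke \Cref{L:escaping sets are covering spaces} for the dynamics of $f$ itself (not just $\rho$), so that a single compactness-plus-translation argument via the fundamental domain $K_-$ handles all of $C$ uniformly.
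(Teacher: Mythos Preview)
Your argument has a genuine gap at its core: the claim that $C\subset\mathcal{U}_-$ (the negative escaping set of $f$, not of $\rho$) is false in general, and in fact is contradicted by \Cref{FillingPair}. That lemma shows that for the boundary curve $\alpha\subset\partial_+C\subset C$ one has $\pi_C(f^{-k}(\alpha))\neq\emptyset$ for every $k\geq 1$; in particular $f^{-k}(\alpha)$ always essentially meets $C$ and can never lie in $U_-$. Since $\alpha\subset C$, this rules out $C\subset\mathcal{U}_-$, and hence the compactness argument you want to run (``$f^{-k}(C)\subset f^{N_0-k}(U_-)$'') never gets off the ground. The flaw in your justification is the conflation of $\rho^{-1}$-dynamics with $f^{-1}$-dynamics: $f^{-1}=h^{-1}\rho^{-1}$, and the interleaved $h^{-1}$ (supported on $C$) can push points \emph{back} into the part of $C$ that $\rho^{-1}$ has not yet cleared. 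This ``trapping'' phenomenon is exactly what \Cref{FillingPair} and \Cref{ArbitraryFillingPair} are detecting, and it is the opposite of the escape you need.

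There is a second, smaller problem: you freely adjust the hypotheses on $C$ (``choosing $C$ inside the union of the handle strips'', ``taking $C$ large enough''), but the lemma is stated for $C$ as in \Cref{Convention-6.1}, where $C$ is merely required to meet each handle strip in a genus-$\geq 2$ piece and to have planar complement in $S\setminus(U_+\cup U_-)$. The paper's proof instead handles a curve $\delta$ disjoint from $C$ by showing it must essentially cross a handle strip (since the complement of the strips in that planar region is contractible), whence it is shifted into an end. For the main case $\pi_C(\delta)\neq\emptyset$, the paper uses \Cref{ArbitraryFillingPair}: pick $\gamma^\pm$ in $U_\pm$ disjoint from $\delta$; then $f^{m}(\gamma^-)$ and $f^{-m}(\gamma^+)$ fill $C$, yet $\delta=f^{m}(\delta)=f^{-m}(\delta)$ must be disjoint from both, which is impossible. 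That filling argument is what replaces your (unavailable) escape of $C$.
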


\begin{proof} Suppose $\delta$ is a simple closed curve in $S$ which is periodic under $f$. 

Note that if any part of $\delta$ essentially intersects $U_+$ or $U_-$, then $\delta$ cannot be periodic because it will always move further into $U_+$ under positive powers of $f$ or will always move further into $U_-$ under negative powers of $f$. Hence, if $\delta$ is periodic under $f$, then $\delta\subset S  -  (U_+\sqcup U_-)$. 

There are now two cases to consider. Either $\delta$ essentially intersects $C$, or $\delta$ is contained entirely in $S  -  (U_+ \sqcup U_- \sqcup C )$. If $\delta$ is contained entirely in $S  -  (U_+ \sqcup U_- \sqcup C )$, then by the assumptions on $U_+$, $U_-$, and $C$, this means that $\delta$ must essentially intersects some handle strip. Since the components of the complement of the handle strips in $S  -  (U_+ \sqcup U_- \sqcup C )$ are contractible (being homeomorphic to closed disks, minus a finite set of points on the boundary). However, if $\delta$ essentially intersects a handle strip outside of $C$, then this portion of $\delta$ will also be carried further out into an end along this handle strip under either forward or backward iteration of $f$.

So, we assume that $\delta$ essentially intersects $C$ but is disjoint from $U_+$ and $U_-$, and that  $m > 0$ is such that $f^m(\delta) = \delta$. Let $\gamma^+\subset  U_+$ and $\gamma^-\subset  U_-$ be such that $\pi_C (\rho^{-1}(\gamma^+)) \neq \emptyset$ and $\pi_C (\rho(\gamma^-))\neq \emptyset$, as in \Cref{ArbitraryFillingPair}. 
Note that since $i(\delta, \gamma^\pm) = 0$, this implies that $i(f^k(\delta), f^k(\gamma^\pm)) = 0$ for all $k \in \mathbb{Z}$. But, \Cref{ArbitraryFillingPair} states that $f^k(\gamma^-)$ and $f^{-k}(\gamma^+)$ fill $C$ for all $k> 0$.  In particular, since $f^m(\delta) = \delta = f^{-m}(\delta)$ essentially intersects $C$, we must have $i(f^{m}(\delta),f^m(\gamma^+)) \neq 0$ or $i(f^{-m}(\delta),f^{-m}(\gamma^-)) \neq 0$, which is a contradiction.
\end{proof}

\begin{lemma}\label{lemma:irreducible-arcs}
If $C$ is partially separating, then $f$ has no AR-periodic lines. If $C$ is fully separating, then $f$ has no periodic lines. 
\end{lemma}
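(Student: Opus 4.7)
The plan is to extend the subsurface projection arguments of \Cref{FillingPair} and \Cref{ArbitraryFillingPair}, which were stated for closed curves, to bi-infinite lines, exploiting once more the translation distance condition $d_C(\rho(\eta), h(\rho(\eta))) \geq 9$ from \Cref{Convention-6.1}. The overall strategy mirrors the proof of \Cref{lemma:periodic}, with the essential new ingredient being that a line crossing $C$ has a well-defined, finite subsurface projection $\pi_C(\ell)$ in the arc and curve graph $\mathcal{AC}(C)$, which must be invariant when $f^k(\ell) = \ell$.

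For the AR-periodic case, assume for contradiction that $\ell$ is an AR-periodic line with $f^k(\ell) = \ell$ for some $k > 0$, with ends in an attracting end $E_+$ and a repelling end $E_-$. Since $C$ is partially separating, $C$ separates $E_+$ from $E_-$, and so $\ell$ must essentially cross $C$. Fix a curve $\gamma^- \subset \overline{U_-}$ and $\gamma^+ \subset \overline{U_+}$ as in \Cref{ArbitraryFillingPair}. Because $\ell$ has one end in $U_+$ and one in $U_-$, the intersection numbers $N_\pm = i(\ell, \gamma^\pm)$ are finite, and by $f^k$-invariance of $\ell$, $i(\ell, f^{mk}(\gamma^\pm)) = N_\pm$ for all $m \in \mathbb{Z}$. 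The filling pair result guarantees $\pi_C(f^{mk}(\gamma^-)) \subset B'$ and $\pi_C(f^{-mk}(\gamma^+)) \subset h^{-1}B'$ for all $m \geq 1$. The intersection constancy, combined with standard lower bounds on essential intersection number in terms of subsurface distance (see \eqref{E:at least 2}), forces $\pi_C(\ell)$ to lie within a bounded neighborhood of $B'$; symmetrically it must lie within a bounded neighborhood of $h^{-1}B'$. But $d_C(B', h^{-1}B') \geq 9 - 6 = 3$, so these two neighborhoods are disjoint, a contradiction.

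For the second statement, let $\ell$ be any periodic line with $f^k(\ell) = \ell$ in the case that $C$ is fully separating. If $\ell$ essentially crosses $C$, then the argument above applies verbatim (its use of the end behavior of $\ell$ was only to ensure essential intersection with $C$). Otherwise, $\ell$ lies in a single component $V$ of $S \setminus C$, and since $C$ is fully separating, $V$ contains a unique end $E$ of $S$; thus both ends of $\ell$ lie in $E$. On $V$, $f$ coincides with $\rho$, so $\rho^k(\ell) = \ell$. Proper homotopy classes of essential lines in $V$ with both ends in $E$ correspond to nontrivial conjugacy classes in a free group (generated by loops around the handles comprising the handle strip tails going into $E$), and $\rho$ acts on this free group by shifting generators. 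Since any nontrivial element of a free group has infinite orbit under such a shift, no nontrivial class can be fixed by $\rho^k$, giving the desired contradiction.

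The main obstacle is the first step, adapting the filling pair bound from a single curve to a line: one must track all arcs of $\ell \cap C$ simultaneously and relate the essential intersection number $i(\ell, f^{mk}(\gamma^\pm))$ to the subsurface distance between $\pi_C(\ell)$ and $\pi_C(f^{mk}(\gamma^\pm))$ in order to localize $\pi_C(\ell)$. One convenient reduction is to approximate $\ell$ near $C$ by a simple closed curve $\gamma_\ell$ obtained by taking a sufficiently long compact segment of $\ell$ containing $\ell \cap C$ and closing it up by an arc disjoint from $C$ (staying in $U_+ \cup U_-$); this gives $\pi_C(\gamma_\ell) = \pi_C(\ell)$ and lets one apply the closed-curve filling pair argument to $\gamma_\ell$ in place of $\ell$.
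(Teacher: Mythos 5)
Your proposal contains a genuine gap in the AR-periodic case, and it misses the one observation that makes the paper's argument clean. You fix arbitrary curves $\gamma^\pm$ as in \Cref{ArbitraryFillingPair} and argue that the constancy of $N_\pm = i(\ell, f^{mk}(\gamma^\pm))$, together with \eqref{E:at least 2}, localizes $\pi_C(\ell)$ to bounded neighborhoods of $B'$ and $h^{-1}B'$, which are then supposed to be disjoint. But \eqref{E:at least 2} only gives a \emph{lower} bound on distance when the intersection number is nonzero; to get an upper bound one needs a Masur--Minsky type estimate $d_C(\alpha,\beta) \lesssim \log i(\alpha,\beta)$, and that estimate depends on $N_\pm$, which is not under control and can be arbitrarily large depending on $\ell$. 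Meanwhile the gap between $B'$ and $h^{-1}B'$ is only $\geq 3$ (this part of your calculation is fine), so for any $N_\pm$ large enough the two ``bounded neighborhoods'' can overlap and no contradiction ensues. Your suggested workaround of replacing $\ell$ by a closed-up curve $\gamma_\ell$ does not repair this, because $\gamma_\ell$ is not $f$-periodic and so the invariance step you need does not apply to it.

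The paper's proof sidesteps all of this with one observation you did not make: a single line $\ell$ cannot fill the infinite-genus surfaces $U_\pm$, so one can choose curves $\beta^\pm \subset U_\pm$ that are genuinely \emph{disjoint} from $\ell$. Pushing these by $\rho^{\pm q}$ until they first project nontrivially to $C$ puts them in the form required by \Cref{ArbitraryFillingPair}, and then $f^{mp}$-invariance of $\ell$ gives $i(\ell, f^{mp}(\beta^-)) = i(\ell, f^{-mp}(\beta^+)) = 0$ exactly. Since those iterates fill $C$ for $p$ large and $\ell$ must essentially cross $C$ (because $C$ separates attracting from repelling ends in the AR case, and in the fully separating case a periodic line avoiding $C$ would be shifted out an end), this is an immediate contradiction with no metric estimates needed. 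For your second statement, the case where $\ell$ avoids $C$ when $C$ is fully separating, your free-group shift argument is a more elaborate way of obtaining what the paper gets directly: such an $\ell$ is contained in a nesting neighborhood and $f$-invariance would force it to be properly homotopic into arbitrarily small neighborhoods of the end, contradicting essentiality. The paper then folds the remaining case (where $\ell$ crosses $C$) back into the same disjoint-$\beta^\pm$ filling argument, rather than handling it separately as you do.
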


\begin{proof}
We will first consider the case where $C$ is partially separating. Suppose $f$ has an AR-periodic line $\ell$ with period $m$, so that $f^{mp}(\ell) = \ell$ for all $p\in \mathbb{Z}$. As $\ell$ cannot fill either $U_+$ or $U_-$, we can find curves $\beta^+\subset U_+$ and $\beta^-\subset U_-$ which are each disjoint from $\ell$. There exist integers $q, r\geq 0$ such that $f^q(\beta^-) = \rho^q(\beta^-)\subset \overline U_-$ but $\pi_C(\rho f^q(\beta^-)) \neq \emptyset$ and $f^{-r}(\beta^+) = \rho^{-r}(\beta^+)\subset \overline U_+$ but $\pi_C(\rho^{-1} f^{-r}(\beta^+)) \neq \emptyset$. 

The curves $\gamma^- = f^q(\beta^-)$ and  $\gamma^+ = f^{-r}(\beta^+)$ satisfy the hypotheses of \Cref{ArbitraryFillingPair}, and thus $f^k(\gamma^-) = f^{q+k}(\beta^-)$ and $f^{-k'}(\gamma^+) = f^{-r -k'}(\beta^+)$ fill $C$ for all $k, k'\geq 1$. Note that since $i(\ell, \beta^\pm) = 0$, this implies that $i(f^k(\ell), f^k(\beta^\pm)) = 0$ for all $k\in \mathbb{Z}$. Since $f^{mp}(\ell) = \ell$ for all $p\in \mathbb{Z}$, this implies that $i(\ell, f^{mp}(\beta^\pm)) = 0$ for all $p\in \mathbb{Z}$. But, for $p$ large enough so that $mp > \max\{q, r\}$, \Cref{ArbitraryFillingPair} implies that $f^{mp}(\beta^-)$ and $f^{-mp}(\beta^+)$ fill $C$, contradicting the fact that $\ell$ was disjoint from $\beta^-$ and $\beta^+$. 

We now consider the case where $C$ is fully-separating and suppose that $f$ has a periodic line $\ell$ with period $m$. If $\ell$ passes between two ends of $S$ of the same type (i.e. both attracting or both repelling), then in order to be invariant, $\ell$ must intersect $C$ (or else it would get shifted farther out the end under forward or backward iteration of $f$). If $\ell$ passes between two ends of $S$ of distinct types (i.e. one is attracting and one is repelling), then $\ell$ must also intersect $C$ since $C$ is fully separating. As $\ell$ can only intersect $U_-$ and $U_+$ in a finite number of arcs and rays, we can find curves $\beta^-\subset U_-$ and $\beta^+\subset U_+$ as before which are disjoint from $\ell$.
But, this gives rise to the same contradiction as in the case when $C$ was partially separating, since $f^{pm}(\beta^-)$ and $f^{-pm}(\beta^+)$ must fill $C$ for sufficiently large $p$ by \Cref{ArbitraryFillingPair}.
\end{proof}

\subsection{Sharpness of the Upper Bound}
\label{sub:sharpness}

In this subsection, we will use some of the examples of irreducible end-periodic homeomorphisms we have constructed to show that \Cref{T:upper} is asymptotically sharp, in the sense made precise by the following theorem from the introduction.  
Recall that $V_\oct$ is the volume of a regular ideal octahedron.

\medskip

\noindent{\bf \Cref{T:blah blah}}
{\em \tblahblah }

\smallskip

We will only be considering strongly irreducible end-periodic homeomorphisms $f = \rho h$, as constructed in \Cref{S:example}. Thus, by \Cref{prop:irreducible-examples}, we will require our subsurface $C = \supp(h)$ to be fully separating. 

\begin{convention}\label{convention:subsurface}
We make the same assumptions on $S,\alpha,\eta,\rho,h,C,U_\pm,B,$ and $B'$ from \Cref{Convention-6.1}, in addition to the following. We assume that $C$ is fully separating, that there is a $4$--times punctured sphere $\Sigma = \Sigma_{0,4}$ embedded in the subsurface $C \cap \rho(C) \subset C \subset S$, and that each of the four boundary curves in $\overline \Sigma$ as well as some curve $\gamma_0 \subset \Sigma$ have all distinct topological types in $S$: that is, there is no homeomorphism of $S$ taking any one of these curves to any other one. For example, the curves can be chosen to all be separating curves which each cut off a subsurface with different topological type. In particular, no two boundary curves of $\overline \Sigma$ or $\gamma_0$ are homotopic. 
For example, the conditions on $\Sigma$ hold when $\Sigma$ is embedded in $C \cap \rho(C)$ so that $S  -  \Sigma$ consists of four pairwise non-homeomorphic components and $\gamma_0$ cuts off a compact subsurface of $S$ not homeomorphic to any of the components of $S  -  \Sigma$.

We also fix a pants decomposition $P$ of $S$ which is $\rho$-invariant on $U_+$ and $U_-$, and which contains the four boundary curves of $\Sigma$, as well as $\partial C_+$, $\partial C_-$ and $\rho (\partial C_-)$.
\end{convention}

\begin{lemma}\label{lemma:TrappedCurves} With the assumptions in \Cref{convention:subsurface} (and hence also \Cref{Convention-6.1}),
for all curves $\gamma\subset C\cap \rho(C)$, we have that for all $k \geq 1$: 
\begin{enumerate}
    \item[(i)]  $\pi_C (f^{-k}(\gamma))\subset h^{-1}B'$; and 
    \item[(ii)] $\pi_C (f^k(\gamma))\subset B'$.
\end{enumerate}
\end{lemma}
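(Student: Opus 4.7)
The plan is to mirror the inductive structure of the proofs of \Cref{FillingPair} and \Cref{ArbitraryFillingPair}, leveraging two consequences of the hypothesis $\gamma \subset C \cap \rho(C)$: the inclusion $\gamma \subset C$ gives $i(\gamma, \eta) = i(\gamma, \alpha) = 0$ (because $\eta \subset \partial_- C$ and $\alpha \subset \partial_+ C$), while the inclusion $\gamma \subset \rho(C)$ gives both $\rho^{-1}(\gamma) \subset C$ and $i(\gamma, \rho(\eta)) = 0$ (since $\rho(\eta) \subset \partial \rho(C)$). Together these let me piggyback on the containments for $\eta$ and $\alpha$ already established in \Cref{FillingPair}.

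For the base case of (i), I would observe that $\rho^{-1}(\gamma) \subset C$ makes $\pi_C(\rho^{-1}(\gamma)) = \rho^{-1}(\gamma)$, and that the vanishing $i(\rho^{-1}(\gamma), \rho^{-1}(\alpha)) = i(\gamma, \alpha) = 0$, combined with the standing relation $d_C(\rho^{-1}(\alpha), \rho(\eta)) = 1$, places $\rho^{-1}(\gamma)$ inside $B$; applying $h^{-1}$ then lands $\pi_C(f^{-1}(\gamma))$ in $h^{-1}B \subset h^{-1}B'$. The base case of (ii) is parallel: from $\gamma \subset \rho(C)$ we have $d_C(\gamma, \rho(\eta)) \leq 1$, and since $h$ acts as an isometry on $\mathcal{AC}(C)$ with $d_C(h\rho(\eta), \rho(\eta)) \geq 9$, the triangle inequality forces $d_C(h(\gamma), \rho^{-1}(\alpha))$ to be at least $7$, so \Cref{lemma:NonemptyProjection} gives $\pi_C(f(\gamma)) = \pi_C(\rho h(\gamma)) \neq \emptyset$; then $i(f(\gamma), f(\eta)) = i(f(\gamma), \rho(\eta)) = 0$ yields $d_C(f(\gamma), \rho(\eta)) \leq 1$, so $\pi_C(f(\gamma)) \subset B'$.

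For the inductive step, assuming $\pi_C(f^m(\gamma)) \subset B'$ in case (ii) (respectively $\pi_C(f^{-m}(\gamma)) \subset h^{-1}B'$ in case (i)), the triangle inequality together with $d_C(h\rho(\eta), \rho(\eta)) \geq 9$ pushes $d_C(hf^m(\gamma), \rho^{-1}(\alpha))$ (respectively $d_C(f^{-m}(\gamma), \rho(\eta))$) above $2$, so \Cref{lemma:NonemptyProjection} secures non-emptiness of $\pi_C(f^{m+1}(\gamma))$ (respectively $\pi_C(f^{-(m+1)}(\gamma))$). To pin down the location, I would invoke $i(f^{m+1}(\gamma), f^{m+1}(\eta)) = 0$ together with $\pi_C(f^{m+1}(\eta)) \subset B$ from \Cref{FillingPair}(ii) for case (ii), and analogously $i(f^{-(m+1)}(\gamma), f^{-(m+1)}(\alpha)) = 0$ with $\pi_C(f^{-(m+1)}(\alpha)) \subset h^{-1}B$ from \Cref{FillingPair}(i) for case (i); a final triangle inequality then closes the induction.

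The hard part is really the bookkeeping rather than any single estimate: because $\gamma$ does not lie on $\partial C$, I cannot shortcut $f^k(\gamma) = \rho^k(\gamma)$ the way \Cref{FillingPair} does for $\eta$ and $\alpha$, so the orbit of $\gamma$ under $f$ interacts nontrivially with both $\rho$ and $h$ at every step. The role of the hypothesis $\gamma \subset C \cap \rho(C)$ is precisely to supply, via disjointness of $\gamma$ from $\partial C \cup \{\rho(\eta)\}$, the one-step base-case distance bounds needed to seed the induction; from there, \Cref{FillingPair} applied to the boundary curves $\eta$ and $\alpha$ provides the anchor needed to complete each inductive step.
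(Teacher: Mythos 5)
Your proof follows essentially the same inductive scheme as the paper's, with identical base cases and inductive steps anchored on \Cref{FillingPair} and \Cref{lemma:NonemptyProjection}; the only cosmetic difference is that in the base case of (ii) you route through \Cref{lemma:NonemptyProjection} to establish non-emptiness before concluding $\pi_C(f(\gamma))\subset B'$, whereas the paper goes straight from $i(f(\gamma),\rho(\eta))=0$ to $\pi_C(f(\gamma))\subset B\subset B'$. Both are correct and the approaches are the same.
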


\begin{proof} The proof will proceed in a similar manner as that of \Cref{ArbitraryFillingPair}, with proofs of both (i) and (ii) by induction.
\begin{proof}[Proof of (i)] Since $i(\alpha, \gamma) = 0$, we have that $i(\rho^{-1}(\alpha), \rho^{-1}(\gamma)) = 0$.  In addition, $\rho^{-1}(\gamma) \subset C$ since $\rho^{-1}(C \cap \rho(C)) \subset C$.  As $d_C(\rho^{-1}(\alpha),\rho(\eta)) = 1$, we have $h^{-1} \pi_C (\rho^{-1}(\gamma)) = \pi_C (f^{-1}(\gamma))\subset h^{-1} B'$, proving the base case.

So, assume by induction that for some $m\geq 1$, $\pi_C(f^{-m}(\gamma)) \subset h^{-1} B'$. As $d_C(h^{-1}\rho(\eta),\rho(\eta)) \geq 9$, we have $d_C(f^{-m}(\gamma),\rho(\eta)) \geq 6$, and hence \Cref{lemma:NonemptyProjection} implies that $\pi_C (\rho^{-1}f^{-m}(\gamma))\neq \emptyset$. As before, we have that $h^{-1}\pi_C (\rho^{-1} f^{-m}(\gamma)) = \pi_C (f^{-m - 1}(\gamma)) \neq \emptyset$. Since $i(\gamma, \alpha) = 0$, we have that $i(f^{-m -1}(\alpha), f^{-m-1}(\gamma)) = 0$. By \Cref{FillingPair}, we know that $\pi_C (f^{-m - 1}(\alpha))\subset h^{-1} B$, and so we must have that $\pi_C (f^{-m -1}(\gamma))\subset h^{-1}B'$, as desired.\end{proof}

\begin{proof}[Proof of (ii)]
Since $\gamma\subset C\cap \rho(C)$, we have $i(\gamma, \eta) = 0$, and hence $i(f(\gamma), f(\eta)) = 0$. As $f(\eta) = \rho h(\eta) = \rho(\eta)$,  we have $\pi_C( f(\gamma))\subset B\subset B'$, proving the base case.

Now, assume by induction that for some $m\geq 1$, $\pi_C (f^m(\gamma)) \subset B'$. Then since  $d_C(h\rho(\eta),\rho(\eta)) \geq 9$, we have $d_C(hf^m(\gamma),\rho(\eta)) \geq 6$, and hence \Cref{lemma:NonemptyProjection} implies $\pi_C(f^{m+1}(\gamma)) = \pi_C (\rho h f^m(\gamma))\neq \emptyset$. Since $i(\gamma, \eta) = 0$, it follows that we have $i(f^{m+1}(\gamma), f^{m+1}(\eta)) = 0$. As $\pi_C (f^{m+1}(\eta)) \subset B$ by \Cref{FillingPair}, we therefore have that $\pi_C (f^{m+1}(\gamma))\subset B'$, as desired. 
\end{proof}
Having proved (i) and (ii), this completes the proof.
\end{proof}

Let $L \subset \overline M_f$ be the link defined by
\[ L = \partial \overline \Sigma \cup \gamma_0 \subset S \times \{ 1 \} \subset M_f \subset \overline M_f.\] 

Write $\overline M_0 = \overline M_f  -  L$ for the complement of $L$ in $\overline M_f$.  Note that $\partial \overline M_0 = \partial \overline M_f$, but  $\overline M_0$ has five ends with neighborhoods homeomorphic to $T^2 \times (0,\infty)$. 

\begin{lemma}\label{DrilledOutHyperbolic}
The manifold $\overline M_0$ admits a convex hyperbolic structure such that $\partial \overline M_0$ is totally geodesic. 
\end{lemma}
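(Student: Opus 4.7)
The plan is to invoke Thurston's Hyperbolization Theorem (\Cref{Theorem:Thurston.Geometrization}) for the compactification $\overline M_0^+$ of $\overline M_0$ obtained by adjoining the five boundary tori at the torus ends, with pared locus $\pare$ equal to the union of these five tori. Then $\partial \overline M_0^+ \setminus \pare = \partial \overline M_f$, and if $(\overline M_0^+,\pare)$ is a pared acylindrical manifold with incompressible boundary, the theorem produces a convex hyperbolic metric on $\overline M_0^+ \setminus \pare = \overline M_0$ with totally geodesic boundary on $\partial \overline M_f$ and rank-two cusps at the five tori of $\pare$.

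First I would dispatch the more routine pared manifold conditions. Compactness, orientability, and irreducibility of $\overline M_0^+$ are inherited from the analogous properties of $\overline M_f$ (\Cref{proposition:interior}), using that any embedded $2$-sphere in $\overline M_0^+$ may be isotoped off the $1$-dimensional set $L$ and bounds a ball in $\overline M_f$ which can be arranged to miss $L$. The fundamental group $\pi_1(\overline M_0^+)$ is not virtually abelian since it surjects via Dehn filling onto the non-virtually-abelian group $\pi_1(\overline M_f)$. Incompressibility of $\partial \overline M_f$ is inherited; each torus of $\pare$ is incompressible since it surrounds an essential simple closed curve of the fiber $S$, which is nontrivial in $\pi_1(\overline M_f)$ via the injection $\pi_1(S) \hookrightarrow \pi_1(\overline M_f)$. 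The condition on non-cyclic abelian subgroups reduces, via the Torus Theorem, to showing any essential torus in $\overline M_0^+$ is parallel to a component of $\pare$: any other essential torus would either persist as an essential torus in $\overline M_f$ (contradicting atoroidality by \Cref{P:its hyperbolic}) or be parallel to a component of $\partial \overline M_f$, which is impossible since those components have genus at least two.

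The main step is verifying acylindricity, i.e., ruling out essential annuli in $(\overline M_0^+,\pare)$. I would split into cases by the location of the boundary components of such an annulus $A$. If both lie on $\partial \overline M_f$, then $A \subset \overline M_f$ is incompressible and, by the acylindricity of $\overline M_f$ (\Cref{L:annuli analysis}, using strong irreducibility of $f$), is boundary parallel in $\overline M_f$; this produces a solid torus $V \subset \overline M_f$ cobounded by $A$ and an annulus in $\partial \overline M_f$. The crucial task is to show $V$ can be taken disjoint from $L$, so the parallelism descends to $\overline M_0^+$: otherwise some component $K$ of $L$ would lie in $V$ and be freely homotopic in $\overline M_f$ to a power of the core of $V$, hence to a curve on $\partial \overline M_f$, which contradicts that in the semidirect product $\pi_1(\overline M_f) = \pi_1(S) \rtimes_{f_*} \mathbb Z$, the conjugacy class of an essential simple closed curve on the fiber $S$ cannot coincide with that of any curve on $\partial \overline M_f = S_+ \sqcup S_-$. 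If the two boundary components of $A$ lie on distinct tori of $\pare$, then two components of $L$ are freely isotopic in $\overline M_f$; but by \Cref{convention:subsurface} the five components have pairwise distinct topological types in $S$, and since $f$ preserves topological type they lie in distinct $f_*$-orbits of $\pi_1(S)$-conjugacy classes and hence cannot be freely homotopic in the mapping torus. If both boundary components of $A$ lie on a single torus of $\pare$, doubling along $\pare$ produces an essential torus in a finite cover of $\overline M_f$, contradicting atoroidality.

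The hardest step will be showing that the solid torus $V$ giving the boundary parallelism of $A$ in $\overline M_f$ can be chosen disjoint from $L$; this is where the specific choice of the link $L$ on the fiber, as specified in \Cref{convention:subsurface}, is crucial, and the argument must carefully exploit the fibered HNN-extension structure of $\pi_1(\overline M_f)$ to compare conjugacy classes of fiber curves with those of curves on the boundary leaves $S_\pm$. Once acylindricity is established, \Cref{Theorem:Thurston.Geometrization} immediately yields the convex hyperbolic structure with totally geodesic boundary required by the lemma.
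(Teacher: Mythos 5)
Your high-level architecture matches the paper's: verify that the exterior of $L$ with pared locus the peripheral tori is a pared acylindrical manifold and invoke \Cref{Theorem:Thurston.Geometrization}. The paper consolidates the verification by proving the double $D\overline M_0$ is atoroidal, which gives both the remaining pared conditions and acylindricity in one stroke; you check the conditions piecemeal, which is also legitimate in principle. However, there are substantial gaps.

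The central problem is the assertion that ``in the semidirect product $\pi_1(\overline M_f) = \pi_1(S) \rtimes_{f_*} \mathbb Z$, the conjugacy class of an essential simple closed curve on the fiber $S$ cannot coincide with that of any curve on $\partial \overline M_f$.'' This is false. Any simple closed curve $\gamma$ contained in a nesting neighborhood $U_+$ lies in $\mathcal U_+$; the suspension flow in $\widetilde M_\infty$ provides a free homotopy from $\gamma$ (in a fiber) to its image on $\mathcal U_+ \times \{\infty\}$, which descends to a free homotopy in $\overline M_f$ from $\gamma$ to a curve on $S_+$. Thus fiber curves are routinely homotopic to boundary curves; what must be shown is that the particular components of $L$ are not. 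This is precisely the content of \Cref{lemma:TrappedCurves}: since $L \subset C \cap \rho(C)$, for all $k \geq 1$ the projections $\pi_C(f^{\pm k}(\alpha_i))$ are nonempty, so $f^{\pm k}(\alpha_i)$ always meets the compact subsurface $C$ and hence $\alpha_i$ never escapes into a nesting neighborhood under iteration. That trapping property is what obstructs the homotopy into $\partial \overline M_f$. Your proposal does not invoke (or reprove) this lemma, and no amount of ``carefully exploiting the HNN structure'' substitutes for it, because the claimed group-theoretic fact does not hold.

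A second gap: your treatment of essential tori contained in the interior of $\overline M_0$ (your reduction of pared condition (5) via the Torus Theorem) is far too brief. You claim any essential torus in $\overline M_0^+$ would ``persist as an essential torus in $\overline M_f$'' or be ``parallel to a component of $\partial \overline M_f$.'' Neither alternative is automatic: a torus that is essential in $\overline M_0^+$ can become compressible in $\overline M_f$, for example by bounding a solid torus that contains one or more components of $L$. Ruling this out is where the paper spends most of the proof: one isotopes $T$ transverse to the fiber, shows the intersection pattern cuts $T$ into annuli that are either vertical or boundary parallel in $S \times [0,1]$, and traces the configuration around the mapping torus to eventually contradict the requirement in \Cref{convention:subsurface} that all five components of $L$ have pairwise distinct topological types in $S$. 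You cite that hypothesis only for the annulus-between-two-$\pare$-tori case, but it is doing essential work in the torus case as well. Finally, for an annulus with both boundary components on a single torus of $\pare$, ``doubling along $\pare$ produces an essential torus in a finite cover of $\overline M_f$'' is not correct: the double of $\overline M_0^+$ along one $\pare$-torus is not a cover of $\overline M_f$. The right move is to cap $\partial A$ with a sub-annulus of the $\pare$-torus to form a torus and fall back on atoroidality of $\overline M_0^+$, which returns you to the case you have not adequately handled.
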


\begin{proof} Instead of simply removing $L$ from $\overline M_f$, we instead remove an open tubular neighborhood of $L$ to produce a compact $3$-manifold whose boundary consists of $\partial \overline M_f$ together with five tori. Declaring the union of these five tori to be the paring locus, we have that the resulting manifold, together with the tori, satisfies all conditions of being a pared $3$-manifold with incompressible boundary, except possibly conditions \eqref{I:annuli} and \eqref{I:non cyclic}.  We can prove these, and simultaneously prove that the pared manifold is acylindrical, by proving that the double, $D\overline M_0$, is atoroidal (see \Cref{S:pared}).  If we do this, then we may apply \Cref{Theorem:Thurston.Geometrization} to deduce that $\overline M_0$ admits a convex hyperbolic structure with totally geodesic boundary.

So, suppose to the contrary that there is an embedded,
torus $T$ in $D\overline M_0$. There are two cases to consider: (a) $T$ is contained in (one copy of) $\overline M_0$; and (b) $T$ essentially intersects $\partial \overline M_0$ in $D \overline M_0$. 

First, suppose that $T$ is contained entirely inside of (one copy of) $\overline M_0$, and let $S' = S- L \subset \overline M_0$.  We assume that $T \pitchfork S'$ (this can always be done by a small perturbation) such that $\Gamma = T \cap S'$ has the minimal number of components among all tori isotopic to $T$ which are transverse to $S'$. We claim that $T  -  \Gamma$ is a union of annuli. Suppose to the contrary, that $T -  \Gamma$ is not a union of annuli. Then, since the Euler characteristic of $T$ is 0, there must be some disk $D$ in $T  -  \Gamma$. Note that $\partial D$ is an inessential curve in $S'$. Otherwise, $\partial D$ bounds an essential curve in $S'$ and $D$ is a compressing disk contradicting the incompressibility of $S'$. Since $\partial D$ is not essential in $S'$, then it bounds a disk in $S'$ which together with $D$ bounds a $3$-ball in $\overline M_0$. The $3$-ball can be used to define an isotopy removing the intersection of $T$ with $S'$ forming $\partial D$, which contradicts the minimality of $\Gamma$.  Therefore, $T  -  \Gamma$ consists entirely of annuli.

Let $M_0 \subset \overline M_0$ denote the interior, so that $M_0 = M_f - (L \times \{1\})$. Setting 
\[ N = S \times [0,1]  -  (L \times \{1\} \cup f(L) \times \{0\}),\]
then $M_0$ is obtained from $N$ by gluing
\[ \partial_1 N = (S -  L) \times \{1\} \subset S \times \{1\}\]
via the homeomorphism $f$ to 
\[ \partial_0 N = (S  -  f(L)) \times \{0\}  \subset S \times \{0\}.\]
Since we may assume that $T$ is contained in $M_0$, the annuli in $T  -  \Gamma$ compactify to embedded, incompressible annuli $(A,\partial A) \to (N,\partial N)$. We let $\mathcal A$ denote the union of these annuli, whose boundary is the union of $\Gamma \subset \partial_1 N \subset S \times \{1\}$ and $f(\Gamma) \subset \partial_0 N \subset S \times \{0\}$.

Since each component of $S' \subset S$ is $\pi_1$--injective, the annuli in $\mathcal A$ are in fact incompressible in $S \times [0,1]$.
Let $A \subset N$ be a component of $\mathcal A$. By work of Waldhausen (see Proposition 3.1 and Lemma 3.4 of \cite{Waldhausen}), $A$ is either boundary parallel in $S \times [0,1]$ or {\em vertical}, meaning it is isotopic to an annulus of the form $\gamma \times [0,1] \subset S \times [0,1]$ for some curve $\gamma \subset S$. These two options are illustrated in  \Cref{fig:parallel-vertical-annuli}. If all annuli are vertical, then $T$ is in fact an incompressible torus in $\overline M_f$, a contradiction.  Therefore, we assume that there is at least one component $A \subset \mathcal A$ which is boundary parallel in $S \times [0,1]$.  Because the number of components of $\Gamma$ in $\partial_1 N$ is equal to the number of components of $f(\Gamma)$ in $\partial_0 N$, we may assume that both boundary components of $A$ are in $\partial_1 N$.

\begin{figure}[htb!]
    \raggedleft
    \def\svgwidth{7.5in}
    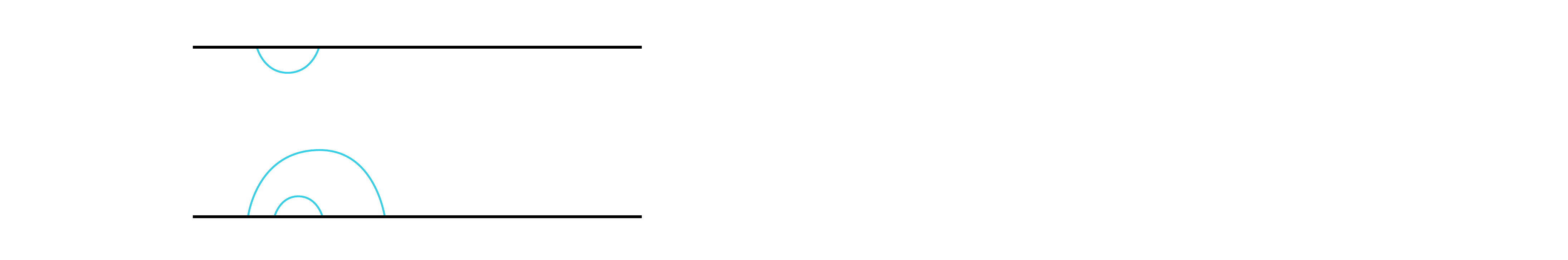
    \caption{In this schematic of $(N, \partial N)$, the curves of $L$ and $f(L)$ are represented by purple dots on $S \times \{1\}$ and $S \times \{0\}$, respectively. All of the annuli shown in the diagram on the left are \emph{boundary parallel} and the annuli $A_1, A_2, A_3,$ and $A_4$ in the diagram on the right are \emph{vertical}}
    \label{fig:parallel-vertical-annuli}
\end{figure}

Note that $A$ cannot be boundary parallel in $N$ since this could be used to define an isotopy in $M_0$ to reduce the number of components of $\Gamma = T \cap S'$.   Therefore, we may assume that $A$ has both boundary curves parallel to some $\alpha_i \subset L$ in $S \times \{1\}$ and on opposite sides of some small neighborhood of $\alpha_i$. Choosing a different component of $\mathcal A$ if necessary, we may assume that $A$ is an innermost annulus of this sort, and let $\partial_+ A,\partial_- A \subset \Gamma \subset S'$ be the two components of $\partial A$.  These curves $\partial_+ A,\partial_- A$ thus bound an annulus $G \subset S \times \{1\}$ containing $\alpha_i$ as a core curve and no other components of $\Gamma$ in its interior. 

Let $A_1 \subset \mathcal A$ be the component with one boundary component $\partial_- A_1 = f(\partial_+ A) \subset \partial_0 N$.  The other boundary component $\partial_+ A_1$ is not equal to $f(\partial_- A)$, for if it were, then $T$ would be a peripheral torus, contradicting the assumption that $T$ is essential.  If $\partial_+ A_1 \subset \partial_0 N$, then $A_1$ is boundary parallel in $S \times [0,1]$ and so $\partial A_1$ is the boundary of an annulus $G' \subset S \times \{0\}$.  This must also contain a component $f(\alpha_j) \subset f(L)$ (or else we could have reduced the number of components of $\Gamma$).  Note that both $f(\alpha_j)$ and $f(\alpha_i)$ are isotopic in $S \times \{0\}$ to $f(\partial_+ A)$ and hence are isotopic to each other.  It follows that $i=j$ by our choice of $L$, and hence $f(G)$ is properly contained in $G'$. Since $f(\partial_- A)$ is also the boundary component of some annulus $A' \subset \mathcal A$ which is disjoint from $A_1$, it must be that the other boundary component of $A'$ is contained in $G' - f(G)$.  This annulus $A'$ is also boundary parallel in $S \times [0,1]$, but note that the annulus in $S \times \{0\}$ with the same boundary curves as $A'$ is entirely contained in $G'-f(G)$, which can contain no components of $f(L)$. Thus $A'$ is boundary parallel in $N$, providing an isotopy of $T$ to reduce the number of components of intersection with $S'$, again a contradiction. This case is illustrated in the diagram on the left of \Cref{fig:parallel-vertical-annuli}. 

Therefore, the annulus $A_1$ is necessarily vertical and $\partial_+ A_1 \subset \partial_1 N$. There is thus an annulus $A_2 \subset \mathcal A$ with $\partial_- A_2 = f(\partial_+ A_1) \subset \partial_0 N$.  Either $A_2$ is boundary parallel in $S \times [0,1]$, or $\partial_+ A_2 \subset \partial_1 N$ and there is an annulus $A_3 \subset \mathcal A$ with $\partial_- A_3 = f(\partial_+ A_2)$.  Continuing in this way, ``tracing around the torus $T$" we obtain a sequence of annuli $A_1,A_2,\ldots,A_m$ which are all vertical, but for which $\partial A_{m+1} \subset \partial_0 N$.  Then $A_{m+1}$ is boundary parallel in $S \times [0,1]$, and as before, there must be some curve $f(\alpha_j) \subset f(L)$ isotopic in $S \times \{0\}$ to $\partial_- A_{m+1}$.  This means that $\alpha_j \subset L$ is isotopic to $\partial_+ A_m$ in $S \times \{1\}$.  

The annuli $A_1,\ldots,A_m$ glue together in $M_0$ to give an annulus from $\partial_+ A$ to $\partial_+ A_m$.  Since $\partial_+ A$ is isotopic to $\alpha_i$ in $S$ and $\partial_+ A_m$ is isotopic to $\alpha_j$ in $S$, inside $M_f$ we can flow this glued-up annulus forward to produce an isotopy from $f^m(\alpha_i)$ to $\alpha_j$.  Since $f$ is irreducible, $f^m(\alpha_i)$ cannot be isotopic to $\alpha_i$, and hence $i \neq j$.  On the other hand, $f^m$ is a homeomorphism from $S$ to itself sending $\alpha_i$ to $\alpha_j$ (up to isotopy), but this contradicts the fact that all components of $L$ in $S$ have different homeomorphism types. This contradiction shows that there is no essential torus in $\overline M_0$. An illustration of this case is shown in the diagram on the right of \Cref{fig:parallel-vertical-annuli}.

So, suppose now there is an embedded, non-peripheral torus $T$ in $D\overline M_0$ which essentially intersects a higher genus boundary component of $\overline M_0$ in $D \overline M_0$. An argument similar to the first case implies that we may assume that $T$ meets $\partial \overline M_0$ minimally and transversely, so that each component of $T  -  \partial \overline M_0$ compactifies to an essential annulus in one of the copies of $\overline M_0$ in $D \overline M_0$.  Let $A \subset \overline M_0$ be any such annulus.

Note that $A$ meets $\partial \overline M_0 = \partial \overline M_f$ in two essential curves, and so $A$ is incompressible in $\overline M_f$. Since $\overline M_f$ is acylindrical and since $\partial A$ are essential curves in $\overline M_f$, $A$ must be boundary parallel in $\overline M_f$. So there must be some annulus $A' \subset \partial \overline M_f$, so that $A \cup A'$ must bound a solid torus in $\overline M_f$.  Since $A$ is essential in $\overline M_0$, there must be some curve $\alpha_i \in L$, which is contained in this solid torus, but this solid torus gives a homotopy of $\alpha_i$ into $\partial \overline M_f$. This contradicts \Cref{lemma:TrappedCurves}. Therefore, $\overline M_0$ admits a complete hyperbolic metric such that the boundary is totally geodesic. \end{proof}

We will use the following result of Adams \cite[Theorem~3.1]{adams1985thrice} to conclude that the convex hyperbolic metric on $\overline M_0$ from the previous lemma makes the thrice-punctured spheres coming from $\Sigma  -  L$ totally geodesic in $\overline M_0$.
 
\begin{theorem}\label{AdamsSphere}
A properly embedded incompressible thrice-punctured sphere in a hyperbolic 3-manifold is isotopic to a totally geodesic properly embedded thrice-punctured sphere.
\end{theorem}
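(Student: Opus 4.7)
The plan is to exploit the rigidity of the hyperbolic structure on the thrice-punctured sphere. Write $M = \mathbb H^3/\Gamma$ and let $F \subset M$ be the properly embedded incompressible thrice-punctured sphere. Incompressibility gives an injection $H = \pi_1(F) \hookrightarrow \Gamma$, and proper embeddedness forces each of the three peripheral elements of $\pi_1(F) \cong F_2$ to act parabolically: indeed, each end of $F$ must be properly homotopic into an end of $M$ of finite volume, i.e.~a cusp, and such cusps correspond to parabolic subgroups of $\Gamma$.

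The main step is the following rigidity statement: any discrete faithful representation $\rho \colon \pi_1(\Sigma_{0,3}) \to \mathrm{PSL}_2(\mathbb C)$ that sends the three peripheral generators to parabolics is conjugate in $\mathrm{PSL}_2(\mathbb C)$ to the Fuchsian holonomy of the complete hyperbolic structure on $\Sigma_{0,3}$. I would prove this by normalization: conjugate so that one peripheral generator is $z \mapsto z+1$ (fixing $\infty$), then conjugate by an element fixing $\infty$ so that the second peripheral generator fixes $0$, obtaining a one-parameter family; the condition that the product of the two generators is parabolic (trace $\pm 2$) becomes a polynomial equation pinning down the parameter up to complex conjugation. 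Consequently, the image of $\rho$ preserves a round circle at infinity, equivalently an $H$-invariant totally geodesic plane $P \subset \mathbb H^3$.

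Given $P$, pass to the intermediate cover $\widetilde M = \mathbb H^3/H$. Then $P/H$ is a properly embedded totally geodesic thrice-punctured sphere in $\widetilde M$, and a chosen lift $\widetilde F \subset \widetilde M$ of $F$ is homeomorphic to $F$ with $\pi_1(\widetilde F) = H$. Both $\widetilde F$ and $P/H$ are incompressible and proper in $\widetilde M$ and realize the same identification of $H$ with $\pi_1(\widetilde M)$, with matching peripheral structure at each of the three cusps. A proper version of Waldhausen's theorem for incompressible surfaces in hyperbolic $3$-manifolds then provides a proper isotopy of $\widetilde F$ to $P/H$ in $\widetilde M$, after truncating the cusps and using the product structure there to align the ends. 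Pushing this isotopy down via the covering projection $\pi \colon \widetilde M \to M$, and using that $F = \pi(\widetilde F)$ was embedded to begin with, one obtains that $\pi(P/H) \subset M$ is an embedded, properly embedded, totally geodesic thrice-punctured sphere isotopic to $F$.

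The main obstacle is making the descent rigorous. One must verify that $\pi$ restricted to $P/H$ is injective, not merely an immersion; if there were a nontrivial identification, an element of $\Gamma - H$ would conjugate $H$ into itself and preserve $P$, yielding a finite-order symmetry of the geodesic surface inconsistent with the fact that $F$ itself is embedded (which passes to the quotient). One must also upgrade the proper homotopy to a global isotopy in $M$; here the simply connected structure of each cusp cross-section and the fact that the three peripheral curves of a thrice-punctured sphere have distinct topological rôles (so that the isotopy class is rigid modulo cusp-preserving homeomorphisms) let the local isotopy near each cusp be glued to a global one covering the $\widetilde M$-isotopy.
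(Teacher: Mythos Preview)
The paper does not prove this statement; it is quoted as Adams' theorem \cite[Theorem~3.1]{adams1985thrice} and used as a black box. Your overall strategy---rigidity of the type-preserving representation $\pi_1(\Sigma_{0,3}) \to \mathrm{PSL}_2(\mathbb C)$ forcing an invariant geodesic plane, then descent---is indeed essentially Adams' argument, and the normalization computation you outline for the rigidity step is correct.

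However, the descent step has a genuine gap. To show $\pi|_{P/H}$ is an embedding you must rule out, for every $\gamma \in \Gamma \setminus H$, that $\gamma P \cap P \neq \emptyset$. You only discuss the case $\gamma P = P$, and there your reasoning is off: such a $\gamma$ need not be finite order (indeed $\Gamma$ is torsion free), and the contradiction is not with $F$ being embedded but with Gauss--Bonnet, since $P/(\Gamma \cap \mathrm{Stab}\,P)$ would be a complete finite-area hyperbolic surface of area strictly less than $2\pi$, which is impossible. More seriously, you say nothing about the case where $\gamma P$ meets $P$ transversely in a geodesic; this is where the real work lies, and it does not follow from the stabilizer argument. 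The standard clean fix is to invoke Freedman--Hass--Scott: a totally geodesic surface is least-area in its homotopy class, and a least-area map homotopic to an embedding is itself an embedding. Once embeddedness is established, the isotopy to $F$ follows directly from Waldhausen's theorem for properly embedded incompressible surfaces in Haken manifolds, and your cover-then-push-down maneuver is unnecessary.
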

 
After an isotopy, we can assume that $\Sigma  -  L$ consists of pairwise disjoint, totally geodesic thrice-punctured spheres in $\overline M_0$. Next, we cut open along these surfaces to obtain four totally geodesic thrice-punctured sphere boundary components. We will make use of them in the following proof of \Cref{T:blah blah} which concludes this section.

\begin{proof}[Proof of \Cref{T:blah blah}.]

Fix a $4$-holed sphere $\Sigma$ and a pants decomposition $P$ as in \Cref{convention:subsurface}. As before, $L$ denotes the union of the four boundary curves of $\overline \Sigma$ together with the pants curve $\gamma_0$ in $\Sigma$, 
and $\overline M_0 := \overline M_f - (L \times \{1\})$. By \Cref{DrilledOutHyperbolic}, $\overline M_f$ admits a convex hyperbolic metric with totally geodesic boundary. 
By \Cref{AdamsSphere}, we can cut along the two totally geodesic thrice-punctured spheres inside $\overline M_0$ which yields a new 3-manifold, $\overline M_0'$ with four more boundary components, each of which is a thrice-punctured sphere.

Now isometrically glue $2k$ copies of the four-holed sphere block, $\mathring{\mathcal{B}}$, (with its complete hyperbolic metric with totally geodesic boundary) into $\overline M_0'$, stacked vertically so that the bottom of the first block is glued to the bottom new boundary component and so that the top of the last block is glued to the top new boundary component.  We call this new manifold $\overline M_k$, and note that \[ \Vol(\overline M_k) = \Vol(\overline M_0)+4kV_{\oct}.\]

Next observe that we may alternatively view $\overline M_k$ as being obtained from $\overline M_f$ by removing a link $L_k \subset \overline M_f$, each component of which is contained in fiber of $M_f$. 
More precisely, we can take $L_k$ to consist of $5+2k$ curves, consisting of $(\partial \overline \Sigma \cup \gamma_0)  \times \left\{\tfrac14\right\}$ together with $2k$ curves of the form $\beta_i \times \left\{\tfrac14 + \tfrac{i}{4k}\right\}$, where $\beta_i \subset \Sigma$ (the choice of interval $[\tfrac14,\tfrac34]$ is arbitrary: up to isotopy, we could of course take any interval in $[0,1]$). For any integer $s$, we let $\overline M_{k,s}$ denote the result of performing $(1,s)$--Dehn filling on each torus cusp of $\overline M_k$. For $s=0$, this gives us back the original mapping torus, $\overline M_{k,0} \cong \overline M_f$. 

Suppose that $P$ and $f^{-1}(P)$ differ by $r$ pants moves, and suppose that the pants move which occurs on $\Sigma$ occurs at the $m$-th vertex in the path between $P$ and $f^{-1}(P)$ in $\mathcal{P}(S)$. As in \Cref{Section:quotient construction}, we may view this move as occurring in the fiber $S\times \left\{\tfrac{m}{r}\right\}$. Gluing in the $2k$ pants blocks and performing $(1,0)$--Dehn filling to get $\overline M_{k,0} \cong \overline M_f$ corresponds to choosing a new path between $P$ and $f^{-1}(P)$ which now has length $r + 2k$ (we have essentially inserted $2k$ redundant pants moves that occur inside $\Sigma$). So, we may perform an isotopy so that the initial pants move on $\Sigma$ now occurs in the fiber $S\times \left\{\tfrac{m}{N + 2k} \right\}$, and so that the $i$-th pants block (after filling) has one boundary component in the fiber $S\times \left\{\tfrac{m + i - 1}{N + 2k}\right\}$ and the other boundary component in the fiber $S \times \left\{\tfrac{m + i}{N + 2k}\right\}$.

By \Cref{P:Stallings}, $\overline M_{k,s} \cong \overline{M}_{fD_{k,s}}$, where $D_{k,s}$ is a product of powers of Dehn twists about the curves in $L_k$ (projected into $\Sigma \subset S$). Letting $D \overline M_k$ denote the double of $\overline M_k$ over the boundary, for each $s >0$ we can view the double $D \overline M_{k,s}$ of $\overline M_{k,s}$ over its boundary as distinct Dehn fillings of $D \overline M_k$.  By \Cref{T:filling volume}, for $s$ sufficiently large, $D \overline M_{k,s}$ is hyperbolic, and as $s \to \infty$, $\Vol(D \overline M_{k,s}) \to \Vol(D \overline M_k)$.  By Mostow rigidity, the involution interchanging the two sides of the double is isotopic to an isometry with respect to the hyperbolic metric on $D \overline M_{k,s}$ and fixes $\partial \overline M_{k,s}$.  Consequently, the hyperbolic metric on $\overline M_{k,s}$ also has totally geodesic boundary and \[ \Vol(\overline M_{k,s}) \to \Vol(\overline M_k) = \Vol(\overline M_0) + 4kV_\oct.\]
For each $k >0$, let $s_k$ be such that
\[\Vol(\overline M_{k,s_k}) \geq 4k V_\oct.\]

We note that the end-periodic monodromy $f_k = fD_{k,s_k} = \rho h D_{k,s_k}$ for $\overline M_{k,s_k}$ is strongly irreducible. This is because our only requirement on the map $h$ in \Cref{S:example} was that $d_C(h\rho(\eta),\rho(\eta)) \geq 9$ in $\mathcal{AC}(C)$. As $L_k$ is disjoint from $\rho(\eta)$, this implies $D_{k,s_k}$ must preserve $\rho(\eta)$. Hence, $h D_{k,s_k} \rho(\eta) = h \rho (\eta)$, and therefore we have $d_C(h D_{k,s_k} \rho(\eta),\rho(\eta))\geq 9$.

By \Cref{T:upper}, we have that $\Vol(\overline M_{f_k}) \leq V_\oct \tau(f_k) \leq V_\oct(n_T+2n_S+4k)$, where $n_S$ and $n_T$ are the number of pants moves on four-punctured spheres and tori, respectively, in the original path between $P$ and $f^{-1}(P)$ (note that all $2k$ pants move from the blocks occur on four-punctured spheres).  On the other hand, $\Vol(\overline M_{f_k}) \geq 4k V_\oct$, and therefore
\[ |\Vol(\overline M_{f_k}) - V_\oct \tau(f_k)| \leq V_\oct(n_T+2n_S). \]
Since $\Vol(\overline M_{f_k}) \geq 4k V_\oct \to \infty$ as $k \to \infty$, this provides the required sequence $\{f_k\}$.\end{proof}

\subsection{Large component translation} \label{S:large distance fixed f}
Here we prove the following corollary from the introduction.\\

\noindent
{\bf \Cref{C:large distance}}
{\em \largedistance}

\smallskip

\begin{proof}
By \Cref{T:component volume}, it suffices to find a sequence of $f$--invariant components $\Omega_k \subset \mathcal P_f(S)$ so that
\[ \Vol(\overline M_f - P_{\Omega_k}) \to \infty \]
as $k \to \infty$. For this, it is useful to observe that any pants decomposition $P$ of $\partial \overline M_f$ arises as $P_\Omega$ for some $f$--invariant component $\Omega \subset \mathcal P_f(S)$ if and only if the preimage of $P$ in $\mathcal U_\pm \times \{\pm \infty\}$ are pants decompositions. This is because $\Omega$ being $f$--invariant is equivalent to requiring that any $P_0 \in \Omega$ has the property that $P_0$ and $f(P_0)$ differ by a finite number of pants moves. This is equivalent to $P_0$ being $f$--invariant when restricted to some nesting neighborhoods, and since these nesting neighborhoods form ``half" the infinite cyclic covers $\mathcal U_\pm \to S_\pm$, this is equivalent to $P \subset S_\pm$ lifting to pants decompositions on $\mathcal U_\pm$.

Now choose any pants decomposition $P$ in an $f$--invariant component $\Omega \subset \mathcal P_f(S)$ and consider $P_\Omega \subset \partial \overline M_f$.  Let $\gamma_0 \subset P_\Omega$ be any component such that $\partial \overline M_f  -  (P_\Omega -  \gamma_0)$ contains a four-punctured sphere component $\Sigma$.  Consider the convex hyperbolic metric on $\overline M_f  -  P_\Omega$ with totally geodesic boundary consisting of thrice-punctured spheres, and as in the proof of \Cref{T:blah blah}, isometrically glue on $2k$ copies of $\mathring{\mathcal B}$ stacked vertically so that the bottom of the first block is glued to $\Sigma - \gamma_0$.  The resulting manifold $\overline M_k$ is homeomorphic to $\overline M_f  -  (P_\Omega \cup L_k)$, where $L_k \subset M_f \subset \overline M_f  -  P_\Omega$ is a link in the interior.  After an isotopy, we can assume that the curves in $L_k$ are of the form $\beta \times \{x_\beta\}$ with respect to some product structure on a collar neighborhood $N \cong \partial \overline M_f \times [0,1]$ of $\partial \overline M_f$, with $\beta \subset \Sigma$ and $x_\beta \in (0,1)$ for all $\beta$.

Similar to \Cref{P:Stallings}, performing $(1,s)$--Dehn filling on each torus cusp of $\overline M_k$ simply changes the product structure on the collar neighborhood of $\partial \overline M_f$, and consequently changes the pants decomposition on the boundary by a composition of powers of Dehn twists in the curves $L_k$.  We can therefore denote the result of this Dehn filling as $\overline M_f  -  P_{k,s}$ for some pants decomposition $P_{k,s}$ of $\partial \overline M_f$.  On the other hand, since all the Dehn twists occur on curves in $\Sigma$, and the entire subsurface $\Sigma$ lifts to $\mathcal U_\pm$, it follows that $P_{k,s}$ lifts to pants decompsitions on $\mathcal U_\pm$.  Therefore, $P_{k,s} = P_{\Omega_{k,s}}$ for some $f$--invariant component $\Omega_{k,s} \subset \mathcal P_f(S)$.  As in the proof of \Cref{T:blah blah}, appealing to \Cref{T:filling volume}, we have
\[\Vol(\overline M_f - P_{\Omega_{k,s}}) \to \Vol(\overline M_f - P_\Omega) + 4k V_\oct. \]
We may therefore choose $s_k$ sufficiently large so that
\[ \Vol(\overline M_f - P_{\Omega_{k,s_k}}) \geq 4k V_\oct.\]
The term on the right tends to $\infty$, and thus setting $\Omega_k = \Omega_{k,s_k}$ completes the proof.
\end{proof}

\bibliographystyle{alpha}
  \bibliography{main}

\end{document}